\newcommand{\C}{\mathbb{C}}
\newcommand{\R}{\mathbb{R}}
\newcommand{\Z}{\mathbb{Z}}
\newcommand{\M}{\mathcal{M}}
\newcommand{\N}{\mathbb{N}}
\newcommand{\V}{\mathbb{V}}
\newcommand{\X}{\mathbf{X}}
\newcommand{\HH}{\mathbb{H}}
\newcommand{\Ss}{\mathbb{S}}
\newcommand{\eps}{\varepsilon}
\newcommand{\mc}{\mathcal}
\newcommand{\End}{\mathrm{End}}
\newcommand{\norm}[1]{\left\lVert#1\right\rVert}
\DeclareMathOperator{\Tr}{Tr}
\DeclareMathOperator{\re}{Re}
\DeclareMathOperator{\rk}{rank}
\DeclareMathOperator{\dd}{d}
\DeclareMathOperator{\id}{Id}
\DeclareMathOperator{\E}{\mathcal{E}}
\DeclareMathOperator{\Hom}{Hom}
\DeclareMathOperator{\Op}{Op}
\DeclareMathOperator{\ran}{ran}
\DeclareMathOperator{\e}{\mathbf{e}}
\DeclareMathOperator{\RR}{\mathbf{R}}
\theoremstyle{plain}
\newtheorem{theorem}{Theorem}[section]
\newtheorem{definition}[theorem]{Definition}
\newtheorem{lemma}[theorem]{Lemma}
\newtheorem{remark}[theorem]{Remark}
\newtheorem{prop}[theorem]{Proposition}
\newtheorem{corollary}[theorem]{Corollary}
\numberwithin{equation}{section}
\begin{document}

\begin{abstract}
Let $(M,g)$ be a smooth Anosov Riemannian manifold and $\mc{C}^\sharp$ the set of its primitive closed geodesics. Given a Hermitian vector bundle $\E$ equipped with a unitary connection $\nabla^{\E}$, we define $\mc{T}^\sharp(\E, \nabla^{\E})$ as the sequence of traces of holonomies of $\nabla^{\E}$ along elements of $\mc{C}^\sharp$. This descends to a homomorphism on the additive moduli space $\mathbb{A}$ of connections up to gauge $\mc{T}^\sharp: (\mathbb{A}, \oplus) \to \ell^\infty(\mc{C}^\sharp)$, which we call the \emph{primitive trace map}. It is the restriction of the well-known \emph{Wilson loop} operator to primitive closed geodesics.

The main theorem of this paper shows that the primitive trace map $\mc{T}^\sharp$ is locally injective near generic points of $\mathbb{A}$ when $\dim(M) \geq 3$. We obtain global results in some particular cases: flat bundles, direct sums of line bundles, and general bundles in negative curvature under a spectral assumption which is satisfied in particular for connections with small curvature. As a consequence of the main theorem, we also derive a spectral rigidity result for the connection Laplacian.

The proofs are based on two new ingredients: a Liv\v{s}ic-type theorem in hyperbolic dynamical systems showing that the cohomology class of a unitary cocycle is determined by its trace along closed primitive orbits, and a theorem relating the local geometry of $\mathbb{A}$ to the Pollicott-Ruelle resonance near zero of a certain natural transport operator.

\end{abstract}

\title{The holonomy inverse problem}
\author[M. Ceki\'{c}]{Mihajlo Ceki\'{c}}
\date{\today}
\address{Laboratoire de Math\'{e}matiques d'Orsay, CNRS, Universit\'{e} Paris-Saclay, 91405 Orsay, France}
\curraddr{Institut f\"ur Mathematik, Universit\"at Z\"urich, Winterthurerstrasse 190, CH-8057 Z\"urich, Switzerland}
\email{mihajlo.cekic@math.uzh.ch}

\author[T. Lefeuvre]{Thibault Lefeuvre}

\address{Université de Paris and Sorbonne Université, CNRS, IMJ-PRG, F-75006 Paris, France.}

\email{tlefeuvre@imj-prg.fr}

\maketitle

\setcounter{tocdepth}{1}
\tableofcontents

\newpage

\section{Introduction}

\subsection{Primitive trace map, local injectivity}

Let $(M,g)$ be a smooth closed Riemannian Anosov manifold such as a manifold of negative sectional curvature \cite{Anosov-67}. Recall that this means that there exists a continuous flow-invariant splitting of the tangent bundle to the unit tangent bundle $\M := SM$:
\[
T\M = \R X \oplus E_s \oplus E_u,
\]
such that:
\begin{equation}
\label{equation:anosov}
\begin{array}{l}
\forall t \geq 0, \forall v \in E_s, ~~ |\dd\varphi_t(v)| \leq Ce^{-t\theta}|v|, \\
\forall t \leq 0, \forall v \in E_u, ~~ |\dd\varphi_t(v)| \leq Ce^{-|t|\theta}|v|,
\end{array}
\end{equation}
where $(\varphi_t)_{t \in \R}$ is the geodesic flow on $\M$ generated by the vector field $X$, and the constants $C,\theta > 0$ are uniform and the metric $|\cdot|$ is arbitrary. 

Let $\E \rightarrow M$ be a smooth Hermitian vector bundle. We denote by $\mc{A}_{\E}$ the affine space of smooth unitary connections on $\E$ and $\mathbb{A}_{\E}$ the moduli space of connections up to gauge-equivalence, namely a point $\mathfrak{a} \in \mathbb{A}_{\E}$ is an orbit $\mathfrak{a} := \left\{p^*\nabla^{\E} ~|~p \in C^\infty(M,\mathrm{U}(\E))\right\}$ of gauge-equivalent connections, where $\nabla^{\E} \in \mathfrak{a}$ is arbitrary and $p^*\nabla^{\E}(\bullet) := p^{-1} \nabla^{\E} (p \bullet)$ is the pullback connection. We let $\mc{C} = \left\{c_1,c_2,...\right\}$ be the set of free homotopy classes of loops on $M$ which is known to be in one-to-one correspondence with closed geodesics \cite{Klingenberg-74}. More precisely, given $c \in \mc{C}$, there exists a unique closed geodesic $\gamma_g(c) \subset M$ in the class $c$. It will be important to make a difference between \emph{primitive} and \emph{non-primitive} homotopy classes (resp. closed geodesics): a free loop is said to be primitive if it cannot be homotoped to a certain power (greater or equal than $2$) of another free loop. The set of primitive classes defines a subset $\mc{C}^\sharp = \left\{c_1^\sharp,c_2^\sharp,...\right\} \subset \mc{C}$.

Given a class $\mathfrak{a} \in \mathbb{A}_{\E}$, a unitary connection $\nabla^{\E} \in \mathfrak{a}$ and an arbitrary point $x_{c^\sharp} \in \gamma_g(c^\sharp)$ (for some $c^\sharp \in \mc{C}^\sharp$), the parallel transport $\mathrm{Hol}_{\nabla^{\E}}(c^\sharp) \in \mathrm{U}(\E_{x_{c^\sharp}})$, starting at $x_{c^\sharp}$, with respect to $\nabla^{\E}$ and along $\gamma_g({c^\sharp})$ depends on the choice of representative $\nabla^{\E} \in \mathfrak{a}$ since two gauge-equivalent connections have conjugate holonomies. However, the trace does not depend on a choice of $\nabla^{\E} \in \mathfrak{a}$ and the \emph{primitive trace map}:
\begin{equation}
\label{equation:trace}
\mc{T}^\sharp : \mathbb{A}_{\E} \ni \mathfrak{a} \mapsto \left(\Tr\left(\mathrm{Hol}_{\nabla^{\E}}(c^\sharp_1)\right), \Tr\left(\mathrm{Hol}_{\nabla^{\E}}(c^\sharp_2)\right), ...\right) \in \ell^\infty(\mc{C}^\sharp),
\end{equation}
is therefore well-defined. Observe that the data of the primitive trace map is a rather weak information: in particular, it is \emph{not} (\emph{a priori}) equivalent to the data of the conjugacy class of the holonomy along each closed geodesic (and the latter is the same as the non-primitive trace map, where one considers \emph{all} closed geodesics). One of the main results of this paper is the following:

\begin{theorem}
\label{theorem:injectivity}
Let $(M,g)$ be a smooth Anosov Riemannian manifold of dimension $\geq 3$ and let $\E \rightarrow M$ be a smooth Hermitian vector bundle. Let $\mathfrak{a}_0 \in \mathbb{A}_{\E}$ be a \emph{generic} point. Then, the primitive trace map is \emph{locally injective} near $\mathfrak{a}_0$.
\end{theorem}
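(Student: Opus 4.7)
The plan is to show that any two connections $\nabla_0,\nabla_1$ close enough in $\mathcal{A}_\E$ to a smooth representative of $\mathfrak{a}_0$ and satisfying $\mc{T}^\sharp(\nabla_0)=\mc{T}^\sharp(\nabla_1)$ must be gauge-equivalent. The first step is to lift everything to the unit tangent bundle $\pi:SM\to M$: parallel transport of $\nabla_i$ along the geodesic flow $\varphi_t$ defines unitary cocycles $C^i_t$ on the pullback bundle $\pi^*\E$, and the hypothesis is precisely $\Tr C^0_{T(x)}(x)=\Tr C^1_{T(x)}(x)$ at every periodic point $x\in SM$ whose orbit, of period $T(x)$, projects to a primitive closed geodesic.

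Next I would invoke the announced Liv\v{s}ic-type theorem for unitary cocycles over primitive orbits. It produces a continuous gauge $u:SM\to \mathrm{U}(\pi^*\E)$ intertwining the two cocycles, $C^1_t(x)=u(\varphi_t x)^{-1}C^0_t(x)u(x)$; the classical stable/unstable bootstrap then upgrades $u$ to a smooth section of $\pi^*\mathrm{U}(\E)$. At this stage the two pulled-back connections $\pi^*\nabla_0$ and $\pi^*\nabla_1$ on $SM$ are explicitly gauge-equivalent via $u$.

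The remaining task is the \emph{descent} to $M$: one must show that $u$ is the pullback $\pi^*p$ of a gauge $p\in C^\infty(M,\mathrm{U}(\E))$, equivalently that $u$ is constant along the vertical fibres of $\pi$. This is where the second ingredient enters. Writing $u=\exp(w)$ (valid once $\nabla_1$ is close to $\nabla_0$) with $w$ a small section of $\pi^*\mathrm{ad}(\E)$, the intertwining identity, expanded to first order, forces $w$ to lie in the generalized $0$-eigenspace of the transport operator $\X$ on $\pi^*\mathrm{ad}(\E)$ attached to $\nabla_0$. The theorem relating the local geometry of $\mathbb{A}_\E$ at $\mathfrak{a}_0$ to the Pollicott--Ruelle resonance at $0$ identifies that generalized eigenspace with $\pi^*(T_{\mathfrak{a}_0}\mathbb{A}_\E)$ plus a canonical obstruction summand; \emph{genericity} of $\mathfrak{a}_0$ is taken to mean exactly the vanishing of this obstruction. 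Consequently $w$, and therefore $u$, is basic, so $u=\pi^*p$ and $\nabla_1=p^*\nabla_0$. The nonlinear remainder in $u=\exp(w)$ is absorbed by an iteration/bootstrap, working in a neighbourhood of $\nabla_0$ on which $u$ stays close to the identity.

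The main obstacle, and the substance of the proof, is the descent step: translating the continuous/smooth Liv\v{s}ic conclusion into differential-geometric gauge equivalence on $M$ requires the fine Pollicott--Ruelle spectral picture of $\X$ near $0$, together with a precise definition of the generic set in terms of the $0$-resonant Jordan structure. In particular, the dimensional assumption $\dim M\geq 3$ should enter here through the cohomological obstruction (expected to be unavoidable on surfaces), and one must carefully control the passage from the infinitesimal analysis to a genuine open neighbourhood of $\mathfrak{a}_0$, since the Liv\v{s}ic-plus-resonance mechanism is not per se an inverse function theorem statement.
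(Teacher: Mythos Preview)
Your first step is right and matches the paper: the exact Liv\v{s}ic cocycle theorem produces a smooth unitary section $u\in C^\infty(SM,\mathrm{U}(\pi^*\E))$ solving $\pi^*\nabla^{\Hom(\nabla_1,\nabla_2)}_X u=0$, i.e.\ $u$ lies in the kernel of the mixed transport operator $\X_{A_1,A_2}$.

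The descent step, however, is not how the paper proceeds, and your version has a genuine gap. The paper never tries to show that $u$ itself is a pullback from $M$. Instead it argues indirectly: the existence of $u$ forces the (unique, by opacity \textbf{(A)}) Pollicott--Ruelle resonance $\lambda_{A_1,A_2}$ of $-\X_{A_1,A_2}$ near $0$ to be exactly $0$. The heart of the proof is then a quantitative perturbation estimate (Lemma~\ref{lemma:borne-lambda-a-1}): one Taylor-expands $\lambda_{A_1,A_2}$ in $A_2$, computes the Hessian to be $-\langle \Pi_1^{\End}\Gamma,\Gamma\rangle$, and uses the $s$-injectivity assumption \textbf{(B)} to get the coercive bound
\[
\|\phi(A_1,A_2)-\nabla_1^{\E}\|_{H^{-1/2}}^2 \le C\,|\lambda_{A_1,A_2}|,
\]
where $\phi(A_1,A_2)$ is the Coulomb gauge of $\nabla_2^{\E}$ relative to $\nabla_1^{\E}$. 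Since $\lambda_{A_1,A_2}=0$, the Coulomb gauge representative of $\nabla_2^{\E}$ \emph{is} $\nabla_1^{\E}$, and the gauge-equivalence on $M$ is produced by the Coulomb map $p_{A_1,A_2}$, not by $u$.

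Your linearization claim is where things break. Writing $u=\exp(w)$ and expanding $\X_{A_1,A_2}u=0$ to first order does \emph{not} give $\X_0 w=0$; it gives a cohomological equation $\X_0 w \approx -\pi_1^*A$ (plus higher order), so $w$ is not in the generalized $0$-eigenspace of $\X_0$. And even granting opacity, knowing $\ker\X_0=\C\cdot\mathbbm{1}_\E$ says nothing direct about whether the perturbed resonant state $u\in\ker\X_{A_1,A_2}$ is fibre-constant. Your proposal also never really uses condition \textbf{(B)}; in the paper it is precisely \textbf{(B)} that turns ``resonance equals zero'' into ``distance in Coulomb gauge equals zero''. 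Finally, the assumption $\dim M\ge 3$ enters not through a cohomological obstruction in the descent, but through the fact that \textbf{(A)} and \textbf{(B)} are only known to be generic in dimension $\ge 3$.
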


By \emph{local injectivity}, we mean the following: there exists $N \in \N$ (independent of $\mathfrak{a}_0$) such that $\mc{T}^\sharp$ is locally injective in the $C^N$-quotient topology on $\mathbb{A}_{\E}$. In other words, for any element $\nabla^{\E}_0 \in \mathfrak{a}_0$, there exists $\eps > 0$ such that the following holds; if $\nabla_{1,2}^{\E}$ are two smooth unitary connections such that $\|p_i^*\nabla_{i}^{\E} - \nabla^{\E}_0\|_{C^N} < \eps$ for some $p_i \in C^\infty(M,\mathrm{U}(\E))$, and $\mc{T}^\sharp(\nabla_1^{\E}) = \mc{T}^\sharp(\nabla_2^{\E})$, then $\nabla_1^{\E}$ and $\nabla_2^{\E}$ are gauge-equivalent. 

We say that a point $\mathfrak{a}$ is \emph{generic} if it enjoys the following two features: 

\begin{itemize}\label{def:generic}
\item[\textbf{(A)}]  $\mathfrak{a}$ is \textbf{opaque}. By definition (see \cite[Section 5]{Cekic-Lefeuvre-20}), this means that for all $\nabla^{\E} \in \mathfrak{a}$, the parallel transport map along geodesics does not preserve any non-trivial subbundle $\mc{F} \subset \E$ (i.e. $\mc{F}$ is preserved by parallel transport along geodesics if and only if $\mc{F} = \left\{0\right\}$ or $\mc{F} = \E$). This was proved to be equivalent to the fact that the Pollicott-Ruelle resonance at $z=0$ of the operator $\X := \pi^* \nabla^{\End}_X$ has multiplicity equal to $1$, with resonant space $\C \cdot \mathbbm{1}_{\E}$ (here $\pi : SM \rightarrow M$ is the projection; $\nabla^{\End}$ is the induced connection on the endomorphism bundle, see \S\ref{ssection:connections} for further details);

\item[\textbf{(B)}] $\mathfrak{a}$ has \textbf{solenoidally injective generalized X-ray transform} $\Pi^{\End(\E)}_1$ on twisted $1$-forms with values in $\End(\E)$. This last assumption is less easy to describe in simple geometric terms: roughly speaking, the X-ray transform is an operator of integration of symmetric $m$-tensors along closed geodesics. For vector-valued symmetric $m$-tensors, this might not be well-defined, and one needs a more general (hence, more abstract) definition involving the residue at $z=0$ of the meromorphic extension of the family $\C \ni z \mapsto (-\X-z)^{-1}$, see \S\ref{section:twisted}. 
\end{itemize}
It was shown in previous articles \cite{Cekic-Lefeuvre-20,Cekic-Lefeuvre-21-2} that in dimension $n \geq 3$, properties \textbf{(A)} and \textbf{(B)} are satisfied on an open dense subset $\omega \subset \mathbb{A}_{\E}$ with respect to the $C^N$-quotient topology.\footnote{More precisely, there exists $N \in \N$ and a subset $\Omega \subset \mc{A}_{\E}$ of the (affine) Fr\'echet space of smooth affine connections on $\E$ such that $\omega = \pi_{\E}(\Omega)$ (where $\pi_{\E} : \mc{A}_{\E} \rightarrow \mathbb{A}_{\E}$ is the projection) and
\begin{itemize}
\item $\Omega$ is invariant by the action of the gauge-group, namely $p^*\Omega = \Omega$ for all $p \in C^\infty(M,\mathrm{U}(\E))$;
\item $\Omega$ is open, namely for all $\nabla^{\E}_0 \in \Omega$, there exists $\eps > 0$ such that if $\nabla^{\E} \in \mc{A}_{\E}$ and $\|\nabla^{\E}-\nabla^{\E}_0\|_{C^N} < \eps$, then $\nabla^{\E} \in \Omega$;
\item $\Omega$ is dense, namely for all $\nabla^{\E}_0 \in \mc{A}_{\E}$, for all $\eps > 0$, there exists $\nabla^{\E} \in \Omega$ such that $\|\nabla^{\E}-\nabla^{\E}_0\|_{C^N} < \eps$;
\item Connections in $\Omega$ satisfy properties \textbf{(A)} and \textbf{(B)}.
\end{itemize}} When the reference connection $\mathfrak{a}$ satisfies only the property \textbf{(A)} (this is the case for the product connection on the trivial bundle for instance), we are able to show a \emph{weak local injectivity} result, see Theorem \ref{thm:weaklocal}. 

We note that the gauge class of a connection is uniquely determined from the holonomies along \emph{all} closed loops \cite{Barrett-91, Kobayashi-54} and that in mathematical physics our primitive trace map $\mc{T}^\sharp$ is known as the \emph{Wilson loop} operator \cite{Beasley-13, Giles-81, Loll-93, Wilson-74}. In stark contrast, our Theorem \ref{theorem:injectivity} says that the \emph{restriction to closed geodesics} of this operator already determines (locally) the gauge class of the connection.

\subsection{Global injectivity}

\label{ssection:global-inj}

We now mention some global injectivity results. We let $\mathbb{A}_r := \bigsqcup_{\E_r \in \mathrm{Vect}_r(M)} \mathbb{A}_{\E_r}$, where the disjoint union runs over all Hermitian vector bundles $\E_r \in \mathrm{Vect}_r(M)$ of rank $r$ over $M$ up to isomorphisms, and we set:
\[
\mathbb{A} := \bigsqcup_{r \geq 0} \mathbb{A}_r,
\]
and $\mathrm{Vect}(M) = \bigsqcup_{r \geq 0} \mathrm{Vect}_r(M)$ be the space of all topological vector bundles up to isomorphisms. A point $\mathrm{x} \in \mathbb{A}$ corresponds to a pair $([\mc{E}],\mathfrak{a})$, where $[\mc{E}] \in \mathrm{Vect}(M)$ is an equivalence class of Hermitian vector bundles and $\mathfrak{a}$ a class of gauge-equivalent unitary connections.\footnote{Note that if two smooth Hermitian vector bundles $\E_1$ and $\E_2$ are isomorphic as topological vector bundles (i.e. there exists an invertible $p \in C^\infty(M,\mathrm{Hom}(\E_1,\E_2))$), then they are also isomorphic as Hermitian vector bundles, that is $p$ can be taken unitary; the choice of Hermitian structure is therefore irrelevant.} 

The space $\mathbb{A}$ has a natural monoid structure given by the $\oplus$-operator of taking direct sums (both for the vector bundle part and the connection part). The primitive trace map can then be seen as a \emph{global} (monoid) homomorphism: 
\begin{equation}
\label{equation:trace-total}
\mc{T}^\sharp : \mathbb{A} \longrightarrow \ell^\infty(\mc{C}^\sharp),
\end{equation}
where $\ell^\infty(\mc{C}^\sharp)$ is endowed with the obvious additive structure. We actually conjecture that the generic assumption of Theorem \ref{theorem:injectivity} is unnecessary and that the primitive trace map \eqref{equation:trace-total} should be globally injective if $\dim(M) \geq 3$ and $\dim (M)$ is odd. Let us discuss a few partial results supporting the validity of this conjecture:

\begin{enumerate}
	\item In \S\ref{sssection:line}, we show that the primitive trace map is injective when restricted to \emph{direct sums of line bundles} when $\dim(M) \geq 3$, see Theorem \ref{theorem:sum}. Note that it was proved by Paternain \cite{Paternain-09} that the primitive trace map restricted to line bundles $\mc{T}_1^\sharp : \mathbb{A}_1 \longrightarrow \ell^\infty(\mc{C}^\sharp)$ is injective when $\dim(M) \geq 3$. 

	\item In \S\ref{sssection:flat}, under the restriction of $\mc{T}^\sharp$ to \emph{flat} connections, we show that the primitive trace map $\mc{T}^\sharp$ is globally injective, see Proposition \ref{proposition:flat}.
	
	\item In \S\ref{sssection:negative}, we also obtain a global result in negative curvature under an extra \emph{spectral condition}, see Proposition \ref{proposition:negative}. This condition is generic (see Appendix \ref{appendix:ckts}) and is also satisfied by connections with \emph{small curvature}, i.e. whose curvature is controlled by a constant depending only on the dimension and an upper bound on the sectional curvature of $(M,g)$ (see Lemma \ref{lemma:small-curvature}).

	\item In \S\ref{sssection:topology}, as a consequence of Corollary \ref{corollary:iso} below, we have that the primitive trace map $\mc{T}^{\sharp}([\mc{E}],\mathfrak{a})$ allows to recover the isomorphism class $\pi^*[\E]$. In particular if $\dim M$ is odd, this suffices to recover $[\E]$, see Proposition \ref{proposition:topology}.
	
\end{enumerate}

Theorem \ref{theorem:injectivity} is inspired by earlier work on the subject, see \cite{Paternain-09,Paternain-12,Paternain-13,Paternain-lecture-notes,Guillarmou-Paternain-Salo-Uhlmann-16} for instance. Nevertheless, it goes beyond the aforementioned literature thanks to an \emph{exact Liv\v{s}ic cocycle Theorem} (see Theorem \ref{theorem:weak-intro}), explained in the next paragraph \S\ref{ssection:exact}. It also belongs to a more general family of \emph{geometric inverse results} which has become a very active field of research in the past twenty years, both on closed manifolds and on manifolds with boundary, see \cite{Pestov-Uhlmann-05, Stefanov-Uhlmann-04,Paternain-Salo-Uhlmann-13, Uhlmann-Vasy-16,Stefanov-Uhlmann-Vasy-17,Guillarmou-17-2} among other references.

Theorem \ref{theorem:injectivity} can also be compared to a similar problem called the \emph{marked length spectrum} (MLS) rigidity conjecture, also known as the Burns-Katok \cite{Burns-Katok-85} conjecture. The latter asserts that if $(M,g)$ is Anosov, then the marked length spectrum
\begin{equation}
\label{equation:mls}
L_g : \mc{C} \rightarrow \R_+, ~~~L_g(c) := \ell_g(\gamma_g(c)),
\end{equation}
(where $\ell_g(\gamma)$ denotes the Riemannian length of the curve $\gamma \subset M$ computed with respect to the metric $g$), namely the length of all closed geodesics marked by the free homotopy classes of $M$, should determine the metric up to isometry. Despite some partial answers \cite{Katok-88, Croke-90,Otal-90,Besson-Courtois-Gallot-95,Hamenstadt-99,Guillarmou-Lefeuvre-18}, this conjecture is still widely open. Recently, Guillarmou and the second author proved a local version of the Burns-Katok conjecture \cite{Guillarmou-Lefeuvre-18} using techniques from microlocal analysis and the theory of Pollicott-Ruelle resonances. 

\subsection{Inverse Spectral problem}

The \emph{length spectrum} of the Riemannian manifold $(M,g)$ is the collection of lengths of closed geodesics \emph{counted with multiplicities}. It is said to be \emph{simple} if all closed geodesics have distinct lengths and this is known to be a generic condition (with respect to the metric), see \cite{Abraham-70,Anosov-82} (even in the non-Anosov case). Given $\nabla^{\E} \in \mathfrak{a}$, one can form the \emph{connection Laplacian} $\Delta_{\nabla^{\E}} := (\nabla^{\E})^*\nabla^{\E}$ (also known as the Bochner Laplacian) which is a differential operator of order $2$, non-negative, formally self-adjoint and elliptic, acting on $C^\infty(M,\E)$. While $\Delta_{\nabla^{\E}}$ depends on a choice of representative $\nabla^{\E}$ in the class $\mathfrak{a}$, its spectrum does not and there is a well-defined \emph{spectrum map}:
\begin{equation}\label{eq:spectrummap}
\mc{S} : \mathbb{A}_{\E} \ni \mathfrak{a} \mapsto \mathrm{spec}(\Delta_{\mathfrak{a}}),
\end{equation}
where $\mathrm{spec}(\Delta_{\mathfrak{a}}) = \left\{0 \leq \lambda_0(\mathfrak{a}) \leq \lambda_1(\mathfrak{a}) \leq ...\right\}$ is the spectrum counted with multiplicities. Note that more generally, the spectrum map \eqref{eq:spectrummap} can be defined on the whole moduli space $\mathbb{A}$ (just as the primitive trace map \eqref{equation:trace}). The trace formula of Duistermaat-Guillemin \cite{Duistermaat-Guillemin-75, Guillemin-73} applied to $\Delta_{\mathfrak{a}}$ reads (when the length spectrum is simple):
\begin{equation}
\label{equation:trace-formula}
\lim_{t \to \ell(\gamma_g(c))} \left(t-\ell(\gamma_g(c))\right) \sum_{j \geq 0} e^{-i \sqrt{\lambda_j}(\mathfrak{a})t} = \dfrac{\ell(\gamma_g(c^\sharp)) \Tr\left(\mathrm{Hol}_{\nabla^{\E}}(c)\right)}{2\pi |\det(\mathbbm{1}-P_{\gamma_g(c)})|^{1/2}} ,
\end{equation}
where $\sharp : \mc{C} \rightarrow \mc{C}^\sharp$ is the operator giving the primitive orbit associated to an orbit; $P_\gamma$ is the Poincar\'e map associated to the orbit $\gamma$ and $\ell(\gamma)$ its length. Theorem \ref{theorem:injectivity} therefore has the following straightforward consequence:

\begin{corollary}
\label{corollary:spectral}
Let $(M,g)$ be a smooth Anosov Riemannian manifold of dimension $\geq 3$ \emph{with simple length spectrum}. Then:
\begin{itemize}
\item Let $\E \rightarrow M$ be a smooth Hermitian vector bundle and $\mathfrak{a}_0 \in \mathbb{A}_{\E}$ be a generic point. Then, the spectrum map $\mc{S}$ is locally injective near $\mathfrak{a}_0$.
\item The spectrum map $\mc{S}$ is also globally injective when restricted to the cases \emph{(1)-(4)} of the previous paragraph \S\ref{ssection:global-inj}.
\end{itemize}

\end{corollary}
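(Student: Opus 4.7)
My plan is to deduce both statements directly from the Duistermaat--Guillemin trace formula \eqref{equation:trace-formula} combined with the injectivity results already established for the primitive trace map $\mc{T}^\sharp$: Theorem \ref{theorem:injectivity} for the local part, and the four global injectivity results listed in \S\ref{ssection:global-inj} for the second part. The overall strategy is to show that, under the simple length spectrum hypothesis, the spectrum map $\mc{S}$ dominates the primitive trace map in the sense that $\mc{S}(\mathfrak{a}_1) = \mc{S}(\mathfrak{a}_2)$ implies $\mc{T}^\sharp(\mathfrak{a}_1) = \mc{T}^\sharp(\mathfrak{a}_2)$, after which the corollary is immediate.

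First I would view the eigenvalue sequence $\{\lambda_j(\mathfrak{a})\}_{j \geq 0}$ as defining the tempered distribution $u_{\mathfrak{a}}(t) := \sum_{j \geq 0} e^{-it\sqrt{\lambda_j(\mathfrak{a})}}$ on $\R$, which manifestly depends only on $\mc{S}(\mathfrak{a})$. By the Duistermaat--Guillemin theorem applied to the Bochner Laplacian $\Delta_{\mathfrak{a}}$, the singular support of $u_{\mathfrak{a}}$ restricted to $t > 0$ is contained in the length spectrum $\{\ell(\gamma_g(c)) : c \in \mc{C}\}$, and the leading singularity at each $t = \ell(\gamma_g(c))$ is precisely the one computed in \eqref{equation:trace-formula}. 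The simplicity hypothesis on the length spectrum ensures that distinct classes $c \in \mc{C}$ produce singularities at distinct times, so the limit on the left-hand side of \eqref{equation:trace-formula} is unambiguously associated to one free homotopy class $c$.

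Next, since the geometric factor
\[
\frac{\ell(\gamma_g(c^\sharp))}{2\pi \, |\det(\mathbbm{1} - P_{\gamma_g(c)})|^{1/2}}
\]
is non-zero (by the Anosov property of $g$) and depends only on the metric, reading the residue on the left-hand side of \eqref{equation:trace-formula} for each $c$ produces $\Tr(\mathrm{Hol}_{\nabla^{\E}}(c))$ for every $c \in \mc{C}$. Restricting to primitive classes then recovers exactly $\mc{T}^\sharp(\mathfrak{a})$. Thus two points $\mathfrak{a}_1, \mathfrak{a}_2 \in \mathbb{A}_{\E}$ with equal spectra have equal primitive traces, and the first item follows from Theorem \ref{theorem:injectivity}, while the second follows from the four global injectivity statements collected in \S\ref{ssection:global-inj}.

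The main obstacle is conceptual rather than technical: one has to verify that the Duistermaat--Guillemin calculus applies to the Bochner Laplacian on a Hermitian vector bundle and produces the stated formula \eqref{equation:trace-formula} (this is classical, and is in fact quoted in the excerpt), and that simplicity of the length spectrum legitimately isolates the singular contributions; no additional microlocal input beyond \eqref{equation:trace-formula} is required. Since the reduction from spectrum to primitive trace costs nothing, Corollary \ref{corollary:spectral} is genuinely a direct corollary of the main theorem.
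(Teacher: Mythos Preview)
Your proposal is correct and follows essentially the same approach as the paper: the paper's proof consists of the single observation that under the simple length spectrum assumption the primitive trace map can be recovered from the trace formula \eqref{equation:trace-formula}, after which Theorem \ref{theorem:injectivity} and the global results of \S\ref{ssection:global-inj} apply directly. Your write-up is in fact more detailed than the paper's, which compresses the argument to one sentence.
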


This corollary simply follows from Theorem \ref{theorem:injectivity} by observing that under the simple length spectrum assumption, the primitive trace map can be recovered from the equality \eqref{equation:trace-formula}. Corollary \ref{corollary:spectral} is analogous to the Guillemin-Kazhdan \cite{Guillemin-Kazhdan-80,Guillemin-Kazhdan-80-2} rigidity result in which a potential $q \in C^\infty(M)$ is recovered from the knowledge of the spectrum of $-\Delta_g + q$ (see also \cite{Croke-Sharafutdinov-98,Paternain-Salo-Uhlmann-14-1}). As far as the connection Laplacian is concerned, it seems that Corollary \ref{corollary:spectral} is the first positive result in this direction. Counter-examples were constructed by Kuwabara \cite{Kuwabara-90} using the Sunada method \cite{Sunada-85} but on coverings of a given Riemannian manifolds; hence the simple length spectrum condition is clearly violated. Up to our knowledge, it is also the first positive general result in an inverse spectral problem on a closed manifold of dimension $> 1$ with an \emph{infinite} gauge-group. 

This gives hope that similar methods could be used in the classical problem of recovering the isometry class of a metric from the spectrum of its Laplace-Beltrami operator \emph{locally} (similarly to a conjecture of Sarnak for planar domains \cite{Sarnak-90}). Such a result was already obtained in a neighbourhood of negatively-curved locally symmetric spaces by Sharafutdinov \cite{Sharafutdinov-09}. See also \cite{Croke-Sharafutdinov-98} for the weaker deformational spectral rigidity results or \cite{DeSimoi-Kaloshin-Wei-17, Hezari-Zelditch-19} for recent results in the plane.

\subsection{Exact Liv\v{s}ic cocycle theorem}

\label{ssection:exact}

The main ingredient in the proof of Theorem \ref{theorem:injectivity} is the following Liv\v{s}ic-type result in hyperbolic dynamical systems, which may be of independent interest. It shows that the cohomology class of a unitary cocycle over a transitive Anosov flow is determined by its trace along primitive periodic orbits. We phrase it in a somewhat more general context where we allow non-trivial vector bundles.

\begin{theorem}
\label{theorem:weak-intro}
Let $\M$ be a smooth manifold endowed with a smooth transitive Anosov flow $(\varphi_t)_{t \in \R}$. For $i\in \left\{1,2\right\}$, let $\E_i \rightarrow \M$ be a Hermitian vector bundle over $\M$ equipped with a unitary connection $\nabla^{\mc{E}_i}$, and denote by $C_i(x,t) : (\E_i)_x \rightarrow (\E_i)_{\varphi_t(x)}$ the parallel transport along the flowlines with respect to $\nabla^{\E_i}$. If the connections have \emph{trace-equivalent holonomies} in the sense that for all \emph{primitive} periodic orbits $\gamma$, one has
\begin{equation}
\label{equation:trace-intro}
\Tr\left(C_1(x_\gamma,\ell(\gamma))\right) = \Tr\left(C_2(x_\gamma,\ell(\gamma))\right),
\end{equation}
where $x_\gamma \in \gamma$ is arbitrary and $\ell(\gamma)$ is the period of $\gamma$, then the following holds: there exists $p \in C^\infty(\M,\mathrm{U}(\E_2,\E_1))$ such that for all $x \in \M, t \in \R$,
\begin{equation}
\label{equation:cohomologous}
C_1(x,t) = p(\varphi_t x) C_2(x,t) p(x)^{-1}.
\end{equation}
\end{theorem}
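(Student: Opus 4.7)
The strategy is to reduce the theorem to the classical non-abelian Liv\v{s}ic theorem for unitary cocycles over transitive Anosov flows (\`a la Parry--Pollicott, Nitica--T\"or\"ok, Kalinin), which yields a smooth gauge transformation $p$ conjugating $C_1$ and $C_2$ as soon as the holonomies $C_1(x_\gamma,\ell(\gamma))$ and $C_2(x_\gamma,\ell(\gamma))$ are conjugate in the unitary group for \emph{every} periodic orbit $\gamma$. Two gaps must be bridged to obtain that stronger hypothesis from ours: (i) upgrading information on primitive orbits to information on all periodic orbits, and (ii) upgrading trace equality to conjugacy of holonomies.

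Gap (ii) is purely algebraic. If $A,B$ are unitary matrices of respective sizes $r_1$ and $r_2$ with $\Tr(A^n)=\Tr(B^n)$ for every $n\ge 1$, then the rational function $f(t)=\sum_i(1-\lambda_i t)^{-1}-\sum_j(1-\mu_j t)^{-1}$ equals the constant $r_1-r_2$. Since its simple poles all lie on the circle $|t|=1$, they must cancel pairwise, forcing $r_1=r_2$ and equality of the eigenvalue multisets; diagonalizability of unitary matrices then gives unitary conjugacy (in particular $\rk\E_1=\rk\E_2$). So it suffices to establish trace equality on all iterates $\gamma^n$ of primitive orbits, which is exactly the content of gap (i).

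Gap (i) is filled using the Anosov closing / specification property of transitive Anosov flows. Given a primitive orbit $\gamma$ and an integer $n\ge 2$, one constructs a sequence of \emph{primitive} periodic orbits $\gamma_k$ with periods $T_k\to n\ell(\gamma)$ which shadow $\gamma^n$ uniformly on $[0,T_k]$. Smoothness of the connections $\nabla^{\E_i}$ ensures that, when compared via a local trivialization near a shadowing base point, the holonomies $C_i(\gamma_k)$ converge in operator norm to $C_i(\gamma)^n$; conjugation-invariance and continuity of the trace give $\Tr C_i(\gamma_k)\to \Tr(C_i(\gamma)^n)$, and the hypothesis $\Tr C_1(\gamma_k)=\Tr C_2(\gamma_k)$ then passes to the limit.

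The main obstacle lies precisely in the primitivity of the approximants $\gamma_k$: an orbit of period close to $n\ell(\gamma)$ could a priori arise as the $n$-fold iterate of a short primitive orbit near $\gamma$. This degeneracy is ruled out by starting the closing-lemma construction at a point chosen transverse to $\gamma$, so that the period of $\gamma_k$ is an incommensurable perturbation of $n\ell(\gamma)$ and cannot coincide with $n$ times the period of any nearby short orbit. A final bootstrap then upgrades the transfer $p$ produced by the classical Liv\v{s}ic theorem --- initially only H\"older continuous --- to $C^\infty$, via Journ\'e's lemma along the stable and unstable foliations together with the smoothness of $\nabla^{\E_1}$ and $\nabla^{\E_2}$.
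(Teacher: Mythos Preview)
Your reduction strategy has a genuine gap in step (i). The claim that one can find primitive periodic orbits $\gamma_k$ with periods $T_k\to n\ell(\gamma)$ that shadow $\gamma^n$ uniformly on $[0,T_k]$ is incompatible with the expansivity of Anosov flows: any periodic orbit which $\epsilon$-shadows $\gamma$ over one full period (for $\epsilon$ below the expansivity constant) must coincide with an iterate of $\gamma$ itself. In other words, $\gamma^n$ is \emph{isolated} among periodic orbits of nearby period, so no sequence of distinct primitive orbits can converge to it in the sense you need. Your proposed fix --- starting the closing construction at a point ``transverse to $\gamma$'' --- does not help: either the resulting pseudo-orbit still $\epsilon$-shadows $\gamma^n$ throughout, in which case the closing lemma returns $\gamma^n$ again, or it makes a genuine excursion away from $\gamma$, in which case the holonomy picks up an uncontrolled factor from the excursion and $\Tr C_i(\gamma_k)$ no longer approximates $\Tr(C_i(\gamma)^n)$.

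This is precisely the obstacle the paper's proof is designed to overcome, and it does so by a different route. Rather than trying to recover the powers $C_i(\gamma)^n$ from primitive orbits, the paper introduces \emph{Parry's free monoid} $\mathbf{G}$ generated by orbits homoclinic to a fixed periodic point $x_\star$, and shows that each connection induces a unitary representation $\rho_i:\mathbf{G}\to\mathrm{U}((\E_i)_{x_\star})$. For any word $g\in\mathbf{G}$, the element $\rho_i(g)$ is approximated by the holonomy of a single \emph{primitive} periodic orbit obtained by shadowing; primitivity is enforced by inserting an arbitrarily long segment wrapping around $\gamma_\star$ into the concatenation, a degree of freedom your construction does not have. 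Trace-equivalence then gives $\chi_{\rho_1}=\chi_{\rho_2}$, so the representations are isomorphic; the intertwiner $p_\star$ is propagated along homoclinic orbits to produce a Lipschitz section $p$ with $\nabla^{\mathrm{Hom}}_Xp=0$, and smoothness follows from \cite{Bonthonneau-Lefeuvre-20}. Note also that the ``classical'' non-abelian Liv\v{s}ic theorem you invoke is not entirely off-the-shelf in this setting: the closest results for compact Lie groups (Parry, Schmidt) require, beyond conjugacy of periodic data, an additional transitivity hypothesis on an auxiliary cocycle, so even with step (i) repaired the final appeal would need further justification.
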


In the vocabulary of dynamical systems, note that every unitary cocycle is given by parallel transport along some unitary connection and \eqref{equation:cohomologous} says that the cocycles induced by parallel transport are \emph{cohomologous}. In particular, in the case of the trivial principal bundle $\mathrm{U}(r) \times \M \rightarrow \M$ our theorem can be restated just in terms of $\mathrm{U}(r)$-cocycles. Note that the bundles $\E_1$ and $\E_2$ could be \emph{a priori} distinct (and have different ranks) but Theorem \ref{theorem:weak-intro} shows that they are actually isomorphic:

\begin{corollary}
\label{corollary:iso}
Let $\E_1,\E_2 \rightarrow \M$ be two Hermitian vector bundles equipped with respective unitary connection $\nabla^{\mc{E}_1}$ and $\nabla^{\mc{E}_2}$. If the traces of the holonomy maps agree as in \eqref{equation:trace-intro}, then $\E_1 \simeq \E_2$ are isomorphic.

\end{corollary}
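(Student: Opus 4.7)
The plan is to deduce Corollary~\ref{corollary:iso} as an immediate consequence of Theorem~\ref{theorem:weak-intro}. Given the trace-equivalence hypothesis \eqref{equation:trace-intro}, that theorem provides a smooth section $p \in C^\infty(\M, \mathrm{U}(\E_2, \E_1))$ of the bundle of unitary bundle-homomorphisms from $\E_2$ to $\E_1$, intertwining the two parallel-transport cocycles via \eqref{equation:cohomologous}. By definition, at each point $x \in \M$ the value $p(x) : (\E_2)_x \to (\E_1)_x$ is a unitary, hence bijective, linear isomorphism; consequently $p$ is itself a smooth bundle isomorphism $\E_2 \to \E_1$, which is precisely the statement $\E_1 \simeq \E_2$ claimed by the corollary.

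The one point worth emphasizing is that Theorem~\ref{theorem:weak-intro} does not presuppose that $\E_1$ and $\E_2$ have the same rank: rank equality is in fact part of its conclusion, since a global unitary section of $\mathrm{U}(\E_2, \E_1)$ cannot exist unless $\mathrm{rank}(\E_1) = \mathrm{rank}(\E_2)$. Therefore no separate argument matching the ranks is required prior to invoking the theorem, and no transitivity or Anosov closing estimate has to be brought in at this stage. The only substantive work lies inside Theorem~\ref{theorem:weak-intro} itself, namely the Liv\v{s}ic-type rigidity statement asserting that trace equality of unitary cocycles along primitive periodic orbits forces cohomology of the cocycles; once that is granted, the corollary is obtained in a single step, with the conjugating gauge $p$ doubling as the sought bundle isomorphism.
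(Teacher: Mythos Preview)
Your argument is correct and matches the paper's approach exactly: the corollary is stated immediately after Theorem~\ref{theorem:weak-intro} with no separate proof, precisely because the smooth section $p \in C^\infty(\M,\mathrm{U}(\E_2,\E_1))$ produced by that theorem is by definition a unitary bundle isomorphism. Your observation that rank equality is a consequence rather than a hypothesis is also explicitly made in the paper.
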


Theorem \ref{theorem:weak-intro} has other geometric consequences which are further detailed in \S\ref{ssection:intro}. Liv\v{s}ic-type theorems have a long history in hyperbolic dynamical systems going back to the seminal paper of Liv\v{s}ic \cite{Livsic-72} and appear in various settings. They were both developed in the Abelian case i.e. for functions (see \cite{Livsic-72,DeLaLlave-Marco-Moryon-86, Lopes-Thieullen-05,Guillarmou-17-1,Gouezel-Lefeuvre-19} for instance) and in the cocycle case.

Surprisingly, we could not locate any result such as Theorem \ref{theorem:weak-intro} in the literature. The closest works (in the discrete-time case) are that of Parry \cite{Parry-99} and Schmidt \cite{Schmidt-99} which mainly inspired the proof of Theorem \ref{theorem:weak-intro}. Nevertheless, when considering compact Lie groups, Parry's and Schmidt's results seem to be weaker as they need to assume that the conjugacy classes of the cocycles agree (and not only the traces) and that a certain additional cocycle is transitive in order to derive the same conclusion. The literature is mostly concerned with the discrete-time case, namely hyperbolic diffeomorphisms: in that case, a lot of articles are devoted to studying cocycles with values in a non-compact Lie group (and sometimes satisfying a ``slow-growth" assumption), see \cite{DeLaLlave-Windsor-10,Kalinin-11, Sadovskaya-13, Sadvoskaya-17, Avila-Kocsard-Liu-18}. One can also wonder if Theorem \ref{theorem:weak-intro} could be proved in the non-unitary setting. Other articles such as \cite{Nitica-Torok-95,Nitica-Torok-98,Nicol-Pollicott-99,Walkden-00,Pollicott-Walkden-01} seem to have been concerned with regularity issues on the map $p$, namely bootstrapping its regularity under some weak \emph{a priori} assumption (such as measurability only). Let us also point out at this stage that some regularity issues will appear while proving Theorem \ref{theorem:weak-intro} but this will be bypassed by the use of a recent regularity statement \cite[Theorem 4.1]{Bonthonneau-Lefeuvre-20} in hyperbolic dynamics.

\subsection{Strategy of proof}

We now briefly discuss the strategy of proof for Theorem \ref{theorem:injectivity}. Fix a generic unitary connection $\nabla^{\E}_0$ (namely, satisfying assumptions {\bf (A)} and {\bf (B)}) and pick two nearby connections $\nabla^{\E}_{1,2}$ such that
\begin{equation}
\label{equation:equality-traces}
\mc{T}^\sharp(\nabla^{\E}_1) = \mc{T}^\sharp(\nabla^{\E}_2).
\end{equation}
Our aim is to prove that there exists an isometry $p \in C^\infty(M,\mathrm{U}(\E))$ such that $\nabla^{\E}_2 = p^*\nabla^{\E}_1 = p^{-1}\nabla^{\E}_1(p \bullet)$. Equivalently, this is the same as having
\begin{equation}
\label{equation:wewant}
\nabla^{\mathrm{Hom}(\nabla^{\E}_2,\nabla^{\E}_1)} p = 0,
\end{equation}
where $\nabla^{\mathrm{Hom}(\nabla^{\E}_2,\nabla^{\E}_1)}$ is the \emph{mixed connection} (see \S\ref{sssection:connection-induced}), namely, the natural connection induced by $\nabla^{\E}_{1,2}$ on the endomorphism bundle $\End(\E)$ over $M$.

\begin{enumerate}[itemsep=5pt]

\item \textbf{Non-Abelian Liv\v sic theory:} The first step is to lift the connections to the unit tangent bundle $\pi : SM \to M$, namely, to consider the pullback bundle $\pi^* \E \to SM$ and the two pullback connections $\pi^* \nabla^{\E}_{1,2}$. Taking the parallel transport with respect to the connection $\pi^* \nabla^{\E}_i$ along a geodesic flowline $(\varphi_s(x,v))_{s \in [0,t]}$ (for $(x,v) \in SM$) yields a natural cocycle map $C_i((x,v),t) : \E_x \to \E_{\pi(\varphi_t(x,v))}$ as in \S\ref{ssection:exact}. It is then straightforward to verify that the equality of the traces \eqref{equation:equality-traces} translates into the fact that the cocycles $C_1$ and $C_2$ are trace-equivalent in the sense of \eqref{equation:trace-intro}. As a consequence, Theorem \ref{theorem:weak-intro} implies the existence of $p \in C^\infty(SM,\mathrm{U}(\pi^*\E))$ such that
\[
	C_1((x,v),t) = p(\varphi_t (x,v)) C_2((x,v),t) p(x,v)^{-1}, \qquad \forall (x,v) \in SM, \forall t \in \R.
\]
Differentiating the previous equality with respect to time $t$ and taking $t=0$, one finds that this is actually equivalent to
\begin{equation}
\label{equation:sm}
	(\pi^* \nabla)^{\mathrm{Hom}(\pi^*\nabla^{\E}_2,\pi^*\nabla^{\E}_2)}_X p = 0,
\end{equation}
where $X$ is the geodesic vector field. The relation \eqref{equation:sm} is similar to \eqref{equation:wewant} but it holds on the unit tangent bundle $SM$ and not on the base manifold $M$. The main difficulty is now to show that \eqref{equation:sm} implies \eqref{equation:wewant}; equivalently, this is the same as showing that the isometry $p$ does not depend on the velocity variable $v$ in $SM$ (only on the base variable $x$).

\item \textbf{Convexity of the leading resonance:} The idea is to use the well-established theory of Pollicott-Ruelle resonances -- which allows to define a natural spectrum for Anosov flows (namely, flows satisfying \eqref{equation:anosov}), and more generally for transport operators over Anosov flows -- and to translate \eqref{equation:sm} into the fact that the first order differential operator $\X := (\pi^* \nabla)^{\mathrm{Hom}(\pi^*\nabla^{\E}_2,\pi^*\nabla^{\E}_1)}_X$, acting on $C^\infty(SM,\pi^*\End(\E))$, has a Pollicott-Ruelle resonance at $z=0$. More precisely, we can write $\nabla^{\E}_i = \nabla^{\E}_0 + A_i$, where $A_i \in C^\infty(M,T^*M \otimes \End_{\mathrm{sk}}(\E))$ is small, with values in skew-Hermitian endomorphisms. Then, under the generic assumptions {\bf (A)} and {\bf (B)}, the operator $\X$ admits a simple leading resonance $\lambda_{A_1, A_2}$ which is real and nonpositive. Moreover, the generic assumption also ensures that for $A_1 = A_2 = 0$, the leading resonant space is spanned by the identity section $\mathbbm{1}_{\E} \in C^\infty(SM, \pi^*\End(\E))$. The key idea is to show by a convexity argument that $|\lambda_{A_1,A_2}|$ controls \emph{quantitatively} the distance between the connections $\nabla^{\E}_{1}$ and $\nabla^{\E}_{2}$ in the moduli space $\mathbb{A}_{\E}$. In particular, \eqref{equation:sm} means that the two connections satisfy $\lambda_{A_1,A_2} = 0$, and thus the connections must be gauge-equivalent.

\end{enumerate}

As pointed out by the referee, the idea to use the strict convexity of the dominant Pollicott-Ruelle resonance is reminiscent of the work of Katsuda-Sunada \cite{Katsuda-Sunada-88} and Pollicott \cite{Pollicott-91}. The main difference here is that our moduli space of unitary connections $\mathbb{A}_{\E}$ is infinite-dimensional and the quantitative strict convexity of $\lambda_{A_1,A_2}$ is given in the end by some elliptic theory.

\subsection{Organization of the paper}

The paper is divided in three parts:

\begin{itemize}[itemsep=5pt]

\item First of all, we prove in Section \S\ref{section:livsic} the \emph{exact Liv\v{s}ic cocycle} Theorem \ref{theorem:weak-intro} for general Anosov flows showing that the cohomology class of a unitary cocycle is determined by its trace along closed orbits. The proof is based on the introduction of a new tool which we call \emph{Parry's free monoid}, denoted by $\mathbf{G}$, and is formally generated by orbits homoclinic to a fixed closed orbit. We show that any unitary connection induces a unitary representation of the monoid $\mathbf{G}$ and that trace-equivalent connections have the same character; we can then apply tools from representation theory to conclude. 

\item In subsequent sections, we develop a microlocal framework, based on the theory of Pollicott-Ruelle resonances. We define a notion of \emph{generalized X-ray transform with values in a vector bundle} which is mainly inspired by \cite{Guillarmou-17-1,Guillarmou-Lefeuvre-18,Gouezel-Lefeuvre-19}. In \S\ref{section:geometry}, we relate the geometry of the moduli space of gauge-equivalent connections to the leading Pollicott-Ruelle resonance of a certain natural operator, the mixed connection.

\item Eventually, the main results such as Theorem \ref{theorem:injectivity} are proved in \S\ref{section:proofs}, where we also deduce the global properties of $\mc{T}^\sharp$ involving line bundles, flat bundles, negatively curved base manifolds, and the topology of bundles.

\end{itemize}
Some technical preliminaries are provided in Section \S\ref{section:tools}. 

\subsection{Perspectives}

We intend to pursue this work in different directions:

\begin{itemize}[itemsep=5pt]

\item Since the first release of the present article on arXiv (May 2021), the notion of \emph{Parry's free monoid} introduced in \S\ref{section:weak-strong} has proved to be extremely powerful. In particular, it was used in the companion papers \cite{Cekic-Lefeuvre-Moroianu-Semmelmann-21, Lefeuvre-21} in order to show that the frame flow of nearly $0.25$-pinched negatively-curved Riemannian manifolds is ergodic, thus almost answering a long-standing conjecture of Brin \cite[Conjecture 2.6]{Brin-82}.

\item Furthermore, in \cite[Theorem 4.5]{Cekic-Lefeuvre-23-polynomial} we were strikingly able to show \emph{global} injectivity of the primitive trace map under a suitable low-rank assumption, by exhibiting a relation with real algebraic geometry.

\item Moreover, in \cite[Theorem 1.1]{Cekic-Lefeuvre-23-stability} we showed a stability estimate version of Theorem \ref{theorem:injectivity}, namely
that the distance between connections up to gauge equivalence is controlled (at least
locally) by the distance between their images under the primitive trace map $\mathcal{T}^\sharp$.

\item Eventually, the arguments developed in \S\ref{section:livsic} mainly rely on the use of homoclinic orbits; to these orbits, we will associate a notion of \emph{length} which is well-defined as an element of $\R/T_\star\Z$, for some real number $T_\star > 0$. We believe that, similarly to the set of periodic orbits where one defines the \emph{Ruelle zeta function}
\[
\zeta(s) := \prod_{\gamma^\sharp \in \mc{G}^\sharp} \left(1 - e^{-s \ell(\gamma^\sharp)}\right), 
\]
where the product runs over all primitive periodic orbits $\gamma^\sharp \in \mc{G}^\sharp$ (and $\ell(\gamma^\sharp)$ denotes the orbit period) and shows that this extends meromorphically from $\left\{\Re(s) \gg 0 \right\}$ to $\C$ (see \cite{Giulietti-Liverani-Pollicott-13,Dyatlov-Zworski-16}), one could also define a complex function for homoclinic orbits by means of a Poincar\'e series (rather than a product). It should be a consequence of \cite[Theorem 4.15]{Dang-Riviere-20} that this function extends meromorphically to $\C$. It might then be interesting to compute its value at $0$; the latter might be independent of the choice of representatives for the length of homoclinic orbits (two representatives differ by $mT_\star$, for some $m \in \Z$) and could be (at least in some particular cases) an interesting topological invariant as for the Ruelle zeta function on surfaces, see \cite{Dyatlov-Zworski-17}.  \\

\end{itemize} 

\noindent \textbf{Acknowledgement:} M.C. has received funding from the European Research Council (ERC) under the European Union’s Horizon 2020 research and innovation programme (grant agreement No. 725967), and from an Ambizione grant (project number 201806) from the Swiss National Science Foundation. We warmly thank Yannick Guedes Bonthonneau, Yann Chaubet, Colin Guillarmou, Julien March\'e, Gabriel Paternain, Steve Zelditch for fruitful discussions. We also thank S\'ebastien Gou\"ezel, Boris Kalinin, Mark Pollicott, Klaus Schmidt for answering our questions on Liv\v{s}ic theory, and Nikhil Savale for providing us with the reference \cite{Guillemin-73}. Special thanks to Danica Kosanovi\'c for helping out with the topology part. We thank the referee for many suggestions and comments, hopefully improving the presentation.

\section{Setting up the tools}

\label{section:tools}

\subsection{Microlocal calculus and functional analysis} 

Let $\M$ be a smooth closed manifold. Given a smooth vector bundle $\E \rightarrow \M$, we denote by $\Psi^m(\M, \E)$ the space of pseudodifferential operators of order $m$ acting on $\E$. When $\E$ is the trivial line bundle, such an operator $P \in \Psi(\M)$ can be written (up to a smoothing remainder) in local coordinates as
\begin{equation}
\label{equation:quantization}
Pf(x) = \int_{\R^{n}} \int_{\R^{n}} e^{i\xi \cdot(x-y)}p(x,\xi)f(y)dyd\xi,
\end{equation}
where $f$ is compactly supported in the local patch and $p \in S^m(T^*\R^{n})$ is a \emph{symbol}, i.e. it satisfies the following estimates in local coordinates:
\begin{equation}
\label{equation:bounds}
\sup_{|\alpha'| \leq \alpha, |\beta'|\leq \beta} \sup_{(x,\xi) \in T^*\R^{n}}  \langle \xi \rangle^{m - |\alpha'|} |\partial_\xi^{\alpha'} \partial_x^{\beta'} p(x,\xi)| < \infty,
\end{equation}
for all $\alpha, \beta \in \N^n$, with $\langle \xi \rangle = \sqrt{1+|\xi|^2}$. When $\E$ is not the trivial line bundle, the symbol $p$ is an $\End(\E)$-valued symbol, which in local coordinates and local trivializations is identified with a matrix function. Given $p \in S^m(T^*\M)$, one can define a (non-canonical) \emph{quantization procedure} $\Op : S^m(T^*\M) \rightarrow \Psi^m(\M)$ thanks to \eqref{equation:quantization} in coordinates patches. This also works more generally with a vector bundle $\E \rightarrow \M$ and one has a quantization map $\Op : S^m(T^*\M, \End(\E)) \rightarrow \Psi^m(\M,\E)$ (note that the symbol is then a section of the pullback bundle $\End(\E) \rightarrow T^*\M$ satisfying the bounds \eqref{equation:bounds} in local coordinates and local trivializations). There is a well-defined (partial) inverse map 
\[\sigma_{\mathrm{princ}} : \Psi(\M,\E) \rightarrow S^m(T^*\M, \End(\E))/S^{m-1}(T^*\M, \End(\E))\] called the \emph{principal symbol} and satisfying $\sigma_{\mathrm{princ}}(\Op(p)) = [p]$ (the equivalence class as an element of $S^m(T^*\M, \End(\E))/S^{m-1}(T^*\M, \End(\E))$).

We denote by $H^s(\M, \E)$ the space of Sobolev sections of order $s \in \R$ and by $C^s_*(\M, \E)$ the space of H\"older-Zygmund sections of order $s \in \R$. It is well-known that for $s \in \R_+ \setminus \N$, $C^s_*$ coincide with the space of H\"older-continuous sections $C^s$ of order $s$. Recall that $C^s_*$ is an algebra as long as $s > 0$ and $H^s$ is an algebra for $s > n/2$. 
If $P \in \Psi^m(\M, \E)$ is a pseudodifferential operator of order $m \in \R$, then $P : X^{s+m}(\M, \E) \rightarrow X^s(\M, \E)$, where $X = H$, $C_*$, is bounded. We refer to \cite{Shubin-01,Taylor-91} for further details.

\subsection{Connections on vector bundles}

\label{ssection:connections}

We refer the reader to \cite[Chapter 2]{Donaldson-Kronheimer-90} for the background on connections on vector bundles.

\subsubsection{Mixed connection on the homomorphism bundle}

\label{sssection:connection-induced}

In this paragraph, we consider two Hermitian vector bundles $\E_1, \E_2 \rightarrow \M$ equipped with respective unitary connections $\nabla_1 = \nabla^{\E_1}$ and $\nabla_2 = \nabla^{\E_2}$ which can be written in some local patch $U \subset \R^n$ of coordinates and in local trivialisations of the bundles as $\nabla^{\E_i} = d +\Gamma_i$, for some $\Gamma_i \in C^\infty(U,T^*U \otimes \End_{\mathrm{sk}}(\E_i))$. Let $\Hom(\E_1,\E_2)$ be the vector bundle of homomorphisms from $\E_1$ to $\E_2$, endowed with the natural Hermitian structure.

\begin{definition}\label{definition:mixed}
We define the (unitary) \emph{homomorphism} or \emph{mixed} connection $\nabla^{\Hom(\nabla^{\E_1}, \nabla^{\E_2})}$ on $\Hom(\E_1, \E_2)$, induced by $\nabla^{\E_1}$ and $\nabla^{\E_2}$, by the Leibnitz property:
\[
\forall u \in C^\infty(\M, \Hom(\E_1, \E_2)), \forall s \in C^\infty(\M, \E_1), \quad \nabla^{\E_2}(us) = (\nabla^{\Hom(\nabla^{\E_1}, \nabla^{\E_2})}u) \cdot s + u\cdot  (\nabla^{\E_1}s).
\]
\end{definition}
Equivalently, it is straightforward to check that this is the canonical tensor product connection induced on $\Hom(\E_1, \E_2) \cong \E_2 \otimes \E_1^*$ and that in local coordinates and local trivializations we have
\begin{equation}\label{eq:localhom}
	\nabla^{\Hom(\nabla^{\E_1}, \nabla^{\E_2})} u := du + \Gamma_2 (\bullet) u - u \Gamma_1(\bullet).
\end{equation}
Note that this definition does not require the bundles to have same rank; we insist on the fact that the mixed connection $\nabla^{\Hom(\nabla_1, \nabla_2)}$ \emph{depends on a choice} of connections $\nabla_1$ and $\nabla_2$. In the particular case when $\E_1 = \E_2 = \E$ and $\nabla_1^{\E} = \nabla_2^{\E} = \nabla^{\E}$ we will write $\nabla^{\End(\nabla^{\E})} = \nabla^{\Hom(\nabla^{\E}, \nabla^{\E})}$ for the induced \emph{endomorphism} connection on $\End(\E)$. When clear from the context, we will also write $\nabla^{\End(\E)}$ for the endomorphism connection induced by $\nabla^{\E}$. We note that the homomorphism and endomorphism connections will play a central role in the upcoming sections.

Given a flow $(\varphi_t)_{t \in \R}$, we will denote by $P(x,t) : \Hom({\E_1}_x, {\E_2}_x) \rightarrow \Hom({\E_1}_{\varphi_t(x)}, {\E_2}_{\varphi_t(x)})$ the parallel transport with respect to the mixed connection along the flowlines of $(\varphi_t)_{t \in \R}$. This parallel transport has a clear geometric meaning, in fact we observe that for $u \in \Hom({\E_1}_x, {\E_2}_x)$, we have:
\begin{equation}
\label{equation:tp-mixed}
P(x,t) u = C_2(x,t) u C_1(x,t)^{-1},
\end{equation}
where $C_i(x,t) : {\E_i}_x \rightarrow {\E_i}_{\varphi_t(x)}$ is the parallel transport with respect to $\nabla^{\E_i}$ along the flowlines of $(\varphi_t)_{t \in \R}$.

Recall that the curvature tensor $F_\nabla \in C^\infty(\M, \Lambda^2(T^*M) \otimes \End(\E))$ of $\nabla = \nabla^{\E}$ is defined as, for any vector fields $X, Y$ on $\M$ and sections $S$ of $\E$
\begin{equation*}
	F_\nabla(X, Y)S = \nabla_X \nabla_Y S - \nabla_Y \nabla_X S - \nabla_{[X, Y]}S.
\end{equation*} 
Then a quick computation using \eqref{eq:localhom} reveals that:
\begin{equation}
\label{equation:induced-curvature}
F_{\nabla^{\mathrm{Hom}(\nabla_1,\nabla_2)}} u = F_{\nabla_2} \cdot u - u \cdot F_{\nabla_1}.
\end{equation}
Eventually, using the Leibnitz property, we observe that if $p_i \in C^\infty(\M,\mathrm{U}(\E_i',\E_i))$ is an isomorphism, then:
\begin{align}
\label{equation:lien}
\begin{split}
\nabla^{\Hom(\nabla_1, p_2^*\nabla_2)} u &= (p_2)^{-1} \nabla^{\Hom(\nabla_1, \nabla_2)} (p_2 u), \quad \forall u \in C^\infty(\M, \Hom(\E_1, \E_2')),\\
\nabla^{\Hom(p_1^*\nabla_1, \nabla_2)} u &= \nabla^{\Hom(\nabla_1, \nabla_2)} (u p_1^{-1}) \cdot p_1,\quad\,\, \forall u \in C^\infty(\M, \Hom(\E_1', \E_2)).
\end{split}
\end{align}

\subsubsection{Ambrose-Singer formula}

Recall that the celebrated Ambrose-Singer formula (see eg. \cite[Theorem 8.1]{Kobayashi-Nomizu-69}) determines the tangent space at the identity of the holonomy group with respect to an arbitrary connection, in terms of its curvature tensor. Here we give an integral version of this fact. We start with a Hermitian vector bundle $\E$ over the Riemannian manifold $(\M, g)$. Equip $\E$ with unitary connection $\nabla = \nabla^{\E}$.

Consider a smooth homotopy $\Gamma: [0, 1]^2 \to \M$ such that $\Gamma(0, 0) = p$.
The ``vertical'' map $C_{\uparrow}(s, t): \E_p \to \E_{\Gamma(s, t)}$ is obtained by parallel transporting with respect to $\nabla$ from $\E_p$ to $\E_{\Gamma(0, 1)}$, then $\E_{\Gamma(0, 1)}$ to $\E_{\Gamma(s, 1)}$ and $\E_{\Gamma(s, 1)}$ to $\E_{\Gamma(s, t)}$, along $\Gamma(0, \bullet)$, $\Gamma(\bullet, 1)$ and $\Gamma(s, \bullet)$, respectively. Next, define the ``horizontal'' map $C_{\rightarrow}(s, t): \E_p \to \E_{\Gamma(s, t)}$ by parallel transport with respect to $\nabla$ from $\E_p$ to $\E_{\Gamma(s, 0)}$ and $\E_{\Gamma(s, 0)}$ to $\E_{\Gamma(s, t)}$, along $\Gamma(\bullet, 0)$ and $\Gamma(s, \bullet)$, respectively. For a better understanding, see Figure \ref{fig:AS1}.

We are ready to prove the formula:

\begin{lemma}\label{lemma:ambrosesinger}
	The following formula holds
	\begin{equation}
		C_{\uparrow}^{-1}(1, 1) C_{\rightarrow}(1, 1) - \mathbbm{1}_{\E_{p}} = \int_0^1 \int_0^1 C_{\uparrow}(s, t)^{-1} F_\nabla(\partial_t, \partial_s) C_{\rightarrow}(s, t) \, dt \, ds.
	\end{equation}	
\end{lemma}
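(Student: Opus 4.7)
The plan is to work on the pullback bundle $\Gamma^*\E \to [0,1]^2$, which is smoothly trivial since the square is contractible. In such a trivialization, the pulled-back connection writes $d + \omega$ for a matrix-valued $1$-form $\omega = \omega_s\, ds + \omega_t \, dt$, and its curvature identifies with $\Gamma^* F_\nabla$, so that $F_\omega(\partial_s, \partial_t) = \partial_s \omega_t - \partial_t \omega_s + [\omega_s, \omega_t]$. Let $\tau_s^t : \E_{\Gamma(s,0)} \to \E_{\Gamma(s,t)}$ denote parallel transport along $t' \mapsto \Gamma(s, t')$, characterized by $\partial_t \tau_s^t + \omega_t \tau_s^t = 0$ and $\tau_s^0 = \mathbbm{1}$. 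Both $C_\uparrow(s,t)$ and $C_\rightarrow(s,t)$ factor as concatenations of such parallel transports along the edges of the subrectangle $[0,s] \times [0,t]$.

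The first step is a variation-of-parallel-transport formula. Setting $Q(s,t) := \partial_s \tau_s^t + \omega_s(s,t)\, \tau_s^t$ and differentiating the ODE for $\tau_s^t$ in $s$ (using $\partial_s \partial_t = \partial_t \partial_s$), a direct computation yields
\[
(\partial_t + \omega_t) Q = -F_\omega(\partial_s, \partial_t)\, \tau_s^t, \qquad Q(s,0) = \omega_s(s,0),
\]
and variation of parameters then produces
\[
\partial_s \tau_s^t = \tau_s^t\, \omega_s(s,0) - \omega_s(s,t)\, \tau_s^t - \int_0^t \tau_s^t (\tau_s^{t'})^{-1} F_\omega(\partial_s, \partial_t)\big|_{(s,t')}\, \tau_s^{t'} \, dt'.
\]

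The second step is to define $\Phi(s) := C_\uparrow(s,1)^{-1} C_\rightarrow(s,1) \in \End(\E_p)$ and compute its $s$-derivative. The factorizations $C_\uparrow(s,1) = A(s) U$ and $C_\rightarrow(s,1) = \tau_s^1 B(s)$, where $U$, $A(s)$, $B(s)$ denote parallel transport along $\Gamma(0,\bullet)$, $\Gamma(\bullet,1)|_{[0,s]}$ and $\Gamma(\bullet,0)|_{[0,s]}$ respectively, give $\Phi(s) = U^{-1} A(s)^{-1}\, \tau_s^1\, B(s)$. The edge transports satisfy $A'(s) = -\omega_s(s,1) A(s)$ and $B'(s) = -\omega_s(s,0) B(s)$; plugging these into the product rule together with the variation formula above, the four boundary contributions involving $\omega_s(s,0)$ and $\omega_s(s,1)$ cancel pairwise, leaving (after antisymmetry of $F_\nabla$ and the identifications $C_\rightarrow(s,t) = \tau_s^t B(s)$ and $C_\uparrow(s,t)^{-1} = U^{-1} A(s)^{-1}\, \tau_s^1 (\tau_s^t)^{-1}$)
\[
\partial_s \Phi(s) = \int_0^1 C_\uparrow(s,t)^{-1}\, F_\nabla(\partial_t, \partial_s)\, C_\rightarrow(s,t) \, dt.
\]
Since $A(0) = B(0) = \mathbbm{1}$ and $\tau_0^1 = U$, one has $\Phi(0) = \mathbbm{1}$, and integrating in $s$ from $0$ to $1$ yields the claim. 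The one substantive point is the pairwise cancellation of the boundary terms, which is the infinitesimal manifestation of the fact that parallel transport concatenates consistently around the subrectangle; once the variation-of-parallel-transport formula is established, the remainder is routine bookkeeping.
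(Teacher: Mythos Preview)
Your proof is correct and takes a genuinely different route from the paper's. The paper works intrinsically: it pairs $(C_\uparrow^{-1}(1,1)C_\rightarrow(1,1)-\mathbbm{1})w_2$ against a test vector $w_1$, applies the fundamental theorem of calculus in $s$ and then in $t$, and uses unitarity of $\nabla$ together with the parallelism conditions $\nabla_{\partial_s}C_\uparrow(s,1)w_1=0$, $\nabla_{\partial_t}C_\uparrow(s,t)w_1=0$, $\nabla_{\partial_s}C_\rightarrow(s,0)w_2=0$, $\nabla_{\partial_t}C_\rightarrow(s,t)w_2=0$ to reduce to the curvature term $\nabla_{\partial_t}\nabla_{\partial_s}-\nabla_{\partial_s}\nabla_{\partial_t}=F_\nabla(\partial_t,\partial_s)$. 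Your approach instead trivializes the pullback bundle, derives an explicit variation-of-parallel-transport formula for $\partial_s\tau_s^t$ via an ODE in $t$, and then differentiates $\Phi(s)=C_\uparrow(s,1)^{-1}C_\rightarrow(s,1)$ directly, checking that the $\omega_s(s,0)$ and $\omega_s(s,1)$ boundary terms cancel pairwise. The paper's argument is shorter and coordinate-free but relies on unitarity to differentiate the inner product; your argument is more computational but never uses the Hermitian structure, so it in fact establishes the formula for an arbitrary connection.
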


\begin{proof}
	Let $w_1, w_2 \in \E_p$; formally, we will identify the connection $\nabla$ with its pullback $\Gamma^*\nabla$ on the pullback bundle $\Gamma^*\E$ over $[0, 1]^2$, as well as the curvature $F_\nabla$ with $\Gamma^*F_\nabla$. Then we have the following chain of equalities:
	\begin{align*}
		&\langle{w_1, (C_{\uparrow}(1, 1)^{-1} C_{\rightarrow}(1, 1) - \mathbbm{1}_{\E_p}) w_2}\rangle = \langle{C_{\uparrow}(1, 1)w_1, C_{\rightarrow}(1, 1)w_2}\rangle - \langle{C_{\uparrow}(0, 1)w_1, C_{\rightarrow}(0, 1)w_2}\rangle\\
		 &= \int_0^1 \partial_s \langle{C_{\uparrow}(s, 1) w_1, C_{\rightarrow}(s, 1) w_2}\rangle ds\\
		&= \int_0^1 \langle{C_{\uparrow}(s, 1) w_1, \nabla_{\partial_s} C_{\rightarrow}(s, 1) w_2}\rangle ds\\
		&= \int_0^1 \Big[\int_0^1 \Big(\partial_t \langle{C_{\uparrow}(s, t) w_1, \nabla_{\partial_s} C_{\rightarrow}(s, t)w_2}\rangle + \langle{C_{\uparrow}(s, 0) w_1, \nabla_{\partial_s} C_{\rightarrow}(s, 0) w_2}\rangle\Big)dt\Big] ds\\
		&= \int_0^1 \int_0^1 \langle{C_{\uparrow}(s, t) w_1, \nabla_{\partial_t} \nabla_{\partial_s} C_{\rightarrow}(s, t) w_2}\rangle \,ds\, dt\\
		&= \int_0^1 \int_0^1 \langle{w_1, C_{\uparrow}(s, t)^{-1}\underbrace{(\nabla_{\partial_t} \nabla_{\partial_s} - \nabla_{\partial_s} \nabla_{\partial_t})}_{=F_\nabla(\partial_t, \partial_s)} C_{\rightarrow}(s, t) w_2}\rangle \,ds\, dt,
	\end{align*}
	as the Lie bracket $[\partial_s, \partial_t] = 0$ and we used the unitarity of $\nabla$ throughout. This completes the proof, since $w_1$ and $w_2$ were arbitrary.
\end{proof}

\begin{figure}
             \centering
\begin{tikzpicture}[scale = 0.8, everynode/.style={scale=0.5}]
\tikzset{cross/.style={cross out, draw=black, minimum size=2*(#1-\pgflinewidth), inner sep=0pt, outer sep=0pt},
cross/.default={1pt}}

      	\draw[thick, ->] (0, 0) -- (6,0) node[right] {\small $s$};
		\draw[thick, ->] (0, 0) -- (0,6) node[left] {\small $t$};
		
		\draw[thick] (3, 5) -- (5, 5) -- (5, 0);
		
		\draw[thick, blue] (0, 0) -- (0, 5) -- (3, 5) -- (3, 3);
		
		\draw[thick, red] (0, 0) -- (3, 0) -- (3, 3);
		
		\draw[thick, ->] (3, 1.5)--(3, 1.501);
		\draw[thick, ->] (1.5, 0)--(1.501, 0);
		
		\draw[thick, ->] (0, 2.5)--(0, 2.501);
		\draw[thick, ->] (1.5, 5)--(1.501, 5);
		\draw[thick, ->] (3, 4)--(3, 3.999);
		
		\fill (0, 0) node[below left] {\tiny $p$} circle (1.5pt);
		\fill (5, 0) node[below] {\tiny $\Gamma(1, 0)$}  circle (1.5pt);
		\fill (0, 5) node[left] {\tiny $\Gamma(0, 1)$}  circle (1.5pt);
		\fill (5, 5) node[right] {\tiny $\Gamma(1, 1)$}  circle (1.5pt);
		
		\fill (3, 3) node[right] {\small $\Gamma(s, t)$}  circle (1.5pt);
		\fill (3, 0) node[below] {\small $\Gamma(s, 0)$}  circle (1.5pt);
		\fill (3, 5) node[above] {\small $\Gamma(s, 1)$}  circle (1.5pt);
\end{tikzpicture}
             \caption{\small The homotopy $\Gamma$ in Lemma \ref{lemma:ambrosesinger} with the corresponding points in $\M$: in blue and red are the trajectories along which the parallel transport maps $C_{\uparrow}$ (vertical) and $C_{\rightarrow}$ (horizontal) are taken, respectively.}
             \label{fig:AS1}
\end{figure}

We have two applications in mind for this lemma: one if $\gamma$ is in a neighbourhood of $p$ and we use the radial homotopy via geodesics emanating from $p$, and the second one for the ``thin rectangle'' obtained by shadowing a piece of the flow orbit, see Lemma \ref{lemma:ASgeometry}.

\subsection{Fourier analysis in the fibers} In this paragraph, we recall some elements of Fourier analysis in the fibers and refer to \cite{Guillemin-Kazhdan-80, Guillemin-Kazhdan-80-2, Paternain-Salo-Uhlmann-14-2, Paternain-Salo-Uhlmann-15, Guillarmou-Paternain-Salo-Uhlmann-16} for further details.

\subsubsection{Analysis on the trivial line bundle}

\label{sssection:line-bundle}

Let $(M,g)$ be a smooth Riemannian manifold of arbitrary dimension $n \geq 2$. The unit tangent bundle is endowed with the natural Sasaki metric and we let $\pi : SM \rightarrow M$ be the projection on the base. There is a canonical splitting of the tangent bundle to $SM$ as:
\[
T(SM) = \HH \oplus \V \oplus \R X,
\] 
where $X$ is the geodesic vector field, $\V := \ker \dd \pi$ is the vertical space and $\HH$ is the horizontal space: it can be defined as the orthogonal to $\V \oplus \R X$ with respect to the Sasaki metric (see \cite[Chapter 1]{Paternain-99}). Any vector $Z \in T(SM)$ can be decomposed according to the splitting
\[
Z = \alpha(Z) X + Z_{\HH} + Z_{\V},
\]
where $\alpha$ is the Liouville $1$-form, $Z_{\HH} \in \HH, Z_{\V} \in \V$. If $f \in C^\infty(SM)$, its gradient computed with respect to the Sasaki metric can be written as:
\[
\nabla_{\mathrm{Sas}}f = (Xf) X + \nabla_{\HH} f + \nabla_{\V} f,
\]
where $\nabla_{\HH} f \in \HH$ is the horizontal gradient, $\nabla_{\V} f \in \V$ is the vertical gradient. We also let $\mc{N} \to SM$ be the \emph{normal bundle} whose fiber over each $(x,v) \in SM$ is given by $(\R \cdot v)^\bot$. The bundles $\mathbb{H}$ and $\mathbb{V}$ may be naturally identified with the bundle $\mc{N}$ (see \cite[Section 1]{Paternain-99}).

For every $x \in M$, the sphere $S_xM = \left\{ v \in T_xM ~|~ |v|^2_x = 1\right\} \subset SM$ endowed with the Sasaki metric is isometric to the canonical sphere $(\Ss^{n-1},g_{\mathrm{can}})$. We denote by $\Delta_{\V}$ the vertical Laplacian obtained for $f \in C^\infty(SM)$ as $\Delta_{\V} f(x,v) = \Delta_{g_{\mathrm{can}}}(f|_{S_xM})(v)$, where $\Delta_{g_{\mathrm{can}}}$ is the spherical Laplacian. For $m \geq 0$, we denote by $\Omega_m$ the (finite-dimensional) vector space of spherical harmonics of degree $m$ for the spherical Laplacian $\Delta_{g_{\mathrm{can}}}$: they are defined as the elements of $\ker(\Delta_{g_{\mathrm{can}}} + m(m+n-2))$. We will use the convention that $\Omega_m = \left\{0\right\}$ if $m< 0$. If $f \in C^\infty(SM)$, it can then be decomposed as $f = \sum_{m \geq 0} f_m$, where $f_m \in C^\infty(M,\Omega_m)$ is the $L^2$-orthogonal projection of $f$ onto the spherical harmonics of degree $m$.

There is a one-to-one correspondence between trace-free symmetric tensors of degree $m$ and spherical harmonics of degree $m$. More precisely, the map
\[
\pi_m^* : C^\infty(M,\otimes^m_S T^*M|_{0-\Tr}) \rightarrow C^\infty(M,\Omega_m),
\]
given by $\pi_m^*f(x,v) = f_x(v,...,v)$ is an isomorphism. Here, the index $0-\Tr$ denotes the space of trace-free symmetric tensors, namely tensors such that, if $(\e_1,...,\e_n)$ denotes a local orthonormal frame of $TM$:
\[
\Tr(f) := \sum_{i=1}^n f(\e_i,\e_i,\cdot, ...,\cdot) = 0.
\]
We will denote by ${\pi_m}_* : C^\infty(M,\Omega_m) \rightarrow C^\infty(M,\otimes^m_S T^*M|_{0-\Tr})$ the adjoint of this map. More generally, the mapping
\begin{equation}\label{eq:pi_muntwisted}
\pi_m^* : C^\infty(M,\otimes^m_S T^*M) \rightarrow \oplus_{k \geq 0}  C^\infty(M,\Omega_{m-2k})
\end{equation}
is an isomorphism. We refer to \cite[Section 2]{Cekic-Lefeuvre-20} for further details.

The geodesic vector field acts as $X : C^\infty(M,\Omega_m) \rightarrow C^\infty(M,\Omega_{m-1}) \oplus C^\infty(M,\Omega_{m+1})$ (see \cite{Guillemin-Kazhdan-80-2, Paternain-Salo-Uhlmann-15}). We define $X_+$ as the $L^2$-orthogonal projection of $X$ on the higher modes $\Omega_{m+1}$, namely if $u \in C^\infty(M,\Omega_m)$, then $X_+u := (Xu)_{m+1}$ and $X_-$ as the $L^2$-orthogonal projection of $X$ on the lower modes $\Omega_{m-1}$. For $m \geq 0$, the operator $X_+ : C^\infty(M,\Omega_m) \rightarrow C^\infty(M,\Omega_{m+1})$ is elliptic and thus has a finite dimensional kernel (see \cite{Dairbekov-Sharafutdinov-10}). The operator $X_- : C^\infty(M,\Omega_m) \rightarrow C^\infty(M,\Omega_{m-1})$ is of divergence type. The elements in the kernel of $X_+$ are called \emph{Conformal Killing Tensors (CKTs)}, associated to the trivial line bundle. For $m=0$, the kernel of $X_+$ on $C^\infty(M,\Omega_0)$ always contains the constant functions. We call \emph{non trivial CKTs} elements in $\ker X_+$ which are not constant functions on $SM$. The kernel of $X_+$ is invariant by changing the metric by a conformal factor (see \cite[Section 3.6]{Guillarmou-Paternain-Salo-Uhlmann-16}). It is known (see \cite{Paternain-Salo-Uhlmann-15}) that there are no non trivial CKTs in negative curvature and for Anosov surfaces but the question remains open for general Anosov manifolds. We provide a positive answer to this question \emph{generically} as a byproduct of our work \cite{Cekic-Lefeuvre-21-2}.

\subsubsection{Twisted Fourier analysis}

\label{sssection:twisted-fourier}

We now consider a Hermitian vector bundle with a unitary connection $(\mc{E},\nabla^{\mc{E}})$ over $(M,g)$ and define the operator $\X := (\pi^*\nabla^{\mc{E}})_X$ acting on $C^\infty(SM,\pi^*\mc{E})$, where $\pi : SM \rightarrow M$ is the projection. For the sake of simplicity, we will drop the $\pi^*$ in the following. If $f \in C^\infty(SM,\mc{E})$, then $\nabla^{\mc{E}}f \in C^\infty(SM,T^*(SM) \otimes \mc{E})$ and we can write
\[
\nabla^{\mc{E}}f = (\X f, \nabla_{\HH} f, \nabla_{\V} f),
\]
where $\nabla_{\HH} f \in C^\infty(SM, \HH^* \otimes \mc{E}), \nabla_{\V} f \in C^\infty(SM, \V^* \otimes \mc{E})$. For future reference, we introduce a bundle endomorphism map $R$ on $\mc{N} \otimes \E$, derived from the Riemann curvature tensor via the formula $R(x, v)(w \otimes e) = (R_x(w, v)v) \otimes e$.

If $(e_1,...,e_r)$ is a local orthonormal frame of $\mc{E}$, then we define the vertical Laplacian as
\[
\Delta_{\V}^{\mc{E}}(\sum_{k=1}^r u_k e_k) := \sum_{k=1}^r (\Delta_{\V} u_k) e_k.
\]
Any section $f \in C^\infty(SM,\mc{E})$ can be decomposed according to $f = \sum_{m \geq 0} f_m$, where $f_m \in \ker(\Delta_{\V}^{\mc{E}} + m(m+n-2))$ and we define $C^\infty(M,\Omega_m \otimes \mc{E}) := \ker(\Delta_{\V}^{\mc{E}} + m(m+n-2)) \cap C^\infty(SM,\mc{E})$.

Here again, the operator $\X : C^\infty(M,\Omega_m \otimes \E) \rightarrow C^\infty(M,\Omega_{m-1} \otimes \E) \oplus C^\infty(M,\Omega_{m+1} \otimes \E)$ can be split into the corresponding sum $\X = \X_- + \X_-$. For every $m \geq 0$, the operator $\X_+$ is elliptic and has finite dimensional kernel, whereas $\X_-$ is of divergence type. The kernel of $\X_+$ is invariant by a conformal change of the metric (see \cite[Section 3.6]{Guillarmou-Paternain-Salo-Uhlmann-16}) and elements in its kernel are called \emph{twisted Conformal Killing Tensors} (twisted CKTs). For simplicity we will often drop the word twisted and refer to the latter as CKTs. There are examples of vector bundles with CKTs on manifolds of arbitrary dimension. We proved in a companion paper \cite{Cekic-Lefeuvre-20} that the non existence of CKTs is a generic condition, no matter the curvature of the manifold (generic with respect to the connection, i.e. there is a residual set of the space of all unitary connections with regularity $C^k$, $k \geq 2$, which has no CKTs).

It is also known by \cite{Guillarmou-Paternain-Salo-Uhlmann-16}, that in negative curvature, there is always a \emph{finite number of degrees} with CKTs (and this number can be estimated thanks to a lower bound on the curvature of the manifold and the curvature of the vector bundle). In other words, $\ker \X_+|_{C^\infty(SM,\mc{E})}$ is finite-dimensional. The proof relies on an energy identity called the Pestov identity. This is also known for Anosov surfaces since any Anosov surface is conformally equivalent to a negatively-curved surface and CKTs are conformally invariant. Nevertheless, and to the best of our knowledge, it is still an open question to show that for Anosov manifolds of dimension $n \geq 3$, there is at most a finite number of CKTs.

\subsection{Twisted symmetric tensors}\label{section:twisted}

Given a section $u \in C^\infty(M,\otimes^m_S T^*M \otimes \mc{E})$, the connection $\nabla^{\mc{E}}$ produces an element $\nabla^{\mc{E}}u \in C^\infty(M, T^*M \otimes (\otimes^m_S T^*M) \otimes \mc{E})$. In coordinates, if $(e_1, ..., e_r)$ is a local orthonormal frame for $\mc{E}$ and $\nabla^{\mc{E}} = d + \Gamma$, for some one-form with values in skew-Hermitian matrices $\Gamma$, such that  $\nabla^{\E}e_k = \sum_{i = 1}^n\sum_{l = 1}^r \Gamma_{ik}^{l} dx_i \otimes e_l$, we have:
\begin{equation}
\label{equation:nabla-e}
\begin{split}
\nabla^{\mc{E}}(\sum_{k=1}^r u_k \otimes e_k) & = \sum_{k=1}^r \big(\nabla u_k \otimes e_k + u_k \otimes \nabla^{\mc{E}} e_k\big)\\
& = \sum_{k=1}^r \left(\nabla u_k +   \sum_{l=1}^r \sum_{i=1}^n \Gamma_{il}^k u_l \otimes dx_i \right) \otimes e_k,
\end{split}
\end{equation}
where $u_k \in C^\infty(M,\otimes^m_S T^*M)$ and $\nabla$ is the Levi-Civita connection. The symmetrization operator $\mc{S}^{\mc{E}} : C^\infty(M,\otimes^m T^*M \otimes \mc{E}) \rightarrow C^\infty(M,\otimes^m_S T^*M \otimes \mc{E})$ is defined by:
\[
\mc{S}^{\mc{E}}\left(\sum_{k=1}^r u_k \otimes e_k\right) = \sum_{k=1}^r \mc{S}(u_k) \otimes e_k,
\]
where $u_k \in C^\infty(M,\otimes^m_S T^*M)$ and in coordinates, writing $u_k = \sum_{i_1, ..., i_m=1}^n u_{i_1...i_m}^{(k)} dx_{i_1} \otimes ... \otimes dx_{i_m}$, we have
\[
\mc{S}(dx_{i_1} \otimes ... \otimes dx_{i_m}) = \dfrac{1}{m!} \sum_{\pi \in \mathfrak{S}_m} dx_{\pi(i_1)} \otimes ... \otimes dx_{\pi(i_m)},
\]
where $\mathfrak{S}_m$ denotes the group of permutations of order $m$. For the sake of simplicity, we will write $\mc{S}$ instead of $\mc{S}^{\mc{E}}$. We can symmetrize \eqref{equation:nabla-e} to produce an element $D^{\E} := \mc{S} \nabla^{\mc{E}}u \in C^\infty(M, \otimes^{m+1}_S T^*M \otimes \mc{E})$ given in coordinates by:
\begin{equation}
\label{equation:formula-de}
D^{\E} \left(\sum_{k=1}^r u_k \otimes e_k\right) = \sum_{k=1}^r \left( Du_k + \sum_{l=1}^r  \sum_{i=1}^n \Gamma_{il}^k \sigma(u_l \otimes dx_i) \right) \otimes e_k,
\end{equation}
where $D := \mc{S} \nabla$ is the usual symmetric derivative of symmetric tensors\footnote{Beware of the notation: $\nabla^{\mc{E}}$ is for the connection, $D^{\E}$ for the symmetric derivative of tensors and $\nabla^{\mathrm{End}(\E)}$ is the connection induced by $\nabla^{\mc{E}}$ on the endomorphism bundle.}. Elements of the form $Du \in C^\infty(M,\otimes^{m+1}_S T^*M)$ are called \emph{potential tensors}. By comparison, we will call elements of the form $D^{\E}f \in C^\infty(M,\otimes^{m+1}_S T^*M \otimes \mc{E})$ \emph{twisted potential tensors}. The operator $D^{\E}$ is a first order differential operator and its expression can be read off from \eqref{equation:formula-de}, namely:
\[
\begin{split}
\sigma_{\mathrm{princ}}(D^{\E})(x,\xi) \cdot \left(\sum_{k=1}^r u_k(x) \otimes e_k(x) \right) & = \sum_{k=1}^r \left(\sigma_{\mathrm{princ}}(D)(x,\xi) \cdot u_k(x)\right) \otimes e_k(x) \\
& = i \sum_{k=1}^r \sigma(\xi \otimes u_k(x)) \otimes e_k(x),
\end{split}
\]
where $e_k(x) \in \mc{E}_x, u_k(x) \in \otimes^m_S T^*_xM$ and the basis $(e_1(x),...,e_r(x))$ is assumed to be orthonormal for the metric $h$ on $\mc{E}$. One can check that this is an injective map, which means that $D^{\E}$ is a left-elliptic operator and can be inverted on the left modulo a smoothing remainder. Its kernel is finite-dimensional and consists of smooth elements.

Before that, we introduce for $m \in \N$, the operator
\[
\pi_m^* : C^\infty(M,\otimes^m_S T^*M \otimes \mc{E}) \rightarrow C^\infty(SM,\pi^*\mc{E}), 
\]
defined by
\[
\pi_m^*\left(\sum_{k=1}^r u_k \otimes e_k\right) (x,v) := \sum_{k=1}^r ({u_k})_x(v,...,v) e_k(x).
\]
Similarly to \eqref{eq:pi_muntwisted}, the following mappings are isomorphisms (see \cite[Section 2]{Cekic-Lefeuvre-20}):

\begin{align*}
\pi_m^* &: C^\infty(M,\otimes^m_S T^*M \otimes \mc{E}) \rightarrow \oplus_{k \geq 0} C^\infty(M,\Omega_{m-2k} \otimes \E),\\
\pi_m^* &: C^\infty(M,\otimes^m_S T^*M|_{0-\Tr} \otimes \mc{E}) \rightarrow C^\infty(M,\Omega_m \otimes \E).
\end{align*}
We recall the notation $(\pi^* \nabla^{\mc{E}})_X := \X$. The following remarkable commutation property holds (see \cite[Section 2]{Cekic-Lefeuvre-20}):
\begin{equation}\label{eq:pullback}
	\forall m \in \mathbb{Z}_{\geq 0}, \quad \pi_{m+1}^* D^{\E} = \X \pi_m^*.
\end{equation}

The vector bundle $\otimes^m_S T^*M \otimes \mc{E}$ is naturally endowed with a canonical fiberwise metric induced by the metrics $g$ and $h$ which allows to define a natural $L^2$ scalar product. The $L^2$ formal adjoint $(D^{\E})^*$ of $D^{\E}$ is of divergence type (in the sense that its principal symbol is surjective for every $(x,\xi) \in T^*M \setminus \left\{ 0 \right\}$, see \cite[Definition 3.1]{Cekic-Lefeuvre-20} for further details). We call \emph{twisted solenoidal tensors} the elements in its kernel.

By ellipticity of $D^{\E}$, for any twisted $m$-tensor $f$ there exists a unique $p \in (\ker D^{\E})^\perp \cap C^\infty(M,\otimes^{m-1}_S T^*M \otimes \mc{E}), h \in C^\infty(M,\otimes^{m}_S T^*M \otimes \mc{E})$ such that:
\begin{equation}\label{eq:decomposition-tt}
f = D^{\E}p + h, \quad (D^{\E})^*h = 0.
\end{equation}
This decomposition bears resemblance with the Hodge decomposition of differential forms; we also note that \eqref{eq:decomposition-tt} could be extended to other regularities. We define $\pi_{\ker (D^{\E})^*}f := h$ as the $L^2$-orthogonal projection on twisted solenoidal tensors. This can be expressed as:
\begin{equation}
\label{equation:projection}
\pi_{\ker (D^{\E})^*} = \mathbbm{1} - D^{\E} [(D^{\E})^*D^{\E}]^{-1} (D^{\E})^*,
\end{equation}
where $[(D^{\E})^*D^{\E}]^{-1}$ is the resolvent of the operator $(D^{\E})^*D^{\E}$ (defined as follows: $[(D^{\E})^*D^{\E}]^{-1} = 0$ on $\ker (D^{\E})^*D^{\E}$, and on the $L^2$-orthogonal of $\ker (D^{\E})^*D^{\E}$ it is genuinely given by the inverse of $(D^{\E})^*D^{\E}$, well defined by Fredholm theory of elliptic operators).

\subsection{Pollicott-Ruelle resonances}

We explain the link between the widely studied notion of Pollicott-Ruelle resonances (see for instance \cite{Liverani-04, Gouezel-Liverani-06,Butterley-Liverani-07,Faure-Roy-Sjostrand-08,Faure-Sjostrand-11,Faure-Tsuji-13,Dyatlov-Zworski-16}) and the notion of (twisted) Conformal Killing Tensors introduced in the last paragraph. We also refer to \cite{Cekic-Lefeuvre-20} for an extensive discussion about this.

\subsubsection{Definition of the resolvents}

\label{ssection:resonances}

Let $\M$ be a smooth manifold endowed with a vector field $X \in C^\infty(\M,T\M)$ generating an Anosov flow in the sense of \eqref{equation:anosov}. Throughout this paragraph, we will always assume that the flow is volume-preserving. It will be important to consider the dual decomposition to \eqref{equation:anosov}, namely
\[
T^*(\M) = \R E_0^* \oplus E_s^* \oplus E_u^*,
\]
where $E_0^*(E_s \oplus E_u) = 0, E_s^*(E_s \oplus \R X) = 0, E_u^*(E_u \oplus \R X) = 0$. As before, we consider a vector bundle $\mc{E} \rightarrow \M$ equipped with a unitary connection $\nabla^{\E}$ and set $\X := \nabla^{\mc{E}}_X$. Since $X$ preserves a smooth measure $\dd \mu$ and $\nabla^{\mc{E}}$ is unitary, the operator $\X$ is skew-adjoint on $L^2(\M,\mc{E};\dd\mu)$, with dense domain
\begin{equation}
\label{equation:domaine-p}
\mc{D}_{L^2} := \left\{ u \in L^2(\M,\mc{E};\dd\mu) ~|~ \X u \in L^2(\M,\mc{E};\dd\mu)\right\}.
\end{equation}
Its $L^2$-spectrum consists of absolutely continuous spectrum on $i\R$ and of embedded eigenvalues. We introduce the resolvents
\begin{equation}
\label{equation:resolvent}
\begin{split}
&\RR_+(z) := (-\X-z)^{-1} = - \int_0^{+\infty} e^{-t z} e^{-t\X} \dd t, \\
&\RR_-(z) := (\X-z)^{-1} = - \int_{-\infty}^0 e^{z t} e^{-t\X} \dd t,
\end{split}
\end{equation}
initially defined for $\Re(z) > 0$. (Let us stress on the conventions here: $-\X$ is associated to the positive resolvent $\RR_+(z)$ whereas $\X$ is associated to the negative one $\RR_-(z)$.) Here $e^{-t\X}$ denotes the propagator of $\X$, namely the parallel transport by $\nabla^{\mc{E}}$ along the flowlines of $X$. If $\X = X$ is simply the vector field acting on functions (i.e. $\mc{E}$ is the trivial line bundle), then $e^{-tX}f(x) = f(\varphi_{-t}(x))$ is nothing but the composition with the flow.

There exists a family $\mc{H}^s_\pm$ of Hilbert spaces called \emph{anisotropic Sobolev spaces}, indexed by $s > 0$, such that the resolvents can be meromorphically extended to the whole complex plane by making $\X$ act on $\mc{H}^s_\pm$. The poles of the resolvents are called the \emph{Pollicott-Ruelle resonances} and have been widely studied in the aforementioned literature \cite{Liverani-04, Gouezel-Liverani-06,Butterley-Liverani-07,Faure-Roy-Sjostrand-08,Faure-Sjostrand-11,Faure-Tsuji-13,Dyatlov-Zworski-16}. Note that the resonances and the resonant states associated to them are intrinsic to the flow and do not depend on any choice of construction of the anisotropic Sobolev spaces. More precisely, there exists a constant $c > 0$ such that $\RR_\pm(z) \in \mc{L}(\mc{H}_\pm^s)$ are meromorphic in $\left\{\Re(z) > -cs\right\}$. For $\RR_+(z)$ (resp. $\RR_-(z)$), the space $\mc{H}^s_+$ (resp. $\mc{H}^s_-$) consists of distributions which are microlocally $H^s$ in a neighborhood of $E_s^*$ (resp. microlocally $H^{s}$ in a neighborhood of $E_u^*$) and microlocally $H^{-s}$ in a neighborhood of $E_u^*$ (resp. microlocally $H^{-s}$ in a neighborhood of $E_s^*$), see \cite{Faure-Sjostrand-11,Dyatlov-Zworski-16}. These spaces also satisfy $(\mc{H}^s_+)' = \mc{H}^s_-$ (where one identifies the spaces using the $L^2$-pairing).

From now on, we will assume that $s$ is fixed and small enough, and set $\mc{H}_\pm := \mc{H}_\pm^s$. We have
\begin{equation}
\label{equation:betaan}
H^{s} \subset \mc{H}_{\pm} \subset H^{-s}.
\end{equation}
and there is a certain strip $\left\{\Re(z) > -\eps_{\mathrm{strip}}\right\}$ (for some $\eps_{\mathrm{strip}} > 0$) on which $z \mapsto \RR_{\pm}(z) \in \mc{L}(\mc{H}_{\pm})$ is meromorphic (and the same holds for small perturbations of $\X$).

These resolvents satisfy the following equalities on $\mc{H}_\pm$, for $z$ not a resonance:
\begin{equation}
\label{equation:resolvent-identity}
\RR_\pm(z)(\mp\X- z) = (\mp\X- z) \RR_\pm(z) = \mathbbm{1}_{\mc{E}}.
\end{equation}
Given $z \in \C$ which not a resonance, we have:
\begin{equation}
\label{equation:adjoint}
\RR_+(z)^* = \RR_-(\overline{z}),
\end{equation}
where this is understood in the following way: given $f_1, f_2 \in C^\infty(\M,\mc{E})$, we have
\[
\langle \RR_+(z) f_1, f_2 \rangle_{L^2} = \langle f_1,  \RR_-(\overline{z}) f_2 \rangle_{L^2}.
\]
We will always use this convention for the definition of the adjoint. 

Since the connections are unitary and the flow preserves a smooth measure, the propagators $e^{-t \mathbf{X}}$ preserve the norm in $L^2(\mc{M}, \mc{E}; \dd \mu)$. As a consequence, the formulas \eqref{equation:resolvent} converge when $\re z > 0$ and thus we obtain the following statement that we record for future purposes:
\begin{equation}\label{eq:spectrum-left-half-plane}
	\mathrm{the\,\,resonance\,\,spectrum\,\,of\,\,\pm\X\,\,is\,\,contained\,\,in\,\,\,\,} \{z \in \mathbb{C} \mid \re(z) \leq 0\}.
\end{equation}

A point $z_0 \in \C$ is a resonance for $-\X$ (resp. $\X$) i.e. is a pole of $z \mapsto \RR_+(z)$ (resp. $\RR_-(z)$) if and only if there exists a non-zero $u \in \mc{H}^s_+$ (resp. $\mc{H}^s_-$) for some $s > 0$ such that $-\X u = z_0 u$ (resp. $\X u = z_0 u$). If $\gamma$ is a small counter clock-wise oriented circle around $z_0$, then the spectral projector onto the resonant states is
\[
\Pi_{z_0}^{\pm} = - \dfrac{1}{2\pi i} \int_{\gamma} \RR_{\pm}(z) \dd z =  \dfrac{1}{2\pi i} \int_{\gamma} (z \pm \X)^{-1} \dd z,
\]
where we use the abuse of notation that $-(\X+z)^{-1}$ (resp. $(\X-z)^{-1}$) to denote the meromorphic extension of $\RR_+(z)$ (resp. $\RR_-(z)$).

\subsubsection{Resonances at $z=0$}

\label{sssection:resonances-zero}

By the previous paragraph, we can write in a neighborhood of $z=0$ the following Laurent expansion (beware the conventions):
\[
\RR_+(z) = - \RR_0^+ - \dfrac{\Pi_0^+}{z} + \mc{O}(z).
\]
(Or in other words, using our abuse of notations, $(\X+z)^{-1} = \RR_0^+ + \Pi_0^+/z + \mc{O}(z)$.) And:
\[
\RR_-(z) = -\RR_0^- - \dfrac{\Pi_0^-}{z} + \mc{O}(z).
\]
(Or in other words, $(z - \X)^{-1} = \RR_0^- + \Pi_0^-/z + \mc{O}(z)$.) As a consequence, these equalities define the two operators $\RR_0^{\pm}$ as the holomorphic part (at $z=0$) of the resolvents $-\RR_\pm(z)$. We introduce:
\begin{equation}
\label{equation:pi}
\Pi := \RR_0^+ + \RR_0^-.
\end{equation}
We have:

\begin{lemma}
\label{lemma:relations-resolvent}
We have $(\RR_0^+)^* = \RR_0^{-}, (\Pi_0^+)^* = \Pi_0^- = \Pi_0^+$. Thus $\Pi$ is formally self-adjoint. Moreover, it is nonnegative in the sense that for all $f \in C^\infty(\M,\mc{E})$, $\langle \Pi f, f \rangle_{L^2} = \langle f, \Pi f \rangle_{L^2} \geq 0$. Also, $\langle \Pi f, f \rangle = 0$ if and only if $\Pi f = 0$ if and only if $f = \X u + v $, for some $u \in C^\infty(\M,\E), v \in \ker(\X|_{\mc{H}_\pm})$.
\end{lemma}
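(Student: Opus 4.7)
My plan is to peel the statement into five nested claims and tackle them in order, using only the identities we already derived in \S\ref{sssection:resonances-zero} together with the adjoint relation \eqref{equation:adjoint}.

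\emph{Step 1: the adjoint identities.} I would start from \eqref{equation:adjoint}, namely $\RR_+(z)^* = \RR_-(\bar z)$, and simply match Laurent coefficients at $z=0$. From $\RR_\pm(z) = -\RR_0^\pm - \Pi_0^\pm/z + \mc{O}(z)$ the conjugated expansion at $z=0$ gives $(\RR_0^+)^* = \RR_0^-$ and $(\Pi_0^+)^* = \Pi_0^-$. In particular $\Pi^* = (\RR_0^+)^* + (\RR_0^-)^* = \RR_0^- + \RR_0^+ = \Pi$, so $\Pi$ is formally self-adjoint.

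\emph{Step 2: $\Pi_0^+ = \Pi_0^-$.} From the identities derived just above the lemma we have $\X\Pi_0^\pm = \Pi_0^\pm \X = 0$, $\RR_0^+\X = \mathbbm{1}-\Pi_0^+$ and $\RR_0^-\X = \Pi_0^- - \mathbbm{1}$; moreover applying $\RR_+(z)$ to $v\in\ker\X$ yields $\RR_+(z)v = -v/z$, whence $\Pi_0^+ v = v$ and analogously $\Pi_0^- v = v$. Therefore $\Pi_0^\pm$ is a projector onto $\ker\X$ (which is a finite-dimensional smooth space). To identify the two projectors, I would compute, for $f\in C^\infty(\M,\E)$ and $v\in \ker\X$,
\[
\langle \Pi_0^+ f, v\rangle_{L^2} = \langle f,v\rangle_{L^2} - \langle \X\RR_0^+ f, v\rangle_{L^2} = \langle f,v\rangle_{L^2} + \langle \RR_0^+ f, \X v\rangle_{L^2} = \langle f,v\rangle_{L^2},
\]
using that $\X$ is formally skew-adjoint (no boundary terms since $\M$ is closed) and $\X v = 0$. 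The same calculation applies to $\Pi_0^-$. Hence both projectors coincide with the $L^2$-orthogonal projection $\Pi_{L^2}$ onto $\ker\X$ on $C^\infty$, and agree as operators on the anisotropic spaces by density and continuity.

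\emph{Step 3: non-negativity.} For $z \in (0,\eps)$ small and real the operators $\RR_\pm(z)$ on smooth functions admit the absolutely convergent integral representations \eqref{equation:resolvent}. Using the unitarity of $e^{-t\X}$ on $L^2(\M,\E;\dd\mu)$ and the spectral theorem for the skew-adjoint operator $\X = iA$, I would express
\[
\langle (\RR_+(z)+\RR_-(z))f,f\rangle_{L^2} = -2\int_0^\infty e^{-tz} \int_{\R}\cos(t\lambda)\,d\mu_f(\lambda)\,dt = -2\int_{\R}\frac{z}{z^2+\lambda^2}\,d\mu_f(\lambda),
\]
where $\mu_f$ is the spectral measure of $A$ at $f$. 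Isolating the atom $\mu_f(\{0\}) = \|\Pi_{L^2}f\|^2$ (which produces the $-2/z$ pole predicted by Step 2) leaves
\[
\langle (\RR_+(z)+\RR_-(z))f,f\rangle_{L^2} + \tfrac{2}{z}\|\Pi_{L^2}f\|^2 = -2\int_{\lambda\neq 0}\frac{z}{z^2+\lambda^2}\,d\mu_f(\lambda) \le 0.
\]
The left-hand side converges, as $z\to 0^+$, to $-\langle \Pi f,f\rangle$ by the Laurent expansion. Hence $\langle \Pi f,f\rangle \ge 0$.

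\emph{Step 4: kernel characterization.} Non-negativity and self-adjointness of $\Pi$ give the Cauchy--Schwarz inequality $|\langle \Pi f,g\rangle|^2 \le \langle \Pi f,f\rangle\langle \Pi g,g\rangle$, so $\langle \Pi f,f\rangle = 0 \iff \Pi f = 0$. For the second equivalence, the easy direction is a direct computation: for any $u\in C^\infty(\M,\E)$ and $v\in\ker(\X|_{\mc{H}_\pm})$ we have $\Pi \X u = \RR_0^+\X u + \RR_0^- \X u = (\mathbbm{1}-\Pi_0^+)u + (\Pi_0^- - \mathbbm{1})u = 0$ by Step 2, and $\Pi v = \RR_0^+ v + \RR_0^- v = 0$ from the pole computation in Step 2. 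Conversely, assume $\Pi f = 0$ with $f$ smooth. Then $\RR_0^+ f = -\RR_0^- f \in \mc{H}_+\cap \mc{H}_-$ and satisfies $\X(\RR_0^+ f) = f - \Pi_0^+ f$, with right-hand side smooth. Setting $u := \RR_0^+ f$ and $v := \Pi_0^+ f \in \ker\X$ produces the required decomposition, provided $u$ is smooth.

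\emph{Main obstacle.} The subtle point is the last smoothness assertion in Step~4: a priori $\RR_0^+ f$ lives only in $\mc{H}_+$. The way I would handle it is to invoke the regularity result \cite[Theorem 4.1]{Bonthonneau-Lefeuvre-20} already used elsewhere in the paper: a distribution that is microlocally $H^s$ both near $E_s^*$ and near $E_u^*$ (as forced by membership in $\mc{H}_+\cap\mc{H}_-$) and that solves $\X u = g$ with $g\in C^\infty$ is itself smooth. If one is content with the weaker statement in which $u\in\mc{H}_+$ (so that $v=\Pi_0^+ f$ remains smooth and $u$ lies in the anisotropic space but satisfies $\X u \in C^\infty$), the proof is immediate from the identity $f = \Pi_0^+ f + \X\RR_0^+ f$.
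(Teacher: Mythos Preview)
Your proof is correct and essentially follows the standard route taken in the cited reference \cite[Lemma 5.1]{Cekic-Lefeuvre-20} (the paper itself gives no argument and simply defers to that lemma). Two small remarks: in Step~4 the reference you invoke for smoothness of $u=\RR_0^+f\in\mc{H}_+\cap\mc{H}_-$ is not quite the right one---\cite[Theorem 4.1]{Bonthonneau-Lefeuvre-20} is a H\"older regularity statement for flow-invariant sections, whereas what you actually need here is the wavefront-set argument via radial source/sink estimates (as in \cite{Dyatlov-Zworski-16} or \cite{Guillarmou-17-1}): $u\in\mc{H}_+$ with $\X u\in C^\infty$ forces $\mathrm{WF}(u)\subset E_u^*$, and symmetrically $u\in\mc{H}_-$ forces $\mathrm{WF}(u)\subset E_s^*$, so $\mathrm{WF}(u)=\emptyset$. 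Second, your spectral-measure computation in Step~3 is a clean way to obtain non-negativity, but you should note explicitly that the $L^2$ kernel of $\X$ coincides with the resonant space at $0$ (both consist of smooth flow-invariant sections in the volume-preserving skew-adjoint setting), so that the atom $\mu_f(\{0\})$ really matches $\|\Pi_0^+ f\|^2$.
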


\begin{proof} See \cite[Lemma 5.1]{Cekic-Lefeuvre-20}. \end{proof}

We also record here for the sake of clarity the following identities:
\begin{equation}\label{eq:resolventrelations}
\begin{split}
& \Pi_0^{\pm} \RR_0^+ = \RR_0^+ \Pi_0^{\pm} = 0, \Pi_0^{\pm} \RR_0^- = \RR_0^- \Pi_0^{\pm} = 0,\\
& \X \Pi_0^\pm = \Pi_0^\pm \X = 0, \X \RR_0^+ = \RR_0^+ \X = \mathbbm{1} - \Pi_0^+, -\X \RR_0^- = -\RR_0^- \X = \mathbbm{1} - \Pi_0^-.
\end{split}
\end{equation}

\subsubsection{Perturbation theory of resonances} 
We will need to apply the framework of Pollicott-Ruelle resonances for connections with finite regularity. Consider $\nabla_0^{\E}$, an arbitrary unitary connection of regularity $C^s_*$ (with $s > 1$) on a smooth Hermitian vector bundle $\E \rightarrow \M$ and define the first order differential operator $\X_0 := (\nabla_0)^{\E}_X$ acting on sections of $\E$. 

\begin{lemma}\label{lemma:mero-finite-reg}
	There exists a constant $C > 0$, depending only on the vector field $X$, and anisotropic Sobolev spaces $\mc{H}_\pm$, such that the resolvents $z \mapsto \RR_\pm(z) = (\mp\X_0-z)^{-1} \in \mc{L}(\mc{H}_\pm)$ admit a meromorphic extension from $\left\{\re z \gg 0\right\}$ to $\left\{\re z \geq - Cs\right\}$.  \end{lemma}
	
	For a proof, we refer to the article by Guedes Bonthonneau-de Poyferr\'e-Guillarmou, see \cite[Theorem 3]{GuedesBonthonneau-Guillarmou-dePoyferre-21} (we note however that less precise statements were obtained by microlocal methods also by Dyatlov-Zworski, see for instance \cite[Remark (i) on page 4]{Dyatlov-Zworski-16}). It is also immediate to extend the perturbation theory of Pollicott-Ruelle resonances of Bonthonneau \cite[Corollary 1.2]{Bonthonneau-19} to finite regularity (in fact, our case is easier to handle because the perturbations we consider are by order zero terms):
	
\begin{lemma}
\label{lemma:perturbation-pr}
	Let $C > 0$ and $\mc{H}_\pm$ be as in Lemma \ref{lemma:mero-finite-reg}. Let $z_0 \in \mathbb{C}$ with $\re(z_0) > -Cs$ be a Pollicott-Ruelle resonance of $-\X_0$ and $\gamma$ be a small contour around $z_0$ enclosing no other resonances of $-\X_0$. Then, there exists an $\varepsilon > 0$ and $s_{0} \gg 1$, such that for any $s > s_{0}$, and any connection $\nabla^{\mc{E}} = \nabla_0^{\mc{E}} + A$ for some $A \in  C_*^s(\mc{M}; T^*\mc{M} \otimes \End(\E))$ such that $\|A\|_{C_*^s} < \varepsilon$, the projector
	\[\Pi_A := \frac{1}{2\pi i} \oint_{\gamma} (-\X - z)^{-1} dz\]
	is well-defined, and the map $A \mapsto \Pi_A \in \mc{L}(\mc{H}_+)$ is $C^\infty$-regular with locally constant rank; here $\X_A := \X_0 + V_A = \nabla_X^{\E}$ and $V_A := A(X)$. Moreover, the map $A \mapsto \lambda_A$ associating to $A$ the sum of the resonances of $-\X_A$ enclosed by $\gamma$ (with multiplicities) is smooth near $A = 0$.
\end{lemma}

\begin{proof}
	We sketch the proof for the convenience of the reader. Note that for any $s > s_1$, the multiplication map 
	\begin{equation}\label{eq:multiplication}
		C_*^s(\mc{M}; \End(\E)) \times \mc{H}_\pm \ni (V, u) \mapsto Vu \in \mc{H}_\pm,
	\end{equation}
	is continuous; this follows from \cite[Section 2]{GuedesBonthonneau-Guillarmou-dePoyferre-21}.

	Note that $V_A = A(X) \in C_*^s(\mc{M}; \End(\E))$. Then for any $z$ in a small neighbourhood of $\gamma$
	\[\X_0 + V_A + z = (\X_0 + z)(\mathbbm{1} + (\X_0 + z)^{-1}V_A)\]
	and so by \eqref{eq:multiplication}, the map $\|(\X_0 + z)^{-1}V\|_{\mc{H}_+ \to \mc{H}_+}$ has norm smaller then $1$ if $\varepsilon$ is small enough, so the operator $\mathbbm{1} + (\X_0 + z)^{-1}V_A$ is invertible on the domain $\mc{D}_+ = \{u \in \mc{H}_+ \mid \X_0 u \in \mc{H}_+\}$ (note that $\X_0 V_A = [\X_0, V_A] + V_A\X_0$ where $[\X_0, V_A] \in C_*^{s - 1}(\mc{M}; \End(\E))$, so the domain $\mc{D}_+$ is invariant under multiplication with $V$ when $s > s_1 + 1$). This implies that $\X_0 + V_A + z: \mc{D}_+ \to \mc{H}_+$ is invertible for $z$ in a neighbourhood of $\gamma$, with inverse bounded by some uniform constant.
	
	Therefore $\Pi_A$ is well-defined and we may differentiate to get, in the direction of some $B \in C_*^s(\mc{M}; T^*\mc{M} \otimes \End(\E))$:
	\[D_B \Pi_A = \frac{1}{2\pi i} \oint_{\gamma} (-\X_A - z)^{-1} V_B (-\X_A - z)^{-1} dz.\]
	It follows from \eqref{eq:multiplication} that $\|D_B \Pi_A\|_{\mc{H}_+ \to \mc{H}_+} \leq C_1 \|V_B\|_{\mc{H}_+ \to \mc{H}_+} \leq C_2 \|B\|_{C_*^s}$ for some uniform constants $C_1, C_2 > 0$. This shows that $A \mapsto \Pi_A \in \mc{L}(\mc{H}_+)$ is $C^1$, and iterating this argument shows that in fact this map is smooth.
	
	To show that the rank of $\Pi_A$ is locally constant we refer to \cite[Section 4]{Bonthonneau-19} (see also \cite[Section 6]{Cekic-Paternain-20}). Finally, consider a basis of (generalized) resonant states $(u_i)_{i = 1}^r \in \mc{H}_+$ of $-\X_0$, where $r$ is the rank of $\Pi_0$. Then the map $A \mapsto (\Pi_A u_i)_{i = 1}^r \in \mc{H}_+$ is smooth and so for small enough $\varepsilon$, the sequence $(\Pi_A u_i)_{i = 1}^r$ is a basis of the range of $\Pi_A$ (of generalized resonant states of $-\X_A$). The map $-\X_A$ acts on the range of $\Pi_A$ by definition and so since $\lambda_A$ equals the trace of $-\X_A$ in the constructed smooth basis, this gives the smoothness of $\lambda_A$.
\end{proof}

\subsection{Generalized X-ray transform}
\label{ssection:generalized-x-ray}

The discussion is carried out here in the closed case, but could also be generalized to the case of a manifold with boundary. We introduce the operator
\[
\Pi := \RR_0^+  + \RR_0^-,
\]
where $\RR_0^+$ (resp. $\RR_0^-$) denotes the holomorphic part at $0$ of $-\RR_+(z)$ (resp. $-\RR_-(z)$) and $\Pi^+_0$ is the $L^2$-orthogonal projection on the (smooth) resonant states at $0$. Such an operator was first introduced in the non-twisted case by Guillarmou \cite{Guillarmou-17-1}. The operator $\Pi + \Pi^+_0$ is the derivative of the (total) $L^2$-spectral measure at $0$ of the skew-adjoint operator $\X$.

\begin{definition}[Generalized X-ray transform of twisted symmetric tensors]
\label{definition:generalized-xray}
We define the generalized X-ray transform of twisted symmetric tensors as the operator:
\[
\Pi_m := {\pi_m}_* \left(\Pi + \Pi^+_0\right) \pi_m^*.
\]
\end{definition}

In what follows, we will mostly use this operator with $m=1$. In this case, the operator $\Pi_1$ takes a one-form valued in some bundle $\E$, pulls it back on the unit tangent bundle to a spherical harmonic of degree $1$ twisted by some bundle ($\pi_1^*$-operator), then ``averages'' this spherical harmonic along the geodesic flowlines ($(\Pi + \Pi^+_0)$-operator) and then selects the first spherical harmonic of this distribution in order to give a twisted one-form on the base manifold $M$ (${\pi_1}_*$-operator). We remark that when we want to emphasize the dependence of $\Pi_m$ on a connection $\nabla^{\mc{E}}$, we will write $\Pi_m^{\nabla^{\mc{E}}}$ (this will appear in particular in \S \ref{ssection:pollicott-ruelle-dea}).

\begin{remark}\rm
	This also allows to define a generalized (twisted) X-ray transform $\Pi_m$ for an arbitrary unitary connection $\nabla^{\mc{E}}$ on $\mc{E}$. Indeed, it is not clear a priori if one sticks to the usual definition of the X-ray transform that one can find a ``natural'' candidate for the X-ray transform on twisted tensors. For instance, one could consider the map
\[
\mc{C} \ni \gamma \mapsto I_m^{\nabla^{\E}}f (\gamma) := \dfrac{1}{\ell(\gamma)} \int_0^{\ell(\gamma)} (e^{-t \X} f) (x_\gamma, v_\gamma) \dd t \in \mc{E}_{x_\gamma},
\]
where $\gamma \in \mc{C}$ is a closed geodesic and $(x_\gamma,v_\gamma) \in \gamma$. However, this definition does depend on the choice of base point $(x_\gamma,v_\gamma) \in \gamma$ and it would no longer be true that $I^{\nabla^{\mc{E}}}_m (D^{\E}p) (\gamma) = 0$ unless the connection is transparent.
\end{remark}

By \eqref{eq:pullback} and \eqref{eq:resolventrelations}, we have the equalities:
\begin{equation}\label{eq:Pi_mproperties}
(D^{\E})^* \Pi_m = 0 = \Pi_m D^{\E},
\end{equation}
showing that $\Pi_m$ maps the set of twisted solenoidal tensors to itself. We say that the generalised $X$-ray transform is \emph{solenoidally injective} ($s$-injective) on $m$-tensors, if for all $u \in C^\infty(SM, \E)$ and $f \in C^\infty(M, \otimes_S^mT^*M \otimes \E)$
\begin{equation}\label{eq:cohomeqn}
	\X u = \pi_m^* f \implies \exists p \in C^\infty(M, \otimes_S^{m-1}T^*M \otimes \E)\,\, \mathrm{such\,\, that}\,\,f = D^{\E} p.
\end{equation}
We have the following:

\begin{lemma}\label{lemma:x-ray}
	The generalised $X$-ray transform is $s$-injective on $m$-tensors if and only if $\Pi_m$ is injective on solenoidal tensors (if this holds, we say $\Pi_m$ is \emph{$s$-injective}).
\end{lemma}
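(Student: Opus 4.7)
The plan is to leverage two algebraic ingredients from the preceding setup: the intertwining relation $\X \pi_m^* = \pi_{m+1}^* D^{\E}$ from \eqref{eq:pullback}, and the vanishing $(\Pi + \Pi_0^+)\X = 0$ that follows from \eqref{eq:resolventrelations}. To see the latter, note that $\RR_0^+ \X = \mathbbm{1} - \Pi_0^+$ and $\RR_0^- \X = \Pi_0^- - \mathbbm{1}$, so $\Pi \X = \Pi_0^- - \Pi_0^+ = 0$ by Lemma \ref{lemma:relations-resolvent}, while $\Pi_0^+ \X = 0$ is already recorded. In words, $\Pi_m$ annihilates twisted potential tensors.

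For the direction ``$\Pi_m$ $s$-injective $\Longrightarrow$ generalized $X$-ray transform $s$-injective,'' I would start from a cohomological equation $\X u = \pi_m^* f$ and use the decomposition \eqref{eq:decomposition-tt} to split $f = D^{\E} p + h$ with $h$ solenoidal. Applying $\pi_m^*$ and using \eqref{eq:pullback} yields $\pi_m^* h = \X(u - \pi_{m-1}^* p)$, and the vanishing identity above then gives $\Pi_m h = {\pi_m}_*(\Pi + \Pi_0^+) \X (u - \pi_{m-1}^* p) = 0$. The hypothesis forces $h = 0$, so $f = D^{\E} p$, as required.

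For the converse, let $h$ be a smooth solenoidal twisted $m$-tensor with $\Pi_m h = 0$. Pairing with $h$ and using that ${\pi_m}_*$ is the $L^2$-adjoint of $\pi_m^*$ yields
\[
0 = \langle \Pi_m h, h\rangle_{L^2} = \langle \Pi \pi_m^* h, \pi_m^* h\rangle_{L^2} + \langle \Pi_0^+ \pi_m^* h, \pi_m^* h\rangle_{L^2}.
\]
Both summands are nonnegative---the first by Lemma \ref{lemma:relations-resolvent}, the second because $\Pi_0^+$ is self-adjoint and idempotent, hence an $L^2$-orthogonal projection---so each vanishes. Lemma \ref{lemma:relations-resolvent} then writes $\pi_m^* h = \X u + v$ with $u \in C^\infty(\M, \E)$ and $v \in \ker(\X|_{\mathcal{H}_\pm})$, while $\Pi_0^+ \pi_m^* h = 0$ annihilates $v$ (since $\Pi_0^+ v = v$ for resonant states $v$). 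Hence $\pi_m^* h = \X u$ with $u$ smooth, and the $s$-injectivity hypothesis yields $h = D^{\E} p$; testing against $p$ and using $(D^{\E})^* h = 0$ then gives $\|h\|_{L^2}^2 = \langle h, D^{\E} p\rangle = \langle (D^{\E})^* h, p\rangle = 0$, so $h = 0$.

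The main point to watch is the first direction, where the identity $(\Pi + \Pi_0^+)\X = 0$ is applied to $\pi_m^* h = \X(u - \pi_{m-1}^* p) \in C^\infty$; the inclusion $C^\infty \subset \mathcal{H}_\pm$ from \eqref{equation:betaan} and the meromorphic continuation of $\RR_\pm$ underlying \eqref{eq:resolventrelations} guarantee this manipulation is legitimate. Beyond this, the argument is a clean combination of the Hodge-type splitting \eqref{eq:decomposition-tt} with the $L^2$-nonnegativity of $\Pi$ provided by Lemma \ref{lemma:relations-resolvent}.
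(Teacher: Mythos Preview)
Your proof is correct and follows essentially the same approach as the paper. The paper's treatment is terser: for the direction ``$\Pi_m$ $s$-injective $\Rightarrow$ generalized X-ray $s$-injective'' it simply invokes \eqref{eq:resolventrelations} (via the already-stated identity $\Pi_m D^{\E}=0$ in \eqref{eq:Pi_mproperties}), whereas you unpack the operator identity $(\Pi+\Pi_0^+)\X=0$ explicitly; for the converse, both arguments coincide, with your version spelling out why the resonant piece $v$ vanishes and why a solenoidal potential tensor must be zero.
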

\begin{proof}
	Assume that $\Pi_m f = 0$ and $f$ is a twisted solenoidal $m$-tensor. Then
\[
\langle \Pi_m f, f \rangle_{L^2} = \langle \Pi \pi_m^*f , \pi_m^*f \rangle_{L^2} +  \langle \Pi^+_0 \pi_m^*f, \pi_m^* f \rangle_{L^2} = 0.
\]
Both terms on the right hand side are non-negative by Lemma \ref{lemma:relations-resolvent}, hence both of them vanish, and the same Lemma implies that $\Pi \pi_m^* f = 0$ and $\Pi^+_0 \pi_m^*f = 0$. Thus $\X u = \pi_m^*f$ for some smooth $u$, so by the $s$-injectivity of generalised $X$-ray transform we obtain $f$ is potential, which implies $f = 0$.

The other direction is obvious by \eqref{eq:resolventrelations}.
\end{proof}

Next, we show $\Pi_m$ enjoys good analytical properties:

\begin{lemma}
\label{lemma:generalized-xray}
The operator $\Pi_m : C^\infty(M,\otimes^m_S T^*M \otimes \mc{E}) \rightarrow C^\infty(M,\otimes^m_S T^*M \otimes \mc{E})$ is:
\begin{enumerate}
\item A pseudodifferential operator of order $-1$,
\item Formally self-adjoint and elliptic on twisted solenoidal tensors (its Fredholm index is thus equal to $0$ and its kernel/cokernel are finite-dimensional),
\item Under the assumption that $\Pi_m$ is $s$-injective, the following stability estimates hold: 
\[
\forall s \in \mathbb{R},\,\, \forall f \in H^s(M,\otimes^m_S T^*M \otimes \mc{E}), ~~~ \|\pi_{\ker (D^{\E})^*}f\|_{H^s} \leq C_s \|\Pi_m f \|_{H^{s+1}},
\]
for some $C_s > 0$ and for some $C > 0$:
\[
\forall f \in H^{-1/2}(M,\otimes^m_S T^*M \otimes \mc{E}), ~~~ \langle \Pi_m f, f \rangle_{L^2} \geq C \|\pi_{\ker (D^{\E})^*} f\|^2_{H^{-1/2}}.
\]
In particular, these estimates hold if $(M,g)$ has negative curvature and $\nabla^{\E}$ has no twisted CKTs.
\end{enumerate}
\end{lemma}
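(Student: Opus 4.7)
\medskip

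The plan is to handle the three items sequentially, all resting on a microlocal description of the resolvent contributions at zero.

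\medskip

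\noindent\textbf{Step 1 (pseudodifferential order $-1$).} I would first argue that $\Pi = \RR_0^+ + \RR_0^-$ is a pseudodifferential operator of order $-1$ on $SM$ acting on sections of $\pi^*\mathcal{E}$. This is the twisted analogue of Guillarmou's result in the untwisted case \cite{Guillarmou-17-1} and can be obtained by splitting the defining integrals in \eqref{equation:resolvent} into a near-diagonal piece (finite time) and a tail, the tail being smoothing thanks to the Anosov property on the appropriate anisotropic space. The spectral projector $\Pi_0^+$ is a finite-rank operator with smooth Schwartz kernel, hence smoothing. Once $\Pi + \Pi_0^+$ is pseudodifferential on $SM$, the composition $\pi_{m*}(\Pi+\Pi_0^+)\pi_m^*$ is shown to be pseudodifferential of order $-1$ on $M$ via a wavefront-set argument: the conormals of $\pi_m^*$ and $\pi_{m*}$ are contained in $N^*(\mathrm{graph}(\pi))$, which intersects the wavefront set of $\Pi+\Pi_0^+$ only at the zero section; alternatively, one verifies this by direct stationary-phase/symbol computation, integrating the fiber variable out.

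\medskip

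\noindent\textbf{Step 2 (self-adjointness and ellipticity on solenoidal tensors).} Self-adjointness is immediate from Lemma \ref{lemma:relations-resolvent}: $(\RR_0^+)^*=\RR_0^-$ and $(\Pi_0^+)^*=\Pi_0^+$, and $\pi_{m*}$ is the formal adjoint of $\pi_m^*$ up to a fixed positive constant. For ellipticity, I would compute the principal symbol. Using the oscillatory-integral representation of $\pi_m^*$ and the fact that the principal symbol of $\Pi+\Pi_0^+$ at $(x,v,\xi) \in T^*SM$ is supported where $\xi(X)=0$, one obtains a symbol of the form
\begin{equation*}
\sigma_{\mathrm{princ}}(\Pi_m)(x,\xi)\cdot f \;=\; \frac{C_n}{|\xi|}\int_{\Ss_x^{n-1}\cap\xi^\perp} (v^{\otimes m})\otimes \langle f(x), v^{\otimes m}\rangle\; dS(v)\otimes \mathbbm{1}_{\E_x},
\end{equation*}
acting on $\otimes_S^m T_x^*M\otimes \E_x$ (this is the standard computation carried out in \cite{Guillarmou-Paternain-Salo-Uhlmann-16,Cekic-Lefeuvre-20} in the twisted setting). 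The symbol is non-negative, and its kernel is exactly the range of $\xi\,\sigma:\otimes_S^{m-1}T_x^*M\otimes\E_x\to \otimes_S^mT_x^*M\otimes\E_x$, i.e.\ the range of $\sigma_{\mathrm{princ}}(D^{\E})(x,\xi)$. Consequently $\Pi_m$ is elliptic on the orthogonal complement, i.e.\ on twisted solenoidal tensors (the kernel of $\sigma_{\mathrm{princ}}((D^{\E})^*)$). Fredholmness on solenoidal tensors is then automatic.

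\medskip

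\noindent\textbf{Step 3 (stability estimates).} Ellipticity on solenoidal tensors yields, for any $s\in\mathbb{R}$ and any twisted solenoidal $f$, the elliptic regularity estimate
\begin{equation*}
\|f\|_{H^s} \;\leq\; C_s \bigl(\|\Pi_m f\|_{H^{s+1}} + \|f\|_{H^{s-1}}\bigr).
\end{equation*}
Under the $s$-injectivity hypothesis (Lemma \ref{lemma:x-ray}), $\Pi_m$ has trivial kernel on the closed subspace $\ker (D^{\E})^*$. A standard compactness-contradiction argument, using the compact embedding $H^s\hookrightarrow H^{s-1}$, removes the error term, yielding the first stability estimate. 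For the $H^{-1/2}$ estimate I would use a G\r{a}rding inequality: $\Pi_m$ is a non-negative, self-adjoint, elliptic pseudodifferential operator of order $-1$ on the closed subspace $\ker (D^{\E})^*\subset L^2$ with trivial kernel there, so a standard G\r{a}rding argument on the orthogonal decomposition (combined with the already established first estimate at $s=-1/2$) gives
\begin{equation*}
\langle \Pi_m f,f\rangle_{L^2}\;\geq\; C\,\|\pi_{\ker(D^{\E})^*}f\|_{H^{-1/2}}^2.
\end{equation*}
Finally, in negative curvature with no twisted CKTs, the absence of CKTs together with the Pestov identity (as in \cite{Guillarmou-Paternain-Salo-Uhlmann-16}) implies the cohomological equation \eqref{eq:cohomeqn} has only potential solutions; by Lemma \ref{lemma:x-ray} this gives $s$-injectivity of $\Pi_m$, so the estimates apply.

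\medskip

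\noindent\textbf{Main obstacle.} The principal symbol computation in Step 2 is the delicate point: one must justify that the pushforward/pullback by the sphere bundle projection interacts correctly with the microlocal structure of $\Pi+\Pi_0^+$ (whose wavefront set lies in $E_s^*\oplus E_u^*$), and identify the fiber integral with the correct geometric quantity. Once this symbol is in hand, both ellipticity on solenoidal tensors and the ensuing functional-analytic estimates in Step 3 are fairly mechanical.
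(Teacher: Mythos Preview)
Your approach is essentially the one the paper has in mind: it omits the first two items as ``straightforward adaptations'' of \cite[Theorem 2.5.1]{Lefeuvre-thesis} and \cite{Guillarmou-17-1}, and for item (3) invokes the elliptic estimate plus injectivity for the first inequality and \cite[Lemma 2.1]{Guillarmou-Knieper-Lefeuvre-19} (a G\r{a}rding-type argument) for the second, exactly as you outline.

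One correction to Step~1: the operator $\Pi = \RR_0^+ + \RR_0^-$ is \emph{not} a pseudodifferential operator on $SM$. Its Schwartz kernel has wavefront set containing, beyond the conormal to the diagonal, off-diagonal contributions in $E_u^* \times E_s^*$ and $E_s^* \times E_u^*$ coming from the long-time propagation (this is the content of \cite[Proposition~4.1]{Guillarmou-17-1} and the analysis in \cite{Dyatlov-Zworski-16}). The point is precisely that these extra singularities are annihilated after sandwiching by ${\pi_m}_*$ and $\pi_m^*$: the conormal to the graph of $\pi$ does not meet $E_s^* \oplus E_u^*$ (since the latter pairs trivially with the vertical directions), so the composition rules for wavefront sets force $\Pi_m$ to have wavefront set on the conormal to the diagonal of $M \times M$, hence to be pseudodifferential on $M$. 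Your wavefront-set sentence already contains this idea, but as written it is inconsistent with your opening claim that $\Pi$ itself is a $\Psi$DO---if that were true, no wavefront argument would be needed at all. Once you drop that opening claim, Step~1 is fine, and Steps~2 and~3 are correct as stated.
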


Point (3) is a quantitative improvement of the statement: $\Pi_m f = 0, f \in \ker (D^{\E})^* \implies f = 0$, i.e. it provides a stability estimate for the X-ray transform (see Lemma \ref{lemma:x-ray} for the relation between $\Pi_m$ and the X-ray transform).

\begin{proof}
The proof of the first two points follows from a rather straightforward adaptation of the proof of \cite[Theorem 2.5.1]{Lefeuvre-thesis} (see also \cite{Guillarmou-17-1} for the original arguments); we omit it. It remains to prove the third point. 

The first estimate follows from $(2)$, the elliptic estimate and the fact that $\Pi_m$ is $s$-injective. The last estimate in the non-twisted case follows from \cite[Lemma 2.1]{Guillarmou-Knieper-Lefeuvre-19} (or \cite[Theorem 2.5.6]{Lefeuvre-thesis}) and subsequent remarks; the twisted case follows by minor adaptations.

If $(M,g)$ has negative curvature and $\nabla^{\E}$ has no twisted CKTs, using Lemma \ref{lemma:x-ray} and by \cite[Sections 4, 5]{Guillarmou-Paternain-Salo-Uhlmann-16} we get that $\Pi_m$ is $s$-injective, proving the claim.
\end{proof}

\section{Exact Liv\v{s}ic cocycle theory}

We phrase this section in a very general context which is that of a transitive Anosov flow on a smooth manifold. It is of independent interest to the rest of the article. 

\label{section:livsic}

\subsection{Statement of the results}

\label{ssection:intro}

\subsubsection{A weak exact Liv\v{s}ic cocycle theorem}

Let $\M$ be a smooth closed manifold endowed with a flow $(\varphi_t)_{t \in \R}$ with infinitesimal generator $X \in C^\infty(\M,T\M)$. We assume that the flow is Anosov in the sense of \eqref{equation:anosov} and that it is \emph{transitive}, namely it admits a dense orbit\footnote{Note that there are examples of non-transitive Anosov flows, see \cite{Franks-Williamas-80}.}. We denote by $\mc{G}$ the set of all periodic orbits for the flow and by $\mc{G}^\sharp$ the set of all \emph{primitive} orbits, namely orbits which cannot be written as a shorter orbit to some positive power greater or equal than $2$.

Let $(\mc{E},\nabla^{\mc{E}})$ be a smooth Hermitian vector bundle of rank $r$ equipped with a unitary connection $\nabla^{\mc{E}}$. We will denote by 
\[
C(x,t) : \mc{E}_x \rightarrow \mc{E}_{\varphi_t(x)},
\]
 the parallel transport along the flowlines of $(\varphi_t)_{t \in \R}$ with respect to the connection $\nabla^{\mc{E}}$. In the more general setting, we may consider $\E_1, \E_2 \rightarrow \M$, two Hermitian vector bundles, equipped with two respective unitary connections $\nabla^{\mc{E}_1}$ and $\nabla^{\mc{E}_2}$. 
Recall that if $\nabla^{\E_2} = p^*\nabla^{\E_1}$, for some unitary map $p \in C^\infty(\M,\mathrm{U}(\E_2, \E_1))$\footnote{Here, we denote by $\mathrm{U}(\E_2, \E_1) \rightarrow \M$ the bundle of unitary maps from $\E_2 \rightarrow \E_1$. Of course, it may be empty if the bundles are not isomorphic.}, i.e. the connections are gauge-equivalent, then parallel transport along the flowlines of $(\varphi_t)_{t \in \R}$ satisfies the commutation relation:
\[
C_1(x,t)= p(\varphi_t x) C_2(x,t) p(x)^{-1}.
\]
We say that such cocycles are \emph{cohomologous}. In particular, given a closed orbit $\gamma = (\varphi_t x_0)_{t \in [0,T]}$ of the flow, one has
\[
C_1(x_0,T) = p(x_0) C_2(x_0,T) p(x_0)^{-1},
\]
i.e. the parallel transport map are conjugate.

\begin{definition}
\label{definition:equivalence}
We say that the connections $\nabla^{\E_{1,2}}$ have 
\emph{trace-equivalent holonomies} if for all primitive closed orbits $\gamma \in \mc{G}^\sharp$, we have:
\begin{equation}
\label{equation:trace-equivalence}
\Tr(C_1(x_\gamma,\ell(\gamma))) = \Tr(C_2(x_\gamma,\ell(\gamma))),
\end{equation}
where $x_\gamma \in \gamma$ is arbitrary and $\ell(\gamma)$ is the period of $\gamma$.
\end{definition}

This condition could be \emph{a priori} obtained with $\rk(\E_1) \neq \rk(\E_2)$. We shall see that this cannot be the case.
The following result is one of the main theorems of this paper. It seems to improve known results on Liv\v{s}ic cocycle theory (in particular \cite{Parry-99,Schmidt-99}), see \S\ref{ssection:exact} for a more extensive discussion on the literature.

\begin{theorem}
\label{theorem:weak}
Assume $\M$ is endowed with a smooth transitive Anosov flow. Let $\E_1, \E_2 \rightarrow \M$ be two Hermitian vector bundles over $\M$ equipped with respective unitary connections $\nabla^{\mc{E}_1}$ and $\nabla^{\mc{E}_2}$. If the connections have \emph{trace-equivalent holonomies} in the sense of Definition \ref{definition:equivalence}, then there exists $p \in C^\infty(\M,\mathrm{U}(\E_2,\E_1))$ such that: for all $x \in \M, t \in \R$,
\begin{equation}
\label{equation:cohomol}
C_1(x,t) = p(\varphi_t x) C_2(x,t) p(x)^{-1},
\end{equation}
i.e. the cocycles induced by parallel transport are cohomologous. Moreover, $\E_2 \simeq \E_1$ are isomorphic.
\end{theorem}

In order to prove the injectivity Theorem \ref{theorem:injectivity}, we will apply the previous Theorem \ref{theorem:weak} with $\M = SM$, the geodesic flow, pullback bundle $\pi^*\E$ equipped with two pullback connections $\pi^*\nabla_1, \pi^*\nabla_2$. However, as we shall see in \S\ref{section:proofs}, Theorem \ref{theorem:weak} will not directly imply Theorem \ref{theorem:injectivity}: indeed, after differentiating \eqref{equation:cohomol} at $t = 0$, it only gives the existence of a map $p \in C^\infty(SM,\mathrm{U}(\pi^*\E))$ such that 
\[\pi^* \nabla^{\mathrm{Hom}(\nabla_2, \nabla_1)}_X p = 0.\]
We will then have to prove that $p = \pi^* p_0$ for some unitary isomorphism on the base $p_0 \in C^\infty(M,\mathrm{U}(\E))$ such that $\nabla^{\mathrm{Hom}(\nabla_2,\nabla_1)}p_0=0$ (this is equivalent to the connections being gauge-equivalent, as follows directly from Definition \ref{definition:mixed}).

\begin{remark}\rm
	The simplest example in which the automorphism $p$ does not descend to the base can be constructed as follows (originally in \cite{Paternain-09}). If $(\Sigma, g_\Sigma)$ is a Riemannian surface of negative curvature, then along any closed geodesic $\gamma$ the parallel transport with respect to Levi-Civita connection $\nabla^{\mathrm{LC}}$ is the identity (as $P_\gamma$ fixes $\dot{\gamma}$ and hence also its normal $\dot{\gamma}^\perp$). Thus $\pi^*\nabla^{\mathrm{LC}}$ on the pull-back $\pi^*\mc{K}$ of the canonical bundle $\mc{K} = (T_{\mathbb{C}}^*\Sigma)^{0, 1}$ and the trivial connection on $S\Sigma \times \mathbb{C}$ have trace equivalent holonomies. By Theorem \ref{theorem:weak}, there is $p \in C^\infty(\M,\mathrm{U}(\pi^*\mc{K}, \mathbb{C}))$ such that $\pi^*\nabla_X^{\operatorname{Hom}(\nabla^{\operatorname{LC}}, d)} p = 0$, but clearly $p$ is not of degree zero as the bundle $\mc{K}$ is not topologically trivial. In fact $p$ can be chosen to be of degree $1$ and similar examples exist in higher dimensions (see \cite{Cekic-Lefeuvre-23-polynomial}).
\end{remark}

As we shall see in the proof, for any given $L > 0$, it suffices to assume that the trace-equivalent holonomy condition \eqref{equation:trace} holds for all primitive periodic orbits of length $\geq L$ in order to get the conclusion of the theorem. Surprisingly, the rather weak condition \eqref{equation:trace} implies in particular that the bundles are isomorphic as stated in Corollary \ref{corollary:iso} and the trace of the holonomy of unitary connections along closed orbits should allow one in practice to classify vector bundles over manifolds carrying Anosov flows. Even more surprisingly, the rank of $\E_1$ and $\E_2$ might be \emph{a priori} different and Theorem \ref{theorem:weak} actually shows that the ranks have to coincide.

The idea relies on a key notion which we call \emph{Parry's free monoid}, whose introduction goes back to Parry \cite{Parry-99}. This free monoid $\mathbf{G}$ corresponds (at least formally) to the free monoid generated by the set of homoclinic orbits to a given periodic orbit of a point $x_{\star}$ (see \S\ref{sssection:homoclinic} for a definition) and we shall see that a connection induces a unitary representation $\rho : \mathbf{G} \rightarrow \mathrm{U}(\E_{x_{\star}})$ (it is not canonical but we shall see that its important properties are). Geometric properties of the connection can be read off this representation, see Theorem \ref{theorem:iso} below. Moreover, tools from representation theory can be applied and this is eventually how we will prove Theorem \ref{theorem:weak}.

\subsubsection{Opaque and transparent connections}

Theorem \ref{theorem:weak} has an interesting straightforward corollary. Recall that a unitary connection is said to be \emph{transparent} if the holonomy along all closed orbits is trivial.

\begin{corollary}
Assume $\M$ is endowed with a smooth transitive Anosov flow. Let $\E \rightarrow \M$ be a Hermitian vector bundle over $\M$ of rank $r$ equipped with a unitary connection $\nabla^{\mc{E}}$. If the connection is transparent, then $\E$ is trivial and trivialized by a smooth orthonormal family $e_1,...,e_r \in C^\infty(\M,\E)$ such that $\nabla^{\E}_X e_i = 0$.
\end{corollary}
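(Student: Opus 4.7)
The plan is to deduce this corollary directly from Theorem \ref{theorem:weak} by comparing the given transparent connection $\nabla^{\mc{E}}$ with the trivial connection on the trivial rank-$r$ bundle. Let me set $\E_1 := \E$ equipped with $\nabla^{\E_1} := \nabla^{\mc{E}}$, and $\E_2 := \M \times \C^r$ equipped with the trivial connection $\nabla^{\E_2} := d$. Its parallel transport is the identity, so $\Tr(C_2(x_\gamma, \ell(\gamma))) = r$ for every primitive periodic orbit $\gamma \in \mc{G}^\sharp$. By the transparency of $\nabla^{\mc{E}}$, one has $C_1(x_\gamma, \ell(\gamma)) = \mathbbm{1}_{\E_{x_\gamma}}$, and therefore $\Tr(C_1(x_\gamma, \ell(\gamma))) = r$ as well. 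The trace-equivalent holonomies condition of Definition \ref{definition:equivalence} is thus satisfied.

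Applying Theorem \ref{theorem:weak} would then yield $p \in C^\infty(\M, \mathrm{U}(\M \times \C^r, \E))$ (a fortiori $\E$ is trivial) satisfying
\[
C_1(x,t) = p(\varphi_t x) \, C_2(x,t) \, p(x)^{-1} = p(\varphi_t x) p(x)^{-1}, \qquad \forall (x,t) \in \M \times \R.
\]
Denoting by $(\epsilon_1, \ldots, \epsilon_r)$ the canonical orthonormal basis of $\C^r$, I would define the candidate frame by $e_i(x) := p(x) \epsilon_i$. Since $p(x)$ is unitary for each $x$, the family $(e_i)_{1 \leq i \leq r}$ is a smooth orthonormal frame of $\E$, which in particular confirms the triviality of $\E$.

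It remains to check the flow-invariance $\nabla^{\E}_X e_i = 0$. Rewriting the cohomology relation as $p(\varphi_t x) = C_1(x,t) p(x)$ and applying both sides to $\epsilon_i$, one gets $e_i(\varphi_t x) = C_1(x,t) e_i(x)$, which is precisely the statement that $e_i$ is parallel along flowlines of $X$ with respect to $\nabla^{\mc{E}}$; differentiating at $t = 0$ gives $\nabla^{\mc{E}}_X e_i = 0$. There is no genuine obstacle: the entire content of the corollary is packaged into Theorem \ref{theorem:weak}, and the only care required is to exhibit the trivial bundle with trivial connection as the natural comparison object.
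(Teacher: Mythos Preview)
Your proof is correct and follows exactly the same approach as the paper: apply Theorem \ref{theorem:weak} with $\E_1 = \E$ and $\E_2 = \M \times \C^r$ equipped with the trivial flat connection, then define $e_i := p(\epsilon_i)$ for $\epsilon_i$ the canonical basis of $\C^r$. The paper's argument is stated more tersely, but the content is identical.
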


In order to prove the previous corollary, it suffices to apply Theorem \ref{theorem:weak} with $\E_1 =  \E$ equipped with $\nabla^{\E}$ and $\E_2 = \C^r \times M$ equipped with the trivial flat connection. Then $C_2(x,t) = \mathbbm{1}$ and $(e_1,...,e_n)$ is obtained as the image by $p$ of the canonical basis of $\C^n$. This corollary seems to be known in the folklore but nowhere written. It is stated in \cite[Proposition 9.2]{Paternain-12} under the extra-assumption that $\E \oplus \E^*$ is trivial.

The ``opposite" notion of transparent connections is that of \emph{opaque} connections which are connections that do not preserve any non-trivial subbundle $\mc{F} \subset \E$ by parallel transport along the flowlines of $(\varphi_t)_{t \in \R}$. It was shown in \cite[Section 5]{Cekic-Lefeuvre-20} that the opacity of a connection is equivalent to the fact that
\[
\ker (\nabla^{\End}_X|_{C^\infty(\M,\End(\E))}) = \C \cdot \mathbf{1}_{\E}.
\]
Also note that when $X$ is volume-preserving, this corresponds to the Pollicott-Ruelle (co)resonant states at $0$ associated to the operator $\nabla^{\End}_X$. We shall also connect this notion with the representation $\rho : \mathbf{G} \rightarrow \mathrm{U}(\E_{x_{\star}})$ of the free monoid:

\begin{prop}
\label{proposition:opaque}
The following statements are equivalent:
\begin{enumerate}
\item The connection $\nabla^{\E}$ is opaque;
\item $\ker (\nabla^{\End}_X|_{C^\infty(\M,\End(\E))}) = \C \cdot \mathbf{1}_{\E}$;
\item The representation $\rho : \mathbf{G} \rightarrow \mathrm{U}(\E_{x_{\star}})$ is irreducible.
\end{enumerate}
\end{prop}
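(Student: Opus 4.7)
\medskip

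The plan is to prove the equivalence \textbf{(1)} $\iff$ \textbf{(2)} by citing \cite[Section 5]{Cekic-Lefeuvre-20} (already acknowledged in the paragraph preceding the proposition), and then to prove \textbf{(2)} $\iff$ \textbf{(3)} by identifying the space $\ker(\nabla^{\End}_X|_{C^\infty(\M,\End(\E))})$ with the commutant $\rho(\mathbf{G})' \subset \End(\E_{x_\star})$ and applying Schur's lemma to the unitary representation $\rho$.

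The identification proceeds through the evaluation map $\mathrm{ev}_{x_\star}: \ker(\nabla^{\End}_X|_{C^\infty}) \to \End(\E_{x_\star})$, $u \mapsto u(x_\star)$. Injectivity is immediate: if $u$ is flow-parallel, then $u(\varphi_t y) = C(y,t) u(y) C(y,t)^{-1}$, so $u(x_\star) = 0$ forces $u \equiv 0$ on the $\varphi_t$-orbit of $x_\star$, which is dense by transitivity, and then everywhere by continuity. To see that the image lies in $\rho(\mathbf{G})'$, I would use the construction of $\rho(g)$ for a homoclinic generator as a limit of parallel transports along almost-closed loops built from the homoclinic orbit truncated over a large time window $[-T,T]$ and closed up via local stable/unstable holonomies; since $u$ is flow-parallel and continuous, and the stable/unstable corrections approach $x_\star$ as $T \to \infty$, the relation $u(x_\star) = \rho(g) u(x_\star) \rho(g)^{-1}$ passes to the limit.

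For the reverse inclusion, given $A \in \rho(\mathbf{G})'$ I would define $\widetilde{A}$ on the orbit of $x_\star$ by $\widetilde{A}(\varphi_t x_\star) := C(x_\star,t) A C(x_\star,t)^{-1}$, which is well-defined modulo periodicity precisely because $A$ commutes with the holonomy along any closed loop based at $x_\star$ (produced in the limit from homoclinic generators). Density of the orbit, together with uniform continuity of the flow-covariant extension along shadowing orbits, extends $\widetilde{A}$ to a continuous flow-invariant section of $\End(\E)$ on $\M$. The regularity of $\widetilde{A}$ is then upgraded from continuous to smooth via the hyperbolic regularity statement \cite[Theorem 4.1]{Bonthonneau-Lefeuvre-20}, which precisely handles such flow-cohomological regularity questions. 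By construction $\nabla^{\End}_X \widetilde{A} = 0$ and $\mathrm{ev}_{x_\star}(\widetilde{A}) = A$, so $\mathrm{ev}_{x_\star}$ is a linear isomorphism onto $\rho(\mathbf{G})'$. Schur's lemma for unitary representations now gives $\rho(\mathbf{G})' = \C \cdot \mathbf{1}_{\E_{x_\star}}$ if and only if $\rho$ is irreducible, which is exactly the equivalence \textbf{(2)} $\iff$ \textbf{(3)}.

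The main obstacle is the surjectivity step: reconciling the commutation relation $A \in \rho(\mathbf{G})'$ with a \emph{globally consistent} and \emph{smooth} invariant section of $\End(\E)$ on all of $\M$. This requires both a careful shadowing/closing argument (to show that the attempted definition along the dense orbit has no ambiguity, using exactly the commutation with every generator of Parry's free monoid) and an application of the regularity bootstrap to promote continuity to $C^\infty$. Both ingredients are natural in the microlocal framework used throughout the paper, so I would expect the argument to slot in cleanly once the basic dictionary between flow-invariant endomorphisms and the representation $\rho$ is set up.
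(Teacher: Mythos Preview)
Your overall strategy --- identifying $\ker(\nabla^{\End}_X|_{C^\infty})$ with the commutant $\rho(\mathbf{G})'$ via evaluation at $x_\star$ and then applying Schur --- is correct, and is in fact the content of the paper's Theorem~\ref{theorem:iso}. The paper's own proof of Proposition~\ref{proposition:opaque} takes a marginally shorter route (it pushes only the orthogonal projection $\pi_\star$ onto an invariant subspace, rather than a general element of the commutant), but the essential construction is the same.

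There is, however, a concrete error that breaks both halves of your argument as written: the point $x_\star$ is \emph{periodic}, so its $\varphi_t$-orbit is the closed curve $\gamma_\star$, not a dense subset of $\M$. Transitivity of the flow does not make a periodic orbit dense. For injectivity this is easily patched: since the connection is unitary, $X|u|^2 = 0$, so $|u|$ is constant on a dense orbit and hence everywhere; thus $u(x_\star)=0$ forces $u\equiv 0$.

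For surjectivity the gap is more serious. Your formula $\widetilde{A}(\varphi_t x_\star) := C(x_\star,t)\,A\,C(x_\star,t)^{-1}$ only defines $\widetilde{A}$ on $\gamma_\star$, and there is no way to extend a section off a single closed curve by continuity alone. The correct dense set is $\mc{W}$, the union of orbits \emph{homoclinic} to $x_\star$ (Lemma~\ref{lemma:dense}). On a homoclinic orbit $\gamma$ one defines $\widetilde{A}$ by parallel-transporting $A$ from $x_\star$ along $\gamma$ via the limiting procedure that defines $\rho$ itself (the $p_-$ construction of Lemma~\ref{lemma:p-minus}); the spiral Lemma~\ref{lemma:spiral} guarantees convergence. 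The commutation $A\in\rho(\mathbf{G})'$ is then used exactly to show that the ``past'' and ``future'' extensions agree (as in Lemma~\ref{lemma:equality}), which in turn gives Lipschitz continuity across local product neighborhoods (Lemma~\ref{lemma:lipschitz}). Only then does the regularity bootstrap of \cite[Theorem 4.1]{Bonthonneau-Lefeuvre-20} apply. Your vague reference to ``shadowing orbits'' gestures in this direction, but the actual mechanism --- homoclinic orbits, the limiting parallel transport, and the $p_-=p_+$ step --- is precisely the missing idea.
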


\subsubsection{Kernel of the endomorphism connection}

The previous proposition actually follows from a more general statement which we now describe. The representation $\rho : \mathbf{G} \rightarrow \mathrm{U}(\E_{x_{\star}})$ gives rise to an orthogonal splitting 
\[
\E_{x_{\star}} = \oplus_{i=1}^K \E_i^{\oplus n_i},
\]
where $\E_i \subset \E_{x_\star}$ and $n_i \geq 1$; each factor $\E_i$ is $\mathbf{G}$-invariant and the induced representation on each factor is irreducible; furthermore, for $i \neq j$, the induced representations on $\E_i$ and $\E_j$ are not isomorphic. Let $\C[\mathbf{G}]$ be the formal algebra generated by $\mathbf{G}$ over $\C$ and let $\mathbf{R} := \rho\left( \C[\mathbf{G}] \right)$. By Burnside's Theorem (see \cite[Corollary 3.3]{Lang-02} for instance), one has that:
\[
\mathbf{R} = \oplus_{i=1}^K \Delta_{n_i} \End(\E_i),
\]
where $\Delta_{n_i} u = u \oplus ... \oplus u$ for $u \in \End(\E_i)$, the sum being repeated $n_i$-times. We introduce the \emph{commutant} $\mathbf{R}'$ of $\mathbf{R}$, defined as:
\[
\mathbf{R}' := \left\{ u \in \End(\E_{x_\star}) ~|~ \forall v \in \mathbf{R}, uv = vu\right\}.
\]
We then have:

\begin{theorem}
\label{theorem:iso}
There exists a natural isomorphism:
\[
\Phi : \mathbf{R}' \rightarrow \ker \nabla^{\End(\E)}_X|_{C^\infty(\M,\End(\E))}.
\]
In particular these spaces have same dimension, that is
\[
\dim\left( \ker \nabla^{\End(\E)}_X|_{C^\infty(\M,\End(\E))} \right) = \dim(\mathbf{R}') = \sum_{i=1}^K n_i^2.
\]
\end{theorem}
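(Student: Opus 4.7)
The plan is to construct $\Phi$ by first defining it along the forward orbit of $x_\star$ via parallel transport, and then extending by density, using that the orbit of $x_\star$ is dense in $\M$ (by transitivity). Explicitly, for $u \in \mathbf{R}'$ I set
\[
\Phi(u)(\varphi_t x_\star) := C(x_\star,t)\, u\, C(x_\star,t)^{-1} \in \End(\E_{\varphi_t x_\star}), \qquad t \in \R,
\]
which already satisfies $\nabla^{\End(\E)}_X \Phi(u) = 0$ along the orbit $\mc{O}(x_\star)$. To extend $\Phi(u)$ continuously (then smoothly) to all of $\M$, the task reduces to showing that the above formula is uniformly (H\"older-)continuous on $\mc{O}(x_\star)$, so that it has a unique continuous extension.

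Uniform continuity is where the commutant hypothesis enters, and this is the heart of the argument. If $\varphi_{t_n}x_\star \to y$ and $\varphi_{s_n}x_\star \to y$, one must compare $C(x_\star,t_n)\,u\,C(x_\star,t_n)^{-1}$ with $C(x_\star,s_n)\,u\,C(x_\star,s_n)^{-1}$ after translating both to a common fiber by a short local parallel transport. The concatenation of the orbit arc from $x_\star$ to $\varphi_{t_n}x_\star$, a short segment to $\varphi_{s_n}x_\star$, and the reversed orbit arc from $\varphi_{s_n}x_\star$ back to $x_\star$, is an almost-return loop at $x_\star$. By the Anosov shadowing/closing lemma it is tracked, with exponentially small error at each end, by a genuine homoclinic orbit to the base orbit of $x_\star$, hence defines (in the limit) an element $g_n \in \mathbf{G}$ whose holonomy is approximately $\rho(g_n) \in \mathbf{R}$. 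Since $u \in \mathbf{R}'$ commutes with every $\rho(g_n)$, the two conjugates of $u$ have the same limit. The exponential contraction on $E_s \oplus E_u$ moreover provides H\"older control on the extension.

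The continuous extension $\Phi(u)$ satisfies $\nabla^{\End(\E)}_X \Phi(u) = 0$ on all of $\M$; by the regularity theorem \cite[Theorem 4.1]{Bonthonneau-Lefeuvre-20} for invariant sections under Anosov flows, $\Phi(u) \in C^\infty(\M,\End(\E))$. The inverse map is evaluation at the base point: if $\sigma \in \ker \nabla^{\End(\E)}_X$, then flow-invariance reads $\sigma(\varphi_t x) = C(x,t)\,\sigma(x)\,C(x,t)^{-1}$, and applied to points on a homoclinic orbit to the base orbit of $x_\star$, passing to the limits at $t \to \pm \infty$ yields $\rho(g)\,\sigma(x_\star) = \sigma(x_\star)\,\rho(g)$ for every generator $g \in \mathbf{G}$; hence $\sigma(x_\star) \in \mathbf{R}'$. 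The composition of the two maps is the identity on either side by construction, so $\Phi$ is an isomorphism of vector spaces.

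Finally, the dimension count $\dim \mathbf{R}' = \sum_{i=1}^K n_i^2$ is immediate from the isotypic decomposition $\E_{x_\star} = \oplus_{i=1}^K \E_i^{\oplus n_i}$ already recalled before the statement: the double commutant theorem combined with Schur's lemma gives $\mathbf{R}' = \oplus_{i=1}^K \mathbbm{1}_{\E_i} \otimes M_{n_i}(\C)$. The main obstacle is the uniform continuity step: one needs a quantitative version of the shadowing lemma to control, uniformly in the base point $y$ and in $n$, the error between the actual loop-holonomy along the almost-return path and the group element $\rho(g_n)$, and to verify that the commutant property of $u$ absorbs this error in the limit; the rest of the argument is then structural.
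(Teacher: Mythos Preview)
There is a genuine gap at the very first step. The point $x_\star$ is the \emph{periodic} point fixed in \S\ref{sssection:homoclinic}, with period $T_\star$; its flow-orbit $\mc{O}(x_\star)=\{\varphi_t x_\star : t\in\R\}$ is the closed curve $\gamma_\star$ and is certainly not dense in $\M$. Transitivity only guarantees the existence of \emph{some} dense orbit, not that the periodic orbit of $x_\star$ is dense. Consequently your formula $\Phi(u)(\varphi_t x_\star)=C(x_\star,t)\,u\,C(x_\star,t)^{-1}$ defines $\Phi(u)$ only on the single closed geodesic $\gamma_\star$, and the ``extension by density'' never gets off the ground. (Incidentally, on $\gamma_\star$ itself the formula is at least well-defined: periodicity forces $C(x_\star,T_\star)\,u\,C(x_\star,T_\star)^{-1}=u$, which holds because $u\in\mathbf{R}'$ commutes with $\rho(\gamma_\star)$.)

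The paper's substitute for a dense orbit is the set $\mc{W}$ of points on \emph{homoclinic} orbits to $x_\star$, which is dense by Lemma~\ref{lemma:dense}. One then parallel-transports $u_\star$ from $x_\star$ to a homoclinic point $x$ along the infinite past (obtaining $u_-(x)$ via the limiting construction of Lemma~\ref{lemma:p-minus}) and along the infinite future (obtaining $u_+(x)$); the commutant hypothesis gives $u_-(x)=u_+(x)$ exactly because $\rho(\gamma)u_\star\rho(\gamma)^{-1}=u_\star$. Lipschitz continuity on $\mc{W}$ then follows as in Lemma~\ref{lemma:lipschitz}, using the local product structure and the matching $u_-=u_+$. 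Your heuristic about ``almost-return loops'' shadowed by homoclinic orbits is in the right spirit, but it cannot be implemented starting from $\mc{O}(x_\star)$ alone; you must build the section on $\mc{W}$ from the outset. Your surjectivity argument via evaluation at $x_\star$ is reasonable in outline and is indeed a valid alternative to the paper's decomposition into projections onto invariant subbundles.
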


\subsubsection{Invariant sections} To conclude this paragraph, we now investigate the existence of smooth \emph{invariant} sections of the bundle $\E \to \M$, namely elements of $\ker \nabla^{\E}_X|_{C^\infty(\M,\E)}$. First of all, observe that if $u \in C^\infty(\M,\E)$ is an invariant section, then $u_\star:= u(x_\star)$ is invariant by the $\mathbf{G}$-action. The converse is also true:

\begin{lemma}
\label{lemma:invariant-section}
Assume that there exists $u_\star \in \E_{x_\star}$ such that $\rho(g)u_\star = u_\star$ for all $g \in \mathbf{G}$. Then, there exists (a unique) $u \in C^\infty(\M,\E)$ such that $u(x_\star)=u_\star$ and $\nabla^{\E}_X u = 0$.
\end{lemma}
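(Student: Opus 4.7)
The plan is to reduce the statement to Theorem~\ref{theorem:iso} via an endomorphism-bundle trick. Set $\E' := \E \oplus \underline{\C}$, where $\underline{\C} = \M \times \C$ denotes the trivial line bundle equipped with the trivial connection $d$, and endow $\E'$ with the direct-sum unitary connection $\nabla^{\E'} := \nabla^{\E} \oplus d$. The corresponding block decomposition
\begin{equation*}
\End(\E') = \End(\E) \oplus \Hom(\underline{\C}, \E) \oplus \Hom(\E, \underline{\C}) \oplus \End(\underline{\C})
\end{equation*}
is preserved by the induced endomorphism connection $\nabla^{\End(\E')}$, since $\nabla^{\E'}$ respects the direct sum. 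Because $\Hom(\underline{\C}, \E) \simeq \E$ canonically as bundles with connections (the mixed connection on the left side restricting to $\nabla^{\E}$, cf. \eqref{eq:localhom} with $\Gamma_1 = 0$), a smooth section $u$ of $\E$ satisfies $\nabla^{\E}_X u = 0$ exactly when the block endomorphism $\Phi_u := \bigl(\begin{smallmatrix} 0 & u \\ 0 & 0 \end{smallmatrix}\bigr)$ of $\E'$ lies in $\ker \nabla^{\End(\E')}_X$.

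Applying Parry's free monoid construction to the enlarged bundle $\E'$, the induced unitary representation $\rho_{\E'} : \mathbf{G} \to \mathrm{U}(\E'_{x_\star})$ factors as the direct sum $\rho_\E \oplus 1_{\C}$, because parallel transport under $\nabla^{\E'}$ splits. A direct block-matrix computation shows that $\Phi_\star := \bigl(\begin{smallmatrix} 0 & u_\star \\ 0 & 0 \end{smallmatrix}\bigr)$ commutes with $\rho_{\E'}(g)$ for every $g \in \mathbf{G}$ if and only if $\rho_\E(g) u_\star = u_\star$ for every $g \in \mathbf{G}$, which is precisely the hypothesis of the lemma. Consequently $\Phi_\star$ belongs to the commutant $\mathbf{R}'$ of $\rho_{\E'}(\mathbf{G})$ inside $\End(\E'_{x_\star})$.

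Applying Theorem~\ref{theorem:iso} to $\E'$ then produces a smooth invariant endomorphism $\Phi \in \ker \nabla^{\End(\E')}_X$ with $\Phi(x_\star) = \Phi_\star$. The key step --- and the main obstacle --- is to verify that $\Phi$ retains its block-upper-triangular form at every point of $\M$. I would deduce this from the \emph{naturalness} of the isomorphism in Theorem~\ref{theorem:iso}: its construction proceeds by transporting $\Phi_\star$ from $x_\star$ using parallel transport along the flow combined with the stable and unstable holonomies, operations which commute with the direct-sum splitting because $\nabla^{\E'}$ does. Hence $\Phi$ takes values pointwise in the $\Hom(\underline{\C}, \E)$ summand, and writing $\Phi = \bigl(\begin{smallmatrix} 0 & u \\ 0 & 0 \end{smallmatrix}\bigr)$ with $u \in C^\infty(\M, \E)$ yields the desired smooth invariant section satisfying $\nabla^{\E}_X u = 0$ and $u(x_\star) = u_\star$.

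Uniqueness is immediate from the injectivity part of Theorem~\ref{theorem:iso}: any two smooth invariant sections of $\E$ sharing the value $u_\star$ at $x_\star$ give rise to the same element $\Phi_u \in \ker \nabla^{\End(\E')}_X$ under the correspondence above and must therefore coincide.
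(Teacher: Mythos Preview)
Your proof is correct and takes a genuinely different route from the paper. The paper proves the lemma by directly repeating the parallel-transport construction already used for Theorem~\ref{theorem:iso}: define $u_-(x)$ on homoclinic orbits by transporting $u_\star$ with respect to $\nabla^{\E}$, observe that $\|u_-(x)-u_+(x)\| = \|u_\star - \rho(\gamma)u_\star\| = 0$ by the invariance hypothesis, and then invoke the Lipschitz argument and the regularity result of \cite{Bonthonneau-Lefeuvre-20}. Uniqueness is obtained more simply than in your argument, from $X|u|^2 = 0$ and transitivity of the flow.

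Your approach instead treats Theorem~\ref{theorem:iso} as a black box, embedding $\E$ as the off-diagonal block $\Hom(\underline{\C},\E)$ inside $\End(\E\oplus\underline{\C})$. This is an elegant reduction that avoids re-running the construction, at the cost of needing to verify that the resulting section $\Phi$ stays in the correct block. Your justification of this last point is essentially right but slightly imprecise: rather than ``stable and unstable holonomies'', the map $\Phi$ in Theorem~\ref{theorem:iso} is built from parallel transport along homoclinic orbits and short geodesics, all with respect to $\nabla^{\End(\E')}$; since $\nabla^{\E'}$ is block-diagonal, each such transport sends the summand $\Hom(\underline{\C},\E)$ to itself, and the limit inherits this. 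One should also note that the subsequence $(k_n)$ and choice of trunks can be taken to agree for $\E$ and $\E'$, so that indeed $\rho_{\E'} = \rho_\E \oplus 1$. With those points made explicit, your argument is complete; it trades a short repetition of the transport argument for an auxiliary bundle and an appeal to the internal structure of Theorem~\ref{theorem:iso}.
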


Such an approach turns out to be useful when trying to understand a sort of \emph{weak version} of Liv\v{s}ic theory, such as the following: if $\E \rightarrow \M$ is a vector bundle equipped with the unitary connection $\nabla^{\E}$ and for each periodic orbit $\gamma \in \mc{G}$, there exists a section $u_\gamma \in C^\infty(\gamma,\E|_{\gamma})$ such that $\nabla^{\E}_X u_{\gamma} = 0$, then one can wonder if this implies the existence of a global invariant smooth section $u \in C^\infty(\M,\E)$? It turns out that the answer depends on the rank of $\E$:

\begin{lemma}
\label{lemma:rank2}
Assume that $\rk(\E) \leq 2$ and that for all periodic orbits $\gamma \in \mc{G}$, there exists $u_\gamma \in C^\infty(\gamma,\E|_{\gamma})$ such that $\nabla^{\E}_X u_{\gamma} = 0$. Then, there exists $u \in C^\infty(\M,\E)$ such that $\nabla^{\E}_X u = 0$.
\end{lemma}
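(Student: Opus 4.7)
The strategy is to reduce each case to Theorem \ref{theorem:weak} by exhibiting, on the other side of the comparison, a ``model'' Hermitian bundle carrying an obvious parallel section and whose holonomy traces along primitive orbits match those of $\E$. The gauge map produced by Theorem \ref{theorem:weak} then transports the model parallel section to the desired global parallel section of $\E$.

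The cases $\rk(\E) = 0$ and $\rk(\E) = 1$ are immediate: in rank one, the existence of a nonvanishing parallel section $u_\gamma$ along each primitive $\gamma \in \mc{G}^\sharp$ forces $\mathrm{Hol}_{\nabla^{\E}}(\gamma) = 1$, so $\Tr \mathrm{Hol}_{\nabla^{\E}}(\gamma) = 1$. Taking $\E_2 := \M\times\C$ with the flat connection $d$, Theorem \ref{theorem:weak} supplies $p \in C^\infty(\M,\mathrm{U}(\E_2,\E))$ intertwining the parallel transports, and $u := p\cdot \mathbbm{1}$ is then a smooth global parallel section of $\E$.

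The substantive case is $\rk(\E) = 2$. For a primitive orbit $\gamma$, write $H_\gamma := \mathrm{Hol}_{\nabla^{\E}}(\gamma) \in \mathrm{U}(\E_{x_\gamma})$. The existence of $u_\gamma$ says that $1$ is an eigenvalue of $H_\gamma$; since $H_\gamma$ is $2\times 2$ unitary, its eigenvalues are $\{1,\lambda_\gamma\}$ with $|\lambda_\gamma|=1$, so
\[
\Tr(H_\gamma) \;=\; 1 + \lambda_\gamma \;=\; 1 + \det(H_\gamma) \;=\; 1 + \mathrm{Hol}_{\nabla^{\det \E}}(\gamma),
\]
where $\nabla^{\det \E}$ is the connection induced on $\det(\E)$. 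Set $\E_2 := (\M\times\C) \oplus \det(\E)$ equipped with the direct sum connection $\nabla^{\E_2} := d \oplus \nabla^{\det \E}$. Then $\Tr \mathrm{Hol}_{\nabla^{\E_2}}(\gamma) = 1 + \mathrm{Hol}_{\nabla^{\det \E}}(\gamma) = \Tr(H_\gamma)$ for every primitive periodic orbit, so $\nabla^{\E}$ and $\nabla^{\E_2}$ have trace-equivalent holonomies in the sense of Definition \ref{definition:equivalence}. Theorem \ref{theorem:weak} yields $p \in C^\infty(\M,\mathrm{U}(\E_2,\E))$ with $C_1(x,t)p(x) = p(\varphi_t x) C_2(x,t)$. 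The section $s(x) := (1,0) \in (\M\times\C)_x \oplus \det(\E)_x$ is $\nabla^{\E_2}_X$-parallel, i.e.\ $s(\varphi_t x) = C_2(x,t)s(x)$, so
\[
u(\varphi_t x) \;:=\; p(\varphi_t x)\,s(\varphi_t x) \;=\; p(\varphi_t x) C_2(x,t) s(x) \;=\; C_1(x,t)\, p(x) s(x) \;=\; C_1(x,t)\,u(x),
\]
which is exactly the statement $\nabla^{\E}_X u = 0$. Pointwise unitarity of $p$ together with $|s|\equiv 1$ guarantees that $u$ is nowhere zero, so we indeed obtain the desired smooth global parallel section.

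The essential ingredient is the identity $\Tr(H_\gamma) = 1 + \det(H_\gamma)$, which is special to rank two: in higher rank, knowledge of one eigenvalue being equal to $1$ no longer determines $\Tr(H_\gamma)$ in terms of $\mathrm{Hol}_{\nabla^{\det \E}}(\gamma)$ alone, so this ``determinant trick'' breaks down. That is the only genuine obstacle, and it is precisely what forces the rank restriction in the statement.
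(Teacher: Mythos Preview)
Your proof is correct, and the key idea --- the rank-two identity $\Tr(H_\gamma) = 1 + \det(H_\gamma)$ when $1$ is an eigenvalue --- is exactly the same as the paper's. The packaging, however, is different and in some ways cleaner. The paper works inside Parry's free monoid: it first shows that every $\rho(g)$ has $1$ as an eigenvalue by approximating $\rho(g)$ with holonomies of periodic orbits, then uses the determinant trick to conclude that $\rho$ has the same character as the reducible representation $\rho'(g) = \mathrm{diag}(1, \det\rho(g))$, invokes the representation-theoretic fact that equal characters imply isomorphism, and finally applies Lemma~\ref{lemma:invariant-section} to extend the fixed vector to a global section. You instead build the model bundle $\E_2 = (\M\times\C)\oplus\det(\E)$ directly on $\M$, match traces along primitive orbits, and hand everything to Theorem~\ref{theorem:weak} in one shot. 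Since Theorem~\ref{theorem:weak} is itself proved via the monoid machinery, the two arguments are equivalent under the hood; your version has the virtue of hiding that machinery behind a single black-box application, while the paper's version makes more visible how the monoid representation $\rho$ mediates between local (periodic-orbit) data and global sections.
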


We shall see that the proof of the previous Lemma is purely representation-theoretic and completely avoids the need to understand dynamics and the distribution of periodic orbits. We leave as an exercise for the reader the fact that Lemma \ref{lemma:rank2} does not hold when $\rk(\E) \geq 3$. A simple counter-example can be built using the following argument: any matrix in $\mathrm{SO}(3)$ preserves an axis; hence, taking any $\mathrm{SO}(3)$-connection on a \emph{real} vector bundle of rank $3$ and then complexifying the bundle, one gets a vector bundle and a connection satisfying the assumptions of Lemma \ref{lemma:rank2}; it then suffices to produce an $\mathrm{SO}(3)$-connection without any invariant sections.

We believe that other links between properties of the representation $\rho$ and the geometry and/or dynamics of the parallel transport along the flowlines could be discovered. To conclude, let us also mention that all the results are presented here for complex vector bundles; most of them could be naturally restated for real vector bundles modulo the obvious modifications in the statements.

\subsection{Dynamical preliminaries on Anosov flows}

\label{section:anosov}

\subsubsection{Shadowing lemma and homoclinic orbits}

Fix an arbitrary Riemannian metric $g$ on $\M$. As usual, we define the \emph{local strong (un)stable manifolds} as:
\[
\begin{array}{l}
W^{s}_{\delta}(x) :=\left\{y \in \M, ~ \forall t \geq 0, d(\varphi_t y, \varphi_t x) < \delta, d(\varphi_t x, \varphi_t y) \rightarrow_{t \rightarrow +\infty} 0 \right\}, \\
W^{u}_{\delta}(x) :=\left\{y \in \M, ~ \forall t \leq 0, d(\varphi_t y, \varphi_t x) < \delta, d(\varphi_t x, \varphi_t y) \rightarrow_{t \rightarrow -\infty} 0 \right\},
\end{array}
\]
where $\delta > 0$ is chosen small enough. For $\delta = \infty$, we obtain the sets $W^{s,u}(x)$ which are the strong stable/unstable manifolds of $x$. We also set $W^{s,u}_{\mathrm{loc}}(x) =:  W^{s,u}_{\delta_0}(x)$ for some fixed $\delta_0 > 0$ small enough. The \emph{local weak (un)stable manifolds} $W^{ws,wu}_{\delta}(x)$ are the set of points $y \in B(x,\delta)$ such that there exists $t \in \R$ with $|t| < \delta$ and $\varphi_t y \in W^{s,u}_{\mathrm{loc}}(x)$.
The following lemma is known as the \emph{local product structure} (see \cite[Theorem 5.1.1]{Fisher-Hasselblatt-19} for more details):

\begin{lemma}
\label{lemma:product-structure}
There exists $\eps_0, \delta_0 > 0$ small enough such that for all $x,y \in \M$ such that $d(x,y) < \eps_0$, the intersection $W^{wu}_{\delta_0}(x) \cap W^{s}_{\delta_0}(y)$ is precisely equal to a unique point $\left\{z\right\}$. We write $z := \llbracket x,y \rrbracket$.
\end{lemma}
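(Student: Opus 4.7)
\medskip

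\noindent\textbf{Proof proposal.} The statement is a standard local product structure result in hyperbolic dynamics. The plan is to invoke the Stable/Unstable Manifold Theorem to present $W^{wu}_{\delta_0}(x)$ and $W^s_{\delta_0}(y)$ as $C^1$-embedded submanifolds of complementary dimensions varying continuously in the base point, and then deduce the intersection statement from a transversality/implicit function theorem argument that is made uniform by compactness.

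First, I would recall the Hadamard--Perron/Stable Manifold Theorem for Anosov flows (see e.g.\ \cite{Fisher-Hasselblatt-19}): for every $x \in \M$ and $\delta_0>0$ small enough, $W^s_{\delta_0}(x)$ and $W^u_{\delta_0}(x)$ are $C^1$-embedded smooth disks tangent at $x$ respectively to $E_s(x)$ and $E_u(x)$, and the map $x \mapsto W^{s,u}_{\delta_0}(x)$ is continuous in the $C^1$ topology on compact pieces. Consequently $W^{wu}_{\delta_0}(x) = \bigcup_{|t|<\delta_0}\varphi_t\bigl(W^u_{\delta_0}(x)\bigr)$ is a $C^1$-embedded disk of dimension $1+\dim E_u$, tangent at $x$ to $\R X(x)\oplus E_u(x)$, also varying continuously with $x$ in the $C^1$ topology.

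Second, I would exploit the continuous splitting $T_x\M = (\R X(x)\oplus E_u(x)) \oplus E_s(x)$. By compactness of $\M$ there is a uniform lower bound $\alpha_0>0$ on the angle between these two complementary subspaces. Fix $x_0 \in \M$ and choose smooth local parametrizations $\phi : D^{wu} \to W^{wu}_{\delta_0}(x_0)$ and $\psi : D^{s} \to W^{s}_{\delta_0}(x_0)$, where $D^{wu}, D^{s}$ are small disks in $\R^{1+\dim E_u}$ and $\R^{\dim E_s}$. Using geodesic normal coordinates at $x_0$ I would consider the $C^1$ map
\[
\Psi_{x_0}: D^{wu}\times D^s \longrightarrow \M, \qquad (u,s)\longmapsto \exp_{x_0}\!\bigl(\phi(u)+\psi(s)\bigr),
\]
whose differential at $(0,0)$ is the identification of $T_{x_0}\M$ with $(\R X(x_0)\oplus E_u(x_0))\oplus E_s(x_0)$, hence invertible thanks to transversality. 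By the inverse function theorem $\Psi_{x_0}$ is a local $C^1$ diffeomorphism onto a neighborhood of $x_0$. Moreover, the relevant norms are controlled uniformly in $x_0$ by the angular bound $\alpha_0$ and the $C^1$-continuity of the two foliations.

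Third, I would pass to the pair $(x,y)$. For $x$ close to $x_0$, $W^{wu}_{\delta_0}(x)$ is a $C^1$-small perturbation of $W^{wu}_{\delta_0}(x_0)$, and likewise for $W^s_{\delta_0}(y)$ when $y$ is close to $x_0$. In the chart $\Psi_{x_0}^{-1}$, these perturbed submanifolds become graphs of small $C^1$ functions over $D^{wu}$ and $D^s$ respectively, and their intersection amounts to solving a Lipschitz fixed-point equation whose contraction constant is controlled by $\alpha_0$. This yields a unique intersection point. Finally I would use compactness of $\M$ to extract a finite cover by such charts, take $\eps_0,\delta_0 > 0$ as the minimum of the parameters obtained on each chart, and conclude that $W^{wu}_{\delta_0}(x)\cap W^s_{\delta_0}(y)$ is a single point $[x,y]$ whenever $d(x,y)<\eps_0$.

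The main technical point is the $C^1$-continuity of the stable and unstable foliations with respect to the base point, which is only Hölder in general; this is precisely the content of the Stable/Unstable Manifold Theorem and is what makes the quantitative transversality argument uniform. Once this continuity is in hand, the remaining steps are essentially an application of the inverse function theorem together with a compactness argument.
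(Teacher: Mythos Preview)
The paper does not prove this lemma; it simply records it as the well-known local product structure and refers the reader to \cite[Theorem 5.1.1]{Fisher-Hasselblatt-19}. Your sketch is the standard transversality-plus-compactness argument and is essentially correct.

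One small notational slip: as written, $\phi$ and $\psi$ take values in $\M$, so the expression $\exp_{x_0}(\phi(u)+\psi(s))$ does not quite parse. What you mean is to work in exponential coordinates at $x_0$ from the outset, express $W^{wu}_{\delta_0}(x_0)$ and $W^s_{\delta_0}(x_0)$ as $C^1$ graphs over $\R X(x_0)\oplus E_u(x_0)$ and $E_s(x_0)$ respectively, and then define $\Psi_{x_0}$ on the product of these linear subspaces. With that adjustment the inverse function theorem step goes through exactly as you describe.
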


The main tool we will use to construct suitable \emph{homoclinic orbits} is the following classical shadowing property of Anosov flows for which we refer to \cite[Corollary 18.1.8]{Hasselblatt-Katok-95}, \cite[Theorem
5.3.2]{Fisher-Hasselblatt-19} and \cite[Proposition 6.2.4]{Fisher-Hasselblatt-19}. For the sake of simplicity, we now write $\gamma=[xy]$ if $\gamma$ is an orbit segment of the flow with endpoints $x$ and $y$.

\begin{theorem}
\label{theorem:shadowing} There exist $\eps_0>0$, $\theta>0$ and $C>0$ with the
following property. Consider $\eps<\eps_0$, and a finite or infinite sequence
of orbit segments $\gamma_i = [x_iy_i]$ of length $T_i$ greater than $1$ such
that for any $n$, $d(y_n,x_{n+1}) \leq \eps$. Then there exists a genuine
orbit $\gamma$ and times $\tau_i$ such that $\gamma$ restricted to $[\tau_i,
\tau_i+T_i]$ \emph{shadows} $\gamma_i$ up to $C\eps$. More precisely, for all $t\in
[0, T_i]$, one has
\begin{equation}
\label{eq:d_hyperbolic}
  d(\gamma(\tau_i+t), \gamma_i(t)) \leq C \eps e^{-\theta \min(t, T_i-t)}.
\end{equation}
Moreover, $|\tau_{i+1} - (\tau_i + T_i)| \leq C \eps$. Finally, if the
sequence of orbit segments $\gamma_i$ is periodic, then the orbit $\gamma$ is
periodic.
\end{theorem}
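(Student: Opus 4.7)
The plan is to reduce the general case to an iterative application of the local product structure (Lemma \ref{lemma:product-structure}), exploiting the exponential contraction on the stable/unstable bundles \eqref{equation:anosov}. This is the classical strategy carried out in \cite{Hasselblatt-Katok-95,Fisher-Hasselblatt-19}, which I would recall here rather than reinvent.

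First I would dispose of the two-segment case. Given $\gamma_i = [x_i y_i]$ of length $T_i$ and $\gamma_{i+1} = [x_{i+1} y_{i+1}]$ with $d(y_i, x_{i+1}) < \eps < \eps_0$, the local product structure produces a unique $z_i := [y_i, x_{i+1}] \in W^{wu}_{\delta_0}(y_i) \cap W^{s}_{\delta_0}(x_{i+1})$. A time reparametrization of size $|\tau| \leq C\eps$ replaces the weak unstable leaf by the strong one, yielding a point $z_i' \in W^{u}_{\delta_0}(y_i) \cap W^{s}_{\delta_0}(\varphi_{-\tau}(x_{i+1}))$. Pulling back by $\varphi_{-T_i}$ and using the unstable contraction in negative time gives a perturbation $\widetilde{x}_i$ of $x_i$ with $d(\widetilde{x}_i, x_i) \leq C\eps e^{-\theta T_i}$, such that the forward orbit of $\widetilde{x}_i$ of length $T_i$ shadows $\gamma_i$ with the bound \eqref{eq:d_hyperbolic} and ends within $C\eps$ of $x_{i+1}$.

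For a finite sequence of segments, I would iterate this construction from right to left: starting from the last segment, each absorption of a new segment on the left perturbs the chosen initial condition by a quantity bounded by $C\eps e^{-\theta T_i}$, and these perturbations form a summable geometric-type series (since each $T_i \geq 1$ and the shifts compound along the unstable direction only after the $T_i$-length of contraction). The resulting orbit $\gamma$ and times $\tau_i$, with $|\tau_{i+1} - (\tau_i + T_i)| \leq C\eps$ coming from the time corrections above, satisfy \eqref{eq:d_hyperbolic} because the point at parameter $\tau_i + t$ lies on a local bi-hyperbolic rectangle around $\gamma_i(t)$ whose diameter is controlled by $C\eps e^{-\theta \min(t, T_i - t)}$ (forward contraction on the stable side, backward contraction on the unstable side). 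The infinite case then follows by a compactness/diagonal argument: the shadowing orbits for finite truncations form a Cauchy sequence in the topology of uniform convergence on compact sets, with limit the desired $\gamma$.

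For the periodic case, I would set up a fixed-point formulation. If $(\gamma_i)_{i \in \Z}$ is periodic of period $p$, define a map $\Phi$ on a small neighborhood of $x_1$ in a suitable transversal (parametrized by local unstable coordinates at $x_1$) which sends a candidate starting point to the starting point obtained after shadowing $\gamma_1, \dots, \gamma_p$ and gluing back. The same unstable contraction estimate shows $\Phi$ is a contraction on a ball of radius $\sim \eps$, so the Banach fixed-point theorem provides a fixed point generating a genuine periodic orbit. The main technical obstacle, and the point I would take extra care with, is controlling the accumulated time reparametrization: the shifts $|\tau_{i+1} - (\tau_i + T_i)| \leq C\eps$ must not drift unboundedly, which is handled by the geometric summability coming from \eqref{equation:anosov} together with the assumption $T_i \geq 1$ ensuring a definite contraction factor $e^{-\theta}$ at each step.
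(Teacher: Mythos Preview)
Your sketch follows the classical route (local product structure plus iterated contraction, then a fixed-point argument for the periodic case), which is exactly what the paper relies on: the paper does not prove Theorem~\ref{theorem:shadowing} at all but simply quotes it from \cite[Corollary 18.1.8]{Hasselblatt-Katok-95} and \cite[Theorem 5.3.2, Proposition 6.2.4]{Fisher-Hasselblatt-19}. So there is nothing to compare---your outline is the standard proof from those references and is fine as a sketch.
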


It is instructive for the reader to have Figure \ref{fig:ASgeometry}(A) in mind, where the upper curve corresponds to the orbit $\gamma$ approximating the segment given by the lower curve. Let us also make the following important comment. In the previous theorem, one can also allow the first orbit segment $\gamma_i$ to be
infinite on the left, and the last orbit segment $\gamma_j$ to be infinite on
the right. In this case,~\eqref{eq:d_hyperbolic} should be replaced by: assuming that $\gamma_i$ is defined on $(-\infty, 0]$
and $\gamma_j$ on $[0,+\infty)$, we would get for some $\tilde\tau_{i+1}$
within $C\eps$ of $\tau_{i+1}$, and all $t\geq 0$
\begin{equation*}
  d(\gamma(\tilde\tau_{i+1}-t), \gamma_i(-t)) \leq C \eps e^{-\theta t}, \quad d(\gamma(\tau_{j}+t), \gamma_j(t)) \leq C \eps e^{-\theta t}.
\end{equation*}

\label{sssection:homoclinic}

Fix an arbitrary periodic point $x_{\star} \in \M$ of period $T_{\star}$ and denote by $\gamma_{\star}$ its primitive orbit.

\begin{definition}[Homoclinic orbits]
\label{definition:homoclinic}
A point $p \in \M$ is said to be \emph{homoclinic} to $x_{\star}$ if $p \in W^{ws}(x_{\star}) \cap W^{wu}(x_{\star})$ (in other words, $d(\varphi_{t+t^\pm_0} p, \varphi_t x_{\star}) \rightarrow_{t \rightarrow \pm \infty} 0$ for some $t^\pm_0 \in \R$). We say that an orbit $\gamma$ is homoclinic to $x_{\star}$ if it contains a point $p \in \gamma$ that is homoclinic to $x_{\star}$ and we denote by $\mc{H} \subset \M$ the set of homoclinic orbits to $x_{\star}$.
\end{definition}

Note that due to the hyperbolicity, the convergence of the point $p$ to $x_{\star}$ is exponentially fast. More precisely, let $\gamma$ be the orbit of $p$ and let $\R \ni t \mapsto \gamma(t)$ be the flow parametrization of $\gamma$. Then, there exists uniform constants $C,\theta > 0$ (independent of $\gamma$) and $A_\pm \in \R$ (depending on $\gamma$) such that the following holds:
\begin{equation}
\label{equation:distance}
d(\gamma(A_\pm \pm n T_{\star}), x_{\star}) \leq C e^{-\theta n}.
\end{equation}
The points $\gamma(A_\pm)$ correspond to an arbitrary choice of points in $W^{s,u}_{\delta_0}(x_{\star})\cap \gamma$ (for some arbitrary $\delta_0 > 0$ small enough). Homoclinic orbits have infinite length (except the orbit of $x_{\star}$ itself) but it will be convenient to introduce a notion of \emph{length} $T_\gamma$ which we define to be equal to $T_\gamma:=A_+-A_-$ (note that this is a highly non-canonical definition). We define the trunk to be equal to the central segment $\gamma([A_-,A_+])$. In other words, the length of $\gamma$ is equal to the length of its trunk. We also define the points: $x_n^\pm := \gamma(A_\pm \pm nT_{\star})$. Note that another choice of values $A'_\pm$ has to differ from $A_\pm$ by $mT_{\star}$ for some $m \in \Z$. Homoclinic orbits will play a key role as we shall see in due course. 

\begin{lemma}
\label{lemma:dense}
Assume that the flow is transitive. Then the set $\mc{W}$ of points belonging to a homoclinic orbit in $\mc{H}$ is dense in $\M$.
\end{lemma}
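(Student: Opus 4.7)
The plan is to combine transitivity of the flow with the extended version of the shadowing lemma (the variant, noted after Theorem \ref{theorem:shadowing}, that allows infinite initial and terminal segments) to produce, for any $x \in \M$ and any $\varepsilon > 0$, a genuine homoclinic orbit passing within $\varepsilon$ of $x$.

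First, fix $x \in \M$ and $\varepsilon > 0$, and choose a small $\delta > 0$ to be determined from the shadowing constant $C$. By transitivity there exists a point $z \in \M$ with dense forward orbit in $\M$. One can therefore select times $s_1 < s_2 < s_3$ with $s_3 - s_1 > 1$ such that both $\varphi_{s_1}(z)$ and $\varphi_{s_3}(z)$ are $\delta$-close to $x_\star$, while $\varphi_{s_2}(z)$ is $\delta$-close to $x$. I would then build the pseudo-orbit consisting of three segments: the orbit of $x_\star$ restricted to $(-\infty,0]$ (infinitely traversing $\gamma_\star$ backward and ending at $x_\star$); the orbit segment $\varphi_{[s_1,s_3]}(z)$; and the orbit of $x_\star$ restricted to $[0,+\infty)$. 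The two consecutive endpoint jumps $x_\star \rightsquigarrow \varphi_{s_1}(z)$ and $\varphi_{s_3}(z) \rightsquigarrow x_\star$ have length at most $\delta$, so the extended shadowing statement applies and yields a genuine orbit $\gamma$ that shadows the middle segment uniformly up to $C\delta$ and that shadows $\gamma_\star$ on its two infinite tails with the sharper exponential error $C\delta e^{-\theta t}$.

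The exponential shadowing on the two infinite tails, after absorbing the shadowing time shifts $\tilde\tau$ into constants $t_0^\pm$, translates into $d(\varphi_t \gamma(0), \varphi_{t+t_0^\pm} x_\star) \to 0$ as $t \to \pm\infty$, which is precisely the condition $\gamma(0) \in W^{ws}(x_\star) \cap W^{wu}(x_\star)$; hence $\gamma \in \mc{H}$. Simultaneously, the middle shadowing places $\gamma(\tau)$ within $C\delta$ of $\varphi_{s_2}(z)$ at some intermediate time $\tau$, and therefore within $(C+1)\delta$ of $x$. Choosing $\delta < \varepsilon/(C+1)$ at the outset produces a point of $\mc{W}$ within distance $\varepsilon$ of $x$, which proves density.

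The main (modest) technical point will be the careful bookkeeping of time parametrizations to verify that the exponential-tail estimate from the extended shadowing lemma really yields membership in the weak stable and weak unstable manifolds of $x_\star$, rather than merely approximate asymptotics; everything else is a straightforward application of the shadowing machinery together with transitivity. Note that the argument produces a homoclinic orbit genuinely distinct from $\gamma_\star$ as soon as $x \notin \gamma_\star$ and $\varepsilon$ is smaller than $d(x,\gamma_\star)$, while in the trivial case $x \in \gamma_\star$ one has $x \in \mc{W}$ directly since $x_\star$ itself lies in $W^{ws}(x_\star) \cap W^{wu}(x_\star)$.
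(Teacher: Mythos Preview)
Your proposal is correct and follows essentially the same approach as the paper: concatenate the periodic orbit $\gamma_\star$ (on both infinite ends) with a finite segment $S$ of a transitive orbit passing near $x$, and apply the extended shadowing lemma to the pseudo-orbit $\dots \gamma_\star \gamma_\star S \gamma_\star \gamma_\star \dots$. The paper's proof is a one-sentence sketch of exactly this construction; your version just unpacks the details more carefully.
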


\begin{proof}
This is a straightforward consequence of the shadowing Theorem \ref{theorem:shadowing}: one concatenates a long segment $S$ of a transitive orbit with $\gamma_{\star}$ i.e. one applies Theorem \ref{theorem:shadowing} with $... \gamma_{\star} \gamma_{\star} S \gamma_{\star} \gamma_{\star} ...$.
\end{proof}

\begin{remark}
\rm
In the particular case of an Anosov geodesic flow on the unit tangent bundle, one can check that $\mc{H}$ is in one-to-one correspondence with $\pi_1(M)/\langle \widetilde{\gamma_{\star}} \rangle$, where $\widetilde{\gamma_{\star}} \in \pi_1(M)$ is any element such that the conjugacy class of $\widetilde{\gamma_{\star}}$ in $\pi_1(M)$ corresponds\footnote{Recall that the set of free homotopy classes $\mc{C}$ is in one-to-one correspondence with conjugacy classes of $\pi_1(M)$, see \cite[Chapter 1]{Hatcher-02}.} to the free homotopy class $c \in \mc{C}$ whose unique geodesic representative is $\gamma_{\star}$.  
\end{remark}

\subsubsection{Applications of the Ambrose-Singer formula}

Consider a Hermitian vector bundle $\E$ over $(\M, g)$ equipped with a unitary connection $\nabla = \nabla^{\E}$. If $x, y \in \M$ are at a distance less than the injectivity radius of $\M$, denote by $C_{x \to y}: \E_x \to \E_y$ the parallel transport with respect to $\nabla^{\E}$ along the shortest geodesic from $x$ to $y$, by $C(x, t): \E_x \to \E_{\varphi_tx}$ the parallel transport along the flow and by $C_\gamma$ the parallel transport along a curve $\gamma$. For $U \in C^\infty(\M,\mathrm{End}(\mc{E}))$, we define, for all $x \in \M$, 
\[
\|U\|_x := \Tr(U^*(x)U(x))^{1/2},
\]
and $\|U\|_{L^\infty} := \sup_{x \in \M} \|U\|_x$. In particular, if $U \in C^\infty(\M,\mathrm{U}(\mc{E}))$ is unitary, then $\|U\|_{L^\infty} = \sqrt{\rk(\E)}$. We record the following consequences of Lemma \ref{lemma:ambrosesinger}:

\begin{lemma}\label{lemma:ASgeometry}
The following consequences of the Ambrose-Singer formula hold:
\begin{enumerate}
	\item Assume we are in the setting of Theorem \ref{theorem:shadowing}: for some $C, \varepsilon, T > 0$, let $x, p \in \M$ satisfy $d(\varphi_t x, \varphi_t p) \leq C \eps e^{-\theta \min(t,T-t)}$ for all $t \in [0, T]$. Then for any $0 \leq T_1 \leq T$:
	\[\|C(\varphi_{T_1}x, -T_1)C_{\varphi_{T_1}p \to \varphi_{T_1}x}C(p, T_1) C_{x \to p} - \mathbbm{1}_{\E_x}\|_x \leq \frac{c_0C \varepsilon}{\theta} \times \|F_\nabla\|_{C^0},\]
	where $c_0 = c_0(X, g) > 0$ depends only on the flow $X$ and the metric.
	
	\item Assume $\gamma \subset B(p, \imath/2)$ is a closed piecewise smooth curve at $p$ of length $L$, where $\imath$ denotes the injectivity radius of $(\mathcal{M}, g)$. Then for some $C = C(g) > 0$ depending on the metric:
	\[\|C_\gamma - \id_p\|_p \leq C L \times \sup_{y \in \gamma} d(p, y) \times \|F_\nabla\|_{C^0}.\]
	
	\item Let $\gamma: [0, L] \to M$ be a unit speed curve based at $p$, and $\nabla'$ be a second unitary connection on $\E$, whose parallel transport along $\gamma$ we denote by $C'_\gamma$. Then:
	\[\|C_{\gamma}^{-1}C'_{\gamma} - \id_p\|_p \leq L \times \|\nabla - \nabla'\|_{C^0}.\]
	\end{enumerate}
\end{lemma}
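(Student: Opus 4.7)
The plan is to deduce parts (1) and (2) directly from the Ambrose--Singer formula of Lemma \ref{lemma:ambrosesinger} by selecting suitable homotopies, and to treat part (3) by a simple Duhamel argument applied to the ODE satisfied by $C_\gamma(t)^{-1} C'_\gamma(t)$.

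For part (1), I would build a ``thin rectangle'' homotopy $\Gamma : [0, 1]^2 \to \M$ (with parameters $(s, \tau)$) by declaring $s \mapsto \Gamma(s, \tau)$ to be the short geodesic from $\varphi_{T_1 \tau}(x)$ to $\varphi_{T_1 \tau}(p)$ for each fixed $\tau$. The four corners then match $\{x, p, \varphi_{T_1}(x), \varphi_{T_1}(p)\}$ in the correct order, so Lemma \ref{lemma:ambrosesinger} computes precisely the desired quadrilateral holonomy $C(\varphi_{T_1}x, -T_1) C_{\varphi_{T_1}p \to \varphi_{T_1}x} C(p, T_1) C_{x \to p}$. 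The geometric inputs are $|\partial_s \Gamma| \leq d(\varphi_{T_1 \tau}(x), \varphi_{T_1 \tau}(p)) \leq C \varepsilon \, e^{-\theta \min(T_1 \tau,\, T_1 - T_1 \tau)}$, from the shadowing hypothesis, and $|\partial_\tau \Gamma| \leq c_1 T_1$, from the fact that $\partial_\tau \Gamma$ is a Jacobi-type variational field along a very short geodesic with endpoint velocities $T_1 X(\varphi_{T_1 \tau} x)$ and $T_1 X(\varphi_{T_1 \tau} p)$, the constant $c_1$ depending only on $X$ and $g$. The substitution $t = T_1 \tau$ converts $\int_0^1 T_1 \, e^{-\theta \min(T_1 \tau, T_1 - T_1 \tau)} \, d\tau$ into the crucial $T_1$-independent quantity $\int_0^{T_1} e^{-\theta \min(t, T_1 - t)} \, dt \leq 2/\theta$, which yields the claimed bound with $c_0 := 2 c_1$.

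For part (2), I would instead use the radial homotopy $\Gamma(s, t) := \exp_p(s \cdot \exp_p^{-1}(\gamma(Lt)))$, well-defined on $[0, 1]^2$ since $\gamma \subset B(p, \imath/2)$. Because $\gamma$ is closed, both lateral edges $\Gamma(0, \cdot) \equiv p$ and $\Gamma(\cdot, 1) \equiv p$ collapse to the single point $p$, so that $C_\uparrow(1, 1) = \id_{\E_p}$ and $C_\rightarrow(1, 1) = C_\gamma$, reducing Lemma \ref{lemma:ambrosesinger} to $\|C_\gamma - \id_p\|_p \leq \iint \|F_\nabla(\partial_t, \partial_s)\| \, dt \, ds$. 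The pointwise estimates $|\partial_s \Gamma| \leq c_2 \sup_{y \in \gamma} d(p, y)$ and $|\partial_t \Gamma| \leq c_2 L$ follow from uniform bounds on derivatives of $\exp_p$ on $B(p, \imath/2)$ and deliver the claim.

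For part (3), letting $A := \nabla' - \nabla \in C^\infty(\M, T^*\M \otimes \End_{\mathrm{sk}}(\E))$ and $U(t) := C_\gamma(t)^{-1} C'_\gamma(t)$, a short computation using the defining equations for parallel transport shows that $U$ satisfies the ODE $\dot U(t) = -C_\gamma(t)^{-1} A(\dot\gamma(t)) C_\gamma(t) \, U(t)$ with $U(0) = \id_p$. Duhamel's formula together with the fact that both $C_\gamma(t)$ and $U(t)$ are unitary (so conjugation preserves the Hilbert--Schmidt norm) immediately yields $\|U(L) - \id_p\|_p \leq \int_0^L \|A(\dot\gamma(t))\| \, dt \leq L \, \|\nabla - \nabla'\|_{C^0}$. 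The main obstacle I anticipate lies in part (1): one must carefully verify that the composition $C_\uparrow(1, 1)^{-1} C_\rightarrow(1, 1)$ reproduces exactly the stated ordering of the quadrilateral, and ensure that the bound on $\partial_\tau \Gamma$ is connection-independent, so that the resulting constant $c_0$ truly depends only on $X$ and $g$ (and not on $T_1$); this independence is made possible precisely by the exponential $\theta$-factor in the shadowing estimate.
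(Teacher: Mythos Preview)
Your proposal is correct and follows essentially the same approach as the paper: the same thin-rectangle homotopy built from short geodesics for (1), the same radial homotopy for (2) (with the roles of $s$ and $t$ swapped, which is immaterial), and the same direct integration of $\partial_t(C_\gamma(t)^{-1}C'_\gamma(t))$ for (3). Your anticipated obstacle in (1) is a non-issue: with your conventions $\Gamma(0,0)=x$, $\Gamma(1,0)=p$, $\Gamma(0,1)=\varphi_{T_1}x$, $\Gamma(1,1)=\varphi_{T_1}p$, one checks that $C_\uparrow(1,1)^{-1}C_\rightarrow(1,1)$ reproduces the stated quadrilateral exactly, and the bound $|\partial_\tau\Gamma|\leq c_1 T_1$ comes from compactness of $\M$ and smoothness of $X$ and $\exp$, so $c_0$ indeed depends only on $(X,g)$.
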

The geometries appearing in (1), (2) and (3) are depicted in Figure \ref{fig:ASgeometry} (A), (B) and (C), respectively.
 
 \begin{figure}
 \centering
\begin{subfigure}{0.39\linewidth}
             \centering
\begin{tikzpicture}[scale = 0.55, everynode/.style={scale=0.5}]
\tikzset{cross/.style={cross out, draw=black, minimum size=2*(#1-\pgflinewidth), inner sep=0pt, outer sep=0pt},
cross/.default={1pt}}

		\fill[pattern=vertical lines, pattern color=blue, draw = black] (0, 0) -- (0, 1.75) to[out = -35, in= -145, distance=75] (10, 1.75) -- (10, 0) --  (0,0);

		\fill (0, 0) node[below left] {\small $x$} circle (1.5pt);
		\fill (10, 0) node[below right] {\small $\varphi_Tx$}  circle (1.5pt);
		\fill (0, 1.75) node[left] {\small $p$}  circle (1.5pt);
		\fill (10, 1.75) node[right] {\small $\varphi_T p$}  circle (1.5pt);

		\draw[thick, ->] (5, 1.1) -- (5, 0.6) node[above left] {\tiny $\varphi_t p$};
		\draw (5, 1.25) node[right] {$\mc{O}(e^{-\theta t})$};
		\fill (5, 0.6) circle (1pt);
		\draw[thick, ->] (5, -0.5) -- (5, 0) node[below left] {\tiny $\varphi_t x$};		
		\fill (5, 0) circle (1pt);
\end{tikzpicture}
             \caption{\small Shadowing homotopy.}
             \end{subfigure}
             \begin{subfigure}{0.29\linewidth}
              \centering
             		\begin{tikzpicture}[scale=0.8]
             		\draw (0, 0) circle (1.5);
             			\draw (0, 0) node[below]{$p$}--(-1, 1)--(1,1)--(0,0);
             			\fill (0, 0) circle (1pt) (1, 1) circle (1pt) (-1, 1) circle (1pt);
             			\draw[blue] (0, 0)--(-0.8, 1) (0, 0)--(-0.6,1) (0, 0)--(-0.4,1) (0, 0)--(-0.2,1) (0, 0)--(-0,1) (0, 0)--(0.2,1) (0, 0)--(0.4,1) (0, 0)--(0.6,1) (0, 0)--(0.8,1);
             		\draw (0.5, 0.4) node[right]{$\gamma$};
             		\end{tikzpicture}
             		 \caption{\small Radial homotopy.}
             \end{subfigure}
             \begin{subfigure}{0.29\linewidth}
             \centering
             		\begin{tikzpicture}
             			\draw[thick, ->] (0, 0) node[below]{\small $p = \gamma(0)$}--(0.5, 0.5);
             			\draw[thick] (0.5, 0.5)--(1, 1) node[right]{\small $\gamma(L)$};
             			\fill (0, 0) circle(1pt) (1, 1) circle(1pt);
             			\draw[->] (0.5, 0.2)--(0.8, 0.5) node[below right]{\small $C'_\gamma$};
             			\draw[->] (0.5, 0.8)--(0.2, 0.5) node[above left]{\small $C_\gamma^{-1}$};
             		\end{tikzpicture}
             		             		 \caption{\small Straight-line homotopy.}
             \end{subfigure}
              \caption{\small Presentation of the geometries considered in Lemma \ref{lemma:ASgeometry}.}
             \label{fig:ASgeometry}
\end{figure}
\begin{proof}
	We first prove (1). For $C, \varepsilon$ small enough, for all $t \in [0, T]$ we denote by $\tau_t$ the unit speed shortest geodesic, of length $\ell(t)$, from $\varphi_tx$ to $\varphi_t p$. Define a smooth homotopy $\Gamma: [0, 1]^2 \to \M$ by setting:
	\[\Gamma(s, t) := \tau_{tT_1}(s \ell(t)),\]
	and note that by assumption $\ell(t) \leq C\varepsilon e^{-\theta \min(t, T - t)}$. We apply Lemma \ref{lemma:ambrosesinger} to the homotopy $\Gamma$ to obtain, after a rescaling of parameters $s$ and $t$:
	\begin{multline*}
		C(\varphi_{T_1}x, -T_1)C_{\varphi_{T_1}p \to \varphi_{T_1}x}C(p, T_1) C_{x \to p} - \mathbbm{1}_{\E_x}\\
		= \int_0^{T_1} \int_0^{\ell(t)} C_{\uparrow}^{-1}(s, t) F_\nabla(\partial_t \tau_t(s), \partial_s \tau_t(s)) C_{\rightarrow}(s, t) \, ds \, dt.
	\end{multline*}
	Here we recall $C_{\uparrow}$ and $C_{\rightarrow}$ are parallel transport maps obtained by parallel transport along curves as in Figure \ref{fig:AS1}. Since $C_{\uparrow}$ and $C_{\rightarrow}$ are isometries, and since by compactness $|\partial_t \tau_t(s)| \leq D$ for some $0 < D = D(X, g)$, we have:
	\begin{multline*}
		\|C(\varphi_{T_1}x, -T_1)C_{\varphi_{T_1}p \to \varphi_{T_1}x}C(p, T_1) C_{x \to p} - \mathbbm{1}_{\E_x}\|_x\\
		 \leq CD\varepsilon \|F_{\nabla}\|_{C^0} \int_0^T e^{-\theta \min(t, T - t)} dt \leq \frac{2D C}{\theta} \times \varepsilon \|F_\nabla\|_{C^0}.
	\end{multline*}
	
	For (2), we may assume by approximation that $\gamma$ is smooth. Then taking the homotopy 
	\[\Gamma(s, t) = \exp_x( t \exp_x^{-1} (\gamma(sL))),\]
	and applying Lemma \ref{lemma:ambrosesinger}, we obtain by a rescaling of $s$ and writing $\widetilde{\Gamma}(s, t) = \Gamma(s/L, t)$:
	\[C_\gamma - \id_x = \int_0^L \int_0^1 C_1(s, t)^{-1} F_\nabla(\partial_s \widetilde{\Gamma}, \partial_t \widetilde{\Gamma}) C_2(s, t) \,dt \,ds.\]
	The estimate now follows by using $\|\partial_t \widetilde{\Gamma}\| \leq C d(x, \gamma(s))$, where we introduce the positive constant $C = \sup_{x \in M} \sup_{|y|_{g_x} < \imath/2}\|d\exp_x(y)\|_{T_xM \to T_{\exp_x(y)}}$.
	
	For the final item, denote by $C_t, C_t'$ the parallel transports along $\gamma|_{[0, t]}$ with the connection $\nabla, \nabla'$, respectively. Then it is straightforward that $\partial_t(C_t^{-1}C'_t) = C_t^{-1}(\nabla- \nabla')(\dot{\gamma}(t)) C_t'$, so
	\[C_\gamma^{-1} C_\gamma' - \id_p = \int_0^L C_t^{-1}(\nabla- \nabla')(\dot{\gamma}(t)) C_t' \,dt.\]
	The required estimate follows.
\end{proof}

We also have the following result to which we will refer to as the \emph{spiral Lemma}:

\begin{lemma}
\label{lemma:spiral}
Let $x_{\star} \in \M$ be a periodic point of period $T_{\star}$ and let $x_0 \in W^s_{\mathrm{loc}}(x_{\star})$. Define $x_n := \varphi_{nT_{\star}}x_0$ and write $q_n :=  C(x_{\star},nT_{\star})^{-1}C_{x_n \rightarrow x_{\star}} C(x_0, n T_{\star})C_{x_{\star} \rightarrow x_0}$. Then:
\[
\rho(x_0) := \lim_{n \rightarrow +\infty} q_n \in \mathrm{U}(\E_{x_{\star}})
\]
exists. Moreover, there exist some uniform constants $C,\theta > 0$ such that
\[
|q_n - \rho(x_0)| \leq C e^{-\theta n}.
\]
\end{lemma}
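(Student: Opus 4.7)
The strategy is to show that $(q_n)$ is exponentially Cauchy in the (closed) unitary group $\mathrm{U}(\E_{x_\star})$, from which both existence of the limit and the rate of convergence follow. First, $q_n$ is manifestly unitary as a composition of parallel transport maps with respect to the unitary connection $\nabla^{\E}$.

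The key step is a telescoping identity for $q_{n+1} q_n^{-1}$. Using the flow-cocycle relations $C(x_\star,(n+1)T_\star) = C(x_\star,T_\star)\, C(x_\star,nT_\star)$ (exploiting that $\varphi_{nT_\star}x_\star=x_\star$) and $C(x_0,(n+1)T_\star) = C(x_n,T_\star)\, C(x_0,nT_\star)$, together with the cancellation of the two short geodesic transports $C_{x_\star\to x_0} C_{x_\star\to x_0}^{-1} = \mathbbm{1}$ which appear in the middle of $q_{n+1}q_n^{-1}$, a direct computation yields
\[
q_{n+1}q_n^{-1} \;=\; C(x_\star,nT_\star)^{-1}\, A_n\, C(x_\star,nT_\star),
\qquad A_n := C(x_\star,T_\star)^{-1}\, C_{x_{n+1}\to x_\star}\, C(x_n,T_\star)\, C_{x_\star\to x_n}.
\]
The factor $A_n$ is exactly the $\nabla^{\E}$-holonomy around the small closed rectangle at $x_\star$ obtained by shadowing one period of $\gamma_\star$ by the orbit segment from $x_n$ to $x_{n+1}$ (with the two "short edges" being the minimizing geodesics $x_\star \to x_n$ and $x_{n+1}\to x_\star$).

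For the analytic bound, since $x_0 \in W^s_{\mathrm{loc}}(x_\star)$ and $x_n = \varphi_{nT_\star}x_0$, the hyperbolicity estimate \eqref{equation:anosov} on $E_s$ gives $d(\varphi_t x_n, \varphi_t x_\star) \leq C\, e^{-\theta(nT_\star + t)}$ uniformly in $t\in[0,T_\star]$. This is precisely the shadowing setting of Lemma~\ref{lemma:ASgeometry}(1) (applied with $T=T_\star$, $x=x_\star$, $p=x_n$, and $C\varepsilon = C'e^{-\theta nT_\star}$), yielding
\[
\|A_n - \mathbbm{1}_{\E_{x_\star}}\|_{x_\star} \;\leq\; C''\, e^{-\theta\, n T_\star}\, \|F_{\nabla^{\E}}\|_{C^0},
\]
for uniform constants $C'', \theta > 0$ independent of $n$.

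Finally, the Frobenius norm $\|U\|_x = \Tr(U^*U)^{1/2}$ is invariant under left/right unitary multiplication, so
\[
\|q_{n+1}-q_n\|_{x_\star} \;=\; \|(q_{n+1}q_n^{-1}-\mathbbm{1})q_n\|_{x_\star} \;=\; \|A_n - \mathbbm{1}\|_{x_\star} \;\leq\; C''\, e^{-\theta\, n T_\star}.
\]
Hence $(q_n)$ is Cauchy, converges in the closed subset $\mathrm{U}(\E_{x_\star})$ to the desired $\rho(x_0)$, and summing the geometric tail gives $|q_n - \rho(x_0)| \leq C e^{-\theta n}$ after absorbing $T_\star$ into the decay constant. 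The only real point that requires care is the algebraic telescoping in the second step that re-exhibits $q_{n+1}q_n^{-1}$ as a conjugate of a pure shadowing-rectangle holonomy; once that is in place, the estimate is a textbook application of the integral Ambrose--Singer formula, which is why we invoked Lemma~\ref{lemma:ASgeometry}(1) in the strong form (1) rather than rederive it.
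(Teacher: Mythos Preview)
Your proof is correct. The telescoping identity $q_{n+1}q_n^{-1} = C(x_\star,nT_\star)^{-1} A_n\, C(x_\star,nT_\star)$ checks out, the application of Lemma~\ref{lemma:ASgeometry}(1) to the single-period rectangle $A_n$ is legitimate (the stable-manifold decay $d(\varphi_t x_n,\varphi_t x_\star)\leq C'e^{-\theta(nT_\star+t)}$ is stronger than the $e^{-\theta\min(t,T-t)}$ hypothesis there), and the bi-unitary invariance of the Frobenius norm gives the exponential Cauchy estimate.

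The paper takes a closely related but slightly different route: it applies the integral Ambrose--Singer formula of Lemma~\ref{lemma:ambrosesinger} \emph{once} to the entire growing strip $[0,nT_\star]$, obtaining
\[
q_n - \mathbbm{1}_{\E_{x_\star}} = \int_0^{nT_\star}\!\!\int_0^{\ell(t)} C_\uparrow^{-1}\, F_\nabla\, C_\rightarrow \, ds\, dt,
\]
and observes directly that this integral converges absolutely (with exponential tail) because $\ell(t)\leq Ce^{-\theta t}$. Your approach instead slices the strip into period-sized rectangles and bounds each via Lemma~\ref{lemma:ASgeometry}(1). The paper's version is marginally more direct (one homotopy instead of a telescoping sum), while yours has the small conceptual advantage of isolating the increment $q_{n+1}q_n^{-1}$ as a pure conjugate of a local holonomy defect, which makes the Cauchy structure explicit. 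Both arguments are ultimately the same curvature integral, just partitioned differently.
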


\begin{center}
\begin{figure}[htbp!]
\includegraphics{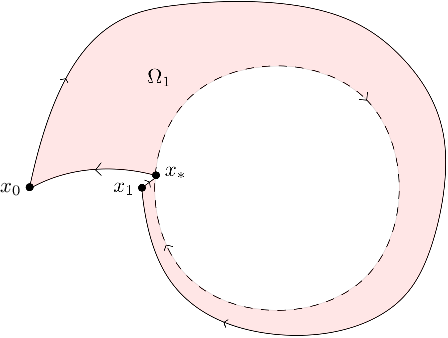}
\caption{The spiral Lemma: the set $\Omega_1$ corresponds to the area over which the integral in the Ambrose-Singer formula is computed for $n=1$.}
\label{fig:spiral}
\end{figure}
\end{center} 

\begin{proof}
Apply the Ambrose-Singer formula as in the first item of the previous Lemma (same notations as in the previous proof):
\[
q_n - \mathbbm{1}_{\E_{x_{\star}}} = \int_0^{nT_{\star}} \int_0^{\ell(t)} C_{\uparrow}^{-1}(s, t) F_\nabla(\partial_t \tau_t(s), \partial_s \tau_t(s)) C_{\rightarrow}(s, t) \, ds \, dt,
\]
where $\tau_t$ is the unit speed shortest geodesic of length $\ell(t)$ from $\varphi_t x_0$ and $\varphi_t x_{\star}$. Observe that this integral converges absolutely as (see \eqref{equation:distance}):
\[
\int_0^{nT_{\star}} \norm{ \int_0^{\ell(t)} C_{\uparrow}^{-1}(s, t) F_\nabla(\partial_t \tau_t(s), \partial_s \tau_t(s)) C_{\rightarrow}(s, t) \, \dd s \, } \dd t \leq \int_0^{n T_{\star}} C \|F_{\nabla}\|_{C^0} e^{-\theta t} \dd t < \infty,
\]
and thus the limit exists. Moreover, it is clear that the convergence is exponential.
\end{proof}

\subsection{Proof of the exact Liv\v{s}ic cocycle Theorem}

\label{section:weak-strong}

\subsubsection{Parry's free monoid}

As we shall see, Parry's free monoid is the key notion to understand the holonomy of unitary connections. Whereas flat connections up to gauge equivalence correspond to representations of the fundamental group up to conjugacy, in the setting of hyperbolic dynamics, we will show that \emph{arbitrary} connections up to cocycle equivalence correspond to representations of Parry's free monoid. 
Recall from \S\ref{sssection:homoclinic} that $x_{\star} \in \M$ is a periodic point of period $T_{\star}$. Let $\mathbf{G}$ be the free monoid generated by $\mc{H}$ (homoclinic orbits to $x_{\star}$), namely the formal set of words
\[
\mathbf{G} := \left\{\gamma_1^{m_1} ... \gamma_k^{m_k} ~|~k \in \N, m_1,...,m_k \in \N_0, \gamma_1,...,\gamma_k \in \mc{H}\right\},
\]
endowed with the obvious monoid structure. The empty word corresponds to the identity element denoted by $\mathbf{1}_{\mathbf{G}}$. Note the periodic orbit corresponding to $x_{\star}$ also belongs to the set of homoclinic orbits. We call $\mathbf{G}$ \emph{Parry's free monoid} as the idea (although not written like this) was first introduced in his work \cite{Parry-99} (see also \cite{Schmidt-99} for a related approach). The main result of this paragraph is the following:

\begin{prop}
\label{proposition:representation0}
Let $\nabla^{\E}$ be a unitary connection on the Hermitian vector bundle $\E \rightarrow \M$. Then $\nabla^{\E}$ induces a representation
\[
\rho : \mathbf{G} \rightarrow \mathrm{U}(\E_{x_{\star}}).
\]
\end{prop}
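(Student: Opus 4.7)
The plan is to exploit the fact that $\mathbf{G}$ is the free monoid on the set $\mathcal{H}$ of homoclinic orbits: once I specify $\rho(\gamma) \in \mathrm{U}(\mathcal{E}_{x_\star})$ for each generator $\gamma \in \mathcal{H}$, the assignment extends automatically to a monoid homomorphism via $\rho(\gamma_1^{m_1}\cdots\gamma_k^{m_k}) := \rho(\gamma_1)^{m_1}\cdots\rho(\gamma_k)^{m_k}$. So the content of the proposition reduces to attaching a coherent unitary to a single homoclinic orbit.

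For $\gamma \in \mathcal{H}$, I would first choose parameters $A_- < A_+$ (up to a harmless flow shift) such that $y := \gamma(A_+) \in W^s_{\delta_0}(x_\star)$ and $z := \gamma(A_-) \in W^u_{\delta_0}(x_\star)$; this is possible since $\gamma \subset W^{ws}(x_\star) \cap W^{wu}(x_\star)$. Writing $x_n^\pm := \gamma(A_\pm \pm n T_\star)$, $T_\gamma := A_+ - A_-$, and $h_\star := C(x_\star, T_\star)$, I would then set
\[
\rho_n(\gamma) := h_\star^{-n}\cdot C_{x_n^+ \to x_\star}\cdot C(x_n^-,\, T_\gamma + 2 n T_\star)\cdot C_{x_\star\to x_n^-}\cdot h_\star^{-n} \in \mathrm{U}(\mathcal{E}_{x_\star}),
\]
and define $\rho(\gamma) := \lim_{n\to\infty}\rho_n(\gamma)$. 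Geometrically $\rho_n(\gamma)$ is the holonomy of the long closed loop at $x_\star$ obtained by shadowing $\gamma_\star$ backwards for $n$ periods, following $\gamma$ across its trunk from $x_n^-$ to $x_n^+$, and shadowing $\gamma_\star$ forward for $n$ periods, where the $h_\star^{-n}$ factors are meant to absorb the two spiral contributions coming from going around $\gamma_\star$.

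The central technical step is convergence of $\rho_n(\gamma)$, and I would reduce it to the spiral Lemma \ref{lemma:spiral}. Factor
\[
\rho_n(\gamma) = \bigl[h_\star^{-n}\, C_{x_n^+ \to x_\star}\, C(y, n T_\star)\bigr]\cdot C(z, T_\gamma)\cdot \bigl[C(x_n^-, n T_\star)\, C_{x_\star \to x_n^-}\, h_\star^{-n}\bigr].
\]
Composing the first bracket on the right by $C_{x_\star \to y}$ gives precisely the sequence $q_n$ of Lemma \ref{lemma:spiral} associated with the local stable point $y$, so it converges exponentially to some $\rho^s(y) \in \mathrm{U}(\mathcal{E}_{x_\star})$ and the first bracket tends to $\rho^s(y)\circ C_{y \to x_\star}$. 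The second bracket is the time-reversed analogue: applying Lemma \ref{lemma:spiral} to the reverse flow $\varphi_{-t}$ (for which $z \in W^u_{\mathrm{loc}}(x_\star)$ is strongly stable and the one-period holonomy becomes $h_\star^{-1}$) yields a unitary $\rho^u(z) \in \mathrm{U}(\mathcal{E}_{x_\star})$ with the second bracket tending to $C_{x_\star \to z}\circ \rho^u(z)^{-1}$. Unitarity of the limit $\rho(\gamma)$ is inherited from that of each $\rho_n(\gamma)$.

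The main obstacle is more bookkeeping than substance: one has to track the many base points carefully and verify that the $h_\star^{-n}$ on each side absorb exactly the shadowing contributions, leaving a finite limit depending only on the trunk of $\gamma$ and the two spiral corrections $\rho^s(y), \rho^u(z)$. I would also emphasize the built-in non-canonicity of the construction: replacing $A_\pm$ by $A_\pm + k_\pm T_\star$, or modifying the auxiliary short transports $C_{x_\star \to x_n^\pm}$, alters $\rho(\gamma)$ by conjugation or multiplication by appropriate powers of $h_\star$. This is consistent with the authors' warning that $\rho$ is not canonical, but its representation-theoretic invariants (character, commutant, etc., used downstream in Theorem \ref{theorem:iso} and Proposition \ref{proposition:opaque}) will be.
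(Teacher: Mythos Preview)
Your proof is correct and follows essentially the same route as the paper: reduce to defining $\rho$ on generators, factor the long holonomy into a trunk piece and two ``spiral'' pieces, and invoke Lemma~\ref{lemma:spiral} for the convergence of each spiral. The one technical difference is that the paper, rather than inserting the explicit correction factors $h_\star^{-n}$ on each side, passes to a subsequence $(k_n)$ along which $C(x_\star,T_\star)^{k_n}\to\mathbbm{1}_{\E_{x_\star}}$ and defines $\rho_{m,n}(\gamma)$ without any such factors; the subsequence absorbs the spiral contributions in the limit. Your variant has the minor advantage of converging along the full sequence $n$, while the paper's subsequence device is reused verbatim in the proof of Proposition~\ref{proposition:representation}, which keeps the notation there more uniform. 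Either way one lands on the same unitary $\rho(\gamma)$, and your remarks on non-canonicity match the paper's Remark following the proof.
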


Formally, this proposition could have also been stated as a definition.

\begin{proof}
Since $\mathbf{G}$ is a free monoid, it suffices to define $\rho$ on the set of generators of $\mathbf{G}$, namely for all homoclinic orbits $\gamma \in \mc{H}$. For the neutral element we  set $\rho(\mathbf{1}_{\mathbf{G}}) = \mathbbm{1}_{\E_{x_{\star}}}$. For the periodic orbit $\gamma_{\star}$ of $x_{\star}$, we set $\rho(\gamma_{\star}) := C(x_{\star},T_{\star})$.

Let $\gamma \in \mc{H}$ (and $\gamma \neq \gamma_{\star}$) and consider a parametrization $\R \ni t \mapsto \gamma(t)$. Following the notations of \S\ref{sssection:homoclinic}, we let $x_n^\pm := \gamma(A_\pm \pm nT_{\star}), x_n^+ = \varphi_{T_n}(x_n^-)$ for some $T_n = A_+-A_-+2n T_{\star}$, where $T_\gamma := A_+ - A_-$ (length of the trunk), and the points $(x_n^\pm)_{n \in \N}$ converge exponentially fast to $x_{\star}$ as $n \rightarrow \infty$. As we shall see, there is a small technical issue coming from the fact that $C(x_{\star},T_{\star})$ is not trivial and this can be overcome by considering a subsequence $k_n \rightarrow \infty$ such that\footnote{For any compact metric group $G$, if $g \in G$, there exists a sequence $k_n \in \N$ such that $g^{k_n} \rightarrow_{n \rightarrow \infty} \mathbf{1}_G$.}
\begin{equation}\label{eq:k_n}
	C(x_{\star},T_{\star})^{k_n} \rightarrow \mathbbm{1}_{\E_{x_{\star}}}, \quad n \to \infty.\end{equation}

For $n,m \in \N$, we define $\rho_{m, n}(\gamma) \in \mathrm{U}(\E_{x_{\star}})$ as follows: 
\begin{equation}\label{eq:rhomn}
\rho_{m,n}(\gamma) :=  C_{x_{k_m}^+ \rightarrow x_{\star}} C(x_0^+, k_m T_{\star}) C(x_0^-, T_\gamma)C(x_{k_n}^-,k_nT_{\star}) C_{x_{\star} \rightarrow x_{k_n}^-},
\end{equation}
and we will write $\rho_{n}(\gamma) := \rho_{n,n}(\gamma)$.

\begin{lemma}
\label{lemma:convergence}
There exists $\rho(\gamma) \in \mathrm{U}(\E_{x_{\star}})$ such that:
\[
\rho_{m,n}(\gamma) \rightarrow_{n,m \rightarrow \infty} \rho(\gamma),
\]
and $\rho(\gamma)$ does not depend in which sense the limit in $n,m$ is taken. 
\end{lemma}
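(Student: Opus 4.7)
The plan is to decompose $\rho_{m,n}(\gamma)$ into three factors that decouple the $m$- and $n$-dependence, and then apply the spiral Lemma~\ref{lemma:spiral} to each outer factor. Setting
\[
A_m^+ := C_{x_{k_m}^+ \to x_{\star}} \, C(x_0^+, k_m T_{\star}), \qquad B_n^- := C(x_{k_n}^-, k_n T_{\star}) \, C_{x_{\star} \to x_{k_n}^-},
\]
the definition~\eqref{eq:rhomn} reads $\rho_{m,n}(\gamma) = A_m^+ \cdot C(x_0^-, T_\gamma) \cdot B_n^-$, where the middle factor is fixed, $A_m^+$ depends only on $m$, and $B_n^-$ depends only on $n$. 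It therefore suffices to prove that $A_m^+$ and $B_n^-$ each converge individually; the limit $\rho(\gamma)$ will then automatically exist and be independent of the order in which $m,n\to\infty$. Unitarity will be preserved since each $\rho_{m,n}(\gamma)$ is a composition of parallel transports for the unitary connection $\nabla^{\E}$ and $\mathrm{U}(\E_{x_{\star}})$ is closed.

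For the forward block, the choice of $A_+$ made in~\S\ref{sssection:homoclinic} ensures that $x_0^+ \in W^s_{\delta_0}(x_{\star})$, and $x_n^+ = \varphi_{nT_{\star}}(x_0^+)$ converges to $x_{\star}$ exponentially fast. Applying Lemma~\ref{lemma:spiral} with $x_0 := x_0^+$ and $n$ replaced by $k_m$, the quantity
\[
q_{k_m}^+ = C(x_{\star}, k_m T_{\star})^{-1} \, A_m^+ \, C_{x_{\star} \to x_0^+}
\]
converges exponentially to some $\rho_+ := \rho(x_0^+) \in \mathrm{U}(\E_{x_{\star}})$. Since $C(x_{\star}, k_m T_{\star}) = C(x_{\star}, T_{\star})^{k_m} \to \mathbbm{1}_{\E_{x_{\star}}}$ by the defining property~\eqref{eq:k_n} of the sequence $(k_m)$, rearranging gives $A_m^+ \to \rho_+ \cdot C_{x_0^+ \to x_{\star}}$ as $m \to \infty$.

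For the backward block, one applies the same Lemma~\ref{lemma:spiral} to the reversed flow $(\varphi_{-t})$: under time reversal, $x_0^- \in W^u_{\delta_0}(x_{\star})$ becomes a point on the local stable manifold of $x_{\star}$, its forward iterates are precisely the $x_n^-$, and parallel transport along $-X$ is the inverse of parallel transport along $X$. A direct algebraic manipulation (inverting the analogue of $q_{k_n}^+$ and again invoking~\eqref{eq:k_n}) yields $B_n^- \to C_{x_{\star} \to x_0^-} \cdot \rho_-^{-1}$ as $n \to \infty$, for some $\rho_- \in \mathrm{U}(\E_{x_{\star}})$ produced by the reversed spiral Lemma. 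Combining the three factors gives the explicit limit
\[
\rho(\gamma) := \rho_+ \cdot C_{x_0^+ \to x_{\star}} \cdot C(x_0^-, T_\gamma) \cdot C_{x_{\star} \to x_0^-} \cdot \rho_-^{-1} \in \mathrm{U}(\E_{x_{\star}}),
\]
and the decoupling of $m$ and $n$ noted above ensures this limit is independent of how $m, n \to \infty$. The only mildly delicate point is the time-reversal step for $B_n^-$, which is pure bookkeeping on inverses of parallel-transport maps; no dynamical input beyond Lemma~\ref{lemma:spiral} is needed. Note that while each outer block converges exponentially, the overall convergence rate is bottlenecked by the a priori slow convergence of $C(x_{\star}, T_{\star})^{k_m}$ to the identity, which is harmless for the qualitative statement claimed.
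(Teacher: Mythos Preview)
Your proof is correct and follows essentially the same approach as the paper: both factor $\rho_{m,n}(\gamma)$ into three pieces, apply the spiral Lemma~\ref{lemma:spiral} (forward and time-reversed) to the outer factors, and use $C(x_{\star},T_{\star})^{k_n}\to\mathbbm{1}$ from~\eqref{eq:k_n}. Your decomposition $\rho_{m,n}=A_m^+\cdot C(x_0^-,T_\gamma)\cdot B_n^-$ decouples $m$ and $n$ from the outset, which makes the order-independence of the limit slightly more transparent than the paper's three-term expansion~\eqref{equation:cv0}, but the substance is identical.
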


\begin{proof}
We have by construction:
\begin{equation}
\label{equation:cv0}
\begin{split}
\rho_{m,n}(\gamma) &   = C_{x_{k_m}^+ \rightarrow x_{\star}} C(x^+_0, k_m T_{\star})C(x^-_0, T_\gamma)C(x_{k_n}^-,k_n T_{\star}) C_{x_{\star} \rightarrow x_{k_n}^-}\\
& = \left[C_{x_{k_m}^+ \rightarrow x_{\star}} C(x^+_0, k_m T_{\star}) C_{x_{\star} \rightarrow x^+_0} C(x_{\star},k_mT_{\star})^{-1}\right] \\
& \hspace{2cm}  \times C(x_{\star},k_mT_{\star}) C_{x^+_0 \rightarrow x_{\star}} C(x^-_0, T_\gamma)C_{x_{\star} \rightarrow x^-_0} C(x_{\star},k_nT_{\star}) \\
& \hspace{2cm} \times \left[C(x_{\star},k_nT_{\star})^{-1}C_{x^-_0 \rightarrow x_{\star}}C(x_{k_n}^-,k_n T_{\star}) C_{x_{\star} \rightarrow x_{k_n}^-}\right],
\end{split}
\end{equation}
where $T_\gamma$ is independent of $n$ (trunk of $\gamma$). For the middle term we have by \eqref{eq:k_n}:
\[
C(x_{\star},k_mT_{\star}) C_{x^+_0 \rightarrow x_{\star}} C(x^-_0, T_\gamma)C_{x_{\star} \rightarrow x^-_0} C(x_{\star},k_nT_{\star}) = C_{x^+_0 \rightarrow x_{\star}} C(x^-_0, T_\gamma)C_{x_{\star} \rightarrow x^-_0} + o(1),
\]
as $n,m$ go to $+\infty$. Moreover the convergence of the terms between brackets follow from the spiral Lemma \ref{lemma:spiral} (the convergence is exponentially fast).
\end{proof}

This concludes the proof.

\end{proof}

\begin{remark}
\rm
For $\gamma \in \mc{H}$, \eqref{equation:cv0} shows that $\rho(\gamma)$ does not depend on the choice of subsequence $(k_n)_{n \in \N}$ as long as it satisfies $C(x_{\star},T_{\star})^{k_n} \rightarrow \mathbbm{1}$. However, $\rho(\gamma)$ does depend on the choice of trunk $[x_0^-x_0^+]$ for $\gamma$ and another choice of trunk produces a $\rho'(\gamma)$ which differs from $\rho(\gamma)$ by:
\begin{equation}
\label{equation:differs}
\rho'(\gamma) = C(x_{\star},T_{\star})^{m_L(\gamma)} \rho(\gamma) C(x_{\star},T_{\star})^{m_R(\gamma)},
\end{equation}
where $m_L(\gamma), m_R(\gamma) \in \Z$. 
\end{remark}

\subsubsection{Conjugate representations}

We introduce the submonoid $\mathbf{G}^* := \mathbf{G} \setminus \left\{ \gamma_{\star}^k, k \geq 1\right\}$ that is $\mathbf{G}$ minus powers of $\gamma_{\star}$. Recall that the character of a representation $\rho$ is defined by $\chi_\rho(\bullet) := \Tr(\rho(\bullet))$. This paragraph is devoted to proving the following:

\begin{prop}
\label{proposition:representation}
Let $\nabla^{\E_{1,2}}$ be two unitary connections on the Hermitian vector bundles $\E_1,\E_2 \rightarrow \M$. Assume that the connections have trace-equivalent holonomies in the sense of Definition \ref{definition:equivalence}. Then, the induced representations $\rho_{1,2} : \mathbf{G}^* \rightarrow \mathrm{U}({\E_{1,2}}_{x_{\star}})$ have the same character. In particular, this implies that they are isomorphic, i.e. there exists $p_{\star} \in \mathrm{U}({\E_2}_{x_{\star}}, {\E_1}_{x_{\star}})$ such that: 
\begin{equation}\label{eq:repconj}
\forall \gamma \in \mathbf{G}, ~~ \rho_1(\gamma) = p_{\star} \rho_2(\gamma) p_{\star}^{-1}.
\end{equation}
\end{prop}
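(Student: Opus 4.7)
The strategy is to prove character equality $\chi_{\rho_1}(g) = \chi_{\rho_2}(g)$ for every $g \in \mathbf{G}^*$ by approximating these traces via traces of holonomies along suitable \emph{primitive} periodic orbits produced by the Shadowing Theorem \ref{theorem:shadowing}, and then to upgrade character equality to unitary conjugacy of the two representations using standard finite-dimensional representation theory of $*$-algebras.

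Fix $g = \eta_1 \cdots \eta_r \in \mathbf{G}^*$ decomposed into homoclinic generators, with at least one $\eta_{j_0} \neq \gamma_{\star}$. By compactness of $\mathrm{U}(\E_{1,x_{\star}}) \times \mathrm{U}(\E_{2,x_{\star}})$, I extract a strictly increasing sequence $(k_n)_{n \in \N}$ with $k_n \to \infty$ along which $C_i(x_{\star}, T_{\star})^{k_n} \to \mathbbm{1}_{\E_{i,x_{\star}}}$ for both $i = 1, 2$ simultaneously. Then, choosing $r$ pairwise distinct indices $k_{n,1} < \cdots < k_{n,r}$ from this subsequence all tending to infinity with $n$, I apply Theorem \ref{theorem:shadowing} to the closed concatenation of the trunks of the $\eta_j$'s separated by $\gamma_{\star}$-waits of lengths $k_{n,j} T_{\star}$, which yields a periodic orbit $\Gamma_n$ shadowing this pattern up to an exponentially small error $\mc{O}(e^{-\theta \min_j k_{n,j}})$. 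The orbit $\Gamma_n$ is primitive for $n$ large: if $\Gamma_n = \sigma^m$ with $m \geq 2$, then the cyclic visiting pattern of $\Gamma_n$ through the $r$ homoclinic trunks with pairwise distinct waits $k_{n,j}T_{\star}$ would have to be a nontrivial cyclic repetition, contradicting the distinctness of the waits (in the case $r=1$, the sole excursion through $\eta_{j_0}\neq \gamma_{\star}$ rules out $m \geq 2$ even more directly, since $\sigma^m$ would display $m$ such excursions per period).

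The key convergence, which I prove by comparing the holonomy of $\Gamma_n$ piece-by-piece with the product $\rho_{i,\bullet}(\eta_1) \cdots \rho_{i,\bullet}(\eta_r)$ using Lemma \ref{lemma:ASgeometry}(1) along the shadowing segments, the spiral Lemma \ref{lemma:spiral} on the asymptotic $\gamma_{\star}$-parts, and the hypothesis $C_i(x_{\star},T_{\star})^{k_{n,j}} \to \mathbbm{1}$ at the gluing points, takes the form
\[
\lim_{n\to\infty} \Tr\bigl(C_i(x_{\Gamma_n}, \ell(\Gamma_n))\bigr) = \Tr(\rho_i(g)), \qquad i = 1,2.
\]
Since $\Gamma_n$ is primitive, the trace-equivalence hypothesis \eqref{equation:trace-equivalence} yields equality of the two left-hand sides, and passing to the limit gives $\chi_{\rho_1}(g) = \chi_{\rho_2}(g)$ on all of $\mathbf{G}^*$.

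To conclude, let $R_i := \spann_{\C} \rho_i(\mathbf{G}^*) \subseteq \End(\E_{i,x_{\star}})$. This is a finite-dimensional $*$-subalgebra: $\mathbf{G}^*$ is stable under concatenation, and for any $\eta \in \mc{H}\setminus\{\gamma_{\star}\}$ the orientation-reversed homoclinic orbit $\eta^{-1}$ again lies in $\mathbf{G}^*$ with $\rho_i(\eta^{-1}) = \rho_i(\eta)^{-1} = \rho_i(\eta)^*$. Extending $\rho_i$ linearly to $\C[\mathbf{G}^*]$, the identity $\Tr(\rho_i(P)\rho_i(P)^*) = \chi_{\rho_i}(PP^*)$ combined with character equality shows that $\rho_1(P) = 0 \iff \rho_2(P) = 0$, so the prescription $\rho_1(g) \mapsto \rho_2(g)$ extends to a well-defined $*$-algebra isomorphism $\Phi: R_1 \to R_2$. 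Since finite-dimensional $*$-algebras are semisimple (Wedderburn), a unitary $R_i$-module is determined up to unitary equivalence by the multiplicities of its irreducible components, and these multiplicities are read off the character; hence there exists $p_{\star} \in \mathrm{U}(\E_{2,x_{\star}}, \E_{1,x_{\star}})$ with $\rho_1(g) = p_{\star} \rho_2(g) p_{\star}^{-1}$ for all $g \in \mathbf{G}^*$, which extends automatically to $\mathbf{G}$ via $\rho_i(\gamma_{\star}) = \rho_i(\gamma_{\star} \eta)\rho_i(\eta)^{-1}$ for any $\eta \in \mc{H}\setminus\{\gamma_{\star}\}$. The main obstacle in the whole scheme is the primitivity argument: for words $g$ with cyclic symmetry, a uniform choice of $\gamma_{\star}$-waits would produce $\Gamma_n$ as proper powers (and the trace of a unitary does \emph{not} determine the traces of its higher powers), so the varied-wait construction above is essential to break the symmetry while remaining compatible with $C_i(x_{\star},T_{\star})^{k_{n,j}} \to \mathbbm{1}$.
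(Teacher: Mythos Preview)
Your overall strategy matches the paper's: approximate $\rho_i(g)$ by holonomies along primitive periodic orbits obtained via shadowing, invoke the trace-equivalence hypothesis, and upgrade character equality to unitary conjugacy by representation theory. The primitivity mechanism you use (pairwise distinct $\gamma_{\star}$-waits to break any cyclic symmetry) differs from the paper's, which instead makes a \emph{single} wait longer than half the total period so that a putative primitive root $\sigma$ would have to fit entirely inside a $\gamma_{\star}$-neighborhood, contradicting the existence of a point on the orbit bounded away from $\gamma_{\star}$. Your argument is morally correct but requires more care to make rigorous: you need to argue that the ``excursions away from $\gamma_{\star}$'' and the ``waits'' can be unambiguously read off from $\Gamma_n$, and you need to handle generators $\eta_j = \gamma_{\star}$ (whose ``trunk'' creates no excursion, so its wait merges with neighbors). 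The paper's single-long-wait trick avoids all of this. You also omit the empty word, which lies in $\mathbf{G}^*$; the paper recovers $\chi_{\rho_1}(\mathbf{1}) = \rk(\E_1) = \rk(\E_2) = \chi_{\rho_2}(\mathbf{1})$ by passing to the limit along $\rho_i(\gamma)^{n_i} \to \mathbbm{1}$ for any fixed non-empty $\gamma \in \mathbf{G}^*$.

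There is, however, a genuine error in your $*$-closure step: for a \emph{general} transitive Anosov flow there is no time-reversal symmetry, so the ``orientation-reversed homoclinic orbit'' $\eta^{-1}$ is not an orbit of $(\varphi_t)$ at all, and in particular does not lie in $\mc{H}$ or $\mathbf{G}^*$. (Even for geodesic flows, the flip of a homoclinic orbit to $x_{\star}$ is homoclinic to the flipped point $R(x_{\star})$, a different periodic point.) The monoid $\mathbf{G}$ is genuinely a monoid, not a group. Fortunately this does not sink the argument: since each $\rho_i(g)$ is unitary, its inverse $\rho_i(g)^* = \rho_i(g)^{-1}$ is a polynomial in $\rho_i(g)$ by Cayley--Hamilton, so the image algebra $R_i = \spann_{\C}\rho_i(\mathbf{G}^*)$ is automatically $*$-closed; alternatively, simply cite the standard fact (e.g.\ \cite[Corollary~3.8]{Lang-02}, as the paper does) that two finite-dimensional representations of a monoid with equal characters are isomorphic.
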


Following Lemma \ref{lemma:convergence}, we consider a subsequence $(k_n)_{n \in \N}$ such that $C_{1,2}(x_{\star},T_{\star})^{k_n} \rightarrow \mathbbm{1}$. 

\begin{proof}
Once we know that the representations have the same character, the conclusion is a straightforward consequence of a general fact of representation theory, see \cite[Corollary 3.8]{Lang-02}. 
For the sake of simplicity, we take $\gamma = \gamma_1 \cdot \gamma_2$, where $\gamma_{1,2} \in \mc{H}$ (and both $\gamma_{1,2}$ cannot be equal to $\gamma_{\star}$ at the same time since the word $\gamma$ is in $\mathbf{G}^*$) but the generalization to longer words is straightforward as we shall see and words of length $1$ are also handled similarly (one does not even need to concatenate orbits in this case). The empty word (corresponding to the identity element in $\mathbf{G}^*)$ will also be dealt with separately. This proposition is based on the shadowing Theorem \ref{theorem:shadowing} and the fact that one can concatenate orbits. But we will have to be careful to produce periodic orbits which are primitive.

We have by Lemma \ref{lemma:convergence}:
\[
\rho_1(\gamma) = \rho_1(\gamma_1) \rho_1(\gamma_2) = \rho_{1 ; n,N}(\gamma_1)\rho_{1 ; n,n}(\gamma_2) + o(1), \quad n \to \infty,
\]
where we use the convention $\rho_{i;a,b}$ to denote the expression in \eqref{eq:rhomn} with respect to $\nabla^{\E_i}$, for $i=1,2$. The term $N = N(n) \geq n$ will ensure that a certain orbit is primitive as we shall see below. Let $x_{n}^\pm(i)$ be the points on the orbit $\gamma_i$ that are exponentially close to $x_{\star}$, given by \S\ref{sssection:homoclinic}. Consider the concatenation of the orbits $S :=  [x_{k_{N}}^-(1) x_{k_{n}}^+(1)] \cup  [x_{k_n}^-(2) x_{k_n}^+(2)]$. Note that the starting points and endpoints of these segments are at distance at most $\mc{O}(e^{-\theta k_n})$. Thus by the shadowing Theorem \ref{theorem:shadowing}, there exists a genuine periodic orbit $\widetilde{\gamma_n}$ and a point $y_n \in \widetilde{\gamma_n}$ (of period $T'_n$) which $\mc{O}(e^{-\theta k_n})$-shadows the concatenation $S$ (here, if we have a longer word of length $k$, it suffices to apply the shadowing Theorem \ref{theorem:shadowing} with $k$ segments). 

We claim that $\widetilde{\gamma_n}$ is primitive for all $N$ large enough. Indeed, observe that $\widetilde{\gamma_n}$ can be decomposed into the following six subsegments:

\begin{center}
\begin{figure}[htbp!]
\includegraphics{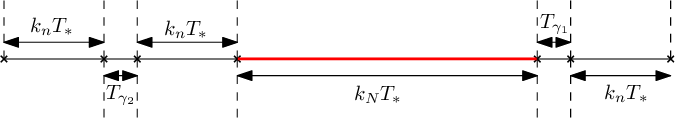}
\caption{The orbit $\widetilde{\gamma_n}$ is made of six segments: in the first segment (of length $k_{n}T_{\star}$), it shadows the first portion $[x_{k_{n}}^-(2) x_{0}^-(2)]$ which wraps around $\gamma_{\star}$; in the second (of length $T_{\gamma_2}$), it shadows the trunk $[x_{0}^-(2) x_{0}^+(2)]$, in the third (of length $k_{n}T_{\star}$), it shadows the last portion of $[x_{0}^+(2) x_{k_{n}}^+(2)]$ which also wraps around $\gamma_{\star}$; then this process is repeated but for the second orbit $\gamma_1$.}
\label{fig:primitive}
\end{figure}
\end{center} 

Moreover, the total length of $\widetilde{\gamma_n}$ is
\[
T'_n = T_{\gamma_2} + 2 k_n T_{\star} + T_{\gamma_1} + (k_{N}+k_n) T_{\star} + \mc{O}(e^{-\theta k_n}).
\]
Take $x \in \gamma_1 \cup \gamma_2$ with $x \not \in \gamma_{\star}$, and consider a small $\varepsilon > 0$ such that $d(x, \gamma_{\star}) > 3\varepsilon$. Let $n$ large enough so that for all $m \geq n$ we have the tail $[x_m^-(1) x_n^-(1)] \subset B(\gamma_{\star}, \varepsilon)$, $\widetilde{\gamma_n}$ satisfies $d(\widetilde{\gamma_n}, x) < \varepsilon$ and finally, so that the shadowing factor of Theorem \ref{theorem:shadowing} satisfies $\mc{O}(e^{-\theta k_n}) < \varepsilon$. Pick $N \geq n$ such that $(k_{N} - k_n) T_{\star} > T'_n/2$. We argue by contradiction and assume that $\widetilde{\gamma_n} = \gamma_0^k$ for some $k \geq 2$ and $\gamma_0 \in \mc{G}^\sharp$, a primitive orbit.

This implies that there is a copy of $\gamma_0$ in the central red segment of Figure \ref{fig:primitive} which $\mc{O}(e^{-\theta k_n})$-shadows the orbit of $x_N^-(1)$ and this forces $\widetilde{\gamma_n} \subset B(\gamma_{\star}, 2\varepsilon)$. Thus $d(\widetilde{\gamma_n}, x) > \varepsilon$, which is a contradiction.

By the first and second items of Lemma \ref{lemma:ASgeometry}, we have:
\[
\rho_{1;n,N}(\gamma_1)\rho_{1;n,n}(\gamma_2) = C_{1,y_n \rightarrow x_{\star}} C_1(y_n, T'_n)C_{1,y_n \rightarrow x_{\star}}^{-1} + \mc{O}(e^{-\theta k_n}).
\]
By assumption, we have $\Tr(C_1(y_n, T'_n)) = \Tr(C_2(y_n,T'_n))$. This yields:
\[
\begin{split}
\Tr(\rho_1(\gamma)) & = \Tr(C_{1,y_n \rightarrow x_{\star}} C_1(y_n, T'_n)C_{1,y_n \rightarrow x_{\star}}^{-1} ) + o(1) \\
& = \Tr(C_1(y_n, T'_n)) + o(1) \\
& = \Tr(C_2(y_n, T'_n)) + o(1)  = \Tr(\rho_2(\gamma)) + o(1).
\end{split}
\]
Taking the limit as $n \rightarrow \infty$, we obtain the claimed result about characters for all non-empty words $\gamma \in \mathbf{G}^*$. 

It remains to deal with the empty word. For that, take any $\gamma \in \mathbf{G}^*$, and consider $n_i \in \N$, a subsequence such that $\rho_1(\gamma)^{n_i} \rightarrow \mathbbm{1}_{{\E_1}_{x_{\star}}}$ and $\rho_2(\gamma)^{n_i} \rightarrow \mathbbm{1}_{{\E_2}_{x_{\star}}}$. Then:
\[
\Tr\left(\rho_1(\gamma)^{n_i}\right) =\Tr\left(\rho_2(\gamma)^{n_i}\right),
\]
and taking the limit as $i \rightarrow \infty$ gives
\[
\Tr(\rho_1(\mathbf{1}_{\mathbf{G}^*}))=\rk(\E_1)=\rk(\E_2) =\Tr(\rho_2(\mathbf{1}_{\mathbf{G}^*})).
\]
By the mentioned \cite[Corollary 3.8]{Lang-02}, there is a $p_{\star}$ satisfying \eqref{eq:repconj} for $\gamma \in \mathbf{G}^*$.

It is now straightforward to show \eqref{eq:repconj} for all $\gamma \in \mathbf{G}$. Applying \eqref{eq:repconj} with $\gamma_{\star} \gamma \in \mathbf{G}^*$, where $\gamma \in \mc{H}\setminus \{\gamma_{\star}\}$ is arbitrary, we get that:
\[
\begin{split}
\rho_1(\gamma_{\star} \gamma)&  = \rho_1(\gamma_{\star}) \rho_1(\gamma) \\
& = p_{\star} \rho_2(\gamma_{\star} \gamma) p_{\star}^{-1} = p_{\star} \rho_2(\gamma_{\star}) p_{\star}^{-1} p_{\star} \rho_2(\gamma) p_{\star}^{-1}.
\end{split}
\]
Since $\rho_1(\gamma) = p_{\star} \rho_2(\gamma) p_{\star}^{-1}$ (because $\gamma \in \mathbf{G}^*$), we get that $\rho_1(\gamma_{\star}) = p_{\star} \rho_2(\gamma_{\star}) p_{\star}^{-1}$, that is $C_1(x_{\star},T_{\star}) = p_{\star} C_2(x_{\star},T_{\star}) p_{\star}^{-1}$ or equivalently $P(x_{\star},T_{\star})p_{\star} = p_{\star}$ (where $P$ denotes the parallel transport along the flowlines of $(\varphi_t)_{t \in \R}$ with respect to the mixed connection $\nabla^{\mathrm{Hom}(\nabla^{\E_2},\nabla^{\E_1})}_X$, as in \eqref{equation:tp-mixed}), concluding the proof.
\end{proof}

\begin{remark}
\rm
Although the representations $\rho_{1,2}$ depend on choices (namely on a choice of trunk for each homoclinic orbit $\gamma \in \mc{H}$), the map $p_{\star} \in \mathrm{U}(\E_{x_{\star}})$ does not. Indeed, taking $\rho_{1,2}'$ two other representations (for some other choices of trunks), one gets by \eqref{equation:differs}:
\[
\begin{split}
\rho'_1(\gamma) & = C_1(x_{\star},T_{\star})^{m_L(\gamma)} \rho_1(\gamma) C_1(x_{\star},T_{\star})^{m_R(\gamma)} \\
&  = C_1(x_{\star},T_{\star})^{m_L(\gamma)} p_{\star} \rho_2(\gamma) p_{\star}^{-1} C_1(x_{\star},T_{\star})^{m_R(\gamma)} \\
& = C_1(x_{\star},T_{\star})^{m_L(\gamma)} p_{\star} C_2(x_{\star},T_{\star})^{-m_L(\gamma)} \rho'_2(\gamma) C_2(x_{\star},T_{\star})^{-m_R(\gamma)} p_{\star}^{-1} C_1(x_{\star},T_{\star})^{m_R(\gamma)} \\
& = \left(P(x_{\star},m_L(\gamma)T_{\star})p_{\star}\right) \rho'_2(\gamma) \left(P(x_{\star},m_R(\gamma)T_{\star})p_{\star}\right)^{-1}  = p_{\star} \rho'_2(\gamma) p_{\star}^{-1},
\end{split}
\]
since $P(x_{\star},T_{\star})p_{\star}=p_{\star}$, that is $p_{\star}$ also conjugates the representations $\rho'_{1,2}$. Note that the map $p_{\star}$ given by \cite[Corollary 3.8]{Lang-02} is generally not unique. Nevertheless, if the representation is irreducible, it is unique modulo the trivial $\Ss^{1}$-action.
\end{remark}

\subsubsection{Proof of Theorem \ref{theorem:weak}}

We can now complete the proof of Theorem \ref{theorem:weak}.

\begin{proof}[Proof of Theorem \ref{theorem:weak}]
Let $\mc{W}$ be the set of all points belonging to homoclinic orbits in $\mc{H}$. By Lemma \ref{lemma:dense}, $\mc{W}$ is dense in $\M$ and we are going to define the map $p$ (which will conjugate the cocycles) on $\mc{W}$ and then show that $p$ is Lipschitz-continuous on $\mc{W}$ so that it extends naturally to $\M$. The map $p$ is defined as the parallel transport of $p_{\star}$ with respect to the mixed connection.

By assumptions, we have $C_i(x_{\star},T_{\star})^{k_n} \rightarrow \mathbbm{1}_{\E_*}$, and thus $P(x_{\star},T_{\star})^{k_n} \rightarrow \mathbbm{1}_{\mathrm{Hom}({\E_2}_{x_{\star}},{\E_1}_{x_{\star}} )}$ (where we use the notation $\mathbbm{1}_{\mathrm{Hom}({\E_2}_{x_{\star}},{\E_1}_{x_{\star}} )}(q) = q$ for $q \in \mathrm{Hom}({\E_2}_{x_{\star}},{\E_1}_{x_{\star}})$). Consider a point $x \in \gamma$, where $\gamma \in \mc{H}$ is a homoclinic orbit and also consider a parametrization of $\gamma$ as in \S\ref{sssection:homoclinic}. For $n \in \N$ large enough, consider the point $x_n^- \in \gamma$ (which is exponentially close to $x_{\star}$) and write $x = \varphi_{T^-_n}(x_n^-)$ for some $T^-_n > 0$. Define:
\[
p^-_n(x) := P(x_{k_n}^-,T^-_{k_n})P_{x_{\star} \rightarrow x_{k_n}^-} p_{\star} \in \mathrm{U}({\E_2}_{x},{\E_1}_{x}).
\]

\begin{lemma}
\label{lemma:p-minus}
Fix $\gamma \in \mc{H}$. Then for all $x \in \gamma$, there exists $p_-(x) \in \mathrm{U}({\E_2}_{x},{\E_1}_{x} )$ such that $p^-_n(x) \rightarrow_{n \rightarrow \infty} p_-(x)$. There exists $C > 0$ such that: $|p^-_n(x)-p_-(x)| \leq C/n$. Moreover, $\nabla^{\mathrm{Hom}(\nabla^{\E_2},\nabla^{\E_1})}_X p_- = 0$ on $\gamma$. 
\end{lemma}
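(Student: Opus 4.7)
The plan is to establish the three claims in succession: first the Cauchy property of $(p_n^-(x))_n$ with an exponential rate (a fortiori the $C/n$ bound), then the covariant constancy of the limit $p_-$ along $\gamma$, and finally its unitarity. The algebraic input in the Cauchy step will be the identity $P(x_\star, T_\star) p_\star = p_\star$, which follows at once from the intertwining $C_1(x_\star, T_\star) p_\star = p_\star C_2(x_\star, T_\star)$ given by Proposition \ref{proposition:representation} applied to $\gamma_\star$, together with the formula \eqref{equation:tp-mixed} for the parallel transport on $\mathrm{Hom}$.

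For the Cauchy estimate, fix $m > n$. Since $x_{k_m}^- = \varphi_{-(k_m - k_n)T_\star}(x_{k_n}^-)$, we have $T_{k_m}^- = T_{k_n}^- + (k_m - k_n)T_\star$, and the semigroup property of parallel transport along the flow gives
\[
p_m^-(x) - p_n^-(x) \;=\; P(x_{k_n}^-, T_{k_n}^-)\bigl[\,Q_{n,m} - P_{x_\star \to x_{k_n}^-}\,p_\star\,\bigr],
\]
where $Q_{n,m} := P(x_{k_m}^-, (k_m - k_n)T_\star)\,P_{x_\star \to x_{k_m}^-}\,p_\star$. Since $P(x_{k_n}^-, T_{k_n}^-)$ is unitary, it is enough to bound the bracket. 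Using $P(x_\star, (k_m - k_n) T_\star)\,p_\star = p_\star$, I would rewrite the bracket as the defect, applied to $p_\star$, of the spiral rectangle with corners $x_\star, x_{k_m}^-, x_{k_n}^-, x_\star$ whose sides are the two short geodesics $x_\star \to x_{k_n}^-$ and $x_\star \to x_{k_m}^-$, the orbit segment $[x_{k_m}^- x_{k_n}^-]$ along $\gamma$, and the loop $\gamma_\star^{\,k_m - k_n}$. An Ambrose-Singer argument (Lemma \ref{lemma:ambrosesinger}) applied to this rectangle for the mixed connection then yields
\[
\bigl|\,Q_{n,m} - P_{x_\star \to x_{k_n}^-}\,p_\star\,\bigr| \;\leq\; C\,\|F_{\nabla^{\mathrm{Hom}}}\|_{C^0}\!\int_0^{(k_m - k_n)T_\star}\! d\bigl(\varphi_t x_{k_m}^-,\,\gamma_\star\bigr)\,dt \;\leq\; C'\,e^{-\theta k_n},
\]
where the integral is bounded via \eqref{equation:distance}: $d(\varphi_t x_{k_m}^-,\gamma_\star) \leq C e^{-\theta(k_m - t/T_\star)}$, whose integral over $[0,(k_m - k_n)T_\star]$ is $\leq C T_\star \theta^{-1} e^{-\theta k_n}$. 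Since $(k_n)$ is strictly increasing, $k_n \geq n$; passing to the limit $m \to \infty$ gives the existence of $p_-(x)$ with $|p_n^-(x) - p_-(x)| \leq C e^{-\theta n} \leq C/n$. Unitarity is preserved in the limit because each $p_n^-(x)$ is a composition of unitary parallel transports applied to $p_\star \in \mathrm{U}({\E_2}_{x_\star}, {\E_1}_{x_\star})$.

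For the flow-invariance, I would invoke the tautological cocycle identity
\[
p_n^-(\varphi_t x) \;=\; P(x,t)\,p_n^-(x),
\]
valid for all $n$ large enough that $T_{k_n}^-(\varphi_t x) = T_{k_n}^-(x) + t$. Passing to the limit $n \to \infty$ yields $p_-(\varphi_t x) = P(x,t)\,p_-(x)$, which is equivalent to $\nabla^{\mathrm{Hom}(\nabla^{\E_2},\nabla^{\E_1})}_X\,p_- = 0$ on $\gamma$.

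The main technical point is the spiral Ambrose-Singer estimate: although morally identical to Lemma \ref{lemma:spiral}, it is carried out here on the unstable (rather than stable) tail of $\gamma_\star$, and for a ``rectangle'' with two vertices at $x_\star$ (the long side $\gamma_\star^{\,k_m - k_n}$ being a closed loop iterated many times). The convergence of the curvature integral to a bound of the form $e^{-\theta k_n}$ is nevertheless secured by the two-sided exponential decay \eqref{equation:distance} of the homoclinic orbit $\gamma$ to $\gamma_\star$, combined with the algebraic fact $P(x_\star,T_\star)p_\star = p_\star$ that trivialises the holonomy contribution of the iterated loop on the vector $p_\star$.
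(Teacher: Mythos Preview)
Your proof is correct, and it follows a slightly different route from the paper's. The paper decomposes
\[
p^-_n(x) = P(x_0^-, T_0)\, P_{x_{\star} \to x_0^-}\, P(x_{\star},T_{\star})^{k_n}\, \Bigl[P(x_{\star},T_{\star})^{-k_n} P_{x_0^- \to x_{\star}} P(x_{k_n}^-,k_n T_{\star}) P_{x_{\star} \to x_{k_n}^-}\Bigr] p_{\star},
\]
and argues that the bracket converges by the spiral Lemma~\ref{lemma:spiral} while the prefactor $P(x_\star,T_\star)^{k_n}$ converges to $\mathbbm{1}$ by the very choice of the subsequence $(k_n)$. In particular, the paper does \emph{not} invoke the intertwining relation $P(x_\star,T_\star)p_\star = p_\star$ at this step; it relies instead on the operator convergence $P(x_\star,T_\star)^{k_n}\to\mathbbm{1}$.

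Your argument trades this for the pointwise identity $P(x_\star,T_\star)p_\star = p_\star$ (available from the end of Proposition~\ref{proposition:representation}), which eliminates the holonomy of the iterated loop on the vector $p_\star$ and leaves only the Ambrose--Singer rectangle defect. This is genuinely cleaner: it yields the exponential bound $|p_n^-(x)-p_-(x)|\leq C e^{-\theta k_n}$ directly, whereas the paper's proof as written only gives convergence (the stated $C/n$ rate would in addition require a quantitative choice of $(k_n)$, which the paper leaves implicit). Your derivation of $\nabla^{\mathrm{Hom}}_X p_-=0$ from the cocycle identity $p_n^-(\varphi_t x)=P(x,t)p_n^-(x)$ and the unitarity of the limit are the same as in the paper's argument.
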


In particular, this shows that $p_-$ is smooth in restriction to $\gamma$ as $\nabla^{\mathrm{Hom}(\nabla^{\E_2},\nabla^{\E_1})}_X$ is elliptic on $\gamma$.

\begin{proof}
By construction, the differential equation is clearly satisfied if the limit exists. Moreover, we have for some time $T_0$ (independent of $n$, $T^-_{k_n} = T_0 + k_n T_{\star}$):
\[
\begin{split}
p^-_n(x) & = P(x_{k_n}^-,T^-_{k_n})P_{x_{\star} \rightarrow x_{k_n}^-} p_{\star}\\
&  = P(x_0^-, T_0)P(x_{k_n}^-,k_n T_{\star}) P_{x_{\star} \rightarrow x_{k_n}^-} p_{\star} \\
& = P(x_0^-, T_0) P_{x_{\star} \rightarrow x_0^-} P(x_{\star},T_{\star})^{k_n} \left[P(x_{\star},T_{\star})^{-k_n}P_{x_0^- \rightarrow x_{\star}} P(x_{k_n}^-,k_n T_{\star}) P_{x_{\star} \rightarrow x_{k_n}^-} p_{\star}\right].
\end{split}
\]
By assumption, the term outside the bracket converges as $n \rightarrow \infty$ and the term between brackets converges by the spiral Lemma \ref{lemma:spiral}.
\end{proof}

We now claim the following:

\begin{lemma}
\label{lemma:stable}
There exists a uniform constant $C > 0$ such that the following holds. Assume that $x$ and $z$ belong to two homoclinic orbits in $\mc{H}$ and $z \in W^u_{\mathrm{loc}}(x)$. Then:
\[
\|P_{x \rightarrow z}p_-(x) - p_-(z)\| \leq C d(x,z).
\]
\end{lemma}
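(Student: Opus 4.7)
The proof decomposes naturally into two estimates, bridged by the flow-invariance of $p_-$. First, for any $T > 0$, applying $\nabla^{\mathrm{Hom}(\nabla^{\E_2},\nabla^{\E_1})}_X p_- = 0$ along the homoclinic orbits of $x$ and $z$ gives $p_-(x) = P(x_{-T}, T)\, p_-(x_{-T})$ and $p_-(z) = P(z_{-T}, T)\, p_-(z_{-T})$, where $x_{-T} := \varphi_{-T}(x)$. Since $z \in W^u_{\mathrm{loc}}(x)$, one has $d(\varphi_s x, \varphi_s z) \leq C e^{\theta s} d(x, z)$ for all $s \leq 0$, and $\int_{-\infty}^0 e^{\theta s} \, ds < \infty$. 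Applying the Ambrose-Singer formula (Lemma \ref{lemma:ambrosesinger}) to the geodesic homotopy $\Gamma(\sigma, s)$ interpolating between $\varphi_s x$ and $\varphi_s z$ for $s \in [-T, 0]$ — a one-sided analog of Lemma \ref{lemma:ASgeometry}(1), where the exponential decay now lives only at the "back" end — yields, uniformly in $T$,
\[
\bigl\|P_{x \to z}\, P(x_{-T}, T) - P(z_{-T}, T)\, P_{x_{-T} \to z_{-T}}\bigr\| \leq C_0 d(x, z),
\]
with $C_0$ depending only on $\|F_{\nabla^{\mathrm{Hom}}}\|_{C^0}$, the Anosov rate $\theta$, and the geometry. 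Combining with the flow-invariance and using that $P(z_{-T}, T)$ is unitary, we deduce
\[
\|P_{x \to z}\, p_-(x) - p_-(z)\| \leq \|P_{x_{-T} \to z_{-T}}\, p_-(x_{-T}) - p_-(z_{-T})\| + C_0\, d(x, z).
\]

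Next I would let $T \to \infty$ and show the first term on the right vanishes. Both $x_{-T}$ and $z_{-T}$ approach $\gamma_\star$, and $d(x_{-T}, z_{-T}) \leq C e^{-\theta T} d(x, z) \to 0$. The key ingredient is an auxiliary estimate describing $p_-$ near $\gamma_\star$: for a point $y$ on a homoclinic orbit sufficiently close to $x_\star$,
\[
p_-(y) = P_{x_\star \to y}\, P(x_\star, \tau(y))\, p_\star + o(1) \quad \text{as } d(y, x_\star) \to 0,
\]
where $\tau(y)$ denotes the phase of $y$ along $\gamma_\star$ (so $y \approx \varphi_{\tau(y)}(x_\star)$). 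This would follow from the defining expansion in Lemma \ref{lemma:p-minus} together with a shadowing application of the spiral Lemma \ref{lemma:spiral}: the long spiraling backward tail of the homoclinic orbit through $y$ shadows $\gamma_\star$, the factor $P(x_\star, T_\star)^{k_m}$ converges to $\mathbbm{1}$ along the subsequence of \eqref{eq:k_n}, and what remains is the stated expression modulo an error controlled by $d(y, x_\star)$. Applying this to $y = x_{-T}$ and $y = z_{-T}$, their phases satisfy $|\tau(x_{-T}) - \tau(z_{-T})| = O(d(x_{-T}, z_{-T})) \to 0$, while the holonomy of the small triangle $\{x_\star, x_{-T}, z_{-T}\}$ tends to the identity by Lemma \ref{lemma:ASgeometry}(2); together these give $P_{x_{-T} \to z_{-T}}\, p_-(x_{-T}) - p_-(z_{-T}) \to 0$, and the bound follows.

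The hardest part will be justifying the auxiliary estimate rigorously, because the limit in the definition of $p_-$ is only guaranteed along the subsequence $\{k_m\}$ for which $P(x_\star, T_\star)^{k_m} \to \mathbbm{1}$, and this subsequence must be reconciled with the continuous parameter $T$ governing how far we have flowed backwards. In particular, one must carefully shadow the long spiraling segment of the homoclinic orbit against $\gamma_\star$ in order to isolate the phase contribution $P(x_\star, \tau(y))$ from the "spiraling" contribution $P(x_\star, T_\star)^{k_m}$, and to control the resulting shadowing error by $d(y, x_\star)$.
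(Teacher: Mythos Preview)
Your approach is viable and leads to a correct proof, but it takes a more indirect route than the paper. The paper works directly with the approximating sequences $p_n^-(x), p_n^-(z)$: it writes out their explicit definitions, factors out a possible phase mismatch $m T_\star$ (coming from the non-canonical trunk choices on the two distinct homoclinic orbits), applies the thin-rectangle Ambrose--Singer estimate of Lemma~\ref{lemma:ASgeometry}(1) once to the rectangle bounded by the two backward-spiralling segments, and then kills the residual phase factor $P(x_\star, m T_\star)^{-1}$ using the identity $P(x_\star, T_\star) p_\star = p_\star$ established at the end of Proposition~\ref{proposition:representation}. No intermediate ``near-$\gamma_\star$'' lemma is needed.

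Your decomposition is a reorganisation of the same ingredients. Your first step (the rectangle estimate between the backward orbits of $x$ and $z$ over $[-T,0]$) is exactly the paper's main computation, just with a continuous parameter $T$ in place of $T_{k_n}^-$. Your second step (the auxiliary estimate $p_-(y) \approx P_{x_\star \to y}\, P(x_\star,\tau(y))\, p_\star$ for $y$ near $x_\star$) is also correct, and is proved by a \emph{second} thin-rectangle estimate, this time between the spiralling tail through $y$ and $\gamma_\star$ itself, followed again by $P(x_\star, T_\star) p_\star = p_\star$; this is what you describe as a ``shadowing application of the spiral Lemma'', though it is really another instance of Lemma~\ref{lemma:ASgeometry}(1) rather than Lemma~\ref{lemma:spiral} per se. Two small points to tighten: (i) you must take $T\to\infty$ along a subsequence for which $x_{-T}$ (hence also $z_{-T}$, since $d(x_{-T},z_{-T})\to 0$) is close to $x_\star$ itself, not merely to $\gamma_\star$, so that $P_{x_\star\to x_{-T}}$ makes sense; (ii) the $o(1)$ in your auxiliary estimate must be checked to be uniform over homoclinic orbits --- it is, because the shadowing constants depend only on the flow and the curvature of the mixed connection. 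In sum, both arguments are correct and use the same two inputs; the paper's is simply one layer shorter, applying the rectangle estimate once rather than splitting it in two.
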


By the previous proofs, the point $x$ is associated to points $x^-_n$ on the homoclinic orbit and we will use the same notations for the point $z$ associated to the points $z^-_n$.

\begin{proof}
There is here a slight subtlety coming from the fact that the parametrizations of the homoclinic orbits $\gamma$ were chosen in a non-canonical way (via a choice of $A_\pm$). In particular, it is not true that the flowlines of $z_{k_n}^-$ and $x_{k_n}^-$ shadow each other; in other words, we might not have $T^-_{k_n}(z) = T^-_{k_n}(x)$ but we rather have $T^-_{k_n}(z) = T^-_{k_n}(x) + mT_{\star}$ for some integer $m \in \Z$ depending on both $x$ and $z$.

We have:
\[
\begin{split}
& \|P_{x \rightarrow z}p_-(x) - p_-(z)\| \\
&  = \|P_{x \rightarrow z} p^-_n(x) - p^-_n(z)\| + o(1) \\
& = \|P_{x \rightarrow z} P(x_{k_n}^-,T^-_{k_n}(x))P_{x_{\star} \rightarrow x_{k_n}^-} p_{\star} - P(z_{k_n}^-,T^-_{k_n}(z))P_{x_{\star} \rightarrow z_{k_n}^-} p_{\star}\| + o(1) \\
& \leq C \| P_{z_{k_n}^- \rightarrow x_{\star}}P(z_{k_n}^-,T^-_{k_n}(z))^{-1} P_{x \rightarrow z} P(x_{k_n}^-,T^-_{k_n}(x))P_{x_{\star} \rightarrow x_{k_n}^-}p_{\star} - p_{\star}\| + o(1) \\
& \leq C \| P_{z_{k_n}^- \rightarrow x_{\star}}P(z_{k_n},mT_{\star})^{-1}P_{x_{\star} \rightarrow z_{k_n-m}^- } \\
& \hspace{1.2cm} \times \left[P_{z_{k_n-m}^- \rightarrow x_{\star}}P(z_{k_n-m}^-,T^-_{k_n}(z)-mT_{\star})^{-1} P_{x \rightarrow z} P(x_{k_n}^-,T^-_{k_n}(x))P_{x_{\star} \rightarrow x_{k_n}^-}\right]p_{\star} - p_{\star}\| + o(1).
\end{split}
\]
Applying the first item of Lemma \ref{lemma:ASgeometry}, we have that:
\[
\|P_{z_{k_n-m}^- \rightarrow x_{\star}}P(z_{k_n-m}^-,T^-_{k_n}(z)-mT_{\star})^{-1} P_{x \rightarrow z} P(x_{k_n}^-,T^-_{k_n}(x))P_{x_{\star} \rightarrow x_{k_n}^-} - \mathbbm{1}_{\End(\E_{x_{\star}})}\| \leq Cd(x,z),
\]
where the constant $C > 0$ is uniform in $n$. Moreover, observe that
\[
\lim_{n \rightarrow \infty} P_{z_{k_n}^- \rightarrow x_{\star}}P(z_{k_n},mT_{\star})^{-1}P_{x_{\star} \rightarrow z_{k_n-m}^- } = P(x_{\star},mT_{\star})^{-1}.
\]

Hence:
\[
 \|P_{x \rightarrow z}p_-(x) - p_-(z)\| \leq C\left(\| P(x_{\star},mT_{\star})^{-1}p_{\star} - p_{\star}\|  + d(x,z) + o(1)\right).
 \]
Using that $P(x_{\star},T_{\star})p_{\star}=p_{\star}$, we get that the first term on the right-hand side vanishes. Taking the limit as $n \rightarrow +\infty$, we obtain the announced result.
\end{proof}

Note that we could have done the same construction ``in the future" by considering instead:
\[
p_+(x) = \lim_{n \rightarrow \infty} P(x,T^+_{k_n})^{-1} P_{x_{\star} \rightarrow x_{k_n}^+} p_{\star} \in \mathrm{U}({\E_2}_{x},{\E_1}_{x}),
\]
where $x_n^+ := \varphi_{T^+_n}(x)$ is exponentially closed to $x_{\star}$ as in \S\ref{sssection:homoclinic}. A similar statement as Lemma \ref{lemma:stable} holds with the unstable manifold being replaced by the stable one. We have:

\begin{lemma}
\label{lemma:equality}
For all $x \in \mc{W}$, $p_-(x) = p_+(x)$.
\end{lemma}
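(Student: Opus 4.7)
The plan is to leverage that both $p_-$ and $p_+$ are $\nabla^{\Hom(\nabla^{\E_2},\nabla^{\E_1})}_X$-parallel along the homoclinic orbit $\gamma$ containing $x$: for $p_-$ this is the content of Lemma \ref{lemma:p-minus}, and for $p_+$ the analogous statement follows by the obvious symmetric argument. Since the mixed connection is elliptic along flowlines, it therefore suffices to verify the equality at a single point of $\gamma$. A convenient choice is the right endpoint of the trunk $x=x_0^+$, where $T_{k_n}^-(x_0^+)=T_\gamma+k_nT_\star$ and $T_{k_n}^+(x_0^+)=k_nT_\star$.

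The second step is to reinterpret the object defining $p_-(x_0^+)$ in terms of the representation of Parry's free monoid associated to the mixed connection. I would set
\[
\rho^{\mathrm{mix}}_{n,n}(\gamma) := P_{x_{k_n}^+ \to x_\star} \, P(x_{k_n}^-, T_\gamma + 2k_nT_\star) \, P_{x_\star \to x_{k_n}^-},
\]
a unitary endomorphism of $\Hom(\E_{2,x_\star},\E_{1,x_\star})$, and repeat the argument of Lemma \ref{lemma:convergence} verbatim with the single connection replaced by the mixed one. Using \eqref{equation:tp-mixed} and the explicit form of $\rho_{1,2;n,n}(\gamma)$ in \eqref{eq:rhomn}, a direct bookkeeping shows that for every $q\in\Hom(\E_{2,x_\star},\E_{1,x_\star})$ one has $\rho^{\mathrm{mix}}_{n,n}(\gamma)(q)=\rho_{1;n,n}(\gamma)\,q\,\rho_{2;n,n}(\gamma)^{-1}$, hence in the limit $\rho^{\mathrm{mix}}(\gamma)(q)=\rho_1(\gamma)\,q\,\rho_2(\gamma)^{-1}$. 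Specializing to $q=p_\star$ and invoking the key conjugation relation $\rho_1(\gamma)=p_\star\rho_2(\gamma)p_\star^{-1}$ of Proposition \ref{proposition:representation} yields the fixed-point identity
\[
\rho^{\mathrm{mix}}(\gamma)(p_\star)=p_\star,
\]
which is the geometric crux.

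The final step is a telescoping manipulation. Inserting $\mathbbm{1}=P(x_0^+,k_nT_\star)^{-1}P(x_0^+,k_nT_\star)$ in the definition of $p_-(x_0^+)$ and using the cocycle identity $P(x_0^+,k_nT_\star)\,P(x_{k_n}^-,T_\gamma+k_nT_\star)=P(x_{k_n}^-,T_\gamma+2k_nT_\star)$ gives
\[
p_-(x_0^+)=\lim_{n\to\infty} P(x_0^+,k_nT_\star)^{-1}\,P_{x_\star \to x_{k_n}^+}\,\rho^{\mathrm{mix}}_{n,n}(\gamma)\,p_\star.
\]
Since $P(x_0^+,k_nT_\star)^{-1}P_{x_\star \to x_{k_n}^+}$ is a composition of two unitary (hence norm-preserving) maps, the second step allows one to replace $\rho^{\mathrm{mix}}_{n,n}(\gamma)p_\star$ by $p_\star$ at the cost of a vanishing error; the resulting expression is exactly the defining limit for $p_+(x_0^+)$, so $p_-(x_0^+)=p_+(x_0^+)$, and the identity propagates to all of $\gamma$ by parallel transport.

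The only real obstacle I anticipate is the bookkeeping of directions: one has to be careful that the geodesic parallel transports $P_{x_\star\to x_{k_n}^\pm}$ and their inverses $P_{x_{k_n}^\pm\to x_\star}$ appear with the correct orientation so that they combine to a single identity on each factor of the $\Hom$ bundle, and that the subsequence $(k_n)$ enforcing $C_i(x_\star,T_\star)^{k_n}\to\mathbbm{1}$ for $i=1,2$ serves simultaneously for both the individual and the mixed representations — which indeed it does, since $P(x_\star,T_\star)^{k_n}(q)=C_1(x_\star,T_\star)^{k_n}\,q\,C_2(x_\star,T_\star)^{-k_n}\to q$.
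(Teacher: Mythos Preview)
Your proposal is correct and follows essentially the same route as the paper. The paper's proof works directly with a general point $x$ rather than first reducing to $x_0^+$, but the computation is identical: both multiply through by the unitary factor $P_{x_{k_n}^+\to x_\star}P(x,T_{k_n}^+)$ to reduce the difference $p_-(x)-p_+(x)$ to $\|P_{x_{k_n}^+\to x_\star}P(x_{k_n}^-,T_{k_n})P_{x_\star\to x_{k_n}^-}p_\star-p_\star\|+o(1)$, and then recognize this (via \eqref{equation:tp-mixed}) as $\|\rho_{1,n}(\gamma)\,p_\star\,\rho_{2,n}(\gamma)^{-1}-p_\star\|$, which vanishes in the limit by Proposition~\ref{proposition:representation}. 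Your explicit introduction of $\rho^{\mathrm{mix}}$ is a clean repackaging of exactly this step.
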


\begin{proof}
This follows from Proposition \ref{proposition:representation}. Indeed, we have:
\[
\begin{split}
\|p_-(x)-p_+(x)\| & = \|P(x_{k_n}^-,T^-_{k_n})P_{x_{\star} \rightarrow x_{k_n}^-} p_{\star} - P(x,T^+_{k_n})^{-1} P_{x_{\star} \rightarrow x_{k_n}^+} p_{\star}\| + o(1) \\
& \leq C \| P_{x_{k_n}^+ \rightarrow x_{\star} }P(x,T^+_{k_n})P(x_{k_n}^-,T^-_{k_n})P_{x_{\star} \rightarrow x_{k_n}^-}p_{\star} - p_{\star}\| + o(1) \\
& \leq C \| P_{x_{k_n}^+ \rightarrow x_{\star} }P(x_{k_n}^-,T_{k_n})P_{x_{\star} \rightarrow x_{k_n}^-}p_{\star} - p_{\star}\| + o(1),\\
\end{split}
\]
where $T_n := T_n^- + T_n^+$. Observe that:
\[
\begin{split}
& P_{x_{k_n}^+ \rightarrow x_{\star} }P(x_{k_n}^-,T_{k_n})P_{x_{\star} \rightarrow x_{k_n}^-}p_{\star} \\
&  \hspace{2cm}= C_{1,x_{k_n}^+ \rightarrow x_{\star}} C_1(x_{k_n}^-,T_{k_n}) C_{1,x_{\star} \rightarrow x_{k_n}^-} p_{\star} \left(C_{2,x_{k_n}^+ \rightarrow x_{\star}} C_2(x_{k_n}^-,T_{k_n}) C_{2,x_{\star} \rightarrow x_{k_n}^-} \right)^{-1} \\
&  \hspace{2cm} = \rho_{1,n}(\gamma) p_{\star} \rho_{2,n}(\gamma)^{-1}  = \rho_1(\gamma)p_{\star} \rho_2(\gamma)^{-1} + o(1) = p_{\star} + o(1),
\end{split}
\]
by Proposition \ref{proposition:representation}. Hence $\|p_-(x)-p_+(x)\| = o(1)$, that is $p_-(x)=p_+(x)$.
\end{proof}

We can now prove the following lemma:

\begin{lemma}
\label{lemma:lipschitz}
The map $p_-$ is Lipschitz-continuous.
\end{lemma}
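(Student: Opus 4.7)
The strategy is to compare $p_-(x)$ and $p_-(y)$ for nearby $x, y \in \mc{W}$ via a three-leg path dictated by the Anosov product structure. Concretely, for $d(x, y)$ less than a fixed uniform $\delta > 0$, Lemma \ref{lemma:product-structure} allows us to form $z := [x, y] \in W^{wu}_{\delta_0}(x) \cap W^s_{\delta_0}(y)$ and to write $z = \varphi_\tau w$ with $w \in W^u_{\mathrm{loc}}(x)$, where the quantities $|\tau|$, $d(x, w)$, $d(w, z)$, and $d(z, y)$ are all bounded by $C d(x, y)$. Provisionally assuming $w, z \in \mc{W}$, I would chain three estimates: the unstable estimate $\| P_{x \to w} p_-(x) - p_-(w) \| \leq C d(x, w)$ from Lemma \ref{lemma:stable}; the flow identity $p_-(z) = P(w, \tau) p_-(w)$, a consequence of $\nabla^{\mathrm{Hom}}_X p_- = 0$ from Lemma \ref{lemma:p-minus} (valid since $\mc{W}$ is flow-invariant, so the orbit $\varphi_\bullet w$ stays in $\mc{W}$); and the stable estimate $\| P_{z \to y} p_-(z) - p_-(y) \| \leq C d(z, y)$, which is the symmetric analogue of Lemma \ref{lemma:stable} for $p_+$ combined with the identity $p_- = p_+$ of Lemma \ref{lemma:equality}. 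Composing these and using unitarity of parallel transport yields
\[
\| \tilde P_{x \to y} \, p_-(x) - p_-(y) \| \leq C \, d(x, y),
\]
where $\tilde P_{x \to y}$ is the mixed-connection parallel transport along the concatenated path $x \to w \to z \to y$.

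It then remains to compare $\tilde P_{x \to y}$ with the direct parallel transport $P_{x \to y}$ along the shortest geodesic joining $x$ to $y$. The concatenation of the three-leg path with the reversed direct geodesic forms a piecewise smooth closed loop at $x$ of total length $O(d(x, y))$ contained in a ball of radius $O(d(x, y))$; applying item (2) of Lemma \ref{lemma:ASgeometry} to the mixed connection $\nabla^{\mathrm{Hom}(\nabla^{\E_2}, \nabla^{\E_1})}$ (whose curvature is controlled by \eqref{equation:induced-curvature}) yields $\| \tilde P_{x \to y} - P_{x \to y} \| \leq C \, d(x, y)^2$. Since $p_-(x)$ takes values in a unitary bundle and is therefore uniformly bounded, combining with the previous display gives the Lipschitz estimate $\| P_{y \to x} p_-(y) - p_-(x) \| \leq C \, d(x, y)$.

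The main obstacle is the provisional assumption $w, z \in \mc{W}$, which is needed to apply the three leg-by-leg estimates: although Lemma \ref{lemma:dense} guarantees density of $\mc{W}$ in $\M$, it gives no a priori density on individual stable or unstable leaves. The key observation to bypass this is that $\mc{W} \cap W^u_{\mathrm{loc}}(x)$ is in fact dense in $W^u_{\mathrm{loc}}(x)$ for every $x \in \mc{W}$: given $w' \in W^u_{\mathrm{loc}}(x)$ and $\eta > 0$, the shadowing Theorem \ref{theorem:shadowing} applied to the bi-infinite concatenation consisting of many copies of $\gamma_\star$ in the far past, a long central segment of the orbit through $w'$, and many copies of $\gamma_\star$ in the far future produces a genuine orbit homoclinic to $x_\star$ passing within $\eta$ of $w'$. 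The analogous density statement on local stable leaves follows symmetrically. Consequently, $w$ (and $z$) may be approximated by sequences $w_k, z_k \in \mc{W}$ lying respectively on $W^u_{\mathrm{loc}}(x)$ and $W^s_{\mathrm{loc}}(y)$; applying the three-leg argument to these approximations and passing to the limit using continuity of parallel transport recovers the estimate for the original $x, y$. This completes the proof and simultaneously shows that $p_-$ extends continuously — indeed, Lipschitz — to all of $\M$.
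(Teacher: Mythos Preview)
Your three-leg decomposition via the local product structure, together with Lemmas \ref{lemma:stable}, \ref{lemma:p-minus}, and \ref{lemma:equality}, is exactly the paper's approach, and the loop-closing step via item (2) of Lemma \ref{lemma:ASgeometry} is also how the paper handles the path comparison.

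However, your ``main obstacle'' is a phantom: the bracket point $z = [x,y] \in W^{wu}_{\mathrm{loc}}(x) \cap W^s_{\mathrm{loc}}(y)$ is \emph{automatically} homoclinic to $x_\star$ whenever $x, y \in \mc{W}$. Indeed, $z \in W^s_{\mathrm{loc}}(y)$ forces $\varphi_t z$ to track $\varphi_t y$ and hence $\gamma_\star$ as $t \to +\infty$, while $\varphi_\tau z \in W^u_{\mathrm{loc}}(x)$ forces $\varphi_t z$ to track $\varphi_t x$ and hence $\gamma_\star$ as $t \to -\infty$. The paper records this in one line (``observe that the point $z$ is homoclinic to the periodic orbit $x_\star$''). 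Since $\mc{W}$ is flow-invariant, $w = \varphi_{-\tau} z \in \mc{W}$ as well, and your entire approximation paragraph is unnecessary. As a side remark, the shadowing argument you sketch there does not quite produce homoclinic points on the \emph{specific} leaf $W^u_{\mathrm{loc}}(x)$ --- shadowing gives an orbit nearby, not on a prescribed leaf; one would instead argue via density of $W^{ws}(x_\star)$ in $\M$ and transversality to the unstable foliation --- but this is moot given the observation above.
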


\begin{proof}
Consider $x, y \in \mc{W}$ which are close enough. Let $z := \llbracket x,y \rrbracket \in W^{wu}_{\mathrm{loc}}(x) \cap W^{s}_{\mathrm{loc}}(y)$ and define $\tau$ such that $\varphi_\tau(z) \in W^u_{\mathrm{loc}}(x)$. Note that $|\tau| \leq Cd(x,y)$ for some uniform constant $C > 0$; also observe that the point $z$ is homoclinic to the periodic orbit $x_{\star}$. We have:
\[
\begin{split}
& \|p_-(x)-P_{y \rightarrow x}p_-(y)\| \\
& \leq \|p_-(x)-P_{z \rightarrow x}p_-(z)\| + \|p_-(z)-p_+(z)\| + \|P_{z \rightarrow x}p_+(z)-P_{y \rightarrow x}p_+(y)\| + \|p_+(y)-p_-(y)\| \\
& \leq \|p_-(x)-P_{z \rightarrow x}p_-(z)\| + \|P_{x \rightarrow y}P_{z \rightarrow x}p_+(z)-p_+(y)\| \\
& \leq  \|p_-(x)-P_{\varphi_{\tau}(z) \rightarrow x} p_-(\varphi_{\tau}(z))\| + \|P_{\varphi_{\tau}(z) \rightarrow x} p_-(\varphi_{\tau}(z))- P_{z \rightarrow x}p_-(z)\| + \|P_{x \rightarrow y}P_{z \rightarrow x}p_+(z)-p_+(y)\| 
\end{split}
\]
where the terms disappear between the second and third line by Lemma \ref{lemma:equality}. By Lemma \ref{lemma:stable}, the first term is controlled by:
\[
\|p_-(x)-P_{\varphi_{\tau}(z) \rightarrow x} p_-(\varphi_{\tau}(z))\| \leq Cd(x,\varphi_{\tau}(z)) \leq Cd(x,y).
\]
As to the second term, using the second item of Lemma \ref{lemma:ASgeometry}, we have:
\[
\|P_{\varphi_{\tau}(z) \rightarrow x} p_-(\varphi_{\tau}(z))- P_{z \rightarrow x}p_-(z)\| = \|P_{x \rightarrow z} P_{\varphi_{\tau}(z) \rightarrow x} P(z,\tau)p_-(z)- p_-(z)\| \leq Cd(x,y).
\]
Eventually, the last term $\|P_{x \rightarrow y}P_{z \rightarrow x}p_+(z)-p_+(y)\|$ is controlled similarly to the first term by applying Lemma \ref{lemma:stable} (but with the stable manifold instead of unstable).
\end{proof}

As $\mc{W}$ is dense, $p_-$ extends to a Lipschitz-continuous map on $\M$ which satisfies the equation $\nabla^{\mathrm{Hom}(\nabla^{\E_2},\nabla^{\E_1})}_X p_- = 0$ and by \cite[Theorem 4.1]{Bonthonneau-Lefeuvre-20}, this implies that $p_-$ is smooth. This concludes the proof of the Theorem.

\end{proof}

\subsubsection{Proof of the geometric properties}

We now prove Proposition \ref{proposition:opaque}.

\begin{proof}[Proof of Proposition \ref{proposition:opaque}]
The equivalence between (1) and (2) can be found in \cite[Section 5]{Cekic-Lefeuvre-20}. If $\mc{F} \subset \mc{E}$ is a non-trivial subbundle that is invariant by parallel transport along the flowlines of $(\varphi_t)_{t \in \R}$, it is clear that $\rho$ will leave the space $\mc{F}_{x_{\star}}$ invariant and thus is not irreducible. Conversely, if $\rho$ is not irreducible, then there exists a non-trivial $\mc{F}_{x_{\star}} \subset \E_{x_{\star}}$ preserved by $\rho$. Let $\pi_{\star} : \mc{E}_{x_{\star}} \rightarrow \mc{F}_{x_{\star}}$ be the orthogonal projection. For $x$ on a homoclinic orbit, define $\pi(x) : \E_x \rightarrow \E_x$ similarly to $p_-$ in Lemma \ref{lemma:p-minus} by parallel transport of the section $\pi_{\star}$ with respect to the connection $\nabla^{\End(\E)}$. Following the previous proofs (we only use $\rho \pi_\star = \pi_\star \rho$), one shows that $\pi$ extends to a Lipschitz-continuous section on homoclinic orbits which satisfies $\pi^2 = \pi$ and $\nabla^{\End}_X \pi = 0$. By \cite[Theorem 4.1]{Bonthonneau-Lefeuvre-20}, $\pi$ extends to a smooth section i.e. $\pi \in C^\infty(\M,\End(\E))$. Moreover, $\pi(x_{\star})=\pi_{\star}$, hence $\pi$ is the projection onto a non-trivial subbundle $\mc{F} \subset \mc{E}$.
\end{proof}

We now prove Theorem \ref{theorem:iso}.

\begin{proof}[Proof of Theorem \ref{theorem:iso}]
The linear map $\Phi : \mathbf{R}' \rightarrow \ker \nabla^{\End(\E)}_X|_{C^\infty(\M,\End(\E))}$ is defined in the following way. Consider $u_{\star} \in \mathbf{R}'$ and define, as in Lemma \ref{lemma:p-minus}, for $x$ on a homoclinic orbit, $u_-(x)$ as the parallel transport of $u_{\star}$ from $x_{\star}$ to $x$ along the orbit (with respect to the endomorphism connection $\nabla^{\End(\E)}$). Similarly, one can define $u_+(x)$ by parallel transport from the future. The fact that $u_{\star} \in \mathbf{R}'$ is then used in the following observation (see Lemma \ref{lemma:equality}):
\[
\|u_-(x)-u_+(x)\| = \|\rho(\gamma)u_{\star} \rho(\gamma)^{-1} - u_\star\| = 0.
\]
(Note that $\rho(\gamma)u_{\star} \rho(\gamma)^{-1}$ corresponds formally to the parallel transport of $u_{\star}$ with respect to $\nabla^{\End(\E)}$ from $x_{\star}$ to $x_{\star}$ along the homoclinic orbit $\gamma$.) Hence, following Lemma \ref{lemma:lipschitz}, we get that $u_-$ is Lipschitz-continuous and satisfies $\nabla^{\End(\E)}_X u_- = 0$. By \cite[Theorem 4.1]{Bonthonneau-Lefeuvre-20}, it is smooth and we set $u_- := \Phi(u_{\star}) \in \ker \nabla^{\End(\E)}_X|_{C^\infty(\M,\End(\E))}$.

Also observe that this construction is done by using parallel transport with respect to the unitary connection $\nabla^{\End(\E)}$. As a consequence, if $u_\star, u_\star' \in \mathbf{R}$ are orthogonal (i.e. $\Tr(u_\star^* u_\star') = 0$), then $\Phi(u_{\star})$ and $\Phi(u'_\star)$ are also pointwise orthogonal. This proves that $\Phi$ is injective.

It now remains to show the surjectivity of $\Phi$. Let $u \in\ker \nabla^{\End(\E)}_X|_{C^\infty(\M,\End(\E))}$. Following \cite[Section 5]{Cekic-Lefeuvre-20}, we can write $u = u_R + i u_I$, where $u_R^*=u_R,u_I^*=u_I$ and $\nabla^{\End(\E)}_X u_R = \nabla^{\End(\E)}_X u_I = 0$. By \cite[Lemma 5.6]{Cekic-Lefeuvre-20}, we can then further decompose $u_R = \sum_{i=1}^p \lambda_i \pi_{\mc{F}_i}$ (and same for $u_I$), where $\lambda_i \in \R$, $p \in \mathbb{N}$ and $\mc{F}_i \subset \E$ is a maximally invariant subbundle of $\E$ (i.e. it does not contain any non-trivial subbundle that is invariant under parallel transport along the flowlines of $(\varphi_t)_{t \in \R}$ with respect to $\nabla^{\E}$), and $\pi_{\mc{F}_i}$ is the orthogonal projection onto $\mc{F}_i$. Setting $(\pi_{\mc{F}_i})_\star := \pi_{\mc{F}_i}(x_\star)$, invariance of $\mc{F}_i$ by parallel transport implies that $\rho(\gamma)(\pi_{\mc{F}_i})_\star = (\pi_{\mc{F}_i})_\star\rho(\gamma)$, for all $\gamma \in \mathbf{G}$, that is $(\pi_{\mc{F}_i})_\star \in \mathbf{R}'$. Moreover, we have $\Phi((\pi_{\mc{F}_i})_\star) = \pi_{\mc{F}_i}$. This proves that both $u_R$ and $u_I$ are in $\mathrm{ran}(\Phi)$. This concludes the proof.
\end{proof}

It remains to prove the results concerning invariant sections:

\begin{proof}[Proof of Lemma \ref{lemma:invariant-section}]
Uniqueness is immediate since $\nabla^{\E}_X u = 0$ implies that
\[
X |u|^2 = \langle \nabla^{\E}_X u, u \rangle = \langle u , \nabla^{\E}_X u \rangle = 0,
\]
that is $|u|$ is constant. Now, given $u_\star$ which is $\mathbf{G}$-invariant, we can define $u_-(x)$ for $x$ on a homoclinic orbit $\gamma$ by parallel transport of $u_\star$ from $x_\star$ to $x$ along $\gamma$ with respect to $\nabla^{\E}$, similarly to Lemma \ref{lemma:p-minus} and to the proof of Theorem \ref{theorem:iso}. We can also define $u_+(x)$ in the same fashion (by parallel transport in the other direction). Then one gets that $\|u_-(x)-u_+(x)\| = \|u_\star - \rho(\gamma)u_\star\|=0$ and the same arguments as before show that $u_-$ extends to a smooth function in the kernel of $\nabla^{\E}_X$.
\end{proof}

\begin{proof}[Proof of Lemma \ref{lemma:rank2}]
This is based on the following:

\begin{lemma}
Assume that for all periodic orbits $\gamma \in \mc{G}$, there exists $u_\gamma \in C^\infty(\gamma,\E|_{\gamma})$ such that $\nabla^{\E}_X u_{\gamma} = 0$. Then for all $g \in \mathbf{G}$, there exists $u_g \in \E_{x_\star}$ such that $\rho(g)u_g = u_g$. 
\end{lemma}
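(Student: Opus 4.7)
The plan is to mimic the shadowing construction used in the proof of Proposition \ref{proposition:representation}, except that now we exploit the hypothesized existence of parallel sections on the resulting periodic orbits. Fix $g \in \mathbf{G}$; it suffices to treat the case of a non-empty word $g = \gamma_{i_1} \cdots \gamma_{i_k}$ with $\gamma_{i_j} \in \mc{H}$, since for the identity $\mathbf{1}_\mathbf{G}$ any non-zero vector works ($\rho(\mathbf{1}_\mathbf{G}) = \mathbbm{1}$). Choose, as in Lemma \ref{lemma:convergence}, a subsequence $(k_n)_{n \in \N}$ with $C(x_\star, T_\star)^{k_n} \to \mathbbm{1}_{\E_{x_\star}}$.

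For each $n$ I would concatenate the trunks of $\gamma_{i_1}, \ldots, \gamma_{i_k}$, inserting segments $\gamma_\star^{k_n}$ between consecutive trunks and wrapping the ends so that the endpoints of the resulting broken orbit are within $\mc{O}(e^{-\theta k_n})$ of each other. Theorem \ref{theorem:shadowing} then yields a genuine periodic orbit $\widetilde{\gamma}_n$ of some period $T'_n$, with a base point $y_n$ satisfying $d(y_n, x_\star) = \mc{O}(e^{-\theta k_n})$, that shadows this concatenation. The Ambrose--Singer estimate of Lemma \ref{lemma:ASgeometry}(1), applied segment by segment exactly as in the proof of Proposition \ref{proposition:representation}, combined with the definition of the $\rho_{n,m}$'s in \eqref{eq:rhomn} and the convergence $\rho_{n,m}(\gamma_{i_j}) \to \rho(\gamma_{i_j})$ from Lemma \ref{lemma:convergence}, gives
\[
C_{y_n \to x_\star}\, C(y_n, T'_n)\, C_{y_n \to x_\star}^{-1} \;=\; \rho(\gamma_{i_1}) \cdots \rho(\gamma_{i_k}) + o(1) \;=\; \rho(g) + o(1)
\]
as $n \to \infty$, where $C_{y_n \to x_\star}$ denotes parallel transport along the short geodesic from $y_n$ to $x_\star$.

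By hypothesis, for each $n$ there is a non-zero section $u_n \in C^\infty(\widetilde{\gamma}_n, \E|_{\widetilde{\gamma}_n})$ with $\nabla^{\E}_X u_n = 0$, which is equivalent to $C(y_n, T'_n)\, u_n(y_n) = u_n(y_n)$. Normalize so that $|u_n(y_n)| = 1$ and set $v_n := C_{y_n \to x_\star}\, u_n(y_n) \in \E_{x_\star}$, a unit vector satisfying $\bigl(C_{y_n \to x_\star}\, C(y_n, T'_n)\, C_{y_n \to x_\star}^{-1}\bigr) v_n = v_n$. Combining with the displayed asymptotic gives $\rho(g) v_n = v_n + o(1)$. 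Since $\E_{x_\star}$ is finite-dimensional, the unit sphere is compact, so a subsequence $v_{n_j}$ converges to some $u_g \in \E_{x_\star}$ with $|u_g| = 1$; passing to the limit yields $\rho(g) u_g = u_g$, as desired.

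The main obstacle is bookkeeping: one must check that the Ambrose--Singer error terms accumulated over the $k$ shadowed pieces together with the tail-wrapping segments $\gamma_\star^{k_n}$ really do vanish in the limit, and that transporting the parallel section by the short geodesic from $y_n$ to $x_\star$ produces the correct identity relating $C(y_n, T'_n)$ to $\rho(g)$. This is precisely the content of Lemma \ref{lemma:ASgeometry}(1) together with the choice of subsequence $(k_n)$ ensuring $C(x_\star, T_\star)^{k_n} \to \mathbbm{1}$, so no new idea beyond the machinery already developed in \S\ref{section:weak-strong} is required. Note also that, unlike in Proposition \ref{proposition:representation}, we do not need $\widetilde{\gamma}_n$ to be primitive, since the hypothesis here covers all periodic orbits.
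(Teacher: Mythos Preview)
Your proposal is correct and follows essentially the same route as the paper's proof: approximate $\rho(g)$ by conjugated holonomies $C_{y_n \to x_\star} C(y_n, T'_n) C_{y_n \to x_\star}^{-1}$ along periodic orbits produced by shadowing, use the hypothesis to get $1$ as an eigenvalue of each, and pass to the limit. The paper compresses your final compactness step into the single remark that eigenvalues depend continuously on the matrix, so the limit $\rho(g)$ inherits the eigenvalue $1$; your explicit extraction of a limiting unit eigenvector is an equivalent way to say this.
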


\begin{proof}
Recall that by the construction of Proposition \ref{proposition:representation}, each element $\rho(g) \in \mathrm{U}(\E_{x_\star})$ can be approximated by the holonomy $C_{y_n \to x_\star}C(y_n,T'_n)C_{x_\star \to y_n}$ along a sequence of periodic orbits of points $y_n$ converging to $x_\star$. Now, each $C(y_n,T'_n)$ has $1$ as eigenvalue by assumption and taking the limit as $n \rightarrow \infty$, we deduce that $1$ is an eigenvalue of $\rho(g)$.
\end{proof}

As a consequence, we can write for all $g \in \mathbf{G}$, in a fixed orthonormal basis of $\E_{x_\star}$	:
\[
\rho(g) = \alpha_g \begin{pmatrix} 1 & 0 \\ 0 & s(g) \end{pmatrix} \alpha_g^{-1},
\]
for some $\alpha_g \in \mathrm{U}(\E_{x_\star})$ and $s(g)$ is an $(r-1) \times (r-1)$ matrix. For $\rk(\E)=1$, the Lemma is then a straightforward consequence of Lemma \ref{lemma:invariant-section} since the conjugacy $\alpha_g$ does not appear. For $\rk(\E)=2$, one has the remarkable property that $s(g)$ is \emph{still} a representation of $ \mathbf{G}$ since $\det \rho(g) = s(g) \in \mathrm{U}(1)$. As a consequence, $\rho :  \mathbf{G} \rightarrow \mathrm{U}(\E_{x_\star})$ has the same character as $\rho' :  \mathbf{G} \rightarrow \mathrm{U}(\E_{x_\star})$ defined by:
\[
\rho'(g) :=  \begin{pmatrix} 1 & 0 \\ 0 & s(g) \end{pmatrix}.
\]
By \cite[Corollary 3.8]{Lang-02}, we then conclude that these representations are isomorphic, that is there exists $p_\star \in \mathrm{U}(\E_{x_\star})$ such that $\rho(g) = p_\star \rho'(g) p_\star^{-1}$. If $u_\star' \in \E_{x_\star}$ denotes the vector fixed by $\rho'(\mathbf{G})$, then $u_\star := p_\star u'_\star$ is fixed by $\rho(\mathbf{G})$. We then conclude by Lemma \ref{lemma:invariant-section}.
\end{proof}

\section{Pollicott-Ruelle resonances and local geometry on the moduli space of connections}

\label{section:geometry}

This section is devoted to the study of the moduli space of connections, with the point of view of Pollicott-Ruelle resonances. We will first deal with the opaque case and then outline the main distinctions with the non-opaque case. We consider a Hermitian vector bundle $(\mc{E},\nabla^{\mc{E}})$ endowed with a unitary connection over the Anosov Riemannian manifold $(M,g)$. Recall the notation of \S \ref{section:twisted}: we write $\X = (\pi^*\nabla^{\E})_X$, $\RR_\pm(z) = (\pm \X + z)^{-1}$ for its resolvent and $\RR^\pm_0$, $\Pi_0^\pm$ for the holomorphic parts and the spectral projector at zero, respectively.

\subsection{The Coulomb gauge}

We study the geometry of the space of connections (and of the moduli space of gauge-equivalent connections) in a neighborhood of a given unitary connection $\nabla^{\mc{E}}$ of regularity $C^s_*$ (for $1 < s < \infty$\footnote{It is very likely that the case $s=\infty$ still works. This would require to use the Nash-Moser Theorem.}) such that $\ker(\nabla^{\End(\E)}) = \C \cdot \mathbbm{1}_{\E}$. For the standard differential topology of Banach manifolds, we refer the reader to \cite{Lang-99}. We denote by
\[
\mc{O}_s(\nabla^{\mc{E}}) := \left\{ \nabla^{\mc{E}}  + p^{-1} \nabla^{\mathrm{End}(\E)}p ~|~ p \in C_*^{s+1}(M,\mathrm{U}(\E)),\,\, \|p - \mathbbm{1}\|_{C^{s+1}_*} < 
\delta\right\}
\]
the orbit of gauge-equivalent connections of $C^{s}_*$ regularity, where $\delta > 0$ is small enough so that $\mc{O}_s(\nabla^{\E})$ is a smooth Banach submanifold. We also define the slice at $\nabla^{\E}$ by
\[
\mathcal{S}_s(\nabla^{\E}) := \nabla^{\E} + \ker (\nabla^{\End(\E)})^* \cap \big\{A \in C^{s}(M,T^*M \otimes \mathrm{End}_{\mathrm{sk}}(\E)): \|A\|_{C_*^s} < \delta\big\}.
\]
Note that $\mathbb{S}^1$ acts by multiplication freely and properly on $C_*^s(M, \mathrm{U}(\E))$ and hence we may form the quotient Banach manifold, denoted by $C_*^s(M, \mathrm{U}(\E))/\mathbb{S}^1$, which in particular satisfies
\begin{equation}
	T_{\mathbbm{1}_{\E}} \Big(C_*^s(M, \mathrm{U}(\E))/\mathbb{S}^1\Big) = C_*^s(M, \End_{\mathrm{sk}}(\E))/\big(\mathbb{R}\cdot (i\mathbbm{1}_{\E})\big), 
\end{equation}
where we used the identification of tangent spaces given by the exponential map.
	Next, observe that the map $O: p \mapsto p^*\nabla^{\E}$ is injective modulo the multiplication action of $\mathbb{S}^1$ on $C^{s + 1}_*(M, \mathrm{U}(\E))$ and that it is an immersion at $p = \mathbbm{1}$ with $\dd_{\mathbbm{1}} O (\Gamma) = \nabla^{\End(\E)} \Gamma$. Therefore by equation \eqref{eq:decomposition-tt}, $\mc{O}_s(\nabla^{\E})$ and $\mc{S}_s$ are smooth transverse Banach manifolds with:
	\[T_{\nabla^{\E}} \mc{O}(\nabla^{\End(\E)}) = \ran(\nabla^{\End(\E)}), \quad T_{\nabla^{\End(\E)}} \mc{S}_s = \ker(\nabla^{\End(\E)})^*.\]

We will say that a connection $\nabla_2^{\E}$ is in the \emph{Coulomb gauge} with respect to $\nabla_1^{\E}$ if $(\nabla_1^{\End(\E)})^*(\nabla_2^{\E} - \nabla_1^{\E}) =0$. The following lemma shows that, near $\nabla^{\E}$, we may always put a pair of connections in the Coulomb gauge with respect to each other. It is a slight generalisation of the usual claim (see \cite[Proposition 2.3.4]{Donaldson-Kronheimer-90}).

\begin{lemma}[Coulomb gauge]
\label{lemma:coulomb}
Let $s > 1$. There exists $\varepsilon = \varepsilon(s, \nabla^{\E}) > 0$ and a neighbourhood $\mathbbm{1}_{\E} \in \mc{U} \subset C_*^{s + 1}(M, \mathrm{U}(\E))/\mathbb{S}^1$ such that for any $A_i \in C^{s}(M,T^*M \otimes \mathrm{End}_{\mathrm{sk}}(\E))$ with $\|A_i\|_{C^s_*} < \varepsilon$, after setting $\nabla_i^{\E} = \nabla^{\E} + A_i$  for $i = 1, 2$, there exists a unique $p_{A_1, A_2} \in \mc{U}$ such that $p_{A_1, A_2}^*\nabla_2^{\E} - \nabla_1^{\E} \in \ker (\nabla_1^{\End(\E)})^*$. Furthermore, if $A_i$ are smooth, then $p_{A_1, A_2}$ is smooth.
Moreover, the map 
\[
\big(C^{s}(M,T^*M \otimes \mathrm{End}_{\mathrm{sk}}(\E))\big)^2 \ni (A_1, A_2) \mapsto \phi(A_1, A_2) := p_{A_1, A_2}^*\nabla_2^{\E} \in \mc{S}_s(\nabla_1^{\E}),
\]
is smooth. Setting $\phi(A) := \phi(0, A)$, we have:
 \[
 \dd \phi|_{A = 0} = \pi_{\ker (\nabla^{\End(\E)})^*}.
 \]
\end{lemma}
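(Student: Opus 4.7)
My plan is a standard Banach implicit function theorem (IFT) argument, where the only nontrivial input is the spectral assumption $\ker \nabla^{\End(\E)} = \C \cdot \mathbbm{1}_{\E}$, which will make the linearization invertible.

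\textbf{Step 1 (setup).} I introduce the auxiliary map
\[
F : \mc{U} \times B_\eps \times B_\eps \longrightarrow C^{s-1}_*(M, \End_{\mathrm{sk}}(\E)), \qquad F(p, A_1, A_2) := (\nabla_1^{\End(\E)})^*\bigl(p^*\nabla_2^{\E} - \nabla_1^{\E}\bigr),
\]
where $\nabla_i^{\E} := \nabla^{\E} + A_i$ and $B_\eps$ is a small ball in $C^s_*(M, T^*M \otimes \End_{\mathrm{sk}}(\E))$. The Coulomb condition is exactly $F(p, A_1, A_2) = 0$, and $F(\mathbbm{1}, 0, 0) = 0$.

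\textbf{Step 2 (linearization).} Using $p^*\nabla_2^{\E} = \nabla^{\E} + p^{-1}\nabla^{\End(\E)} p + p^{-1} A_2\, p$, a first-order expansion at $p = e^{t\Gamma}$ with $\Gamma \in C^{s+1}_*(M, \End_{\mathrm{sk}}(\E))$ gives
\[
\partial_p F\big|_{(\mathbbm{1}, 0, 0)}(\Gamma) = (\nabla^{\End(\E)})^*\nabla^{\End(\E)}\Gamma.
\]
This is the Bochner Laplacian $\Delta^{\End(\E)}$ on skew-Hermitian endomorphisms, a second-order formally self-adjoint elliptic operator.

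\textbf{Step 3 (invertibility on quotients).} The assumption $\ker \nabla^{\End(\E)} = \C \cdot \mathbbm{1}_{\E}$ forces $\ker \Delta^{\End(\E)}\big|_{\End_{\mathrm{sk}}} = \R \cdot (i\mathbbm{1}_{\E})$, which is precisely the tangent space to the $\mathbb{S}^1$-action at $\mathbbm{1}_{\E}$ that we have quotiented out. By elliptic Fredholm theory on Hölder--Zygmund spaces, and self-adjointness, $\Delta^{\End(\E)}$ induces an isomorphism
\[
\Delta^{\End(\E)} : C^{s+1}_*(M, \End_{\mathrm{sk}}(\E))\big/\bigl(\R \cdot i\mathbbm{1}_{\E}\bigr) \xrightarrow{\ \sim\ } \ran(\nabla^{\End(\E)})^* \subset C^{s-1}_*(M, \End_{\mathrm{sk}}(\E)),
\]
and the codomain of $F$ coincides with this range (as $F$ lies in the image of $(\nabla_1^{\End(\E)})^*$). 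Thus $\partial_p F|_{(\mathbbm{1}, 0, 0)}$ is an isomorphism between the relevant Banach spaces.

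\textbf{Step 4 (IFT and regularity).} The Banach space IFT applied to $F$ yields a unique $p_{A_1, A_2} \in \mc{U}$ depending smoothly on $(A_1, A_2)$ and solving $F = 0$; this gives uniqueness and the smoothness of the map $\phi(A_1, A_2) = p_{A_1,A_2}^*\nabla_2^{\E}$ as a composition of smooth operations (multiplication in $C^s_*$ is continuous/smooth since $s > 1$). If $A_1, A_2 \in C^\infty$, then the equation $F(p, A_1, A_2) = 0$ reads as a nonlinear elliptic PDE for $p$, with principal part $\Delta^{\End(\E)}$; standard elliptic bootstrap in Hölder--Zygmund scales promotes $p \in C^{s+1}_*$ to $p \in C^\infty$. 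The main technical obstacle is precisely to verify the isomorphism in Step~3 in the non-Hilbertian $C^s_*$-topology, but this is classical for self-adjoint elliptic operators whose kernel is understood.

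\textbf{Step 5 (computation of $d\phi|_{A=0}$).} Setting $A_1 = 0$ and writing $p_A = e^{\Gamma(A)}$ with $\Gamma(A) = -(\Delta^{\End(\E)})^{-1}(\nabla^{\End(\E)})^* A + O(\|A\|^2)$ from Step~3, the expansion
\[
\phi(A) - \nabla^{\E} = \nabla^{\End(\E)}\Gamma(A) + A + O(\|A\|^2) = \bigl(\mathbbm{1} - \nabla^{\End(\E)}(\Delta^{\End(\E)})^{-1}(\nabla^{\End(\E)})^*\bigr)A + O(\|A\|^2)
\]
identifies $d\phi|_{A=0}$ with $\pi_{\ker(\nabla^{\End(\E)})^*}$ by the formula \eqref{equation:projection}, completing the proof.
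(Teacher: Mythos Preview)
Your proof is correct and follows essentially the same implicit function theorem argument as the paper: both identify the linearization of the Coulomb condition at $(\mathbbm{1},0,0)$ as the Bochner Laplacian $(\nabla^{\End(\E)})^*\nabla^{\End(\E)}$, use the hypothesis $\ker\nabla^{\End(\E)}=\C\cdot\mathbbm{1}_{\E}$ to see it as an isomorphism modulo the $\mathbb{S}^1$-direction, and then bootstrap regularity and differentiate to obtain $\dd\phi|_{A=0}=\pi_{\ker(\nabla^{\End(\E)})^*}$. The only cosmetic difference is that the paper parametrizes gauge transformations via $p=\exp(\chi)$ and checks explicitly that $F$ lands in $\{i\mathbbm{1}_{\E}\}^{\perp_{L^2}}$ (a fixed space independent of $A_1$), whereas you work directly with $p$ and observe that $F$ lies in $\ran(\nabla_1^{\End(\E)})^*\subset\{i\mathbbm{1}_{\E}\}^{\perp}$.
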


\begin{center}
\begin{figure}[htbp!]
\includegraphics[scale=0.9]{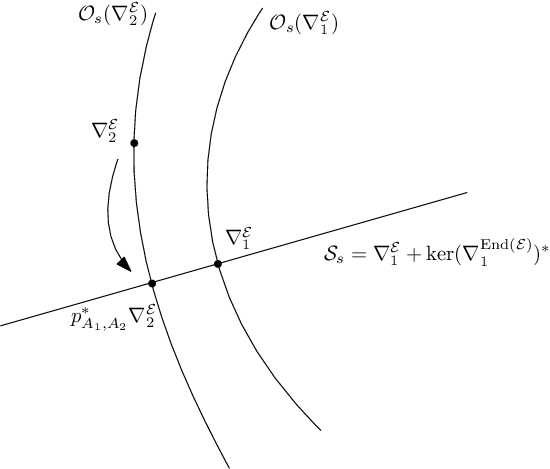}
\caption{A schematic representation of Lemma \ref{lemma:coulomb}.}
\label{fig:coulomb}
\end{figure}
\end{center}

\begin{proof}
Note that the exponential map $\exp: C^{s + 1}_*(M, \End_{\mathrm{sk}}(\E)) \cap \{i \mathbbm{1}_{\E}\}^{\perp_{L^2}} \to C^{s + 1}_*(M, \mathrm{U}(\E))/\mathbb{S}^1$ is well-defined and a local diffeomorphism at zero, so we reduce the claim to finding a neighbourhood $0 \in \mathcal{V} \subset C^{s + 1}_*(M, \End_{\mathrm{sk}}(\E)) \cap \{i\mathbbm{1}_{\E}\}^{\perp_{L^2}}$ and setting $p = p_{A_1, A_2} = \exp(\chi_{A_1, A_2})$ for $\chi = \chi_{A_1, A_2} \in \mc{V}$, that is $\mc{U} = \exp(\mc{V})$. Define the functional
	\begin{align*}
		&F: \big(C^s_*(M, T^*M \otimes \End_{\mathrm{sk}}(\E))\big)^2 \times C^{s + 1}_*(M, \End_{\mathrm{sk}}(\E)) \cap \{i\mathbbm{1}_{\E}\}^{\perp_{L^2}}\to C^{s - 1}_*(M, \End_{\mathrm{sk}}(\E))\\
		&\cap \{i\mathbbm{1}_{\E}\}^{\perp_{L^2}}, F(A_1, A_2, \chi) :=
		(\nabla_1^{\End(\E)})^*\Big(\exp(-\chi) \nabla^{\End(\E)} \exp(\chi) + \exp(-\chi) A_2 \exp(\chi) - A_1\Big).
	\end{align*}
	We have $F$ well-defined, i.e. with values in skew-Hermitian endomorphisms, since $\nabla^{\E}$ is unitary, and integrating by parts $\langle{F(A_1, A_2, \chi), \mathbbm{1}_{\E}}\rangle_{L^2} = 0$; note that $F$ is smooth in its entries. Next, we compute the partial derivative with respect to the $\chi$ variable at $A_1 = A_2 = 0$ and $\chi = 0$:
	\[\dd_{\chi}F(0, 0, 0) (\Gamma) = \partial_t|_{t = 0} F(0, 0, t\Gamma) = (\nabla^{\End(\E)})^* \nabla^{\End(\E)} \Gamma.\]
	This derivative is an isomorphism on 
	\[(\nabla^{\End(\E)})^* \nabla^{\End(\E)}: C^{s + 1}_*(M, \End_{\mathrm{sk}}(\E)) \cap \{i \mathbbm{1}_{\E}\}^{\perp_{L^2}} \to C^{s - 1}_*(M, \End_{\mathrm{sk}}(\E)) \cap \{i\mathbbm{1}_{\E}\}^{\perp_{L^2}},\]
	by the Fredholm property of $(\nabla^{\End(\E)})^* \nabla^{\End(\E)}$ and since $\ker (\nabla^{\End(\E)}) = \mathbb{C} \cdot \mathbbm{1}_{\E}$ by assumption. The first claim then follows by an application of the implicit function theorem for Banach spaces. 
	
	The fact that $p$ is smooth if $(A_1, A_2)$ is, is a consequence of elliptic regularity and the fact that $C_*^s$ is an algebra, along with the Coulomb property:
	\[(\nabla_1^{\End(\E)})^* \nabla_1^{\End(\E)} p = (\nabla_1^{\End(\E)}p) p^{-1} \bullet \nabla_1^{\End(\E)} p + p(\nabla_1^{\End(\E)})^* \big(p^{-1} (A_1 - A_2) p\big) \in C_*^{s}\]
	implies $p \in C_*^{s+2}$. Bootstrapping we obtain $p_{A_1, A_2} \in C^\infty$. Here $\bullet$ denotes the operation of taking the inner product on the differential form side and multiplication on the endomorphism side.
	
	Eventually, we compute the derivative of $\phi(A)$. Write $p_A := p_{0, A}$ and $\chi_A := \chi_{0, A}$, where $\chi_A$ is orthogonal to $i \mathbbm{1}_{\E}$ with respect to the $L^2$-scalar product, so that by definition
	\begin{equation}
	\label{equation:phi}
	\phi(A) = \nabla^{\E} + p_A^{-1} \nabla^{\End(\E)} p_A + p_A^{-1} A p_A.
	\end{equation}
	By differentiating the relation $F(A, \chi_A) := F(0, A,\chi_A)=0$ at $A=0$, we obtain for every $\Gamma \in C^s_*(M,T^*M \otimes \End_{\mathrm{sk}}(\E))$:
	\begin{align*}
	0 &= \dd_A F|_{A=0,\chi=0}(\Gamma) + \dd_\chi F|_{A=0,\chi=0}(\dd \chi_A|_{A=0}(\Gamma))\\
	&= (\nabla^{\End(\E)})^*\Gamma + (\nabla^{\End(\E)})^*\nabla^{\End(\E)} \dd \chi_A|_{A=0}(\Gamma),
	\end{align*}
	that is $\dd \chi_A|_{A=0}(\Gamma) = - [(\nabla^{\End(\E)})^*\nabla^{\End(\E)}]^{-1} (\nabla^{\End(\E)})^*\Gamma$. 	
	 Observe that $\dd p_A|_{A=0} = \dd \chi_A|_{A=0}$ via the exponential map and by \eqref{equation:phi}, we obtain:
	\[
	\dd \phi|_{A=0}(\Gamma) = \nabla^{\End(\E)} \dd \chi_A|_{A=0}(\Gamma) + \Gamma = \Gamma -  \nabla^{\End(\E)}[(\nabla^{\End(\E)})^*\nabla^{\End(\E)}]^{-1} (\nabla^{\End(\E)})^*\Gamma.
	\]
	We then conclude by \eqref{equation:projection}.
\end{proof}

In particular, the proof also gives that the map
\[
C^{s}_*(M,T^*M \otimes \mathrm{End}_{\mathrm{sk}}(\E)) \ni A \mapsto \phi(A) \in \mc{S}_s := \mc{S}_{s}(\nabla^{\E}),
\]
is constant along orbits of gauge-equivalent connections (by construction).

\subsection{Resonances at $z=0$: finer remarks}
\label{ssection:finer}

We recall that $\nabla^{\E}$ is an arbitrary smooth unitary connection on a Hermitian vector bundle $\E \rightarrow M$ and that the differential operator $\X := (\pi^*\nabla^{\E})_X$ is defined over $SM$.

In the subsequent lemma, we will use the following characterization of Pollicott-Ruelle resonances: $z_0 \in \C$ is a Pollicott-Ruelle resonance of $-\X$ if and only if there exists a non-zero distribution $u \in \mathcal{D}'_{E_u^*}(\M, \E) $ such that $-\X u = z_0 u$. Here for a closed conic set $\Gamma \subset T^*\mc{M}$, we denote by $\mathcal{D}'_{\Gamma}(\M, \E)$ the set of distributional sections $u$, such that the \emph{wavefront set} satisfies $\operatorname{WF}(u) \subset \Gamma$, see \cite[Chapter 8]{Hormander-90} for the background on the wavefront set. This characterization follows by the flexibility in the choice of anisotropic spaces (see e.g. \cite[Theorem 13]{Faure-Sjostrand-11} for details).

\begin{lemma}\label{lemma:symmetricspectrum}
The Pollicott-Ruelle resonance spectrum of $\X$ is symmetric with respect to the real axis.
\end{lemma}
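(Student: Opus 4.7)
The plan is to exploit the skew-adjointness of $\X$ on $L^2(\M, \pi^*\E; d\mu)$, which already holds in this setup because $X$ preserves the smooth measure $d\mu$ and $\nabla^{\E}$ is unitary. This is exactly the hypothesis that produced the adjoint identity \eqref{equation:adjoint}, namely $\RR_+(z)^* = \RR_-(\overline{z})$. First I would record that this identity, initially an equality on $L^2$ valid for $\Re(z) > 0$, propagates to a meromorphic identity between the extensions of $\RR_+$ acting on $\mc{H}_+^s$ and of $\RR_-$ acting on $\mc{H}_-^s$, using the duality $(\mc{H}_+^s)' = \mc{H}_-^s$ stated in \S\ref{ssection:resonances}.

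The next step is essentially formal: taking adjoints of a meromorphic operator-valued family with a pole at $z_0$ of order $m$ and principal part $\sum_{k=1}^m A_{-k}(z-z_0)^{-k}$ yields a meromorphic family in the variable $\overline{z}$ with a pole at $\overline{z_0}$, of the same order $m$ and principal part $\sum_{k=1}^m A_{-k}^*(\overline{z}-\overline{z_0})^{-k}$. Substituting $w=\overline{z}$ in the identity $\RR_+(z)^* = \RR_-(\overline{z})$ therefore gives that $z_0$ is a pole of $\RR_+$ of order $m$ if and only if $\overline{z_0}$ is a pole of $\RR_-$ of the same order. If we denote by $\Sigma_{\pm}$ the pole sets of $\RR_{\pm}$ (contained in $\{\Re(z) \leq 0\}$), we obtain the identity $\Sigma_+ = \overline{\Sigma_-}$.

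Finally, the Pollicott-Ruelle resonance spectrum of $\X$, as described in \S\ref{ssection:resonances}, is by construction the set of poles of both meromorphic extensions $\RR_+$ and $\RR_-$, i.e. the union $\Sigma = \Sigma_+ \cup \Sigma_-$. The identity $\Sigma_+ = \overline{\Sigma_-}$ then gives $\overline{\Sigma} = \overline{\Sigma_+} \cup \overline{\Sigma_-} = \Sigma_- \cup \Sigma_+ = \Sigma$, which is the desired symmetry with respect to the real axis. I do not anticipate a real technical obstacle here; the only point to check carefully is that the adjoint identity genuinely promotes to the meromorphic extensions on the anisotropic spaces $\mc{H}_{\pm}^s$ (rather than merely on $L^2$), and this is standard given the duality of these spaces and the standard functoriality of residues under taking adjoints.
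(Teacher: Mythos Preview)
Your argument establishes that $\Sigma_+ = \overline{\Sigma_-}$, and hence that the \emph{union} $\Sigma_+ \cup \Sigma_-$ is invariant under complex conjugation. But this is not what is being claimed, and it is not what the paper needs. In the application (Lemma~\ref{lemma:diff-2}), one fixes a small contour $\gamma$ containing the unique resonance $\lambda_A$ of $-\X_A$ (a pole of $\RR_+$) and concludes from the symmetry that $\overline{\lambda_A}$ is also a resonance of $-\X_A$ inside $\gamma$, forcing $\lambda_A \in \R$. For this one needs $\Sigma_+$ \emph{itself} to be symmetric about the real axis, i.e.\ $\overline{\Sigma_+} = \Sigma_+$. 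Your identity $\Sigma_+ = \overline{\Sigma_-}$ only tells you that $\overline{\lambda_A}$ is a pole of $\RR_-$, which is of no direct use.

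What you are missing is the second ingredient in the paper's proof: the time-reversal symmetry of the geodesic flow. The antipodal map $R:(x,v)\mapsto(x,-v)$ satisfies $R^*X = -X$, and since $\nabla^{\E}$ is pulled back from $M$ one has $R^*\X = -\X R^*$. One can choose the order function so that $R^*:\mc{H}_\pm^s \to \mc{H}_\mp^s$ is an isomorphism; then $-\X u = z_0 u$ with $u \in \mc{H}_+^s$ gives $\X(R^*u) = z_0 (R^*u)$ with $R^*u \in \mc{H}_-^s$, so $\Sigma_+ = \Sigma_-$. Combining this with your adjoint step $\Sigma_+ = \overline{\Sigma_-}$ yields $\Sigma_+ = \overline{\Sigma_+}$, which is the required statement. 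Note that for a general volume-preserving Anosov flow without a time-reversal symmetry there is no reason for $\Sigma_+$ alone to be conjugation-invariant, so this extra step is genuinely needed.
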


\begin{proof}
If $z_0$ is a resonance associated to $-\X$, i.e. a pole of $z \mapsto \RR_+(z)$, then by \eqref{equation:adjoint} $\overline{z}_0$ is a resonance associated to $+\X$, i.e. a pole of $z \mapsto \RR_-(z)$. Let $u \in \mathcal{D}'_{E_u^*}(\M, \E) $ be a non-zero resonant state such that $-\X u = z_0 u$. Let $R : (x,v) \mapsto (x,-v)$ be the antipodal map on $SM$; note that the pullback $R^*$ acts on sections of $\pi^*\mc{E}$ and that $R^* \pi^* \nabla^{\mc{E}} = \pi^* \nabla^{\mc{E}}$ since $\pi \circ R = \pi$. Observe that $R^*: \mathcal{D}'_{E_u^*}(\M, \E) \to \mathcal{D}'_{E_s^*}(\M, \E)$ is an isomorphism, since $R^*X=-X$ so $R^*$ will swap the stable and the unstable bundles. Then $z_0 R^* u = -R^* \X u = \X R^* u$ and $R^*u \in \mathcal{D}'_{E_s^*}(\M, \E) $. Thus $R^*u$ is a resonant state associated to the resonance $z_0$. So both $z_0$ and $\overline{z}_0$ are resonances for $+\X$, which completes the proof.\footnote{Alternatively, by inspecting the construction of the anisotropic Sobolev space in \cite{Faure-Sjostrand-11}, we see that we may assume $R^* : \mc{H}_\pm^s \rightarrow \mc{H}_\mp^s$ is an isomorphism, simply by replacing the degree function $m$ in the construction by $\frac{m - R^*m}{2}$, which then implies that $R^*m = -m$.}
\end{proof}

We remark that the preceding lemma also holds in sufficiently high finite regularity by a density argument and the continuity of resonances established in Lemma \ref{lemma:perturbation-pr}.

Consider a contour $\gamma \subset \mathbb{C}$ such that $-\X$ has no resonances other than zero inside or on $\gamma$. By continuity of resonances (see Lemma \ref{lemma:perturbation-pr}), there is an $\varepsilon > 0$ such that for all skew-Hermitian $1$-forms $A$ with $\|A\|_{C_*^s} < \varepsilon$ the operator $-\X_A := -(\pi^*(\nabla^{\E} + A))_X$ has no resonances on $\gamma$. Here we need to take $s$ large enough (depending on the dimension), so that the framework of microlocal analysis applies.

In the specific case where $\dim \ker(\X|_{\mc{H}_+}) = 1$, we denote by $\lambda_A$ the unique resonance of $-\X_A$ enclosed by $\gamma$. Note that the map $A \mapsto \lambda_A$ is $C^3$-regular for $\varepsilon > 0$ small enough (see Lemma \ref{lemma:perturbation-pr}).

\begin{lemma}
\label{lemma:diff-2}
Assume that $\dim \ker(\X|_{\mc{H}_+}) = 1$. Then $\lambda_A \in \mathbb{R}$ and for $\Gamma \in C^\infty(M,T^*M \otimes \mathrm{End}_{\mathrm{sk}}(\E))$:
\[
\dd \lambda_A|_{A=0} = 0, ~~ \dd^2 \lambda_A|_{A=0}(\Gamma,\Gamma) = - \langle \Pi \pi_1^*\Gamma u_0, \pi_1^*\Gamma u_0 \rangle_{L^2},
\]
where $u_0$ is a resonant state associated to $A=0$ and $\|u_0\|_{L^2}=1$.
\end{lemma}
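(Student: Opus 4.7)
The reality of $\lambda_A$ follows from Lemma~\ref{lemma:symmetricspectrum}: by symmetry of the resonance spectrum with respect to the real axis, $\overline{\lambda_A}$ is also a resonance of $-\X_A$, and for $\varepsilon>0$ small enough it still lies inside the contour $\gamma$, so the uniqueness of the enclosed resonance forces $\overline{\lambda_A}=\lambda_A$. Fix $\Gamma\in C^\infty(M,T^*M\otimes\End_{\mathrm{sk}}(\E))$ and consider the one-parameter family $A_t=t\Gamma$. Ordinary analytic perturbation theory, applied to the holomorphic family of Fredholm operators $-\X-t\pi_1^*\Gamma-z$ on the anisotropic Sobolev space $\mc{H}_+^s$, produces a smooth path $t\mapsto(\lambda_t,u_t)\in\R\times\mc{H}_+^s$ of resonance and resonant state. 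I would normalize by $\|u_0\|_{L^2}=1$ and $\Pi_0^+u_t=u_0$, which in particular enforces $\dot u_0\perp u_0$.

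For the first derivative, differentiate the eigenvalue equation $(\X+t\pi_1^*\Gamma)u_t=-\lambda_tu_t$ once at $t=0$ to get $\X\dot u_0+\pi_1^*\Gamma\,u_0=-\dot\lambda_0\,u_0$, and pair with $u_0$: the $\X$-term vanishes by skew-adjointness of $\X$ together with $\X u_0=0$, while $\langle\pi_1^*\Gamma\,u_0,u_0\rangle$ is purely imaginary because $\pi_1^*\Gamma$ is a pointwise skew-Hermitian endomorphism. Since $\dot\lambda_0\in\R$, both sides must vanish, whence $\dot\lambda_0=0$ and $\Pi_0^+(\pi_1^*\Gamma\,u_0)=0$. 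The first-order equation then becomes $\X\dot u_0=-\pi_1^*\Gamma\,u_0$, which, thanks to the identity $\RR_0^+\X=\mathbbm{1}-\Pi_0^+$ from~\eqref{eq:resolventrelations} and the normalization $\Pi_0^+\dot u_0=0$, admits the explicit solution $\dot u_0=-\RR_0^+\pi_1^*\Gamma\,u_0$.

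For the second derivative, differentiate the eigenvalue equation once more at $t=0$, using $\dot\lambda_0=\lambda_0=0$, and pair with $u_0$ again; the $\X\ddot u_0$ term drops, and substitution of the formula for $\dot u_0$, together with the skew-adjointness of $\pi_1^*\Gamma$, yields the preliminary expression $\ddot\lambda_0=-2\langle\RR_0^+\pi_1^*\Gamma\,u_0,\pi_1^*\Gamma\,u_0\rangle$. Since $\ddot\lambda_0$ is real, taking the real part and invoking $(\RR_0^+)^*=\RR_0^-$ from Lemma~\ref{lemma:relations-resolvent} converts $2\Re\RR_0^+$ into $\RR_0^++\RR_0^-=\Pi$, which gives the claimed identity.

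The main technical subtlety is justifying the perturbative setup: for $t\neq 0$ the state $u_t$ need no longer lie in $L^2$ but only in $\mc{H}_+^s$ (since the resonance may leave the imaginary axis), so the inner products must be interpreted as dual pairings $\mc{H}_+^s\times\mc{H}_-^s$. This is handled via the rank-one spectral projector $\mathbb{P}_t=-\tfrac{1}{2\pi i}\oint_\gamma(-\X-t\pi_1^*\Gamma-z)^{-1}\,dz$ on $\mc{H}_+^s$, which depends smoothly on $t$ by continuity of Pollicott--Ruelle resonances under smooth perturbations~\cite{Bonthonneau-19}; since $u_0$ is smooth and $\pi_1^*\Gamma$ is a smooth fiberwise endomorphism preserving these spaces, all pairings remain well-defined and the formal manipulations above are rigorous.
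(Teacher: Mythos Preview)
Your proof is correct and follows essentially the same route as the paper: set up perturbation theory along $t\mapsto t\Gamma$, differentiate the eigenvalue equation, pair with $u_0$, solve for $\dot u_0$ via $\RR_0^+$, and symmetrize at the end using $(\RR_0^+)^*=\RR_0^-$ together with $\ddot\lambda_0\in\R$.

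The one genuine difference is in the first-derivative step. The paper first observes that the antipodal map $R^*$ preserves the one-dimensional kernel $\ker(\X|_{\mc H_+})$, so $u_0$ must be purely odd or purely even in $v$; since $\pi_1^*\Gamma$ has odd parity, the pairing $\langle\pi_1^*\Gamma\,u_0,u_0\rangle$ vanishes directly by parity, giving $\dot\lambda_0=0$. You instead argue that $\pi_1^*\Gamma$ is pointwise skew-Hermitian, so this pairing is purely imaginary, and combine with the already-established fact $\dot\lambda_0\in\R$ to force it to zero. Both arguments are valid and lead to the same consequence $\Pi_0^+(\pi_1^*\Gamma\,u_0)=0$ needed downstream. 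Your route has the small advantage of not invoking the parity structure of $u_0$; the paper's route, on the other hand, establishes $\dot\lambda_0=0$ without appealing to the reality of $\lambda_A$ at that stage.
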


\begin{proof}
By the symmetry property of Lemma \ref{lemma:symmetricspectrum} and continuity of resonances we know $\lambda_A \in \mathbb{R}$. Also, observe that $u_0$ is either pure odd or pure even with respect to $v$ (i.e. $R^*u_0 = u_0$ or $R^*u_0 = -u_0$) because $R^*$ keeps $\ker (\X)$ fixed and $\ker (\X)$ is assumed to be one dimensional.

For the second claim, it is sufficient to start with the equality $-\X_{\tau \Gamma} u_{\tau \Gamma} = \lambda_{\tau \Gamma} u_{\tau \Gamma}$, where $\Gamma \in C^\infty(M,T^*M \otimes \mathrm{End}_{\mathrm{sk}}(\E))$, $\tau \in (-\delta, \delta)$ is small enough so that $\tau \mapsto \lambda_{\tau \Gamma}$ and $\tau \mapsto u_{\tau \Gamma} \in \mc{H}_+$ are $C^3$, and to compute the derivatives at $\tau=0$. Observe that $\dot{\X}_0 = \pi_1^*\Gamma,\, \ddot{\X}_0 = 0$. We obtain: $-\dot{\X}_0 u_0 -\X_0 \dot{u}_0 = \dot{\lambda}_0 u_0$ and taking the $L^2$ scalar product with $u_0$, we find $\dot{\lambda}_0 = 0$, using that $u_0$ is either pure odd or pure even. Thus $\dot{u}_0- \Pi_0^+ \dot{u}_0 = -\mathbf{R}_0^+ \pi_1^*\Gamma u_0$. Then, taking the second derivative at $\tau = 0$, we get: $-2 \pi_1^*\Gamma \dot{u}_0 - \X_0 \ddot{u}_0 = \ddot{\lambda}_0 u_0$ and taking once again the scalar product with $u_0$, we find $\ddot{\lambda}_0 = - 2 \langle \mathbf{R}_0^+ \pi_1^*\Gamma u_0,  \pi_1^*\Gamma u_0 \rangle_{L^2}$. It is then sufficient to observe that by symmetry (using $(\mathbf{R}_0^+)^* = \mathbf{R}_0^-$ and $\ddot{\lambda}_0 \in \mathbb{R}$)
\[
\langle \mathbf{R}_0^+ \pi_1^*\Gamma u_0,  \pi_1^*\Gamma u_0 \rangle_{L^2} = \langle \mathbf{R}_0^- \pi_1^*\Gamma u_0,  \pi_1^*\Gamma u_0 \rangle_{L^2}.
\]
This proves the result.
\end{proof}

\subsection{P-R resonance at $0$ of the mixed connection: opaque case}

\label{ssection:pollicott-ruelle-dea}

We now further assume that  $\X := (\pi^* \nabla^{\mathrm{End}(\E)})_X$ has the resonant space at $0$ spanned by $\mathbbm{1}_{\E}$. This condition is known as the \emph{opacity} of the connection $\pi^*\nabla^{\E}$. When $(M,g)$ is Anosov, this is known to be a generic condition, see \cite[Theorem 1.6]{Cekic-Lefeuvre-20}.

As in \S\ref{sssection:connection-induced}, we assume that $s \gg 1$ (so that standard microlocal analysis and the perturbation Lemma \ref{lemma:perturbation-pr} apply) and we introduce the mixed connection induced by $\nabla_1^{\E} = \nabla^{\E} + A_1$ and $\nabla_2^{\E} = \nabla^{\E} + A_2$, namely
\[
\nabla^{\Hom(\nabla_1^{\E}, \nabla_2^{\E})} u = \nabla^{\End(\E)} u + A_2u - u A_1,
\]
and we set $\X_{A_1, A_2} :=  (\pi^* \nabla^{\Hom(\nabla_1^{\E}, \nabla_2^{\E})})_X$ and $z \mapsto \RR_{\pm}(z, A_i)$ for the resolvents. The operator $\X := \X_{0, 0}$ has the resonant space at $z=0$ spanned by $\mathbbm{1}_{\E}$. For $\|A_1\|_{C_*^s}, \|A_2\|_{C_*^s}$ small enough by Lemma \ref{lemma:perturbation-pr} the map $(A_1, A_2) \mapsto \lambda_{A_1, A_2}$ is $C^3$-regular, where we denote by $\lambda_{A_1, A_2}$ the unique resonance close to $0$, namely the unique pole of $\RR_{\pm}(z,A_i)$ inside the small contour $\gamma$ around zero (see \S \ref{ssection:finer}). Since $\nabla^{\Hom(\nabla_1^{\E}, \nabla_2^{\E})}$ is unitary, by \eqref{eq:spectrum-left-half-plane} we have $\re(\lambda_{A_1, A_2}) \leq 0$, and by Lemmas \ref{lemma:diff-2} and \ref{lemma:perturbation-pr} that $\lambda_{A_1, A_2} \in \mathbb{R}$ (as otherwise the rank of the projector in Lemma \ref{lemma:perturbation-pr} would be at least $2$, contradicting the fact that its locally constant). In fact $\lambda_{A_1, A_2}$ descends to the moduli space: if $p_i^*(\nabla^{\E} + A_i') = \nabla^{\E} + A_i$ for $\|A_i'\|_{C_*^s}$ small enough, then using \eqref{equation:lien} we get
\begin{equation}\label{eq:mixedunitaryequiv}
	\X_{A_1', A_2'}u = (p_2)^{-1} \cdot \X_{A_1, A_2}(p_2 u (p_1)^{-1}) \cdot p_1, \quad u \in \mc{H}_+.
\end{equation}
Here we used that $\mc{H}_+$ is stable under multiplication by $C_*^s$ for $s$ large enough; hence $\X_{A_1, A_2}$ and $\X_{A_1', A_2'}$ have equal P-R spectra and so $\lambda_{A_1, A_2} = \lambda_{A_1', A_2'}$.

Next, we need a uniform estimate for the generalized X-ray transform operator $\Pi_1^{\End(\E)}$, introduced in Definition \ref{definition:generalized-xray} with $m=1$ and with respect to the endomorphism connection $\operatorname{End}(\nabla^{\mc{E}})$ (see Definition \ref{definition:mixed}), in a neighbourhood of $\nabla^{\E}$:

\begin{lemma}\label{lemma:Pi_1uniform}
	Assume $\Pi_1^{\End(\E)}$ is $s$-injective. There are constants $\varepsilon, C > 0$ depending only on $\nabla^{\E}$ such that for all skew-Hermitian $1$-forms with $\|A\|_{C_*^s} < \varepsilon$:
	\[
\forall f \in H^{-1/2}(M, T^*M \otimes \End(\mc{E})), ~~~ \langle \Pi_1^{\End(\nabla^{\E} + A)} f, f \rangle_{L^2} \geq C \|\pi_{\ker (\nabla^{\End(\nabla^{\E} + A)})^*}f\|^2_{H^{-1/2}}.
\] 
\end{lemma}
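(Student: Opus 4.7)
The plan is to prove this by perturbation from the base estimate at $A=0$ given by Lemma \ref{lemma:generalized-xray}(3). The key inputs needed are: \textbf{(i)} continuous dependence of the generalized X-ray transform $\Pi_1^{\End(\nabla^{\E}+A)}$ on the perturbation parameter $A \in C^s_*(M, T^*M \otimes \End_{\mathrm{sk}}(\E))$, measured in the operator norm of $\mathcal{L}(H^{-1/2}, H^{1/2})$; \textbf{(ii)} continuous dependence on $A$ of the $L^2$-orthogonal projector $\pi_A := \pi_{\ker(\nabla^{\End(\nabla^{\E}+A)})^*}$ onto twisted solenoidal 1-tensors; \textbf{(iii)} persistence of opacity (required for the spectral projector $\Pi_0^+(A)$ to remain simple and equal to a rank-one projector onto $\mathbbm{1}_{\E}$). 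Once (i)--(iii) are in place, the desired estimate follows routinely.

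For \textbf{(iii)}, opacity is an open condition in the $C^N$-quotient topology by \cite{Cekic-Lefeuvre-20} (this is precisely property \textbf{(A)} discussed in the introduction), so for $\eps$ sufficiently small depending on $\nabla^{\E}$, the connection $\nabla^{\E}+A$ is opaque, meaning $\X_{A,A}$ still has a simple Pollicott-Ruelle resonance at $z=0$ with resonant space $\mathbb{C}\cdot\mathbbm{1}_{\E}$. Combined with continuity of Pollicott-Ruelle resonances (see \cite{Bonthonneau-19} and \S\ref{ssection:finer}), one can fix a small counterclockwise circle $\gamma \subset \mathbb{C}$ around $0$ such that for all small $A$ the only resonance of $\X_{A,A}$ inside $\gamma$ is $0$. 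This gives the uniform contour integral
\[
\Pi_0^+(A) = -\frac{1}{2\pi i}\int_\gamma \mathbf{R}_+(z,A,A)\,dz,
\]
which depends continuously on $A$ in $\mathcal{L}(\mc{H}_+)$, and hence (via \eqref{equation:betaan}) in $\mathcal{L}(H^{s_0},H^{-s_0})$ for some $s_0 > 0$. The holomorphic parts $\mathbf{R}_0^\pm(A)$ are then extracted by subtracting the polar contribution $\Pi_0^+(A)/z$ and taking the limit $z \to 0$; since $z \mapsto \mathbf{R}_\pm(z,A,A)$ is jointly continuous in $(z,A)$ on $\gamma$ and its interior (away from $0$), the resulting operators $\Pi(A) = \mathbf{R}_0^+(A)+\mathbf{R}_0^-(A)$ depend continuously on $A$ as well. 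Composing with the smooth $\pi_1^*$ and $\pi_{1*}$ maps yields
\[
\|\Pi_1^{\End(\nabla^{\E}+A)} - \Pi_1^{\End(\E)}\|_{H^{-1/2}\to H^{1/2}} \;\leq\; C\,\|A\|_{C^s_*},
\]
which is (i). For \textbf{(ii)}, note that $\nabla^{\End(\nabla^{\E}+A)}$ depends linearly (hence smoothly) on $A$ at the principal-symbol level only through its zeroth-order part, so standard elliptic perturbation theory for the finite-dimensional kernels of the associated Laplacian-type operators (see \cite{Lang-99,Kato-95}) yields $\|\pi_A - \pi_0\|_{H^{-1/2}\to H^{-1/2}} \leq C\|A\|_{C^s_*}$.

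Assembling the pieces, for any $f \in H^{-1/2}(M,T^*M\otimes\End(\E))$ write $f = \pi_A f + (1-\pi_A)f$. Using (i), (ii) and the base coercivity estimate,
\[
\langle \Pi_1^{\End(\nabla^{\E}+A)}f,f\rangle_{L^2} \geq \langle \Pi_1^{\End(\E)}f,f\rangle_{L^2} - C_1\|A\|_{C^s_*}\|f\|_{H^{-1/2}}^2 \geq C_0\|\pi_0 f\|_{H^{-1/2}}^2 - C_1\|A\|_{C^s_*}\|f\|_{H^{-1/2}}^2,
\]
and $\|\pi_0 f\|_{H^{-1/2}}^2 \geq \tfrac{1}{2}\|\pi_A f\|_{H^{-1/2}}^2 - C_2\|A\|^2_{C^s_*}\|f\|_{H^{-1/2}}^2$ by (ii). On the other hand the operator $\Pi_1^{\End(\nabla^{\E}+A)}$ vanishes on twisted potential tensors (by \eqref{eq:Pi_mproperties} applied to $\nabla^{\E}+A$), so we may replace $f$ by $\pi_A f$ throughout on the left, and $\|f\|_{H^{-1/2}}$ by $\|\pi_A f\|_{H^{-1/2}}$ on the right. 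Choosing $\eps$ small enough that $C_1\|A\|_{C^s_*}$ and $C_2\|A\|_{C^s_*}^2$ are both less than $C_0/4$ delivers the stated estimate with a uniform constant.

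The main obstacle will be rigorously establishing step (i): the holomorphic parts $\mathbf{R}_0^\pm(A)$ of the meromorphic resolvents depend on $A$ only through the subtle anisotropic Sobolev-space machinery of \cite{Faure-Sjostrand-11, Bonthonneau-19}, and one needs to verify that the natural uniform bounds on $\gamma$ transfer to operator-norm continuity on the standard Sobolev scale $H^{-1/2}\to H^{1/2}$ after composition with $\pi_1^*$, $\pi_{1*}$. Everything else is a standard (though not entirely trivial) perturbation argument.
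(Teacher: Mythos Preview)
Your approach is essentially the same as the paper's: reduce to $f$ solenoidal with respect to $\nabla^{\E}+A$ using \eqref{eq:Pi_mproperties}, apply the base coercivity estimate from Lemma~\ref{lemma:generalized-xray}(3), and absorb the error terms using continuity of $A\mapsto \Pi_1^{\End(\nabla^{\E}+A)}$ and of $A\mapsto \pi_{\ker(\nabla^{\End(\nabla^{\E}+A)})^*}$. The paper handles your step~(i) more tersely, simply asserting that $A\mapsto \Pi_1^{\End(\nabla^{\E}+A)}\in\Psi^{-1}$ is continuous by analogy with \cite[Proposition~4.1]{Guillarmou-Knieper-Lefeuvre-19} (so the $H^{-1/2}\to H^{1/2}$ bound follows), and your step~(iii) on persistence of opacity is not separately invoked---only continuity (not a Lipschitz bound) is used, and the definition of $\Pi_1$ does not require the resonance at $0$ to be simple.
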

\begin{proof}
	Observe firstly that the left hand side of the inequality vanishes on potential tensors by \eqref{eq:Pi_mproperties} and hence it suffices to consider $f \in \ker (\nabla^{\End(\nabla^{\E} + A)})^*$. Then we obtain:
	\begin{align*}
			\langle \Pi_1^{\End(\nabla^{\E} + A)} &f, f \rangle_{L^2} = \langle \Pi_1^{\End(\nabla^{\E})} f, f \rangle_{L^2} + \langle (\Pi_1^{\End(\nabla^{\E} + A)} - \Pi_1^{\End(\nabla^{\E})}) f, f \rangle_{L^2}\\
			&\geq C_0 \|\pi_{\ker(\nabla^{\End(\E)})^*}f\|_{H^{-1/2}}^2 - \|\Pi_1^{\End(\nabla^{\E} + A)} - \Pi_1^{\End(\nabla^{\E})}\|_{H^{-1/2} \to H^{1/2}} \|f\|_{H^{-1/2}}^2\\
			&\geq \frac{1}{4} C_0 \|f\|_{H^{-1/2}}^2 - \frac{1}{2} C_0 \|\pi_{\ker(\nabla^{\End(\nabla^{\E})})^*} - \pi_{\ker (\nabla^{\End(\nabla^{\E} + A)})^*}\|_{H^{-1/2} \to H^{-1/2}} \|f\|_{H^{-1/2}}^2\\
			&\geq \frac{1}{8}C_0 \|f\|_{H^{-1/2}}^2.
	\end{align*}
	In the second line, we used Lemma \ref{lemma:generalized-xray} (item 3) with a constant $C_0$. In the next line we used that the map $A \mapsto \Pi_1^{\End(\nabla^{\E} + A)} \in \Psi^{-1}$ is continuous; the proof of this fact is analogous to the proof of \cite[Proposition 4.1]{Guillarmou-Knieper-Lefeuvre-19} and we omit it. Thus for $\varepsilon > 0$ small enough $\|\Pi_1^{\End(\nabla^{\E} + A)} - \Pi_1^{\End(\nabla^{\E})}\|_{H^{-1/2} \to H^{1/2}} \leq C_0/4$. Similarly in the last line we used that $A \mapsto \pi_{\ker(\nabla^{\End(\nabla^{\E} + A)})^*} \in \mc{L}(H^{-1/2})$ is continuous which follows from standard microlocal analysis from \eqref{equation:projection}: more precisely, this is a consequence of the fact that $(\nabla^{\End(\nabla^{\E})})^*\nabla^{\End(\nabla^{\E})}$ is an isomorphism on $C^\infty(M,\End(\E)) \cap (\C \mathbbm{1}_{\E})^\bot$ by the opacity assumption (where the orthogonal is taken with respect to the $L^2$-scalar product) and by continuity, this holds for all operators $(\nabla^{\End(\nabla^{\E}+A)})^*\nabla^{\End(\nabla^{\E}+A)}$ as long as $\|A\|_{C^s_*}$ is small enough (for $s$ large enough); hence the operator $[(\nabla^{\End(\nabla^{\E}+A)})^*\nabla^{\End(\nabla^{\E}+A)}]^{-1}$ in \eqref{equation:projection} is locally uniformly bounded and so by the resolvent formula, it varies continuously with respect to the connection.
	So again we choose $\varepsilon > 0$ small enough so that $\|\pi_{\ker(\nabla^{\End(\nabla^{\E})})^*} - \pi_{\ker (\nabla^{\End(\nabla^{\E} + A)})^*}\|_{H^{-1/2} \to H^{-1/2}} \leq C_0/4$. The claim follows by setting $C = C_0/8$.
\end{proof}

Recall that $\lambda_{A_1, A_2} \leq 0$ in the following:

\begin{lemma}
\label{lemma:borne-lambda-a-1}
Assume that the generalized X-ray transform $\Pi^{\End(\E)}_1$ defined with respect to the connection $\nabla^{\End(\E)}$ is $s$-injective. For $s \gg 1$ large enough, there exist constants $\eps, C > 0$ such that for all $A_i \in C^s(M,T^*M \otimes  \End_{\mathrm{sk}}(\E))$ with $\|A_i\|_{C^s_*} < \eps$ for $i = 1, 2$, we have:
\[
0 \leq  \|\phi(A_1, A_2) - \nabla_1^{\E}\|^2_{H^{-1/2}(M,T^*M\otimes \End_{\mathrm{sk}}(\E))} \leq C |\lambda_{A_1, A_2}|.
\]
\end{lemma}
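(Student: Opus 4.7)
The first (trivial) inequality is immediate since $\lambda_{A_1,A_2} \in \R_{\leq 0}$ (Lemma \ref{lemma:symmetricspectrum} plus skew-adjointness of $\X$ on $L^2$). For the upper bound, the plan is to reduce matters to a one-parameter perturbation of an \emph{opaque} base point, and then apply the second-variation formula from Lemma \ref{lemma:diff-2} together with the uniform coercivity of $\Pi_1^{\End}$ from Lemma \ref{lemma:Pi_1uniform}.

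Concretely, set $B := \phi(A_1,A_2) - \nabla_1^{\E}$, so that $B \in \ker(\nabla_1^{\End(\E)})^*$ by construction of the Coulomb slice. By the gauge-invariance \eqref{eq:mixedunitaryequiv} of $\lambda$, one has $\lambda_{A_1,A_2} = \lambda_{A_1, A_1+B}$. For $\|A_1\|_{C^s_*}$ small, the endomorphism operator $\X_{A_1,A_1}$ remains opaque (by upper semicontinuity of Pollicott–Ruelle multiplicities), with one-dimensional kernel spanned by $\mathbbm{1}_{\E}$. Parametrize $\lambda_\tau := \lambda_{A_1, A_1+\tau B}$ for $\tau \in [0,1]$; since $\X_{A_1,A_1+\tau B} = \X_{A_1,A_1} + \tau L_{\pi_1^*B}$ (with $L_C$ denoting left multiplication), Lemma \ref{lemma:diff-2} applied at $\tau = 0$ with normalized resonant state $u_0 = \mathbbm{1}_{\E}/c_0$, $c_0 = \|\mathbbm{1}_{\E}\|_{L^2}$, yields $\dot\lambda_0 = 0$ and
\[
\ddot\lambda_0 = -c_0^{-2}\langle \Pi_{A_1,A_1}\,\pi_1^* B,\pi_1^* B\rangle_{L^2(SM)} = -c_0^{-2}\langle \Pi_1^{\End(\nabla_1^{\E})} B, B\rangle_{L^2(M)},
\]
where the second equality uses $\Pi_0^+\pi_1^*B = 0$ because $\pi_1^*B$ lies in degree $\geq 1$ while the resonant space lies in degree $0$. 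Invoking Lemma \ref{lemma:Pi_1uniform} and the Coulomb condition on $B$, we obtain $|\ddot\lambda_0| \geq c\|B\|_{H^{-1/2}}^2$ for some $c>0$ independent of $A_1$.

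To conclude, write the Taylor expansion $\lambda_1 = \tfrac{1}{2}\ddot\lambda_0 + R$ and aim for the refined remainder bound
\[
|R| \;\leq\; C\,\|B\|_{C^s_*}\,\|B\|_{H^{-1/2}}^2 ,
\]
which is strictly stronger than a naive $\|B\|_{C^s_*}^3$ estimate. Such an inequality follows by differentiating $\lambda_\tau$ a third time and expressing $\lambda^{(3)}_\tau$ as a pairing involving the perturbed mixed-connection resolvent $\mathbf{R}_0^\pm(A_1,A_1+\tau B)$ sandwiched between three insertions of $L_{\pi_1^*B}$: two of these three factors can be absorbed against the smoothing operator $\Pi$ to produce an $H^{-1/2}$-type quadratic form (exactly as in the computation of $\ddot\lambda_0$), while the third factor is controlled in operator norm by $\|B\|_{C^s_*}$ together with the uniform (in small perturbations) mapping properties of $\mathbf{R}_0^\pm$ on the anisotropic spaces $\mc{H}_\pm^s$. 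Combining the lower bound on $|\ddot\lambda_0|$ with this estimate yields, for $\|A_i\|_{C^s_*}$ sufficiently small so that $\|B\|_{C^s_*}$ is small (by Lemma \ref{lemma:coulomb}),
\[
|\lambda_{A_1,A_2}| \;=\; -\lambda_1 \;\geq\; \Big(\tfrac{c}{2} - C\|B\|_{C^s_*}\Big)\|B\|_{H^{-1/2}}^2 \;\geq\; \tfrac{c}{4}\|B\|_{H^{-1/2}}^2,
\]
which is the desired estimate since $\phi(A_1,A_2) - \nabla_1^{\E} = B$. The main obstacle is precisely this refined remainder bound: one must exploit the smoothing of $\Pi$ carefully so as to extract two factors of the weak norm $H^{-1/2}$ from the $B$-insertions in $\lambda^{(3)}$, rather than the three strong-norm factors that appear naively; the analogous difficulty arises in the marked length spectrum local rigidity arguments of Guillarmou–Lefeuvre, whose perturbative framework adapts to the present setting.
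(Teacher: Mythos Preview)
Your reduction to the one-parameter family $\tau\mapsto\lambda_{A_1,A_1+\tau B}$ via gauge invariance and the second-variation bound $\ddot\lambda_0\le -c\|B\|_{H^{-1/2}}^2$ obtained from Lemmas \ref{lemma:diff-2} and \ref{lemma:Pi_1uniform} are correct and coincide with the paper's strategy.

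The gap is precisely where you flag it: the refined remainder $|R|\le C\|B\|_{C^s_*}\|B\|_{H^{-1/2}}^2$. The proposed mechanism---absorbing two of the three $B$-insertions against the smoothing of $\Pi$---does not work as stated. The third variation contains terms of the form $\langle \RR_0^+(\tau)\,V\,\RR_0^+(\tau)\,V\,u_\tau,\, V u_\tau\rangle$ with $V=\pi_1^*B$, and the individual holomorphic parts $\RR_0^\pm$ are bounded only on the anisotropic spaces $\mc{H}_\pm$, not on $H^{\pm 1/2}$. In the second-variation step the passage from $\RR_0^+$ to the genuine $\Psi^{-1}$-operator $\Pi_1=\pi_{1*}(\Pi+\Pi_0^+)\pi_1^*$ relied on the symmetrization $\langle\RR_0^+f,f\rangle=\langle\RR_0^-f,f\rangle$ together with the outer $\pi_{1*},\pi_1^*$; neither is available once an intermediate factor $\RR_0^+V$ is inserted, since multiplication by $V$ mixes vertical Fourier degrees. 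Moreover the bound must hold uniformly in $\tau\in[0,1]$, where $u_\tau$ is a genuinely anisotropic distribution with no $H^{-1/2}$ control.

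The paper avoids this entirely: it Taylor expands in a $C^{s_0}_*$ slice variable to get a crude remainder $\mc{O}(\|B\|^3_{C^{s_0}_*})$, and then invokes the interpolation inequality $\|B\|^3_{C^{s_0}_*}\le C\|B\|_{C^s_*}\|B\|^2_{H^{-1/2}}$, valid for a second, much larger exponent $s\gg s_0$. Shrinking $\eps$ in this stronger $C^s_*$-norm then absorbs the remainder into the main term. This is the same device used in the marked-length-spectrum work of Guillarmou--Lefeuvre, and it requires nothing of $\lambda^{(3)}$ beyond continuity in $C^{s_0}_*$.
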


\begin{proof}
We introduce two functionals in the vicinity of $\nabla^{\E}$, for small enough $\varepsilon > 0$: 
\begin{align*}
	F_1, F_2 &: \big(C^{s_0}_*(M, \End_{\mathrm{sk}}(\E)) \cap \{\|A\|_{C_*^{s_0}} < \varepsilon\}\big)^2 \to \mathbb{R},\\
	 F_1(A_1, A_2) &:=\lambda_{A_1, A_2}, \quad F_2(A_1, A_2) := -\|\phi(A_1, A_2)-\nabla_1^{\E}\|^2_{H^{-1/2}}.
\end{align*}
They are well-defined and restrict as $C^{3}$-regular maps on $\mc{S}_{s_0}$ for some $s_0 \gg 1$ large enough by Lemma \ref{lemma:coulomb} and the discussion above. Moreover, using Lemma \ref{lemma:diff-2}, we have for all $A$:
\[F_1(A, A) = F_2(A, A) = 0 \quad \mathrm{and}\quad \dd F_1|_{(A, A)} = \dd F_2|_{(A, A)} = 0.\] 
We will compare the second partial derivatives in the variable $A_2$ at a point $(A, A)$. Given $\Gamma \in T_{\nabla^{\E}} \mc{S}_{s_0} \simeq \ker (\nabla^{\End(\E)})^*$, we have by Lemma \ref{lemma:coulomb}:
\begin{equation}\label{eq:F_2}
\dd^2_{A_2} F_2|_{(A, A)}(\Gamma,\Gamma) = -2\|\pi_{\ker (\nabla^{\mathrm{End}(\nabla^{\E} + A)})^*} \Gamma\|^2_{H^{-1/2}}.
\end{equation}
By Lemma \ref{lemma:diff-2}, we have
\[
\dd^2_{A_2} F_1|_{(A, A)}(\Gamma,\Gamma) = -c^2 \langle \Pi^{\End(\nabla^{\E} + A)} \pi_1^* \Gamma \mathbbm{1}_{\E}, \pi_1^* \Gamma \mathbbm{1}_{\E} \rangle_{L^2} = - c^2 \langle \Pi^{\End(\nabla^{\E} + A)}_1 \Gamma, \Gamma \rangle_{L^2},
\]
for some constant $c > 0$, where $\Pi^{\End(\nabla^{\E} +A)}$ denotes the $\Pi$ operator with respect to the endomorphism connection induced by $\nabla^{\E} + A$. We used here that the orthogonal projection to the resonant space $\mathbb{C} \mathbbm{1}_{\E}$ of $\X_{A, A}$ at zero vanishes, because $\langle{\pi_1^*\Gamma, \mathbbm{1}_{\E}}\rangle_{L^2} = 0$ as $\pi_1^*\Gamma$ is odd. For $\varepsilon > 0$ small enough, by Lemma \ref{lemma:Pi_1uniform} we know $\Pi^{\End(\nabla^{\E} + A)}_1$ is $s$-injective and there is a constant $C' = C'(\nabla^{\E}) > 0$ such that: 
\begin{equation}\label{eq:F_1}
\dd^2_{A_2} F_1|_{(A, A)}(\Gamma,\Gamma) \leq - C' \|\pi_{\ker(\nabla^{\mathrm{End}(\nabla^{\E} + A)})^*}\Gamma\|^2_{H^{-1/2}} = C'/2  \dd^2_{A_2} F_2|_{(A, A)}(\Gamma,\Gamma).
\end{equation}
As a consequence, writing $G(A_2) := F_1(A, A_2) - C'/4 \times F_2(A, A_2)$, we have $G(A)=0, \dd G|_{A_2 = A} = 0$ and by \eqref{eq:F_1}, \eqref{eq:F_2}
\[\dd^2 G|_{A_2 = A}(\Gamma,\Gamma) \leq C'/4  \dd^2_{A_2} F_2|_{(A, A)}(\Gamma,\Gamma) = -C'/2 \|\pi_{\ker(\nabla^{\mathrm{End}(\nabla^{\E} + A)})^*} \Gamma\|^2_{H^{-1/2}}.\]
If we now Taylor expand the $C^3$-map $\mc{S}_{s_0} \ni A_2 \mapsto G(A_2)$ at $A_2 = A$, we obtain:
\begin{align*}
G(A + \Gamma) &= \dfrac{1}{2} \dd^2 G|_{A_2=A}(\Gamma, \Gamma) + \mc{O}(\|\Gamma\|_{C_*^{s_0}}^3)\\
 &\leq -C'/4 \|\Gamma\|_{H^{-1/2}}^2 + C'/4 \|(\pi_{\ker(\nabla^{\End(\E)})^*} - \pi_{\ker(\nabla^{\End(\nabla^{\E} + A)})^*})\Gamma\|_{H^{-1/2}}^2 + C''\|\Gamma\|^3_{C_*^{s_0}}\\
 &\leq -C'/8\|\Gamma\|_{H^{-1/2}}^2 + C''\|\Gamma\|_{C_*^{s_0}}^3.
\end{align*}
In the second line we introduced a uniform constant $C'' = C''(\nabla^{\E}) > 0$ using the $C^3$-regular property and $\pi_{\ker(\nabla^{\End(\E)})^*} \Gamma = \Gamma$. For the last line, we observed that $A \mapsto \pi_{\ker(\nabla^{\End(\nabla^{\E} + A)})^*} \in \mc{L}(H^{-1/2})$ is a continuous map by equation \eqref{equation:projection} (see the last paragraph of Lemma \ref{lemma:Pi_1uniform}) and hence the $H^{-1/2} \to H^{-1/2}$ estimate is arbitrarily small for $\varepsilon$ small enough. This estimate holds for all $\|A\|_{C_*^{s_0}}, \|\Gamma\|_{C_*^{s_0}} < \varepsilon/2$.

Choosing $s \gg s_0$ and assuming that $A \in C_*^s(M,T^*M \otimes  \End_{\mathrm{sk}}(\E))$ with $\|A\|_{C^s_*} < \eps$, there is a $C'''(\nabla^{\E}) > 0$, such that for $\eps > 0$ with $C''' \varepsilon \leq C'/16$, we then obtain by interpolation:
\[
G(A + \Gamma) \leq -C'/8 \|\Gamma\|^2_{H^{-1/2}} + \underbrace{C''' \|\Gamma\|_{C^s_*}}_{\leq C'/16} \|\Gamma\|_{H^{-1/2}}^2 \leq - C'/16 \|\Gamma\|_{H^{-1/2}}^2 \leq 0.
\]
After taking $\varepsilon > 0$ small enough, the statement holds with $C=C'/2$.
\end{proof}

We note that the preceding lemma shows that $\lambda_{A_1, A_2}$ controls the distance in the moduli space between $\nabla^{\E} + A_1$ and $\nabla^{\E} + A_2$.

\begin{remark}
\rm
It was proved in \cite{Guillarmou-Knieper-Lefeuvre-19} that there exists a metric $G$ on the moduli space of isometry classes (of metrics with negative sectional curvature) which generalizes the usual Weil-Petersson metric on Teichm\"uller space in the sense that, in the case of a surface, the restriction of $G$ to Teichm\"uller space is equal to the Weil-Petersson metric. We point out that the operator $\Pi_1$ also allows to define a metric $G$ at a generic point $\mathfrak{a}_0 \in \mathbb{A}_{\E}$, similarly to \cite{Guillarmou-Knieper-Lefeuvre-19}. Indeed, if $\mathfrak{a}_0 \in \mathbb{A}_{\E}$, taking a representative $\nabla^{\E} \in \mathfrak{a}_0$, one has $T_{\mathfrak{a}_0}\mathbb{A}_{\E} \simeq \ker (\nabla^{\End})^*$ and thus, given $\Gamma \in \ker (\nabla^{\End})^*$, one can consider:
\[
G_{\mathfrak{a}_0}(\Gamma,\Gamma) := \langle \Pi_1 \Gamma, \Gamma \rangle_{L^2(M,T^*M\otimes\End(\E))} \geq c \|\Gamma\|^2_{H^{-1/2}},
\]
for some constant $c>0$. Lemma \ref{lemma:Pi_1uniform} shows that the constant $c$ is locally uniform with respect to $\mathfrak{a}_0$.
\end{remark}

\subsection{P-R resonance at $0$ of the mixed connection: non-opaque case}
\label{ssection:non-opaque}

The aim of this paragraph is to deal with neighbourhoods of connections that are not necessarily opaque, and only assume $\Pi_1^{\End(\E)}$ is injective. In other words, we do not want to assume the resonant space of $-(\pi^*\nabla^{\End(\E)})_X$ at zero is spanned by $\mathbbm{1}_{\E}$ necessarily.

Next, as in \S \ref{ssection:pollicott-ruelle-dea}, we introduce the mixed connection with respect to $\nabla^{\E} + A$ and $\nabla^{\E}$, denoted by $\nabla^{\Hom(\nabla^{\E} + A, \nabla^{\E})}$, and set $\X_A :=(\pi^*\nabla^{\Hom(\nabla^{\E} + A, \nabla^{\E})})_X$. We assume $s \gg 1$. As before, consider a contour $\gamma \subset \mathbb{C}$ around zero such that $\X := \X_0$ has only the resonance zero enclosed by $\gamma$ and $\varepsilon > 0$ such that $-\X_A$ has no resonances on $\gamma$ for all $\|A\|_{C_*^s} < \varepsilon$. We introduce
\[\Pi_A^+ := \frac{1}{2\pi i} \int_\gamma (z + \X_A)^{-1} dz, \quad \lambda_A := \Tr(-\X_A\Pi_A^+).\]
This generalises the quantity studied in \S \ref{ssection:finer}, where it was assumed that the multiplicity of $\X$ at zero is equal to one. By Lemma \ref{lemma:perturbation-pr}, we have $A \mapsto \Pi_A^+$ and $A \mapsto \lambda_A$ are $C^3$-regular. As in \eqref{eq:mixedunitaryequiv}, we observe that the operators $-\X_A$ and $-\X_{A'}$ are unitarily equivalent on $\mc{H}_+$ whenever $\nabla^{\E} + A$ and $\nabla^{\E} + A'$ are gauge equivalent; hence $\lambda_A = \lambda_{A'}$ and so $\lambda_A$ descends to the local moduli space.

Note also that $\re(\lambda_A) \leq 0$, since by \eqref{eq:spectrum-left-half-plane} all resonances of $-\X_A$ lie in the half-plane $\{\re z \leq 0\}$ and this gives us hope that $\re(\lambda_A)$ controls the distance between the connections. Assume that the resonant space of $-\X$ at zero is spanned by smooth $L^2$-orthonormal resonant states $\{u_i\}_{i = 1}^p$. We have the following generalisation of Lemma \ref{lemma:diff-2}: 

\begin{lemma}
\label{lemma:variations}
For $A \in C^\infty(M,T^*M \otimes \mathrm{End}_{\mathrm{sk}}(\E))$ with $\|A\|_{C_*^s} < \varepsilon$, we have $\lambda_A \in \mathbb{R}$ and the following perturbation formulas hold:
\[
\dd \lambda_A|_{A=0} = 0, ~~ \dd^2 \lambda_A|_{A=0}(\Gamma,\Gamma) = - \sum_{i = 1}^p \langle \Pi u_i \pi_1^*\Gamma, u_i \pi_1^*\Gamma\rangle_{L^2}.
\]
\end{lemma}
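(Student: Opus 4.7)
\textbf{Strategy.} The plan is to write $-\X_A \Pi_A^+$ as a contour integral of the perturbed resolvent, expand it in a Neumann series in the bounded, $A$-linear perturbation $\delta_A := \X_A - \X$ (which acts on $C^\infty(SM,\pi^*\End(\E))$ by $\delta_A u = -u\,\pi_1^*A$), and extract the Taylor coefficients in $A$ via residue calculus at $z=0$, using the Laurent expansion $\mathcal{R}(z) := (z+\X)^{-1} = \Pi_0^+/z + \RR_0^+ + \mathcal{O}(z)$ from \S\ref{sssection:resonances-zero}. Applying the identity $-\X_A(z+\X_A)^{-1} = -\mathbbm{1} + z(z+\X_A)^{-1}$ together with $\int_\gamma \mathbbm{1}\,dz = 0$, one has, as finite-rank operators on $\mc{H}_+$,
\[
-\X_A \Pi_A^+ = \frac{1}{2\pi i}\int_\gamma z(z+\X_A)^{-1}\,dz = \sum_{k \geq 0}(-1)^k \frac{1}{2\pi i}\int_\gamma z\mathcal{R}(z)\bigl(\delta_A \mathcal{R}(z)\bigr)^k dz,
\]
and $\lambda_A$ is the trace. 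Reality of $\lambda_A$ follows from $R^* \X_A = -\X_A R^*$ (where $R(x,v) = (x,-v)$), which by the argument of Lemma \ref{lemma:symmetricspectrum} forces the resonance spectrum of $-\X_A$ to be symmetric under conjugation; combined with continuity of resonances \cite{Bonthonneau-19}, the sum of resonances of $-\X_A$ inside $\gamma$ is therefore real.

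\textbf{First derivative.} The first-order coefficient is the residue at $z=0$ of $z\mathcal{R}(z)\delta_\Gamma\mathcal{R}(z)$, which equals $\Pi_0^+ \delta_\Gamma \Pi_0^+$, so $\dd\lambda_A|_{A=0}(\Gamma) = -\Tr(\Pi_0^+ \delta_\Gamma \Pi_0^+) = \sum_{i=1}^p \int_{SM}\Tr(\pi_1^*\Gamma \cdot u_i^* u_i)\,d\mu_{SM}$. Since $R^*$ preserves $\ker \X$ (as $R^*\X = -\X R^*$) and $(R^*)^2 = \mathbbm{1}$ is unitary on $L^2$, one may assume—by invariance of the trace under change of orthonormal basis of $\ker\X$—that the $u_i$ are of definite $R^*$-parity $R^* u_i = \epsilon_i u_i$. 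Then $u_i^* u_i$ is $R^*$-even while $\pi_1^*\Gamma$ is $R^*$-odd, so the integral vanishes by the change of variables $R$.

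\textbf{Second derivative.} The $\tau^2$-coefficient of $-\X_{\tau\Gamma}\Pi_{\tau\Gamma}^+$ is the residue at $z=0$ of $z\mathcal{R}(z)\delta_\Gamma\mathcal{R}(z)\delta_\Gamma\mathcal{R}(z)$, which expands into
\[
\Pi_0^+\delta_\Gamma\Pi_0^+\delta_\Gamma \RR_0^+ + \Pi_0^+\delta_\Gamma \RR_0^+\delta_\Gamma\Pi_0^+ + \RR_0^+\delta_\Gamma\Pi_0^+\delta_\Gamma\Pi_0^+.
\]
Cyclicity of the trace together with $\Pi_0^+ \RR_0^+ = \RR_0^+ \Pi_0^+ = 0$ (see \eqref{eq:resolventrelations}) kills the first and third terms, and using $\delta_\Gamma u_i = -u_i \pi_1^*\Gamma$ together with $(\pi_1^*\Gamma)^* = -\pi_1^*\Gamma$, one gets
\[
\tfrac{1}{2}\dd^2\lambda_A|_{A=0}(\Gamma,\Gamma) = \Tr(\Pi_0^+ \delta_\Gamma \RR_0^+ \delta_\Gamma \Pi_0^+) = -\sum_{i=1}^p \langle \RR_0^+(u_i \pi_1^*\Gamma),\, u_i \pi_1^*\Gamma\rangle_{L^2}.
\]
Matching Laurent coefficients in the identity $R^*(z+\X)^{-1} = (z-\X)^{-1} R^*$ (with the right-hand side read on $\mc{H}_-$) yields $R^* \RR_0^+ = \RR_0^- R^*$; combined with the parity of $u_i \pi_1^*\Gamma$ and unitarity of $R^*$, this shows each $\langle \RR_0^+(u_i \pi_1^*\Gamma), u_i \pi_1^*\Gamma\rangle$ is real. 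Using then $(\RR_0^+)^* = \RR_0^-$ and $\Pi = \RR_0^+ + \RR_0^-$, one obtains $2\langle \RR_0^+ f, f\rangle = \langle \Pi f, f\rangle$ for each such $f$, and the announced formula follows.

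\textbf{Main obstacle.} The principal technicality is making the interchange of trace, contour integral, and Neumann series rigorous: $(z+\X_A)^{-1}$ is not trace class on $\mc{H}_+$ for individual $z \in \gamma$, but the contour integral produces the finite-rank spectral projector $\Pi_A^+$, so $-\X_A \Pi_A^+$ is trace class. Convergence of the Neumann series requires $\sup_{z\in\gamma}\|\delta_A \mathcal{R}(z)\|_{\mc{H}_+\to\mc{H}_+} < 1$ for $\|A\|_{C^s_*}$ small, which rests on boundedness of right-multiplication by $\pi_1^*A$ on $\mc{H}_+$ (valid for $s$ large enough thanks to \eqref{equation:betaan}); this justifies term-by-term integration and trace extraction.
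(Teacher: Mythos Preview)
Your proof is correct and takes a somewhat different route from the paper. The paper's argument for $\dd\lambda_A|_{A=0}=0$ is the one-line observation that $A=0$ is a maximum of $\lambda_A\le 0$, and for the second derivative it simply refers to \cite[Lemma~5.9]{Cekic-Lefeuvre-20} (whose simple-eigenvalue analogue, Lemma~\ref{lemma:diff-2}, differentiates the eigenvalue equation $-\X_{\tau\Gamma}u_{\tau\Gamma}=\lambda_{\tau\Gamma}u_{\tau\Gamma}$ directly). You instead expand $-\X_A\Pi_A^+$ as a contour integral of the resolvent, develop it in a Neumann series in $\delta_A$, and extract the Taylor coefficients by residue calculus, using the parity symmetry $R^*\X_A=-\X_A R^*$ both for reality of $\lambda_A$ and (via $R^*\RR_0^+=\RR_0^- R^*$) to convert $\langle \RR_0^+f,f\rangle$ into $\tfrac12\langle\Pi f,f\rangle$. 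This is a legitimate and fully self-contained alternative; it avoids the external reference and makes the role of the resolvent identities \eqref{eq:resolventrelations} (which kill the two boundary terms $\Tr(\Pi_0^+\delta_\Gamma\Pi_0^+\delta_\Gamma\RR_0^+)$ and $\Tr(\RR_0^+\delta_\Gamma\Pi_0^+\delta_\Gamma\Pi_0^+)$ via cyclicity and $\RR_0^+\Pi_0^+=\Pi_0^+\RR_0^+=0$) completely explicit. The paper's approach is terser; yours is more transparent about where each piece of structure is used.
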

\begin{proof}
	The fact that $\lambda_A$ is real follows from the symmetry of the spectrum of $-\X_A$ shown in Lemma \ref{lemma:symmetricspectrum}. The first derivative formula is obvious as $\lambda_A \leq 0$; the second one follows from minor adaptations of \cite[Lemma 5.9]{Cekic-Lefeuvre-20}, where the analogous case of endomorphisms was considered. 
\end{proof}

Next, by a straightforward adaptation of the Lemma \ref{lemma:coulomb}, we obtain the existence of $\varepsilon > 0$, such that for all $A$ with $\|A\|_{C_*^s} < \varepsilon$, there is a smooth map $A \mapsto \phi(A) \in \mc{S}_s$ that sends $\nabla^{\E} + A$ to Coulomb gauge with respect to $\nabla^{\E}$, that is it satisfies $\phi(A) - \nabla^{\E} \in \ker (\nabla^{\End(\E)})^*$.
\begin{remark}\rm
	We cannot get the analogous statement to Lemma \ref{lemma:coulomb} for parameters $(A_1, A_2)$, because the range of $F(A_1, A_2, \chi)$ equals $\ker(\nabla_1^{\End(\E)})^\perp$ and this is not uniform in $A_1$ (i.e. $\ker \nabla_1^{\End(\E)}$ changes as we move $A_1$); the space $\mathbb{A}_{\E}$ is not a smooth manifold at reducible connections.
\end{remark}

In the following lemma, we will need to assume that $\pi_{1*} \Pi_0^+ = 0$. Equivalently, this means that the resonant states $u_i \in \ker(\X|_{\mc{H}_+})$ satisfy $\pi_{1*} u_i = 0$, for $i = 1, \dotso, p$, i.e. the degree $1$ Fourier mode of all the $u_i$ vanishes.

\begin{lemma}
\label{lemma:maininequalitygeneral}
Assume that the generalized X-ray transform $\Pi^{\End(\E)}_1$ defined with respect to the connection $\nabla^{\End(\E)}$ is $s$-injective and additionally that $\pi_{1*} \Pi_0^+ = 0$. For $s \gg 1$ large enough, there exist constants $\eps, C > 0$ such that for all $A \in C^s(M,T^*M \otimes  \End_{\mathrm{sk}}(\E))$ with $\|A\|_{C^s_*} < \eps$:
\[
0 \leq  \|\phi(A)-\nabla^{\E}\|^2_{H^{-1/2}(M,T^*M\otimes \End_{\mathrm{sk}}(\E))} \leq C |\lambda_A|.
\]
\end{lemma}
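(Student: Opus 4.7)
The plan is to mimic the proof of Lemma \ref{lemma:borne-lambda-a-1}, but with two simplifications that make the non-opaque case tractable: (i) work only with the single-parameter family $A \mapsto \nabla^{\E}+A$ rather than pairs, and (ii) isolate the single resonant state $\mathbbm{1}_{\E}/\|\mathbbm{1}_{\E}\|_{L^2}$ inside the sum produced by Lemma \ref{lemma:variations}. First I would reduce to the slice. Since $\lambda_A = \lambda_{\phi(A)-\nabla^{\E}}$ by gauge-invariance (cf.\ \eqref{eq:mixedunitaryequiv}) and $\phi(A)-\nabla^{\E} \in \ker(\nabla^{\End(\E)})^*$, it suffices to prove the inequality for $A \in \ker(\nabla^{\End(\E)})^*$. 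For such $A$ the connection $\nabla^{\E}+A$ is already in Coulomb gauge, so by uniqueness in Lemma \ref{lemma:coulomb} we have $\phi(A)=\nabla^{\E}+A$, hence $\|\phi(A)-\nabla^{\E}\|_{H^{-1/2}} = \|A\|_{H^{-1/2}}$.

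Next I would set, on the slice, $F_1(A) := \lambda_A$ and $F_2(A) := -\|A\|_{H^{-1/2}}^2$. Both are $C^3$ in a $C^s_*$-neighbourhood of $0$, vanish at $A=0$, and have vanishing first differential there ($F_1$ because $\lambda_A \leq 0$ is maximised at $0$). The Hessian of $F_2$ is trivial: $\dd^2 F_2|_0(\Gamma,\Gamma)=-2\|\Gamma\|_{H^{-1/2}}^2$. For the Hessian of $F_1$, Lemma \ref{lemma:variations} gives
\[
\dd^2 F_1|_0(\Gamma,\Gamma) = -\sum_{i=1}^p \langle \Pi\, u_i\pi_1^*\Gamma,\, u_i\pi_1^*\Gamma\rangle_{L^2}.
\]
The crucial observation is that $\nabla^{\End(\E)}\mathbbm{1}_{\E}=0$, so we can choose the orthonormal basis $(u_i)$ of the resonant space so that $u_1 = \mathbbm{1}_{\E}/\|\mathbbm{1}_{\E}\|_{L^2}$. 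Keeping only this term and using the nonnegativity of $\Pi$ (Lemma \ref{lemma:relations-resolvent}) yields
\[
-\dd^2 F_1|_0(\Gamma,\Gamma) \geq c_0 \langle \Pi\pi_1^*\Gamma, \pi_1^*\Gamma\rangle_{L^2}, \qquad c_0 := \|\mathbbm{1}_{\E}\|_{L^2}^{-2}.
\]
Now I would invoke the hypothesis $\pi_{1*}\Pi_0^+ = 0$. Writing $\Pi_0^+ = \sum_i \langle\,\cdot\,,u_i\rangle u_i$, this hypothesis is equivalent to $\pi_{1*} u_i = 0$ for every $i$. Consequently
\[
\langle \Pi_0^+\pi_1^*\Gamma, \pi_1^*\Gamma\rangle_{L^2} = \sum_{i=1}^p |\langle \Gamma, \pi_{1*}u_i\rangle_{L^2}|^2 = 0,
\]
so $\langle \Pi\pi_1^*\Gamma, \pi_1^*\Gamma\rangle_{L^2} = \langle (\Pi+\Pi_0^+)\pi_1^*\Gamma, \pi_1^*\Gamma\rangle_{L^2} = \langle \Pi_1^{\End(\E)}\Gamma, \Gamma\rangle_{L^2}$. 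The $s$-injectivity of $\Pi_1^{\End(\E)}$ combined with Lemma \ref{lemma:generalized-xray}(3) delivers $\langle \Pi_1^{\End(\E)}\Gamma,\Gamma\rangle_{L^2} \geq C_0\|\Gamma\|_{H^{-1/2}}^2$ for $\Gamma \in \ker(\nabla^{\End(\E)})^*$, yielding $-\dd^2 F_1|_0(\Gamma,\Gamma) \geq c_0 C_0 \|\Gamma\|_{H^{-1/2}}^2$.

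Finally I would close the argument by Taylor expansion exactly as in Lemma \ref{lemma:borne-lambda-a-1}: setting $G := F_1 - (c_0C_0/4)F_2$, one has $G(0)=0$, $\dd G|_0=0$, and $\dd^2 G|_0 \leq 0$; the cubic remainder controlled in the $C^{s_0}_*$-norm is absorbed by Sobolev interpolation between $H^{-1/2}$ and $C^{s}_*$ for $s \gg s_0$, provided $\|A\|_{C^s_*}<\varepsilon$ is small enough. This yields $\lambda_A \leq -(c_0 C_0/8)\|A\|_{H^{-1/2}}^2$, which is the desired inequality. The main conceptual obstacle is the spectral perturbation formula: one must recognise that among all the resonant states $u_i$, the distinguished state $\mathbbm{1}_{\E}$ alone already suffices to feed $\Pi_1^{\End(\E)}$ with input $\pi_1^*\Gamma$, and that precisely the assumption $\pi_{1*}\Pi_0^+=0$ removes the obstruction coming from a potentially non-trivial zero resonant space of $\X$ (since otherwise $\Pi_0^+$ would contribute to $\Pi_1^{\End(\E)}$ in a way that is not controlled by $\Pi$ alone). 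The rest is routine: the slice reduction, the Coulomb gauge, and the interpolation step are all already in place from the opaque case.
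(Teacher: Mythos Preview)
Your proof is correct and follows essentially the same strategy as the paper's: single out the resonant state $u_1 = c\,\mathbbm{1}_{\E}$, discard the remaining $u_i$-terms by nonnegativity of $\Pi$, use $\pi_{1*}\Pi_0^+=0$ to pass from $\langle\Pi\pi_1^*\Gamma,\pi_1^*\Gamma\rangle$ to $\langle\Pi_1^{\End(\E)}\Gamma,\Gamma\rangle$, and then run the Taylor expansion with interpolation as in Lemma~\ref{lemma:borne-lambda-a-1}. The only cosmetic difference is that you first reduce to the slice via gauge-invariance of $\lambda_A$ (so that $F_2(A)=-\|A\|_{H^{-1/2}}^2$), whereas the paper keeps $F_2(A)=-\|\phi(A)-\nabla^{\E}\|_{H^{-1/2}}^2$ and uses $\dd\phi|_0=\pi_{\ker(\nabla^{\End(\E)})^*}$; both lead to the same Hessian comparison on $\ker(\nabla^{\End(\E)})^*$.
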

\begin{proof}
This is straightforward from the proof of Lemma \ref{lemma:borne-lambda-a-1}. With the same functionals $F_1(A) = \lambda_A$ and $F_2(A) = -\|\phi(A) - \nabla^{\E}\|^2_{H^{-1/2}}$, the only slight difference is the computation of $\dd^2 F_1$. Pick an $L^2$-orthonormal basis $u_1, \dotso, u_p$ of the resonant space of $-\X$ at zero such that $u_1 = c\mathbbm{1}_{\E}$, where $c$ is a fixed constant. By Lemmas \ref{lemma:variations} and \ref{lemma:relations-resolvent}, we have
\[
\dd^2 F_1|_{A=0}(\Gamma,\Gamma) = -\sum_{i = 1}^p \langle \Pi u_i \pi_1^* \Gamma, u_i \pi_1^* \Gamma \rangle_{L^2} \leq -c^2 \langle \Pi_1^{\End(\E)} \Gamma, \Gamma \rangle_{L^2}.
\]
Note importantly that we have used $\Pi_0^+ \pi_1^*\Gamma = 0$. This follows from the expression for the projector $\Pi_0^+ = \sum_{i = 1}^p \langle{\bullet, u_i}\rangle_{L^2} u_i$ and $\pi_{1*}u_i = 0$ for all $i$. This suffices to run the proof in the same manner. 
\end{proof}

\section{Injectivity of the primitive trace map}

We can now prove the main results stated in the introduction.

\label{section:proofs}

\subsection{The local injectivity result}

\label{ssection:proof-injectivity}

We now prove the injectivity result of Theorem \ref{theorem:injectivity}.

\begin{proof}[Proof of Theorem \ref{theorem:injectivity}]
We fix a regularity exponent $N \gg 1$ large enough so that the results of \S\ref{section:geometry} apply. We fix $\nabla^{\mc{E}}$, a smooth unitary connection on $\E$ and assume that it is \emph{generic} (see page \pageref{def:generic}). 
By mere continuity, the same properties hold for every connection $\nabla^{\E} + A$ such that $\|A\|_{C^N_*} < \eps$, where $\eps > 0$ is small enough depending on $\nabla^{\E}$.

Consider two smooth unitary connections $\nabla_i^{\E} = \nabla^{\E} + A_i$ such that $\|A_i\|_{C^N_*} <  \eps$ for $i = 1, 2$. Assume that $\mc{T}^\sharp(\nabla_1^{\E})=\mc{T}^\sharp(\nabla_2^{\E})$. By differentiating with respect to time and taking $t=0$ in \eqref{equation:cohomol}, the exact Liv\v{s}ic cocycle Theorem \ref{theorem:weak} yields the existence of a smooth map $p \in C^\infty(SM,\mathrm{U}(\E))$ such that:
\[
	\pi^* \nabla^{\Hom(\nabla_1^{\mc{E}}, \nabla_2^{\mc{E}})}_X p = 0,
\]
that is $p$ is a resonant state for the operator $\X_{A_1, A_2}$ associated to the eigenvalue $0$. Assumptions \textbf{(A)} and  \textbf{(B)} allow us to apply Lemma \ref{lemma:borne-lambda-a-1}. We therefore obtain:
\[
	\lambda_{A_1, A_2} = 0 \leq - C \|\phi(A_1, A_2) - \nabla_1^{\E}\|^2_{H^{-1/2}(M,T^*M \otimes \End(\E))} \leq 0,
\]
where $C = C(\nabla^{\E}) > 0$ only depends on $\nabla^{\E}$. Hence $\phi(A_1, A_2) = p_{A_1, A_2}^*\nabla_2^{\E} = \nabla_1^{\E}$. In other words, the connections are gauge-equivalent.
\end{proof}	

Next, we discuss a version of local injectivity in a neighbourhood of a connection which is non-opaque. We will say a map $f: X \to Y$ of topological spaces is \emph{weakly locally injective at $x_0 \in X$} if there exists a neighbourhood $U \ni x$ such that $f(x) = f(x_0)$ for $x \in U$ implies $x = x_0$. This notion appears in non-linear inverse problems where the linearisation is not continuous, see \cite[Section 2]{Stefanov-Uhlmann-08}. We have:

\begin{theorem}\label{thm:weaklocal}
	If $N \gg 1$ and $[\nabla^{\E}] \in \mathbb{A}_{\E}$ is such that the generalised $X$-ray transform $\Pi_1^{\End(\E)}$ is $s$-injective, as well as $\pi_{1*} \ker(\pi^*\nabla^{\End(\E)}_X|_{C^\infty}) = 0$, then the primitive trace map $\mc{T}^\sharp$ is weakly locally injective at $[\nabla^{\E}]$ in the $C^N$-quotient topology.
\end{theorem}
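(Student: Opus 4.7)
The plan is to mimic the proof of Theorem \ref{theorem:injectivity}, replacing the opacity-based Lemma \ref{lemma:borne-lambda-a-1} by its non-opaque analogue Lemma \ref{lemma:maininequalitygeneral}, and supplying the vanishing $\lambda_{A_1}=0$ by a representation-theoretic input from Section \ref{section:livsic}. Fix $N \gg 1$ so that the tools of Section \ref{section:geometry} apply, and choose $\eps>0$ small enough for Lemma \ref{lemma:maininequalitygeneral} and the spectral perturbation theory to hold on a $C^N_*$-ball of radius $\eps$ centered at $\nabla^{\E}$; note that the hypotheses of Theorem \ref{thm:weaklocal} are tailored precisely so that Lemma \ref{lemma:maininequalitygeneral} is available at $\nabla^{\E}$. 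Let $\nabla_1^{\E}=\nabla^{\E}+A_1$ with $\|A_1\|_{C^N_*}<\eps$ and assume $\mc{T}^\sharp(\nabla_1^{\E})=\mc{T}^\sharp(\nabla^{\E})$; the goal is to show $\nabla_1^{\E}$ is gauge-equivalent to $\nabla^{\E}$. Applying Theorem \ref{theorem:weak} on $\M=SM$ with $\E_1=\E_2=\pi^*\E$ yields a smooth $p \in C^\infty(\M,\mathrm{U}(\pi^*\E))$ satisfying $\X_{A_1}p=0$, so $0$ is a Pollicott-Ruelle resonance of $-\X_{A_1}$ admitting a nonzero smooth resonant section.

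The next step is to control the full resonance multiplicity inside a small contour around zero. By Proposition \ref{proposition:representation}, the Parry representations $\rho_1,\rho\colon\mathbf{G}\to \mathrm{U}(\E_{x_\star})$ attached to $\nabla_1^{\E}$ and $\nabla^{\E}$ have equal characters, hence are isomorphic, which gives
\[
\dim \Hom_{\mathbf{G}}(\rho_1,\rho) = \dim \End_{\mathbf{G}}(\rho) = \dim \mathbf{R}'.
\]
A verbatim extension of Theorem \ref{theorem:iso} to the mixed setting (replacing parallel transport along homoclinic orbits with respect to $\nabla^{\End(\E)}$ by parallel transport with respect to the mixed connection $\nabla^{\Hom(\nabla_1,\nabla)}$, and the commutant $\mathbf{R}'$ by $\Hom_{\mathbf{G}}(\rho_1,\rho)$) then produces an isomorphism identifying
\[
\dim \ker \X_{A_1}|_{C^\infty} = \dim \Hom_{\mathbf{G}}(\rho_1,\rho) = \dim \ker \X|_{C^\infty}.
\]
Fix a small contour $\gamma\subset\C$ enclosing only the resonance $0$ of $-\X$. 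By continuity of Pollicott-Ruelle resonances \cite{Bonthonneau-19}, the total multiplicity of resonances of $-\X_{A_1}$ inside $\gamma$ equals $\dim \ker \X|_{C^\infty}$. Since $\X_{A_1}$ remains skew-adjoint on $L^2(\M,\pi^*\End(\E))$ (being the covariant derivative of a unitary connection along a volume-preserving flow), there is no Jordan block at the embedded eigenvalue $0$, so the multiplicity of the resonance at $0$ coincides with $\dim \ker \X_{A_1}|_{C^\infty}$. Comparing these two dimensions forces all resonances of $-\X_{A_1}$ inside $\gamma$ to accumulate precisely at $0$, and therefore $\lambda_{A_1} = \Tr(-\X_{A_1}\Pi_{A_1}^+) = 0$.

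Lemma \ref{lemma:maininequalitygeneral} then yields
\[
\|\phi(A_1)-\nabla^{\E}\|_{H^{-1/2}(M,T^*M\otimes \End(\E))}^{2} \leq C|\lambda_{A_1}| = 0,
\]
so $\phi(A_1)=\nabla^{\E}$; since $\phi(A_1)$ is gauge-equivalent to $\nabla_1^{\E}$ by construction of the Coulomb slice (Lemma \ref{lemma:coulomb}), we conclude $[\nabla_1^{\E}]=[\nabla^{\E}]$ in $\mathbb{A}_{\E}$. The main obstacle I anticipate is the mixed extension of Theorem \ref{theorem:iso}: one must verify the analogue of Lemma \ref{lemma:equality} in the mixed setting (namely, that future- and past-parallel transports of a $\mathbf{G}$-intertwiner based at $x_\star$ agree everywhere) and then invoke \cite[Theorem 4.1]{Bonthonneau-Lefeuvre-20} to promote the resulting Lipschitz intertwiner to a smooth one. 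A secondary but delicate point is the coincidence of the Pollicott-Ruelle resonance multiplicity at $0$ with the smooth-kernel dimension, which should follow from skew-adjointness together with standard microlocal regularity, but is worth spelling out carefully.
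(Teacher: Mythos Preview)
Your argument is correct but takes a substantially more elaborate route than necessary. The detour through a mixed analogue of Theorem \ref{theorem:iso}, the dimension count $\dim \Hom_{\mathbf{G}}(\rho_1,\rho)=\dim\mathbf{R}'$, and the semisimplicity discussion can all be bypassed by a single observation: the smooth unitary $p$ produced by Theorem \ref{theorem:weak} already \emph{conjugates} $\X_{A_1}$ and $\X$ as operators on $\mc{H}_+$. Indeed, writing $\X_{A_1}u=\X u-u\cdot\pi_1^*A_1$ and using $\X p=p\cdot\pi_1^*A_1$ (which is just $\X_{A_1}p=0$), the Leibniz rule gives
\[
\X_{A_1}(up)=(\X u)p+u(\X p)-up\cdot\pi_1^*A_1=(\X u)p,
\]
so right multiplication $\Phi_p:u\mapsto up$ satisfies $\X_{A_1}\Phi_p=\Phi_p\X$. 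Since $p$ is smooth and invertible, $\Phi_p$ is an isomorphism of the anisotropic spaces, and the Pollicott--Ruelle resonances (with multiplicity and Jordan structure) of $-\X_{A_1}$ and $-\X$ coincide. In particular the only resonance inside $\gamma$ is $0$, so $\lambda_{A_1}=\Tr(-\X_{A_1}\Pi^+_{A_1})=0$ immediately, and Lemma \ref{lemma:maininequalitygeneral} finishes the proof. This is presumably what the paper means by ``analogous to the proof of Theorem \ref{theorem:injectivity}''.

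Your approach does work, but the two places you flagged as delicate are genuinely so: the mixed extension of Theorem \ref{theorem:iso} requires reproving Lemmas \ref{lemma:p-minus}--\ref{lemma:lipschitz} in the $\Hom$-setting, and the semisimplicity/smoothness of resonant states at $0$ for $\X_{A_1}$, while standard for skew-adjoint lifts over volume-preserving Anosov flows, is not something you can simply assert from the $L^2$-picture (the resonant states live a priori only in $\mc{H}_+$). Both issues evaporate under the conjugation argument.
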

\begin{proof}
	The proof is analogous to the proof of Theorem \ref{theorem:injectivity}, by using the results of \S \ref{ssection:non-opaque}. We omit the details.
\end{proof}

We shall see below (see Lemma \ref{lemma:Pi_1inj}) that flat connections have an injective generalised $X$-ray transform $\Pi_1^{\End(\E)}$ and satisfy the additional condition that $\ker (\pi^*\nabla^{\E})_X|_{C^\infty}$ consists of elements of degree zero (but might not be opaque). The previous Theorem therefore shows that the primitive trace map is weakly locally injective near such connections. Let us state this as a corollary for the trivial connection, as it partially answers an open question of Paternain \cite[p33, Question (3)]{Paternain-13}.

\begin{corollary}
	Let $\E = M \times \mathbb{C}^r$ be the trivial Hermitian vector bundle equipped with the trivial flat connection $d$. Then there exists a neighbourhood $\mc{U} \ni [d]$ in $\mathbb{A}_{\E}$ with $C^N$-quotient topology such that $[d]$ is the unique gauge class of transparent connections in $\mc{U}$.
\end{corollary}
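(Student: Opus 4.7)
The plan is to deduce the corollary directly from the weak local injectivity Theorem \ref{thm:weaklocal} applied at the class $[d]$, combined with the elementary observation that every transparent connection has the same image under $\mc{T}^\sharp$ as $d$. Indeed, the trivial connection $d$ is itself transparent, since parallel transport with respect to $d$ along any loop is the identity; and if $\nabla^{\E}$ is any other transparent connection, then by definition $\mathrm{Hol}_{\nabla^{\E}}(c^\sharp) = \mathbbm{1}_{\E}$ for every primitive closed geodesic $c^\sharp$, whence $\Tr(\mathrm{Hol}_{\nabla^{\E}}(c^\sharp)) = r = \Tr(\mathrm{Hol}_d(c^\sharp))$. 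Thus $\mc{T}^\sharp(\nabla^{\E}) = \mc{T}^\sharp(d)$ as elements of $\ell^\infty(\mc{C}^\sharp)$, namely the constant sequence with value $r$.

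To invoke Theorem \ref{thm:weaklocal} at $[d]$, I must verify its two hypotheses: that the generalized X-ray transform $\Pi_1^{\End(\E)}$ is solenoidally injective, and that $\pi_{1*}\ker(\pi^*\nabla^{\End(\E)}_X|_{C^\infty}) = 0$. Since $d$ is flat, the induced endomorphism connection $\nabla^{\End(\E)}$ on $\End(\E) = M \times \mathrm{End}(\C^r)$ is also flat, and both properties are recorded for flat connections in Lemma \ref{lemma:Pi_1inj} and the remark immediately preceding this corollary. Concretely, $\pi^*\nabla^{\End(\E)}_X$ is simply the action of $X$ on smooth matrix-valued functions on $SM$; by topological transitivity of the Anosov geodesic flow, any such $X$-invariant function must reduce componentwise to a constant matrix, so that the kernel consists purely of sections of vertical Fourier degree zero, which are annihilated by $\pi_{1*}$.

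With both hypotheses of Theorem \ref{thm:weaklocal} verified, it produces a $C^N$-quotient neighborhood $\mc{U} \ni [d]$ of $\mathbb{A}_{\E}$ such that any $[\nabla^{\E}] \in \mc{U}$ satisfying $\mc{T}^\sharp(\nabla^{\E}) = \mc{T}^\sharp(d)$ must equal $[d]$. Combined with the first paragraph, any transparent gauge class in $\mc{U}$ must coincide with $[d]$, which is precisely the assertion of the corollary. The only substantive step is the verification of the hypotheses of Theorem \ref{thm:weaklocal}, which is handled by Lemma \ref{lemma:Pi_1inj}; beyond this citation, no genuine obstacle arises, as the traces along closed geodesics for transparent connections are automatically identified and the conclusion follows by one application of the weak local injectivity theorem.
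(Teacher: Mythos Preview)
Your proof is correct and follows essentially the same approach as the paper: verify the hypotheses of Theorem \ref{thm:weaklocal} at $[d]$ via Lemma \ref{lemma:Pi_1inj} (flatness of $\nabla^{\End(\E)}$), then observe that any transparent connection has the same primitive trace as $d$. The explicit argument you give for the kernel being constant matrices (via transitivity) is a pleasant addition but is already subsumed in the paper by the appeal to Lemma \ref{lemma:Pi_1inj}.
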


\subsection{Global injectivity results}

We now detail some cases in which Theorem \ref{theorem:injectivity} can be upgraded.

\subsubsection{Line bundles}

\label{sssection:line}

We let $\mc{T}^\sharp_1$ be the restriction of the total primitive trace map \eqref{equation:trace-total} to line bundles. The moduli space of all connections on line bundles $\mathbb{A}_1$ carries a natural Abelian group structure using the tensor product.
When restricted to line bundles, the primitive trace map $\mc{T}^\sharp_1$ takes value in $\ell(\mc{C}^\sharp,\mathrm{U}(1))$, namely the set of sequences indexed by primitive free homotopy classes. We have:

\begin{lemma}
The map $\mc{T}^\sharp_1 : \mathbb{A}_1 \rightarrow \ell^\infty(\mc{C}^\sharp,\mathrm{U}(1))$ is a multiplicative group homomorphism.
\end{lemma}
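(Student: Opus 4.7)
The plan is to unpack the group structure on $\mathbb{A}_1$ explicitly and then observe that, because line bundles have rank one, holonomy equals trace and behaves multiplicatively under tensor products.

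First I would recall that the monoid operation on the total moduli space $\mathbb{A}$ specializes on $\mathbb{A}_1$ to the tensor product: given two line bundles $\mc{L}_1,\mc{L}_2 \to M$ with unitary connections $\nabla^{\mc{L}_1},\nabla^{\mc{L}_2}$, the bundle $\mc{L}_1 \otimes \mc{L}_2$ carries the canonical induced unitary connection $\nabla^{\mc{L}_1 \otimes \mc{L}_2} := \nabla^{\mc{L}_1} \otimes \mathbbm{1} + \mathbbm{1} \otimes \nabla^{\mc{L}_2}$, which descends to $\mathbb{A}_1$ (gauge equivalences on each factor induce a gauge equivalence on the tensor product). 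The neutral element is the trivial line bundle $M \times \C$ equipped with $d$, and the inverse of $[\mc{L},\nabla^{\mc{L}}]$ is $[\mc{L}^*, (\nabla^{\mc{L}})^*]$, since $\mc{L} \otimes \mc{L}^* \simeq M \times \C$ as Hermitian line bundles and the induced connection is gauge-equivalent to $d$.

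Next I would exploit the fact that for a line bundle the holonomy $\mathrm{Hol}_{\nabla^{\mc{L}}}(c^\sharp) \in \mathrm{U}(\mc{L}_{x_{c^\sharp}}) \simeq \mathrm{U}(1)$ is a scalar and hence coincides with its trace; in particular it is well-defined as an element of $\mathrm{U}(1)$ independent of any choice of trivialization or representative. By the defining Leibniz rule, parallel transport with respect to $\nabla^{\mc{L}_1 \otimes \mc{L}_2}$ along $\gamma_g(c^\sharp)$ equals the tensor product of the parallel transports for $\nabla^{\mc{L}_1}$ and $\nabla^{\mc{L}_2}$, so
\[
\mathrm{Hol}_{\nabla^{\mc{L}_1 \otimes \mc{L}_2}}(c^\sharp) = \mathrm{Hol}_{\nabla^{\mc{L}_1}}(c^\sharp) \cdot \mathrm{Hol}_{\nabla^{\mc{L}_2}}(c^\sharp) \in \mathrm{U}(1),
\]
for every primitive class $c^\sharp \in \mc{C}^\sharp$. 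Taking traces (which is the identity here) and applying this pointwise on $\mc{C}^\sharp$ gives
\[
\mc{T}^\sharp_1(\mathfrak{a}_1 \otimes \mathfrak{a}_2) = \mc{T}^\sharp_1(\mathfrak{a}_1) \cdot \mc{T}^\sharp_1(\mathfrak{a}_2),
\]
with the pointwise product on $\ell^\infty(\mc{C}^\sharp, \mathrm{U}(1))$, so $\mc{T}^\sharp_1$ is a homomorphism of multiplicative groups. The trivial connection $d$ clearly maps to the constant sequence equal to $1$, and the dualization sends holonomy to its complex conjugate (equivalently its inverse in $\mathrm{U}(1)$), confirming compatibility with inverses.

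There is essentially no obstacle here: the statement reduces to the purely algebraic fact that parallel transport is natural under tensor products, combined with the triviality $\Tr = \mathrm{id}$ on $1\times 1$ matrices. The only point requiring a line of care is that all constructions descend cleanly to the gauge-equivalence quotient, which follows because a gauge transformation $p_i \in C^\infty(M,\mathrm{U}(\mc{L}_i))$ acts by a scalar on each fibre and therefore commutes with the formation of $\otimes$ and with the trace of holonomy.
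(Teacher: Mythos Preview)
Your proof is correct and is exactly the natural argument the paper has in mind (the paper's own proof reads ``Left as an exercise to the reader''). One small framing correction: the monoid operation on the total space $\mathbb{A}$ is the \emph{direct sum} $\oplus$, not the tensor product, and $\mathbb{A}_1$ is not closed under $\oplus$; rather, as the paper states just before the lemma, $\mathbb{A}_1$ carries a \emph{separate} Abelian group structure coming from the tensor product, which is the one you correctly use throughout the rest of your argument.
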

\begin{proof}
	Left as an exercise to the reader.
\end{proof}

\begin{remark}
\rm
There also exists a group homomorphism for higher rank vector bundles by taking the determinant instead of the trace. More precisely, writing $\mathbb{A}_r$ for the set of all unitary connections on all possible Hermitian vector bundles of rank $r$ (up to isomorphisms), one can set
\begin{equation}
\label{equation:det}
\det{}^\sharp : \mathbb{A} \rightarrow \ell^\infty(\mc{C}^\sharp,\mathrm{U}(1)),
\end{equation}
by taking the determinant of the holonomy along each closed primitive geodesic. This map is also a group homomorphism (where the group structure on $\bigsqcup_{r \geq 0} \mathbb{A}_r$) is also obtained by tensor product. Nevertheless, the determinant map \eqref{equation:det} cannot be injective as all trivial bundles (of different ranks) have same image.
\end{remark}

We have the following result, mainly due to Paternain \cite{Paternain-09}:

\begin{prop}[Paternain]
\label{proposition:line}
Let $(M,g)$ be a smooth Anosov $n$-manifold. If $n \geq 3$, then the restriction of the primitive trace map to line bundles
\begin{equation}
\label{equation:trace-line}
\mc{T}_1^\sharp : \mathbb{A}_1 \longrightarrow \ell^\infty(\mc{C}^\sharp),
\end{equation}
is globally injective. Moreover, if $n = 2$ then:
\[
\ker \mc{T}^\sharp_1 = \left\{ ([\kappa^{\otimes n}], [{\nabla^{\mathrm{LC}}}^{\otimes n}]), n \in \Z\right\},
\]
where $\kappa \rightarrow M$ denotes the canonical line bundle and $\nabla^{\mathrm{LC}}$ is connection induced on $\kappa$ by the Levi-Civita connection.
\end{prop}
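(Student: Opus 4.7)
The plan is to combine the Abelian group structure on $\mathbb{A}_1$ (via tensor products) with the exact Liv\v{s}ic Theorem \ref{theorem:weak} and the classical Gysin sequence for sphere bundles. Since $\mc{T}_1^\sharp\colon \mathbb{A}_1 \to \ell^\infty(\mc{C}^\sharp, \mathrm{U}(1))$ is a homomorphism of Abelian groups, it suffices to compute its kernel. Let $(\mc{L},\nabla)$ be a Hermitian line bundle with unitary connection whose trace of holonomy vanishes along every primitive closed geodesic; since rank is one and $\mathrm{U}(1)$ is Abelian, its holonomy is in fact trivial along \emph{every} closed geodesic. Theorem \ref{theorem:weak} applied on $SM$ with $\mc{E}_1 = \pi^*\mc{L}$ and $\mc{E}_2 = SM \times \C$ (carrying the trivial flat connection) then produces a smooth unit section $p \in C^\infty(SM, \pi^*\mc{L})$ with $(\pi^*\nabla)_X p = 0$. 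This $p$ trivializes $\pi^*\mc{L}$, forcing $\pi^* c_1(\mc{L}) = 0$ in $H^2(SM,\Z)$.

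For $n \geq 3$, the Gysin sequence of the oriented sphere bundle $S^{n-1}\hookrightarrow SM\xrightarrow{\pi}M$,
\[
H^{2-n}(M,\Z) \xrightarrow{\cup e(TM)} H^2(M,\Z) \xrightarrow{\pi^*} H^2(SM,\Z),
\]
has vanishing left-hand term, so $\pi^*$ is injective and $c_1(\mc{L}) = 0$, i.e.\ $\mc{L}\cong M\times\C$ topologically. In this trivialization $\nabla = d+i\alpha$ for some real $\alpha\in \Omega^1(M)$, and the triviality of holonomy reads $\int_\gamma\alpha\in 2\pi\Z$ for every closed geodesic $\gamma$; since every free homotopy class of loops on $M$ admits a closed geodesic representative, $\alpha$ has periods in $2\pi\Z$ against every class in $H_1(M,\Z)$. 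The same Gysin sequence in degree $1$ gives an isomorphism $\pi^*\colon H^1(M,\Z)\xrightarrow{\sim} H^1(SM,\Z)$, so the class of $p\colon SM\to \mathrm{U}(1)$ in $H^1(SM,\Z)$ descends to an element of $H^1(M,\Z) = [M,S^1]$, realized by a smooth $g\colon M\to \mathrm{U}(1)$. One then checks that gauging $\nabla$ by (a representative of) $g$ produces the trivial connection, which proves $\ker\mc{T}_1^\sharp = \{0\}$ in dimension $\geq 3$.

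For $n = 2$, the Gysin kernel of $\pi^*$ on $H^2$ equals $\chi(M)\Z = c_1(\kappa)\Z$, so the triviality of $\pi^*\mc{L}$ forces only $\mc{L}\cong\kappa^{\otimes k}$ for some $k\in\Z$. The inclusion $\supset$ in the stated kernel description is immediate: the tautological map $(x,v)\mapsto v$ is a global unit section of $\pi^*TM$ that is, by the very definition of the geodesic flow, parallel along $X$, so $(\kappa,\nabla^{\mathrm{LC}})\in\ker\mc{T}_1^\sharp$ and the homomorphism property yields all its tensor powers. The reverse inclusion $\subset$ follows by tensoring an arbitrary kernel element with the appropriate power of $(\kappa^{-1},(\nabla^{\mathrm{LC}})^{-1})$ to reduce to a connection on the trivial bundle with trivial primitive holonomies, and then re-running the gauging step from the $n\geq 3$ argument (which relies only on the triviality of $\mc{L}$ and the resulting period analysis of $\alpha$, not on the dimension). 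The principal technical difficulty throughout is the descent from the flow-parallel unit section $p$ on $SM$ to a genuine smooth gauge transformation on $M$: the topological data furnished by the Gysin sequence has to be promoted to an honest gauge equivalence of connections, and this is precisely where the dimensional hypothesis enters via the Gysin injectivity on $H^2$ (killing the line bundle topology when $n \geq 3$) and the Gysin isomorphism on $H^1$ (providing the descent of the gauge class).
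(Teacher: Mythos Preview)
Your overall architecture matches the paper's: reduce to the kernel via the homomorphism structure, apply Theorem~\ref{theorem:weak} to obtain a flow-parallel unit section $p$ of $\pi^*\mc{L}$, and use the Gysin sequence to control $c_1(\mc{L})$ (this is the content of Lemma~\ref{lemma:iso} and the discussion of the $n=2$ case in the paper). Where your argument diverges is in the final ``connection'' step, and there is a genuine gap there.

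The sentence ``$\alpha$ has periods in $2\pi\Z$ against every class in $H_1(M,\Z)$'' is not meaningful as written: $\alpha$ is \emph{not} known to be closed (the curvature $id\alpha$ need not vanish), so ``periods'' are undefined and the integral $\int_c\alpha$ is not a homotopy invariant of $c$. Knowing $\int_\gamma\alpha\in 2\pi\Z$ only for closed \emph{geodesics} does not propagate to arbitrary loops. More importantly, your ``one then checks'' hides the whole difficulty. After descending the homotopy class of $p$ to $g\colon M\to \mathrm{U}(1)$ via the Gysin isomorphism on $H^1$, you may write $p=(\pi^*g)\,e^{i\phi}$ with $\phi\in C^\infty(SM,\R)$; the flow-parallel condition then becomes $X\phi=-\pi_1^*(\alpha+\beta)$ where $i\beta=g^{-1}dg$. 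Concluding that $\alpha+\beta$ is exact (i.e.\ that gauging by $g$, and then by $e^{ih}$ for suitable $h$, trivializes $\nabla$) is exactly the statement that the cohomological equation $X\phi=\pi_1^*\omega$ forces $\omega=dh$, i.e.\ the $s$-injectivity of the geodesic X-ray transform on $1$-forms. This is a nontrivial result (it is the Pestov-identity argument on Anosov manifolds), and it is precisely what the paper invokes as \cite[Theorem~3.2]{Paternain-09}. So you have not avoided the external input; you have only postponed it to an unjustified ``one then checks''.

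The $n=2$ reduction by tensoring with powers of $(\kappa^{-1},(\nabla^{\mathrm{LC}})^{-1})$ is fine and agrees with the paper, but it feeds into the same missing step. To repair the proof, replace the period discussion by the reduction above and then invoke the $s$-injectivity of $\Pi_1$ for the trivial line bundle (equivalently \cite[Theorem~3.2]{Paternain-09}, or the case $\E=\C$ of Lemma~\ref{lemma:Pi_1inj}).
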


Observe that on surfaces, the trivial line bundle $\C \times M \rightarrow M$ (with trivial connection) and the canonical line bundle $\kappa \rightarrow M$ (with the Levi-Civita connection) both have trivial holonomy but are not isomorphic. This explains the existence of a non-trivial kernel for $n=2$. We will need this preliminary lemma:

\begin{lemma}
\label{lemma:iso}
Let $(M,g)$ be a smooth closed Riemannian manifold of dimension $\geq 3$ and let $\pi : SM \rightarrow M$ be the projection. Let $\mc{L}_1 \rightarrow M$ and $\mc{L}_2 \rightarrow M$ be two Hermitian line bundles. If $\pi^*\mc{L}_1 \simeq \pi^*\mc{L}_2$ are isomorphic, then $\mc{L}_1 \simeq \mc{L}_2$ are isomorphic.
\end{lemma}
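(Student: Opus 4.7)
The plan is to reduce the isomorphism problem for Hermitian line bundles to a statement in integral cohomology, and then exploit the Gysin sequence of the sphere bundle $\pi : SM \to M$. Recall that the isomorphism classes of Hermitian (equivalently, smooth complex) line bundles over a manifold $N$ are in bijection with $H^2(N;\mathbb{Z})$ via the first Chern class $c_1$. Moreover, $c_1$ is natural: $c_1(\pi^*\mc{L}) = \pi^*c_1(\mc{L})$. Hence the lemma will follow at once from the injectivity of
\[
\pi^* : H^2(M;\mathbb{Z}) \longrightarrow H^2(SM;\mathbb{Z}).
\]

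To prove this injectivity, I would invoke the Gysin long exact sequence associated with the oriented sphere bundle $\pi : SM \to M$ (of fiber $S^{n-1}$ with $n = \dim M$), which reads
\[
\cdots \longrightarrow H^{k-n}(M;\mathbb{Z}) \xrightarrow{\ \cup e\ } H^{k}(M;\mathbb{Z}) \xrightarrow{\ \pi^*\ } H^{k}(SM;\mathbb{Z}) \longrightarrow H^{k-n+1}(M;\mathbb{Z}) \longrightarrow \cdots
\]
where $e \in H^n(M;\mathbb{Z})$ is the Euler class of $TM$. Setting $k = 2$ and using the hypothesis $n \geq 3$, the group $H^{2-n}(M;\mathbb{Z})$ vanishes (since $2-n < 0$), so exactness immediately yields that $\pi^*$ is injective on $H^2$. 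Combined with the classification by $c_1$ discussed above, this gives the desired conclusion.

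There is no real obstacle here: the only subtlety worth mentioning is that if $M$ is not orientable, one should either pass to the orientation double cover or use the Gysin sequence with $\mathbb{Z}$-coefficients twisted by the orientation sheaf of the vertical bundle; in either case, the vanishing of $H^{2-n}(M)$ for $n \geq 3$ yields the same conclusion. Note also that the hypothesis $\dim M \geq 3$ is sharp: on a surface ($n=2$), the Gysin map $\cup e : H^0(M;\mathbb{Z}) \to H^2(M;\mathbb{Z})$ sends $1$ to the Euler class, whose kernel of $\pi^*$ is generated precisely by the classes arising from tensor powers of the canonical bundle $\kappa$, in agreement with the non-trivial kernel appearing in Proposition \ref{proposition:line}.
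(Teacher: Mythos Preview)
Your proof is correct and follows essentially the same approach as the paper: reduce to injectivity of $\pi^*$ on $H^2(M;\Z)$ via the first Chern class classification of line bundles, then invoke the Gysin sequence for the sphere bundle together with the vanishing of $H^{2-n}(M;\Z)$ for $n\geq 3$. The paper's proof is more terse but identical in substance; your additional remarks on orientability and the sharpness at $n=2$ are accurate elaborations.
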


\begin{proof}
The topology of line bundles is determined by their first Chern class. As a consequence, it suffices to show that $c_1(\mc{L}_1) = c_1(\mc{L}_2)$. By assumption, we have $c_1(\pi^*\mc{L}_1) = \pi^* c_1(\mc{L}_1) =  c_1(\pi^*\mc{L}_2) = \pi^* c_1(\mc{L}_2)$ and thus it suffices to show that $\pi^* : H^2(M,\Z) \rightarrow H^2(SM,\Z)$ is injective when $\dim(M) \geq 3$. But this is then a mere consequence of the Gysin exact sequence \cite[Proposition 14.33]{Bott-Tu-82}. 
\end{proof}

We can now prove Proposition \ref{proposition:line}:

\begin{proof}[Proof of Proposition \ref{proposition:line}]
Assume that $\mc{T}^\sharp_1(\mathfrak{a}_1) = \mc{T}^\sharp_1(\mathfrak{a}_2)$, where $\mathfrak{a}_1 \in \mathbb{A}_{\mc{L}_1}$ and $\mathfrak{a}_2 \in \mathbb{A}_{\mc{L}_2}$ are two classes of connections defined on two (classes of) line bundles. By Theorem \ref{theorem:weak}, we obtain that the pullback bundles $\pi^*\mc{L}_1$ and $\pi^*\mc{L}_2$ are isomorphic, hence $\mc{L}_1 \simeq \mc{L}_2$ are isomorphic by Lemma \ref{lemma:iso}. Up to composing by a first bundle (unitary) isomorphism, we can therefore assume that $\mc{L}_1 = \mc{L}_2 =: \mc{L}$. Let $\nabla^\mc{L}_1 \in \mathfrak{a}_1$ and $\nabla^\mc{L}_2 \in \mathfrak{a}_2$ be two representatives of these classes. They satisfy $\mc{T}^\sharp(\nabla^{\mc{L}}_1) = \mc{T}^\sharp(\nabla^{\mc{L}}_2)$. Combing Theorem \ref{theorem:weak} with \cite[Theorem 3.2]{Paternain-09}, the primitive trace map $\mc{T}^\sharp_{\mc{L}}$ is known to be globally injective for connections on the same fixed bundle. Hence $\nabla^{\mc{L}}_1$ and $\nabla^{\mc{L}}_2$ are gauge-equivalent.

For the second claim, $\mathrm{x} = ([\mc{L}],\mathfrak{a})$. If $\mc{T}^\sharp_1(\mathrm{x}) = (1,1,...)$ (i.e. the connection is transparent), then by Theorem \ref{theorem:weak}, one has that $\pi^*\mc{L} \rightarrow SM$ is trivial. By the Gysin sequence \cite[Proposition 14.33]{Bott-Tu-82}, this implies that $c_1(\mc{L})$ is divisible by $2g-2$, where $g$ is the genus of $M$ (see \cite[Theorem 3.1]{Paternain-09}), hence $[\mc{L}] = [\kappa^{\otimes n}]$ for some $n \in \Z$. Moreover, the Levi-Civita connection on $\kappa^{\otimes n}$ is transparent and by uniqueness (see \cite[Theorem 3.2]{Paternain-09}), this implies that $\mathfrak{a} = [{\nabla^{\mathrm{LC}}}^{\otimes n}]$.
\end{proof}

\begin{remark}
\rm
The target space in \eqref{equation:trace-line} is actually $\ell^\infty(\mc{C}^\sharp,\mathrm{U}(1))$ (sequences indexed by $\mc{C}^\sharp$ and taking values in $\mathrm{U}(1)$) which can be seen as a subset of $\mathrm{U}(\ell^\infty(\mc{C}^\sharp))$, the group of unitary operators of the Banach space $\ell^\infty(\mc{C}^\sharp)$ (equipped with the sup norm). Then $\mc{T}_1^{\sharp}$ is a group homomorphism and Proposition \ref{proposition:line} asserts that
\[
\mc{T}_1^{\sharp} : \mathbb{A}_1 \rightarrow \mathrm{U}(\ell^\infty(\mc{C}^\sharp))
\]
is a faithful unitary representation of the Abelian group $\mathbb{A}_1$.
\end{remark}

We end this paragraph with a generalization of Proposition \ref{proposition:line}. There is a natural submonoid $\mathbb{A}' \subset \mathbb{A}$ which is obtained by considering sums of lines bundles equipped with unitary connections, that is:
\[
\mathbb{A}' := \left\{ \mathrm{x}_1 \oplus ... \oplus \mathrm{x}_k ~\mid~ k \in \N, \mathrm{x}_i \in \mathbb{A}_1\right\}.
\]
We then have the following:

\begin{theorem}
\label{theorem:sum}
Let $(M,g)$ be a smooth Anosov Riemannian manifold of dimension $\geq 3$. Then the restriction of the primitive trace map to $\mathbb{A}'$:
\[
\mc{T}^{\sharp} : \mathbb{A}' \longrightarrow \ell^\infty(\mc{C}^\sharp)
\]
is globally injective.
\end{theorem}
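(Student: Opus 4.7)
The plan is as follows. Write $\mathrm{x} = \bigoplus_{i=1}^k \mathrm{x}_i$ and $\mathrm{y} = \bigoplus_{j=1}^\ell \mathrm{y}_j$ with $\mathrm{x}_i = [(\mc{L}_i, \nabla_i)]$ and $\mathrm{y}_j = [(\mc{L}'_j, \nabla'_j)]$, and assume $\mc{T}^\sharp(\mathrm{x}) = \mc{T}^\sharp(\mathrm{y})$. I will first apply the exact Liv\v{s}ic Theorem \ref{theorem:weak} and Corollary \ref{corollary:iso} to the total bundles $\E_1 = \bigoplus_i \mc{L}_i$ and $\E_2 = \bigoplus_j \mc{L}'_j$: since the primitive traces agree, $k = \ell$ and there exists $p \in C^\infty(SM, \mathrm{U}(\pi^*\E_2, \pi^*\E_1))$ conjugating the two pullback flow cocycles. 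Identifying $\pi^*\E_1 \cong \pi^*\E_2$ via $p$ produces a single Hermitian bundle with $\X$-cocycle $(\E, \X)$ on $SM$, carrying two decompositions $\E = \bigoplus_i \pi^*\mc{L}_i = \bigoplus_j \pi^*\mc{L}'_j$ into $\X$-invariant sub-line-bundles.

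The core step is to show that these two decompositions agree as unordered multisets of isomorphism classes of $\X$-line-bundles on $SM$. Fix a periodic point $x_\star \in SM$ on a closed geodesic orbit and endow $\E$ with the full pullback connection from $\pi^*\nabla^{\E_1}$. By Proposition \ref{proposition:representation0}, this produces a unitary Parry representation $\rho \colon \mathbf{G} \to \mathrm{U}(\E_{x_\star})$. Unitarity of $\rho$ yields a canonical isotypic decomposition $\E_{x_\star} = \bigoplus_\alpha W_\alpha$, indexed by the $1$-dimensional characters $\alpha \colon \mathbf{G} \to \mathrm{U}(1)$ appearing in $\rho$, with multiplicities $n_\alpha = \dim W_\alpha$. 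Theorem \ref{theorem:iso} identifies the commutant $\mathbf{R}'$ of $\rho$ with $\ker \nabla^{\End(\E)}_X \subset C^\infty(SM, \End \E)$, so the fibrewise orthogonal projections onto the $W_\alpha$ extend to smooth $\X$-invariant idempotents on $\E$. These globalise the fibrewise decomposition to a canonical flow-invariant splitting $\E = \bigoplus_\alpha \E_\alpha$ with $\dim \E_\alpha = n_\alpha$. Since $\ker \nabla^{\End(\E)}_X$ depends only on the parallel transport cocycle of $\X$, this splitting is intrinsic to $(\E, \X)$ and both given decompositions $\bigoplus \pi^*\mc{L}_i$ and $\bigoplus \pi^*\mc{L}'_j$ must refine it.

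Inside each $\E_\alpha$, any two $\X$-invariant sub-line-bundles are interchanged by a unitary element of the commutant (which acts through $\mathrm{U}(n_\alpha)$ on the $n_\alpha$ copies of $\alpha$) and are therefore isomorphic as $\X$-line-bundles. Counting summands within each $\E_\alpha$ therefore yields a permutation $\sigma \in \mathfrak{S}_k$ such that $\pi^*\mc{L}_i$ and $\pi^*\mc{L}'_{\sigma(i)}$ are isomorphic as $\X$-line-bundles on $SM$; equivalently, their $\X$-cocycles are cohomologous. Taking traces along closed orbits of the geodesic flow gives $\mathrm{Hol}_{\mc{L}_i}(c) = \mathrm{Hol}_{\mc{L}'_{\sigma(i)}}(c)$ for every primitive closed geodesic $c \in \mc{C}^\sharp$ of $M$.

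To conclude I invoke Paternain's Proposition \ref{proposition:line}: since $\dim M \geq 3$, injectivity of $\mc{T}_1^\sharp$ on single line bundles forces $\mathrm{x}_i = \mathrm{y}_{\sigma(i)}$ in $\mathbb{A}_1$ for every $i$, so $\mathrm{x} = \mathrm{y}$ in $\mathbb{A}'$. The hardest part is the central globalisation step: upgrading the abstract isotypic decomposition of the Parry representation at $x_\star$ into a canonical flow-invariant splitting of the whole bundle $(\E, \X)$ on $SM$ that both given decompositions must refine. This relies essentially on Theorem \ref{theorem:iso}, on the semisimplicity coming from the unitarity of $\rho$, and on the fact that $\ker \nabla^{\End(\E)}_X$ is an intrinsic invariant of the $\X$-cocycle, independent of any choice of extending full connection.
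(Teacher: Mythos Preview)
Your proof is correct and arrives at the same endpoint as the paper, but by a genuinely different route. The paper works directly at the base point: it invokes Proposition~\ref{proposition:representation} to see that the two Parry representations $\rho_{\mc{L}},\rho_{\mc{J}}$ have equal characters and are therefore isomorphic; since both are sums of one-dimensional representations, the intertwiner $p_\star$ can be chosen to permute the individual lines, giving componentwise isomorphisms $p^{(i)}_\star : (\pi^*\mc{L}_i)_{x_\star} \to (\pi^*\mc{J}_{\sigma(i)})_{x_\star}$. It then \emph{re-runs the construction in the proof of Theorem~\ref{theorem:weak}} line by line, parallel-transporting each $p^{(i)}_\star$ along homoclinic orbits with respect to $\pi^*\nabla^{\mathrm{Hom}(\nabla^{\mc{L}_i},\nabla^{\mc{J}_{\sigma(i)}})}$ and appealing to Lemmas~\ref{lemma:p-minus} and~\ref{lemma:lipschitz} for regularity. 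Your argument instead applies Theorem~\ref{theorem:weak} once as a black box on the full bundles, and then uses Theorem~\ref{theorem:iso} (the isomorphism $\mathbf{R}' \simeq \ker \nabla^{\End(\E)}_X$) to globalise the isotypic decomposition of the Parry representation into a canonical flow-invariant splitting that both line decompositions refine; the matching within each isotypic block then follows from the fact that $\mathbf{R}'$ acts on $W_\alpha$ as the full matrix algebra. Your approach is more structural and avoids re-opening the homoclinic-orbit machinery, at the cost of invoking the heavier Theorem~\ref{theorem:iso}; the paper's approach is more hands-on and bypasses Theorem~\ref{theorem:iso} entirely, building each line-bundle conjugacy from scratch. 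One small point worth tightening: after identifying via $p$, the second decomposition should be written as $\bigoplus_j p(\pi^*\mc{L}'_j)$ rather than $\bigoplus_j \pi^*\mc{L}'_j$, and the assertion that any two $\X$-invariant line subbundles of $\E_\alpha$ are isomorphic deserves one more line (pick any unitary $u_\star \in \mathrm{U}(W_\alpha) \subset \mathbf{R}'$ sending one fibre to the other; then $\Phi(u_\star)$ restricts to a nowhere-vanishing $\X$-parallel section of $\mathrm{Hom}(L_1,L_2)$ since its pointwise norm is constant).
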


\begin{proof}
We consider $\mc{L} := \mc{L}_1 \oplus ... \oplus \mc{L}_k$ and $\mc{J} = \mc{J}_1 \oplus ... \oplus \mc{J}_{k'}$, two Hermitian vector bundles over $M$, equipped with the respective connections $\nabla^{\mc{L}_1} \oplus ... \oplus \nabla^{\mc{L}_k}$ and $\nabla^{\mc{J}_1} \oplus ... \oplus \nabla^{\mc{J}_{k'}}$ and we assume that they have same image by the primitive trace map. Fixing a periodic point $(x_\star,v_\star)$ and applying Proposition \ref{proposition:representation}, we obtain that $k=k'$ and the existence of two isomorphic representations $\rho_{\mc{L}} : \mathbf{G} \to \mathrm{U}(\pi^*\mc{L}_{(x_\star,v_\star)})$ and $\rho_{\mc{J}} : \mathbf{G} \to \mathrm{U}(\pi^*\mc{J}_{(x_\star,v_\star)})$, where $\mathbf{G}$ denotes Parry's free monoid at $(x_\star, v_\star)$. Since these representations are sums of $1$-dimensional representations, there is a unitary isomorphism $p_\star : \pi^*\mc{L}_{(x_\star,v_\star)} \to \pi^*\mc{J}_{(x_\star,v_\star)}$ such that for each $i \in \left\{1,...,k\right\}$, there exists $\sigma(i) \in \left\{1,...,k\right\}$ with $p^{(i)}_\star := p_\star|_{\pi^*\mc{L}_{i, (x_\star, v_\star)}}$ is a representation isomorphism $p^{(i)}_\star: \pi^* \mc{L}_{i, (x_\star,v_\star)} \to \pi^* \mc{J}_{\sigma(i), (x_\star,v_\star)}$.

Now, following the arguments of Lemma \ref{lemma:p-minus}, we parallel-transport $p^{(i)}_\star$ along the homoclinic orbits with respect to the pullback of the mixed connection $\pi^*\nabla^{\mathrm{Hom}(\nabla^{\mc{L}_i}, \nabla^{\mc{J}_{\sigma(i)}})}_X$ (induced by the connections $\nabla^{\mc{L}_i}$ on $\mc{L}_i$ and $\nabla^{\mc{J}_{\sigma(i)}}$ on $\mc{J}_{\sigma(i)}$); the Lipschitz-regularity of the obtained section follows, as in Lemma \ref{lemma:lipschitz}, from the fact that $p_\star^{(i)} \rho_{\mc{L}_i}(g) = \rho_{\mc{J}_{\sigma(i)}}(g) p_\star^{(i)}$ for all $g \in \mathbf{G}$.
Using the regularity result of \cite{Bonthonneau-Lefeuvre-20}, we thus obtain a unitary section
\[
p^{(i)} \in C^\infty(SM,\pi^*\mathrm{Hom}(\mc{L}_i, \mc{J}_{\sigma(i)}))
\]
conjugating the parallel transports along geodesic flowlines with respect to the connections $\pi^* \nabla^{\mc{L}_i}$ and $\pi^* \nabla^{\mc{J}_{\sigma(i)}}$. In particular, the existence of such $p^{(i)}$ ensures that $\mc{T}^{\sharp}_1(\mc{L}_i,\nabla^{\mc{L}_i}) = \mc{T}^{\sharp}_1(\mc{J}_{\sigma(i)},\nabla^{\mc{J}_{\sigma(i)}})$. We then conclude by Proposition \ref{proposition:line}, showing that each pair $(\mc{L}_i, \nabla^{\mc{L}_i})$ is isomorphic to $(\mc{J}_{\sigma(i)}, \nabla^{\mc{J}_{\sigma(i)}})$, for $i = 1, \dotso, k$.
\end{proof}

\subsubsection{Flat bundles}

\label{sssection:flat}

We discuss the particular case of flat vector bundles. It is well-known that the data of a vector bundle equipped with a unitary connection (modulo isomorphism) is equivalent to a unitary representation of the fundamental group (modulo inner automorphisms of the unitary group). More precisely, given $\rho \in \Hom(\pi_1(M),\mathrm{U}(r))$, one can associate a Hermitian bundle $\E \rightarrow M$ equipped with a flat unitary connection $\nabla^{\E}$ by the following process: let $\widetilde{M}$ be the universal cover of $M$; consider the trivial bundle $\C^r \times \widetilde{M}$ equipped with the flat connection $d$ and define the relation $(x,v) \sim (x',v')$ if and only if $x' = \gamma(x), v' = \rho(\gamma)v$, for some $\gamma \in \pi_1(M)$; then $(\E,\nabla^{\E})$ is obtained by taking the quotient $\C^r \times \widetilde{M}/\sim$. Changing $\rho$ by an isomorphic representation $\rho' = p \cdot \rho \cdot p^{-1}$ (for $p \in \mathrm{U}(r)$) changes the connection by a gauge-equivalent connection and this process gives a one-to-one correspondence between the moduli spaces.

For $r \geq 0$, we let
\[
\mc{M}_r := \mathrm{Hom}(\pi_1(M),\mathrm{U}(r))/\sim,
\]
be the moduli space of unitary representations of the fundamental group, where two representations are equivalent $\sim$ whenever they are isomorphic. The space $\mc{M}_r$ is called the \emph{character variety}, see \cite{Labourie-13} for instance. For $r=0$, it is reduced to a point; for $r=1$, it is given by $\mc{M}_1 = \mathrm{U}(1)^{b_1(M)}$, where $b_1(M)$ denotes the first Betti number of $M$. Given $\mathrm{x} \in \mc{M}_r$, we let $\Psi(\mathrm{x}) = (\E_{\mathrm{x}}, \nabla^{\E_{\mathrm{x}}})$ be the data of a Hermitian vector bundle equipped with a unitary connection (up to gauge-equivalence) described by the above process. The primitive trace map $\mc{T}^\sharp$ can then be seen as a map:
\[
\mc{T}^\sharp : \bigsqcup_{r \geq 0} \mc{M}_r \rightarrow \ell^\infty(\mc{C}^\sharp), \quad \mc{T}^\sharp(\mathrm{x}) := \mc{T}^\sharp(\nabla^{\E_{\mathrm{x}}}),
\]
where the right-hand side is understood by \eqref{equation:trace}. We then have the following:

\begin{prop}
\label{proposition:flat}
Let $(M, g)$ be an Anosov manifold of dimension $\geq 2$. Then the primitive trace map
\[
\mc{T}^\sharp : \bigsqcup_{r \geq 0} \mc{M}_r \rightarrow \ell^\infty(\mc{C}^\sharp), 
\]
is globally injective. Moreover, given $\mathrm{x}_0 = ([\E_0],[\nabla^{\E}_0]) \in \mc{M}_r$, the primitive trace map is weakly locally injective (in the sense of Theorem \ref{thm:weaklocal}) near $\mathrm{x}_0$ in the space $\mathbb{A}_{[\E_0]}$ of all unitary connections on $[\E_0]$.
\end{prop}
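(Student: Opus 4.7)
The plan is to combine Theorem \ref{theorem:weak} with the flatness hypothesis so as to upgrade the $X$-parallel intertwiner produced by the exact Liv\v{s}ic cocycle theorem into a genuinely parallel section on $SM$, which in turn descends to an intertwiner of the $\pi_1(M)$-representations. Starting from $\mathrm{x}_1 = ([\mc{E}_1],[\nabla_1^{\mc{E}}]) \in \mc{M}_{r_1}$ and $\mathrm{x}_2 = ([\mc{E}_2],[\nabla_2^{\mc{E}}]) \in \mc{M}_{r_2}$ with $\mc{T}^\sharp(\mathrm{x}_1) = \mc{T}^\sharp(\mathrm{x}_2)$, a direct application of Theorem \ref{theorem:weak} to the pulled-back flat bundles $\pi^*\mc{E}_i \to SM$ forces $r_1 = r_2 =: r$ (this is the content of Corollary \ref{corollary:iso}) and produces a smooth section $p \in C^\infty(SM,\mathrm{U}(\pi^*\mc{E}_2,\pi^*\mc{E}_1))$ satisfying $\nabla^{\mathrm{Hom}(\pi^*\nabla_1^{\mc{E}},\pi^*\nabla_2^{\mc{E}})}_X p = 0$.

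The central step is to show that $p$ is parallel in every direction, not merely along $X$. I will pull $p$ back to the universal cover $\widetilde{SM}$: since both $\pi^*\nabla_i^{\mc{E}}$ are flat, the mixed bundle $\mathrm{Hom}(\pi^*\mc{E}_2,\pi^*\mc{E}_1)$ admits a global flat trivialization over $\widetilde{SM}$, in which $p$ becomes a smooth map $\tilde p \colon \widetilde{SM} \to \mathrm{U}(r)$, constant along flow orbits (i.e.\ $X\tilde p = 0$) and obeying the equivariance relation $\tilde p(\gamma \cdot \tilde x) = \tilde\rho_1(\gamma)\,\tilde p(\tilde x)\,\tilde\rho_2(\gamma)^{-1}$ for every $\gamma \in \pi_1(SM)$, where $\tilde\rho_i := \rho_i \circ \pi_*$ and $\rho_i \colon \pi_1(M) \to \mathrm{U}(r)$ denotes the monodromy of $\nabla_i^{\mc{E}}$. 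Since $SM$ is compact and $p$ is smooth, $\tilde p$ is Lipschitz in the pullback Sasaki metric, and the Anosov splitting \eqref{equation:anosov} lifts to $\widetilde{SM}$ because the covering map is a local isometry. For $\tilde y \in W^s_{\mathrm{loc}}(\tilde x)$, flowing forward gives
\[
|\tilde p(\tilde x) - \tilde p(\tilde y)| = |\tilde p(\varphi_t \tilde x) - \tilde p(\varphi_t \tilde y)| \leq \|d\tilde p\|_{L^\infty}\, C e^{-\theta t}\, d_{\widetilde{SM}}(\tilde x, \tilde y) \xrightarrow[t\to+\infty]{} 0,
\]
so $\tilde p$ is constant on strong stable leaves; symmetrically (taking $t\to-\infty$) it is constant on strong unstable leaves. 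Combining with the $X$-invariance and the decomposition $T\widetilde{SM} = \R X \oplus E_s \oplus E_u$ forces $d\tilde p \equiv 0$, and therefore $\tilde p \equiv p_0 \in \mathrm{U}(r)$ on the connected manifold $\widetilde{SM}$. Equivariance then reduces to $\tilde\rho_1(\gamma)\, p_0 = p_0\,\tilde\rho_2(\gamma)$ for all $\gamma \in \pi_1(SM)$; since $\pi_* \colon \pi_1(SM) \to \pi_1(M)$ is surjective as soon as $\dim M \geq 2$, $p_0$ is an intertwiner between $\rho_1$ and $\rho_2$, so $\rho_1 \simeq \rho_2$ and $\mathrm{x}_1 = \mathrm{x}_2$ in the character variety.

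For the weak local injectivity statement, the plan is to invoke Theorem \ref{thm:weaklocal} directly: its two hypotheses, namely the $s$-injectivity of $\Pi_1^{\mathrm{End}(\mc{E}_0)}$ and the vanishing $\pi_{1*}\ker(\pi^*\nabla^{\mathrm{End}(\mc{E}_0)}_X|_{C^\infty}) = 0$, are precisely the content of Lemma \ref{lemma:Pi_1inj} for flat connections. The main delicate point is the pullback argument of the middle paragraph: one must check that the lifted geodesic flow, the Anosov contraction rates, and the Sasaki geometry all behave cleanly on $\widetilde{SM}$, and that the equivariance of $\tilde p$ allows the uniform Lipschitz estimate from the compact $SM$ to be transferred to the cover. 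Once that is done, flatness rigidifies the Liv\v{s}ic cocycle intertwiner into a genuine representation intertwiner, and the rest is formal.
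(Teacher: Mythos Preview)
Your proof is correct, and the second part (weak local injectivity via Theorem \ref{thm:weaklocal} and Lemma \ref{lemma:Pi_1inj}) matches the paper exactly. For the first part you take a genuinely different route. The paper argues as follows: once Theorem \ref{theorem:weak} produces the flow-invariant intertwiner $p$, it observes that the mixed connection $\nabla^{\mathrm{Hom}(\nabla^{\E_2},\nabla^{\E_1})}$ is itself flat by \eqref{equation:induced-curvature}, and then invokes Lemma \ref{lemma:Pi_1inj} --- proved via the twisted Pestov identity --- to conclude that $p$ has Fourier degree $0$, i.e.\ $p = \pi^*p'$ for some $p' \in C^\infty(M,\mathrm{U}(\E_2,\E_1))$ realizing the gauge equivalence. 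Your argument instead lifts to $\widetilde{SM}$, trivializes via flatness, and uses the elementary Anosov rigidity fact that a Lipschitz flow-invariant map into a fixed target is constant along local stable and unstable leaves, hence (since $E_s \oplus E_u \oplus \R X$ spans) globally constant; equivariance then yields the $\pi_1(M)$-intertwiner directly. Your approach is more elementary and purely dynamical for this step, bypassing the Pestov machinery; the paper's approach has the virtue of reusing Lemma \ref{lemma:Pi_1inj}, which is needed anyway for the second part, so no extra ingredient is introduced. One minor index slip: the mixed connection on $\mathrm{Hom}(\pi^*\E_2,\pi^*\E_1)$ should be written $\nabla^{\mathrm{Hom}(\pi^*\nabla_2,\pi^*\nabla_1)}$ per Definition \ref{definition:mixed}.
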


The previous Proposition \ref{proposition:flat} will be strengthened below when further assuming that $(M,g)$ has negative curvature (see Lemma \ref{lemma:small-curvature}): we will show that the primitive trace map is globally injective on connections with \emph{small} curvature.
The first part of Proposition \ref{proposition:flat} could be proved by purely algebraic arguments; nevertheless, we provide a proof with dynamical flavour, which is more in the spirit of the present article. We need a preliminary result:

\begin{lemma}\label{lemma:Pi_1inj}
Assume $(M,g)$ is Anosov and $\nabla^{\E}$ is a flat and unitary connection on the Hermitian vector bundle $\E \rightarrow M$. If $\X := (\pi^*\nabla^{\E})_X$, then:
\begin{itemize}
\item If $\X u = f$ with $f=f_0 + f_1 \in C^\infty(M,(\Omega_0 \oplus \Omega_1) \otimes \E)$ and $u \in C^\infty(SM,\pi^*\E)$, then $f_0 = 0$ and $u$ is of degree $0$. 
\item In particular, smooth invariant sections $u \in \ker \X|_{C^\infty(SM,\pi^*\E)}$ are of degree $0$.
\item The operator $\Pi_1^{\E}$ is s-injective.
\end{itemize}
\end{lemma}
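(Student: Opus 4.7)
The third point follows from the first via Lemma \ref{lemma:x-ray} and the intertwining \eqref{eq:pullback}. Indeed, $s$-injectivity of $\Pi_1^{\E}$ is equivalent to the statement that whenever $\X u = \pi_1^* f$ has a smooth solution $u \in C^\infty(SM, \pi^*\E)$ for some $f \in C^\infty(M, T^*M \otimes \E)$, the tensor $f$ is a twisted potential $D^\E p$. Granting the first point, such $u$ must be of degree $0$, say $u = \pi_0^* p$; then $\pi_1^* f = \X \pi_0^* p = \pi_1^* D^\E p$, and $f = D^\E p$ by injectivity of $\pi_1^*$. The second point is the case $f = 0$ of the first.

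So the crux is the first point. The plan exploits the crucial input $F_{\nabla^\E} = 0$: in the twisted Pestov identity (as in \cite{Paternain-Salo-Uhlmann-15, Guillarmou-Paternain-Salo-Uhlmann-16}), flatness makes the bundle-curvature correction disappear, so that in a local parallel frame of $\E$ the identity reduces to the untwisted one applied componentwise, twisted globally only by the monodromy representation $\rho: \pi_1(M) \to \mathrm{U}(r)$. My first step is to decompose $(\E, \nabla^\E) = \bigoplus_i (\E_i, \nabla^{\E_i})$ according to the irreducible summands of $\rho$, reducing to the case where $\rho$ is irreducible.

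Next, I would decompose $u = \sum_{m \geq 0} u_m$ in vertical Fourier modes. Writing $\X = \X_+ + \X_-$, the hypothesis $\X u = f_0 + f_1$ translates into the system
\begin{align*}
\X_- u_1 &= f_0, \\
\X_+ u_0 + \X_- u_2 &= f_1, \\
\X_+ u_{m-1} + \X_- u_{m+1} &= 0 \quad (m \geq 2).
\end{align*}
Applying the flat twisted Pestov identity to $u$ and combining with this finite-degree right-hand side should yield energy estimates forcing $u_m = 0$ for every $m \geq 1$. Once $u$ is of degree zero, the first equation of the system immediately gives $f_0 = \X_- u_1 = 0$.

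The main obstacle will be pushing the Pestov/Fourier-mode argument through on a \emph{general} Anosov manifold, where we do not have negative curvature or a priori absence of non-trivial CKTs for the trivial bundle to lean on. This is exactly where flatness earns its keep: the vanishing of $F_{\nabla^\E}$ removes all the delicate bundle-curvature remainders that typically obstruct such a conclusion, and I expect the surviving Riemannian-curvature contributions to be controlled by a combination of the Anosov splitting and the dynamical rigidity provided by the exact Liv\v{s}ic cocycle theorem (Theorem \ref{theorem:weak-intro}) of this paper.
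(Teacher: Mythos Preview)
Your reductions of the second and third points to the first are correct and match the paper's proof. The gap is in your treatment of the first point.

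You correctly identify the twisted Pestov identity as the tool, and flatness does exactly what you say: it kills the bundle-curvature term $\mc{F}^{\E}$, leaving
\[
\|\nabla^{\E}_{\V} \X u \|_{L^2}^2 = \|\X \nabla^{\E}_{\V} u \|_{L^2}^2 - \langle R \nabla^{\E}_{\V} u, \nabla^{\E}_{\V} u \rangle_{L^2} + (n-1)\|\X u\|_{L^2}^2.
\]
But from here your proposal goes off course. The decomposition into irreducible $\rho$-summands is harmless but unnecessary, and the mode-by-mode system you write down is not how the argument closes. Most importantly, the ``main obstacle'' you flag---controlling the Riemannian curvature term on a general Anosov manifold---is \emph{not} handled by the Liv\v{s}ic cocycle theorem of this paper; that theorem plays no role here. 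The missing ingredient is the dynamical coercivity estimate
\[
\|\X \nabla^{\E}_{\V} u \|_{L^2}^2 - \langle R \nabla^{\E}_{\V} u, \nabla^{\E}_{\V} u \rangle_{L^2} \geq C \|\nabla^{\E}_{\V} u\|_{L^2}^2,
\]
valid on any Anosov manifold by \cite[Theorem 7.2]{Paternain-Salo-Uhlmann-15} (the argument there is for the trivial bundle, but it transfers verbatim since the operator acting on $\nabla^{\E}_{\V} u$ is the same Riemannian quadratic form). Plugging this in yields
\[
\|\nabla^{\E}_{\V} \X u \|_{L^2}^2 \geq C \|\nabla^{\E}_{\V} u\|_{L^2}^2 + (n-1)\|\X u\|_{L^2}^2.
\]
The paper then splits $u = u_{\mathrm{odd}} + u_{\mathrm{even}}$ rather than mode-by-mode: since $\X$ flips parity, $\X u_{\mathrm{odd}} = f_0$ and $\X u_{\mathrm{even}} = f_1$. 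Applying the inequality to $u_{\mathrm{odd}}$ (left side vanishes since $f_0$ has degree $0$) forces $f_0 = 0$ and $\nabla^{\E}_{\V} u_{\mathrm{odd}} = 0$, hence $u_{\mathrm{odd}} = 0$. For $u_{\mathrm{even}}$, one computes $\|\nabla^{\E}_{\V} f_1\|_{L^2}^2 = (n-1)\|f_1\|_{L^2}^2$, so the inequality collapses to $C\|\nabla^{\E}_{\V} u_{\mathrm{even}}\|_{L^2}^2 \leq 0$, giving $u_{\mathrm{even}}$ of degree $0$.
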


\begin{proof}
The proof is based on the twisted Pestov identity for flat connections. 

\begin{lemma}[Twisted Pestov identity]
Let $u\in H^2(SM,\pi^*\E)$. Then
\[
\|\nabla^{\E} _{\V} \X u \|_{L^2}^2 = \|\X \nabla^{\E} _{\V} u  \|_{L^2}^2 - \langle R \nabla^{\E} _{\V} u, \nabla^{\E} _{\V} u \rangle_{L^2} + (n -1) \| \X u \|_{L^2}^2.
\]
\end{lemma}

For the notation, see \S \ref{sssection:twisted-fourier}; for a proof, we refer to \cite[Proposition 3.3]{Guillarmou-Paternain-Salo-Uhlmann-16}. An important point is that the following inequality holds for Anosov manifolds:
\[
\|\X \nabla^{\E} _{\V} u  \|_{L^2}^2 - \langle R \nabla^{\E} _{\V} u, \nabla^{\E} _{\V} u \rangle_{L^2} \geq C\|\nabla^{\E}_{\V} u\|^2_{L^2},
\]
where $C > 0$ is independent of $u$, see \cite[Theorem 7.2]{Paternain-Salo-Uhlmann-15} for the case of the trivial line bundle (the generalization to the twisted case is straightforward). We thus obtain:
\begin{equation}
\label{equation:inegalite}
\|\nabla^{\E} _{\V} \X u \|_{L^2}^2 \geq C\|\nabla^{\E}_{\V} u\|^2_{L^2} + (n -1) \| \X u \|_{L^2}^2.
\end{equation}

By assumption, $\X u = f_0 + f_1  \in C^\infty(M, (\Omega_0 \oplus \Omega_1) \otimes \E)$. Observe that this equation can be split into odd/even parts, namely: $\X u_{\mathrm{even}} = f_1, \X u_{\mathrm{odd}} = f_0$, and $u_{\mathrm{even},\mathrm{odd}} \in C^\infty(SM,\pi^*\E)$ have respective even/odd Fourier components. Applying \eqref{equation:inegalite} with $u_{\mathrm{odd}}$, we obtain $f_0 = 0$, $\X u_{\mathrm{odd}} = 0$ and $\nabla^{\E}_{\V} u_{\mathrm{odd}} = 0$, that is $u_{\mathrm{odd}}$ is of degree $0$ but $0$ is even so $u_{\mathrm{odd}} = 0$. As far as $u_{\mathrm{even}}$ is concerned, observe that $\nabla^{\E} _{\V} \X u_{\mathrm{even}} = \nabla^{\E} _{\V} f_1$ and:
\[
\|\nabla^{\E} _{\V} f_1\|_{L^2}^2 = \langle -\Delta_{\V}^{\E} f_1, f_1 \rangle_{L^2} = (n-1) \|f_1\|^2_{L^2}.
\]
Hence, applying the twisted Pestov identity with $u_{\mathrm{even}}$, we obtain:
\[
0 = \|\X \nabla^{\E} _{\V} u_{\mathrm{even}}  \|_{L^2}^2 - \langle R \nabla^{\E} _{\V} u_{\mathrm{even}}, \nabla^{\E} _{\V} u_{\mathrm{even}} \rangle_{L^2} \geq C\|\nabla^{\E}_{\V} u_{\mathrm{even}}\|^2_{L^2},
\]
that is $u_{\mathrm{even}}$ is of degree $0$. This proves the first point and the second point is a direct consequence of the first point.

For the last point, consider the equation $\X u = \pi_1^*f$. By the first point, $u$ is of degree zero, so $u = \pi_0^*u'$ for some $u' \in C^\infty(M, \E)$. Hence by \eqref{eq:pullback} we get $f = \nabla^{\E}u'$ and the conclusion follows by Lemma \ref{lemma:x-ray}.
\end{proof}

We can now prove Proposition \ref{proposition:flat}.

\begin{proof}
We prove the first part of Proposition \ref{proposition:flat}. We assume that $\mc{T}^\sharp(\mathrm{x}_1) =\mc{T}^\sharp(\mathrm{x}_2)$ or, equivalently, that $\mc{T}^\sharp(\nabla^{\E_{\mathrm{x}_1}}) = \mc{T}^\sharp(\nabla^{\E_{\mathrm{x}_2}})$. The exact Liv\v{s}ic cocycle Theorem \ref{theorem:weak} implies that the bundles $\pi^*\E_{\mathrm{x}_1}$ and $\pi^*\E_{\mathrm{x}_2}$ are isomorphic and yields the existence of a section $p \in C^\infty(SM,\mathrm{U}(\pi^*\E_{\mathrm{x}_2}, \pi^*\E_{\mathrm{x}_1}))$ such that: $C_{\mathrm{x}_1}(x,t) = p(\varphi_t x)C_{\mathrm{x}_2}(x,t)p(x)^{-1}$ for all $x \in \M, t \in \R$, which is equivalent to
\[
\pi^* \nabla^{\mathrm{Hom}(\nabla^{\E_{\mathrm{x}_2}}, \nabla^{\E_{\mathrm{x}_1}})}_X p = 0,
\]
where $\nabla^{\mathrm{Hom}(\nabla^{\E_{\mathrm{x}_2}}, \nabla^{\E_{\mathrm{x}_1}})}$ is the mixed connection induced by $\nabla^{\E_{\mathrm{x}_2}}$ and $\nabla^{\E_{\mathrm{x}_1}}$ on $\mathrm{Hom}(\E_{\mathrm{x}_2}, \E_{\mathrm{x}_1})$. Observe that by \eqref{equation:induced-curvature}, the curvature of $\nabla^{\mathrm{Hom}(\nabla^{\E_{\mathrm{x}_2}}, \nabla^{\E_{\mathrm{x}_1}})}$ vanishes as both curvatures $F_{\nabla^{\E_{\mathrm{x}_{1,2}}}}$ vanish. Applying Lemma \ref{lemma:Pi_1inj} with $\X := \pi^* \nabla^{\mathrm{Hom}(\nabla^{\E_{\mathrm{x}_2}}, \nabla^{\E_{\mathrm{x}_1}})}_X$ acting on the pullback bundle $\pi^*\mathrm{Hom}(\E_{\mathrm{x}_2}, \E_{\mathrm{x}_1})$, we get that $p$ is of degree $0$, which is equivalent to the fact that the connections are gauge-equivalent.

As to the second part of Proposition \ref{proposition:flat}, by Theorem \ref{thm:weaklocal} it is a straightforward consequence of the s-injectivity of $\Pi_1^{\End(\E_0)}$ and the fact that elements of $\ker(\pi^*\nabla^{\End(\E_0)})_X|_{C^\infty})$ are of degree zero, which follows from Lemma \ref{lemma:Pi_1inj} (items 2 and 3).
\end{proof}

\subsubsection{Negative sectional curvature}

\label{sssection:negative}

We now assume further that the Riemannian manifold $(M,g)$ has negative sectional curvature. We introduce the following condition:

\begin{definition}
We say that the pair of connections $(\nabla^{\E_1},\nabla^{\E_2})$ satisfies the \emph{spectral condition} if the mixed connection $\nabla^{\mathrm{Hom}(\nabla^{\E_1},\nabla^{\E_2})}$ has no non-trivial twisted CKTs. 
\end{definition}

This condition is symmetric in the pair $(\nabla^{\E_1},\nabla^{\E_2})$. Observe that by \eqref{equation:lien}, the previous condition is invariant by changing one of the two connections by $p^* \nabla^{\E_i}$, for some vector bundle isomorphism $p$, and thus this condition descends to the moduli space. We then define
\begin{equation}\label{eq:setS}
\mathbf{S} \subset \mathbb{A} \times \mathbb{A},
\end{equation}
the subspace of all pairs of equivalence classes of connections satisfying the spectral condition. The set $\mathbf{S}$ is open and dense (for the $C^N_*$-topology, $N \gg 1$) as shown in Appendix \ref{appendix:ckts}. Moreover, it also contains all pairs of connections with \emph{small curvature}, that is if
\[
\Omega_\eps := \left\{ \mathrm{x} = ([\E],[\nabla^{\E}]) \in \mathbb{A}, ~~~ \|F_{\nabla^{\E}}\|_{L^\infty(M,\Lambda^2 T^*M \otimes \End(\E))} < \eps \right\} \subset \mathbb{A},
\]
then we have the following:

\begin{lemma}
\label{lemma:small-curvature}
Let $(M,g)$ be a negatively-curved Riemannian manifold of dimension $\geq 2$ and let $-\kappa < 0$ be an upper bound for the sectional curvature. There exists $\eps(n,\kappa) > 0$ such that:
\[
\Omega_{\eps(n,\kappa)} \times \Omega_{\eps(n,\kappa)} \subset \mathbf{S}.
\]
One can take $\eps(n,\kappa) =\dfrac{\kappa \sqrt{n-1}}{4}$.
\end{lemma}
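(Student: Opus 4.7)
The strategy is to reduce the statement to an application of the (quantitative) twisted Pestov identity of Guillarmou--Paternain--Salo--Uhlmann, applied to the mixed connection. Recall that the non-existence of non-trivial twisted CKTs is exactly the statement that $\ker \X_+ |_{C^\infty(M,\Omega_m \otimes \mathrm{Hom}(\E_1,\E_2))} = 0$ for all $m \geq 1$, where $\X = (\pi^*\nabla^{\mathrm{Hom}(\nabla^{\E_1},\nabla^{\E_2})})_X$. One knows from \cite{Guillarmou-Paternain-Salo-Uhlmann-16} that in negative curvature there is a threshold $\tau(n,\kappa,m)$, depending only on the dimension, the curvature upper bound $-\kappa$, and the degree $m$, such that if the pointwise norm of the curvature of the underlying connection is bounded by $\tau(n,\kappa,m)$, then no twisted CKTs of degree $m$ exist. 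The plan is to (i) bound the curvature of the mixed connection by $\|F_{\nabla^{\E_1}}\|_\infty + \|F_{\nabla^{\E_2}}\|_\infty$, and (ii) check that the choice $\eps(n,\kappa) = \kappa \sqrt{n-1}/4$ makes $2\eps$ fall below the degree-$1$ threshold (which, as $m \mapsto \tau(n,\kappa,m)$ turns out to be non-decreasing, controls all degrees simultaneously).

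For step (i), by the formula \eqref{equation:induced-curvature} the curvature of the mixed connection acts on $u \in \mathrm{Hom}(\E_1,\E_2)$ by $F_{\nabla^{\mathrm{Hom}}}(X,Y) u = F_{\nabla^{\E_2}}(X,Y) u - u F_{\nabla^{\E_1}}(X,Y)$; taking operator norms fibrewise and the supremum over $M$ gives
\[
\|F_{\nabla^{\mathrm{Hom}(\nabla^{\E_1},\nabla^{\E_2})}}\|_{L^\infty} \leq \|F_{\nabla^{\E_1}}\|_{L^\infty} + \|F_{\nabla^{\E_2}}\|_{L^\infty} < 2\eps.
\]
For step (ii), apply the twisted Pestov identity to a putative twisted CKT $u \in C^\infty(M, \Omega_m \otimes \mathrm{Hom}(\E_1,\E_2))$ (for $m\geq 1$) satisfying $\X_+ u = 0$. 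Integration by parts and the negative curvature assumption $\mathrm{sec}_g \leq -\kappa$ bound the geometric part from below by $m(m+n-2) \kappa \|u\|_{L^2}^2$ (using that $u$ is pure of degree $m$ and that $-\langle R\nabla_\V u, \nabla_\V u\rangle \geq \kappa \|\nabla_\V u\|^2 = m(m+n-2)\kappa \|u\|^2$), while the twisting produces an error term bounded by $C_{n,m} \|F_{\nabla^{\mathrm{Hom}}}\|_{L^\infty} \|u\|_{L^2}^2$. Combining and using Cauchy--Schwarz with the splitting $\nabla^{\E}\X u = \X\nabla^{\E} u + F_{\nabla}$--term, the threshold for degree $m$ can be optimized to $\tau(n,\kappa,m) \geq m \kappa \sqrt{n-1}/2$; for $m=1$ this is exactly $\kappa\sqrt{n-1}/2$, and our choice $\eps = \kappa \sqrt{n-1}/4$ yields $2\eps < \tau(n,\kappa,1)$ as required.

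The routine step is step (i); the genuine work is making the Pestov estimate sharp enough that the constant comes out to $\kappa\sqrt{n-1}/4$ for \emph{all} degrees, i.e.\ verifying monotonicity $m \mapsto \tau(n,\kappa,m)$ and handling the Cauchy--Schwarz constants in the $F_{\nabla^{\mathrm{Hom}}}$ error term. The main obstacle is bookkeeping rather than ideas: one has to carefully split the Pestov identity into the Fourier modes $\X_\pm$, apply Young's inequality to the cross terms involving $F_{\nabla^{\mathrm{Hom}}}$, and absorb them using the coercivity provided by negative curvature, without losing more than a factor $1/2$ in the final estimate.
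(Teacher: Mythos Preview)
Your proposal is correct and follows essentially the same route as the paper. Step (i) is identical: the paper also bounds $\|F_{\nabla^{\mathrm{Hom}(\nabla^{\E_1},\nabla^{\E_2})}}\|_{L^\infty} \leq \|F_{\nabla^{\E_1}}\|_{L^\infty} + \|F_{\nabla^{\E_2}}\|_{L^\infty} < 2\eps$ via \eqref{equation:induced-curvature}. For step (ii), the paper does not redo the Pestov computation but cites \cite[Theorem 4.5]{Guillarmou-Paternain-Salo-Uhlmann-16} as a black box, which gives the threshold in the form $m(m+n-2) \geq 4\|\mc{F}^{\mathrm{Hom}}\|_{L^\infty}^2/\kappa^2$, i.e.\ $\tau(n,\kappa,m) = \tfrac{\kappa}{2}\sqrt{m(m+n-2)}$; your claimed threshold $\tfrac{m\kappa\sqrt{n-1}}{2}$ differs from this for $m\geq 2$ (and you do not justify it), but since $m \mapsto m(m+n-2)$ is increasing with value $n-1$ at $m=1$, the minimum over $m\geq 1$ is attained at $m=1$ in both formulations and the final constant $\eps(n,\kappa)=\kappa\sqrt{n-1}/4$ comes out the same.
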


\begin{proof}
We start by a preliminary discussion. Given a Hermitian vector bundle $\E \to M$ with metric $\langle \bullet, \bullet \rangle$, a unitary connection $\nabla^{\E}$, we introduce, following \cite[Section 3]{Guillarmou-Paternain-Salo-Uhlmann-16}, an operator $\mc{F}^{\E} \in C^\infty(SM,\mc{N} \otimes \End_{\mathrm{sk}}(\E))$ (recall that $\mc{N}$ is the normal bundle, see \S\ref{sssection:line-bundle}) defined by the equality:
\begin{equation}
\label{equation:twisted-curvature}
 \langle \mc{F}^{\E}(x,v) e, w \otimes e' \rangle := \langle F_{\nabla^{\E}}(v,w)e,e' \rangle,
\end{equation}
where $F_{\nabla^{\E}}$ is the connection of $\nabla^{\E}$, and $(x,v) \in SM, e, e' \in \E_x, w \in \mc{N}(x,v)$, and the metric on the left-hand side is the natural extension of the metric $\langle \bullet, \bullet \rangle$ on $\E$ to $\mc{N} \otimes \E$ by tensoring with the metric $g$. A straightforward computation shows that:
\begin{equation}
\label{equation:bad-bound}
\|\mc{F}^{\E}\|_{L^\infty(SM, \mc{N} \otimes \End_{\mathrm{sk}}(\E))} \leq \|F_{\nabla^{\E}}\|_{L^\infty(M,\Lambda^2 T^*M \otimes \End(\E))}. 
\end{equation}
Now, let $\nabla^{\E_1}$ and $\nabla^{\E_2}$ be two unitary connections, $\nabla^{\mathrm{Hom}(\nabla^{\E_1}, \nabla^{\E_2})}$ be the mixed connection and $\mc{F}^{\mathrm{Hom(\E_1,\E_2)}}$ be the operator induced by the mixed connection as in \eqref{equation:twisted-curvature}. Observe that by \eqref{equation:induced-curvature} and \eqref{equation:bad-bound}, we get:
\begin{equation}
\label{equation:bad-bound-2}
\begin{split}
\|\mc{F}^{\mathrm{Hom(\E_1,\E_2)}}\|_{L^\infty}  \leq \|F_{\nabla^{\mathrm{Hom}(\nabla^{\E_1}, \nabla^{\E_2})}}\|_{L^\infty} \leq \|F_{\nabla^{\E_1}}\|_{L^\infty} + \|F_{\nabla^{\E_2}}\|_{L^\infty} < 2\eps(n,\kappa).
\end{split}
\end{equation}
By \cite[Theorem 4.5]{Guillarmou-Paternain-Salo-Uhlmann-16}, if $m \geq 1$ satisfies:
\begin{equation}
\label{equation:ckts}
m(m+n-2) \geq 4 \dfrac{\|\mc{F}^{\mathrm{Hom(\E_1,\E_2)}}\|^2_{L^\infty}}{\kappa^2},
\end{equation}
then there are no twisted CKTs of degree $m$ (for the connection $\nabla^{\mathrm{Hom}(\nabla^{\E_1}, \nabla^{\E_2})}$). Now, the choice of $\eps(n,\kappa) > 0$ combined with \eqref{equation:bad-bound-2} guarantees that \eqref{equation:ckts} is satisfied for any $m \geq 1$.
\end{proof}

We then have the following statement:

\begin{prop}
\label{proposition:negative}
Let $(M,g)$ be a negatively-curved Riemannian manifold of dimension $\geq 2$. Let $(\mathfrak{a}, \mathfrak{a}') \in \mathbf{S}$ such that $\mc{T}^\sharp(\mathfrak{a}) = \mc{T}^\sharp(\mathfrak{a}')$. Then $\mathfrak{a} = \mathfrak{a}'$.
\end{prop}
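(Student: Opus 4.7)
The plan is to combine the exact Liv\v{s}ic cocycle theorem with a twisted Pestov identity argument in negative curvature. Fix representatives $\nabla^{\E_1} \in \mathfrak{a}$ and $\nabla^{\E_2} \in \mathfrak{a}'$. The equality $\mc{T}^\sharp(\mathfrak{a}) = \mc{T}^\sharp(\mathfrak{a}')$ is precisely the trace-equivalence of holonomies along primitive closed geodesics (Definition \ref{definition:equivalence}), applied to the geodesic flow on $\M := SM$ with the pulled-back Hermitian bundles $\pi^*\E_i$ equipped with the pullback unitary connections. Theorem \ref{theorem:weak} thus produces a smooth unitary section $p \in C^\infty(SM, \pi^*\mathrm{Hom}(\E_2, \E_1))$ conjugating the two parallel transports; equivalently, $\X_{\mathrm{mix}}\, p = 0$, where $\X_{\mathrm{mix}} := (\pi^*\nabla^{\mathrm{Hom}(\nabla^{\E_2}, \nabla^{\E_1})})_X$.

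The heart of the argument is to show that $p$ has degree $0$ in the fiberwise Fourier decomposition of \S\ref{sssection:twisted-fourier}; gauge equivalence then follows at once. Decompose $p = \sum_{m \geq 0} p_m$ with $p_m \in C^\infty(M, \Omega_m \otimes \mathrm{Hom}(\E_2, \E_1))$. Since $\X_{\mathrm{mix}}$ shifts fiberwise degree by $\pm 1$, the even and odd parts of $p$ are invariant separately, and projecting on each mode yields the recurrence
\[
(\X_{\mathrm{mix}})_+ p_{m-1} + (\X_{\mathrm{mix}})_- p_{m+1} = 0, \qquad m \geq 0.
\]
In negative curvature, the twisted Pestov identity of \cite[Proposition 3.3]{Guillarmou-Paternain-Salo-Uhlmann-16}, combined with the spectral hypothesis $(\mathfrak{a},\mathfrak{a}') \in \mathbf{S}$ (no non-trivial twisted CKTs for the mixed connection), gives a coercive estimate at every positive degree of the form
\[
\|(\X_{\mathrm{mix}})_+ p_m\|_{L^2}^2 \geq \|(\X_{\mathrm{mix}})_- p_m\|_{L^2}^2 + c_m \|p_m\|_{L^2}^2, \qquad c_m > 0.
\]
Feeding the recurrence into this estimate and telescoping over even indices (and separately over odd ones) produces, for every $k \geq 1$,
\[
\|(\X_{\mathrm{mix}})_+ p_{2k}\|_{L^2}^2 \geq \|(\X_{\mathrm{mix}})_+ p_0\|_{L^2}^2 + \sum_{j=1}^{k} c_{2j}\|p_{2j}\|_{L^2}^2.
\]
Smoothness of $p$ forces $\|p_m\|_{L^2}$ (and hence $\|(\X_{\mathrm{mix}})_+ p_m\|_{L^2}$) to decay faster than any polynomial in $m$, so the left-hand side tends to $0$ as $k \to \infty$ and each $p_{2j}$ with $j \geq 1$ must vanish. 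The same telescope applied to odd indices kills $p_m$ for $m \geq 3$; the leftover mode $p_1$ then satisfies both $(\X_{\mathrm{mix}})_\pm p_1 = 0$, so $p_1$ is a twisted CKT of degree $1$ and vanishes by hypothesis. Hence $p = p_0$.

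With $p$ of degree $0$, write $p = \pi^*\tilde{p}$ for some $\tilde{p} \in C^\infty(M, \mathrm{U}(\E_2, \E_1))$. The invariance equation $\X_{\mathrm{mix}}\, p = 0$ reads at each $(x,v) \in SM$ as $(\nabla^{\mathrm{Hom}(\nabla^{\E_2}, \nabla^{\E_1})}_v \tilde{p})(x) = 0$, so $\nabla^{\mathrm{Hom}(\nabla^{\E_2}, \nabla^{\E_1})}\tilde{p} = 0$ on $M$. By the Leibnitz defining property of the mixed connection (Definition \ref{definition:mixed}), $\tilde{p}$ intertwines $\nabla^{\E_2}$ with $\nabla^{\E_1}$, which is exactly the statement $\mathfrak{a} = \mathfrak{a}'$. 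The main obstacle is establishing the coercive estimate on each positive Fourier mode: one must verify that, at every degree, the combination of negative sectional curvature and the absence of twisted CKTs for the mixed connection yields a genuinely positive constant $c_m > 0$, and that the telescope is robust enough to exploit the smoothness-driven decay of the Fourier tail. This is a direct adaptation of the Pestov-identity machinery developed in \cite{Guillarmou-Paternain-Salo-Uhlmann-16}.
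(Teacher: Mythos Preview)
Your overall strategy matches the paper's: apply Theorem \ref{theorem:weak} to obtain an $\X_{\mathrm{mix}}$-invariant unitary section $p$, argue that $p$ has Fourier degree $0$, and conclude gauge equivalence. The paper's proof is considerably shorter because it outsources the second step entirely, citing \cite[Theorem 4.1]{Guillarmou-Paternain-Salo-Uhlmann-16} (any $\X_{\mathrm{mix}}$-invariant smooth section has \emph{finite} Fourier degree in negative curvature) and then \cite[Theorem 5.1]{Guillarmou-Paternain-Salo-Uhlmann-16} (finite degree plus absence of twisted CKTs forces degree $0$, since the top nonzero mode would satisfy $\X_+ p_m = 0$).

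Your direct telescope is a reasonable attempt to merge those two cited results, but the specific coercive inequality you invoke,
\[
\|(\X_{\mathrm{mix}})_+ p_m\|_{L^2}^2 \geq \|(\X_{\mathrm{mix}})_- p_m\|_{L^2}^2 + c_m \|p_m\|_{L^2}^2,\qquad c_m>0,
\]
is not what the Pestov identity delivers at every degree. For large $m$ the localized Pestov estimate in negative curvature does give an inequality of this flavour (with $m$-dependent coefficients on the $\X_\pm$ terms, and the sectional-curvature term dominating the connection-curvature term $\|\mc{F}^{\mathrm{Hom}}\|$); this is exactly the content of the finite-degree theorem. But for the finitely many small $m$ where the connection curvature is not dominated, the Pestov identity may fail to yield a positive $c_m$, and the no-CKTs hypothesis only gives injectivity of $(\X_{\mathrm{mix}})_+$ on $\Omega_m$, i.e.\ $\|(\X_{\mathrm{mix}})_+ p_m\|\geq c_m\|p_m\|$, \emph{not} the comparison with $\|(\X_{\mathrm{mix}})_- p_m\|$. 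The clean fix is to follow the two-step route: use the Pestov telescope only at large degrees to obtain finite degree, and then observe that the highest surviving mode is a twisted CKT, hence zero by the spectral hypothesis; iterate downward. This is precisely what the paper invokes from \cite{Guillarmou-Paternain-Salo-Uhlmann-16}.
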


In other words, two connections satisfying the spectral condition and whose images by the primitive trace map are equal, are actually gauge-equivalent.

\begin{proof}
Consider two representatives $\nabla^{\E_1} \in \mathfrak{a}$ and $\nabla^{\E_2} \in \mathfrak{a}'$. The exact Liv\v{s}ic cocycle Theorem \ref{theorem:weak} provides a section $p \in C^\infty(SM,\mathrm{U}(\pi^*\E_2,\pi^*\E_1))$ such that:
\[
\pi^* \nabla^{\mathrm{Hom}(\nabla^{\E_{2}}, \nabla^{\E_{1}})}_X p = 0.
\]
By assumption, $(M,g)$ has negative curvature and thus $p$ has finite Fourier degree by \cite[Theorem 4.1]{Guillarmou-Paternain-Salo-Uhlmann-16}. Moreover, since $ \nabla^{\mathrm{Hom}(\nabla^{\E_{2}}, \nabla^{\E_{1}})}$ has no non-trivial twisted CKTs, $p$ is of degree $0$ (see \cite[Theorem 5.1]{Guillarmou-Paternain-Salo-Uhlmann-16}). This shows that the connections are gauge-equivalent.
\end{proof}

\subsubsection{Topological results}

\label{sssection:topology}

In this section we prove a global \emph{topological} uniqueness result for the primitive trace map. 

\begin{prop}\label{proposition:topology}
	Let $(M, g)$ be an orientable Anosov manifold. If $\mathrm{x}_i = ([\E_i], [\nabla^{\E_i}]) \in \mathbb{A}$ for $i = 1, 2$, then $\mc{T}^\sharp(\mathrm{x}_1) = \mc{T}^\sharp(\mathrm{x}_2)$ implies:
	
	\begin{itemize}
			\item If $\dim M$ is odd or more generally the Euler characteristic $\chi(M) = 0$ vanishes, then $\E_1 \simeq \E_2$ are isomorphic as vector bundles.
			\item If $\dim M = 2d$ for some $d \in \mathbb{N}$ and $\chi(M) \neq 0$, then
				\begin{itemize}
					\item The Chern classes satisfy $c_i(\E_1) = c_i(\E_2)$ for $i = 1, \dotso, d - 1$; also $c_d(\E_1) - c_d(\E_2) \in H^{2d}(M; \mathbb{Z}) \cong \mathbb{Z}$ is a multiple of $\chi(M)$.
					\item If the even cohomology ring $H^{\mathrm{even}}(M; \mathbb{Z})$ is torsion-free, and the rank of the bundles is less than $d$ or more generally $c_d(\E_1) = c_d(\E_2)$, then $\E_1$ and $\E_2$ are stably isomorphic, i.e. there is an $m \geq 0$ such that $\E_1 \oplus \mathbb{C}^m \simeq \E_2 \oplus \mathbb{C}^m$.
				\end{itemize}
	\end{itemize}	
\end{prop}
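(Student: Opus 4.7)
The starting point is Corollary \ref{corollary:iso}: applying it to the geodesic flow $(\varphi_t)$ on $\M = SM$ with the pulled-back bundles $\pi^*\E_i$ and connections $\pi^*\nabla^{\E_i}$ (the flow is transitive on an Anosov manifold, and primitive periodic orbits of the geodesic flow correspond bijectively to primitive closed geodesics), the hypothesis $\mc{T}^\sharp(\mathrm{x}_1) = \mc{T}^\sharp(\mathrm{x}_2)$ immediately yields a topological isomorphism $\pi^*\E_1 \simeq \pi^*\E_2$ of complex vector bundles over $SM$. What remains is a purely topological question: how much of the isomorphism class of a bundle $\E \to M$ is determined by that of $\pi^*\E \to SM$?

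For the first bullet, in both cases $\chi(M) = 0$: this is automatic when $\dim M$ is odd and $M$ is orientable, by Poincar\'e duality ($b_i = b_{n-i}$ forces the alternating sum of Betti numbers to vanish when $n$ is odd). By Hopf's theorem, $\chi(M) = 0$ is equivalent to the existence of a nowhere-vanishing vector field, i.e.\ a smooth section $s : M \to SM$ of the sphere bundle. Since $\pi \circ s = \mathrm{id}_M$, we have $\E_i \cong s^*\pi^*\E_i$, and pulling back the isomorphism $\pi^*\E_1 \simeq \pi^*\E_2$ along $s$ gives $\E_1 \simeq \E_2$.

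For the second bullet, with $\dim M = 2d$ and $\chi(M) \neq 0$, the main tool is the Gysin sequence for the oriented $S^{2d-1}$-bundle $\pi : SM \to M$:
\[
\cdots \to H^{k-2d}(M;\Z) \xrightarrow{\cup e(TM)} H^k(M;\Z) \xrightarrow{\pi^*} H^k(SM;\Z) \to H^{k-2d+1}(M;\Z) \to \cdots,
\]
where $e(TM) \in H^{2d}(M;\Z)$ is the Euler class, which equals $\chi(M)$ under the orientation isomorphism $H^{2d}(M;\Z) \cong \Z$. For $k < 2d$ the source $H^{k-2d}(M;\Z) = 0$, so $\pi^*$ is injective; for $k = 2d$, exactness identifies $\ker \pi^*$ with $\mathrm{im}(\cup e(TM): H^0(M;\Z) \to H^{2d}(M;\Z)) = \chi(M) \cdot \Z$. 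By naturality of Chern classes, $\pi^*\bigl(c_i(\E_1) - c_i(\E_2)\bigr) = c_i(\pi^*\E_1) - c_i(\pi^*\E_2) = 0$ for all $i$; reading this off in each degree yields $c_i(\E_1) = c_i(\E_2)$ for $i < d$ and $c_d(\E_1) - c_d(\E_2) \in \chi(M) \cdot \Z$.

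For the stable isomorphism statement, under the rank or $c_d$ hypothesis the previous step upgrades to $c_i(\E_1) = c_i(\E_2)$ for all $i$ (Chern classes vanish above the rank, and above $d$ for dimension reasons). Under the torsion-freeness assumption on $H^{\mathrm{even}}(M;\Z)$, a standard consequence of the Atiyah--Hirzebruch spectral sequence is that $K^0(M)$ is torsion-free and the total Chern class detects stable equivalence of complex vector bundles; hence $[\E_1] = [\E_2]$ in $K^0(M)$, which is exactly the assertion that $\E_1 \oplus \C^m \simeq \E_2 \oplus \C^m$ for some $m \geq 0$. The genuinely non-trivial step is the last one, which is the only place where we rely on external $K$-theoretic machinery; the remainder of the argument is a transparent combination of Corollary \ref{corollary:iso}, Hopf's theorem on vector fields, and the Gysin sequence for sphere bundles.
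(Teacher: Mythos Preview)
Your proof is correct and follows essentially the same approach as the paper: reduce to $\pi^*\E_1 \simeq \pi^*\E_2$ via Corollary~\ref{corollary:iso}, use a section of $SM$ when $\chi(M)=0$, invoke the Gysin sequence for the Chern class statements, and appeal to $K$-theory for the stable isomorphism. The only minor difference is that for the last step the paper simply cites the Chern character isomorphism $K^0(M)\otimes\Q \cong H^{\mathrm{even}}(M;\Q)$, whereas you spell out the role of the Atiyah--Hirzebruch spectral sequence in guaranteeing that $K^0(M)$ itself is torsion-free; your formulation is in fact slightly more explicit about why the rational statement suffices.
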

\begin{proof}
	As a direct consequence of Theorem \ref{theorem:weak}, from $\mc{T}^\sharp(\mathrm{x}_1) = \mc{T}^\sharp(\mathrm{x}_2)$ we obtain that $\pi^*\E_1 \simeq \pi^*\E_2$ are isomorphic.
	
	If $(M, g)$ has a vanishing Euler characteristic, there is a non-vanishing vector field $V \in C^\infty(M, TM)$ (see \cite[Chapter 11]{Bott-Tu-82}), that we normalise to unit norm using the metric $g$ and hence see as a section of $SM$. Then since $\pi \circ V = \id_M$, we get
	\[\E_1 \simeq V^*\pi^*\E_1 \simeq V^*\pi^* \E_2 \simeq \E_2,\]
	completing the proof of the first item. 
	
	The first point of the second item is immediate after an application of the Gysin exact sequence \cite[Proposition 14.33]{Bott-Tu-82} for the sphere bundle $SM$. The second point follows from the first one and the fact that the Chern character gives an isomorphism between the rational $K$-theory and even rational cohomology, see \cite[Proposition 4.5]{Hatcher-17}.
\end{proof}

It is not known to the authors if further results hold as to the injectivity of $\pi^* : \mathrm{Vect}(M) \rightarrow \mathrm{Vect}(SM)$ in even dimensions ($\dim M \geq 4$).

\appendix

\section{Generic absence of CKTs for the mixed connection}

\label{appendix:ckts}

We assume that $(M,g)$ has negative curvature in this section. For $i=1,2$, we let $\nabla^{\E_i}$ be a (smooth) unitary connection on the Hermitian vector bundle $\E_i \to M$. By \cite[Theorem 4.5]{Guillarmou-Paternain-Salo-Uhlmann-16}, there exists $m_0 \gg 1$ (depending on the dimension and the sup norm of the curvatures of $\nabla^{\E_{1,2}}$) such that the mixed connection $\nabla^{\mathrm{Hom}(\nabla^{\E_{1}},\nabla^{\E_{2}})}$ has no non-trivial twisted CKTs of degree $m \geq m_0$. This property is stable by any small perturbation of $\nabla^{\E_i}$ in the $C^1$-topology (so that the curvature is well-defined). From this, we deduce by standard elliptic theory that if the pair $(\nabla^{\E_1},\nabla^{\E_2})$ satisfies the spectral condition, then any small perturbation (in the $C^1$-topology) will also satisfy the spectral condition: indeed, absence of twisted CKTs of degree $m$ is equivalent to the invertibility of a natural Laplacian operator acting on $\Omega_m \otimes \mathrm{Hom}(\E_1,\E_2)$ and this is an open property as there is only \emph{a priori} a finite number of integers $m \leq m_0$ to check, see \cite{Cekic-Lefeuvre-20} for further details. This shows that the set $\mathbf{S}$ defined in \eqref{eq:setS} is open.

We now show that it is dense. More precisely, we show the following:

\begin{lemma}
Let $\nabla^{\E_{1,2}}$ be a smooth unitary connection on $\E_{1,2} \to M$ and assume that the mixed connection $\nabla^{\mathrm{Hom}(\nabla^{\E_{1}},\nabla^{\E_{2}})}$ admits non-trivial twisted CKTs of degree $m \geq 1$. Then, for any $\eps > 0$ and $k_0 \gg 1$ large enough, there exists a small perturbation $\nabla^{\E_2}+\Gamma_2$, where $\Gamma_2 \in C^\infty(M,T^*M \otimes \End_{\mathrm{sk}}(\E_2))$ and $\|\Gamma_2\|_{C^{k_0}} \leq \eps$, such that the mixed connection induced by the pair $(\nabla^{\E_1},\nabla^{\E_2}+\Gamma_2)$ has no non-trivial twisted CKTs of degree $m$.
\end{lemma}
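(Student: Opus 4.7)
The plan is to follow the genericity argument of \cite{Cekic-Lefeuvre-20, Cekic-Lefeuvre-21-2}, adapted here to the mixed connection with a one-sided perturbation. The setup: for $\Gamma_2 \in C^\infty(M, T^*M \otimes \End_{\mathrm{sk}}(\E_2))$, I consider the family $\nabla^{\E_2}_\tau := \nabla^{\E_2} + \tau \Gamma_2$. By the local formula \eqref{eq:localhom}, the induced mixed connection satisfies
$$ \X_\tau := \bigl(\pi^*\nabla^{\mathrm{Hom}(\nabla^{\E_1}, \nabla^{\E_2}_\tau)}\bigr)_X = \X_0 + \tau\, \pi_1^*\Gamma_2, $$
where the perturbation acts by left multiplication on sections of $\pi^*\Hom(\E_1,\E_2)$. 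The degree-raising part $(\X_\tau)_+ : C^\infty(M, \Omega_m \otimes \Hom(\E_1,\E_2)) \to C^\infty(M, \Omega_{m+1} \otimes \Hom(\E_1,\E_2))$ is a real-analytic family of elliptic operators, and by Kato perturbation theory applied to the self-adjoint family $(\X_\tau)_+^*(\X_\tau)_+$, the function $\tau \mapsto d(\tau) := \dim \ker(\X_\tau)_+$ is locally constant off a discrete set. Hence an induction on $d(0)$ reduces the claim to showing that, if $d(0) \geq 1$, one can find $\Gamma_2$ of arbitrarily small $C^{k_0}$ norm and $\tau$ with $d(\tau) < d(0)$.

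I would then proceed by contradiction: assume $d$ is locally constant equal to $d(0) \geq 1$ on a neighborhood of $\Gamma_2 = 0$. Then each $u \in K_0 := \ker(\X_0)_+$ extends smoothly to $u_\tau \in \ker(\X_\tau)_+$, and differentiating at $\tau = 0$ yields the solvability condition $(\pi_1^*\Gamma_2 \cdot u)_{m+1} \in \ran(\X_0)_+$, i.e.\ $L^2$-orthogonality to the finite-dimensional cokernel $V := \ker(\X_0)_-|_{\Omega_{m+1} \otimes \Hom(\E_1,\E_2)}$ (consisting of smooth sections by elliptic regularity). Unwinding the Hilbert--Schmidt pairing, this rewrites as
$$ \int_{SM} \bigl\langle \pi_1^*\Gamma_2,\, v u^*\bigr\rangle_{\End(\E_2)} \, d\mu = 0 $$
for all $\Gamma_2$, all $u \in K_0$ and all $v \in V$. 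Testing with $\Gamma_2 = f \cdot A$ for arbitrary $f \in C^\infty(M)$ and skew-Hermitian $A \in \End(\E_2)$, and then exploiting the complex linearity of $K_0$ by replacing $u$ with $iu$, one concludes that the degree-$1$ Fourier mode of $v u^* \in C^\infty(SM, \End(\E_2))$ vanishes identically for every pair $(u,v) \in K_0 \times V$.

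The main obstacle is to derive a contradiction from this simultaneous vanishing. First, since $\dim K_0 - \dim V = \ind(\X_0)_+$ is a topological invariant, a preliminary perturbation can be used to arrange $V \neq 0$ whenever $K_0 \neq 0$ (this side issue is vacuous in the index-zero case, which is the relevant one by Atiyah--Singer). Second, by unique continuation and smoothness of the sections, I would select $x_0 \in M$ at which $u$ and $v$ evaluate to non-zero elements on the fibre $S_{x_0}M$, reducing the contradiction to the following pointwise algebraic statement: for any non-zero $\tilde u \in \Omega_m \otimes \Hom(\E_1,\E_2)_{x_0}$ and $\tilde v \in \Omega_{m+1} \otimes \Hom(\E_1,\E_2)_{x_0}$, the degree-$1$ spherical harmonic component of the $\End(\E_2)_{x_0}$-valued function $\tilde v \tilde u^*$ on $S_{x_0}M$ cannot vanish. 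This is a Clebsch--Gordan type statement: the summand $\Omega_1$ appears with positive multiplicity in the decomposition of $\Omega_{m+1} \otimes \overline{\Omega_m}$ for every $m \geq 0$, and the corresponding projection is surjective onto $\End(\E_2)$ because $\End(\E_2)$ is a full matrix algebra. Verifying this pointwise non-degeneracy is the core technical content of the argument, but its proof is parallel to the analogous step for the endomorphism connection carried out in \cite{Cekic-Lefeuvre-21-2}.
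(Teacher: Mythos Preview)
Your overall strategy matches the paper's --- perturb $\nabla^{\E_2}$, study the self-adjoint family $(\X_\tau)_+^*(\X_\tau)_+$, argue by contradiction --- and your solvability condition $(\Gamma_2)_+u\in\ran(\X_0)_+$ is equivalent to the paper's vanishing Hessian $\dd^2\lambda(A_2,A_2)=\sum_i\|\pi_{\ker\X_-}(A_2)_+u_i\|^2$: both say $(\Gamma_2)_+u\perp V:=\ker(\X_0)_-$ for all $\Gamma_2$, which you correctly rewrite as the vanishing of the degree-$1$ mode of $vu^*$ for all $(u,v)\in K_0\times V$. The gap is your pointwise step. The Clebsch--Gordan claim as you state it is false: it is \emph{not} true that for every non-zero $\tilde u\in\Omega_m$ and $\tilde v\in\Omega_{m+1}$ the $\Omega_1$-component of $\tilde v\,\tilde u^*$ is non-zero. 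Already in the scalar case on a surface ($n=2$, $\E_1=\E_2=\C$), taking $\tilde u=e^{im\theta}\in\Omega_m$ and $\tilde v=e^{-i(m+1)\theta}\in\Omega_{m+1}$ gives $\tilde v\overline{\tilde u}=e^{-i(2m+1)\theta}\in\Omega_{2m+1}$, whose $\Omega_1$-component vanishes for $m\ge 1$; analogous highest/lowest weight choices work for $n\ge 3$. That $\Omega_1$ occurs in $\Omega_{m+1}\otimes\overline{\Omega_m}$ only says the bilinear projection is non-zero \emph{somewhere}, not that it has trivial left or right kernel. Since you test only against $v$ in the fixed finite-dimensional space $V$, with no control on the evaluations $\{v(x_0):v\in V\}\subset\Omega_{m+1}(x_0)\otimes\Hom$, no contradiction follows.

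The paper fills this gap by a different mechanism: invoking that $\X_-$ is of \emph{uniform divergence type} (\cite[Section~3]{Cekic-Lefeuvre-20}), the argument of \cite[Section~4.3]{Cekic-Lefeuvre-20} upgrades the orthogonality to the pointwise identity $\langle u_1(x),(A_2)_-w\rangle_x=0$ for \emph{all} $w\in\Omega_{m+1}(x)\otimes\Hom(\E_1,\E_2)_x$, not merely $w$ arising from $V$. With this full freedom one then \emph{constructs} $(A_2,w)$ adapted to $u_1$: writing $u_1(x_0)=\sum_i p_i\otimes s_i$, choosing $A_2=i\mathbbm{1}_{\E_2}\otimes\e_1^*$ and $w=f\otimes s_{i_0}$ with $(v_1)_-f=p_{i_0}$ (surjectivity of $(v_1)_-$ on spherical harmonics), one forces $\|p_{i_0}\|^2=0$ and hence $u_1\equiv 0$. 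The ingredient missing from your argument is precisely this passage from $v\in V$ to arbitrary $w$. (Your aside that $\ind(\X_0)_+=0$ ``by Atiyah--Singer'' is also unsubstantiated, but this is secondary.)
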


\begin{proof}
We let $\X^{\Gamma_2} := \pi^* \nabla^{\mathrm{Hom}(\nabla^{\E_{1}},\nabla^{\E_{2}}+\Gamma_2)}_X$, where $\Gamma_2$ is small and, as in \S\ref{sssection:twisted-fourier}, we define:
\[
\X^{\Gamma_2}_\pm : C^\infty(M,\Omega_m \otimes \mathrm{Hom}(\E_1,\E_2)) \to  C^\infty(M,\Omega_{m \pm 1} \otimes \mathrm{Hom}(\E_1,\E_2)),
\]
and $\X_\pm := \X^{\Gamma_2 = 0}$. We also let $\Delta^{\Gamma_2} := (\X^{\Gamma_2}_+)^* \X^{\Gamma_2}_+$, the Laplacian-type operator acting on sections of $\Omega_m \otimes \mathrm{Hom}(\E_1,\E_2)$. The existence of twisted CKTs of degree $m$ for the mixed connection $\nabla^{\mathrm{Hom}(\nabla^{\E_{1}},\nabla^{\E_{2}})}$ is equivalent to the existence of a non-trivial kernel for $\Delta^{\Gamma_2=0}$, see \cite[Section 4]{Cekic-Lefeuvre-20} for further details.

Given $\gamma \subset \C$, a small contour in $\C$ around $0$ (containing only the eigenvalue $0$ of $\Delta^{\Gamma_2=0}$), we let $\lambda^{\Gamma_2}$ be the sum of the eigenvalues of $\Delta^{\Gamma_2}$ inside $\gamma$. We have that $C^{k_0} \ni \Gamma_2 \mapsto \lambda^{\Gamma_2}$ is at least $C^3$, when $k_0 \gg 1$ is chosen large enough. We have $\lambda^{\Gamma_2=0} = 0$ and $\dd \lambda^{\Gamma_2} = 0$, see \cite[Section 4]{Cekic-Lefeuvre-20}. Moreover, it was shown in \cite[Lemma 4.2]{Cekic-Lefeuvre-20} that the second derivative is obtained as
\[
\dd^2\lambda^{\Gamma_2=0}(A_2,A_2) = \sum_{i=1}^d \|\pi_{\ker \X_-} \left[\dd \X^{\Gamma_2=0} (A_2)\right]_+ u_i\|^2_{L^2} =  \sum_{i=1}^d \|\pi_{\ker \X_-} (A_2)_+ u_i\|^2_{L^2},
\]
where $\left\{u_1, ..., u_d\right\}$ is an $L^2$-orthonormal basis of $\ker \Delta^{\Gamma_2=0}$ (each $u_i$ is a smooth section of $\Omega_m \otimes \mathrm{Hom}(\E_1,\E_2) \to M$), $A_2 \in C^\infty(M,T^*M \otimes \End_{\mathrm{sk}}(\E_2))$ and $(A_2)_+$ is the positive part of the operator (see \cite[Section 2.2]{Cekic-Lefeuvre-20}), and $\pi_{\ker \X_-}$ is the $L^2$-orthogonal projection onto $\ker \X_-|_{L^2(M,\Omega_{m+1} \otimes \mathrm{Hom}(\E_1,\E_2))}$. The formula for $\dd \X^{\Gamma_2=0}(A_2) = A_2$ can be directly read off \eqref{eq:localhom}.

It thus suffices to produce a small perturbation such that this second derivative is positive. We can argue by contradiction and assume that for \emph{any} perturbation, this second derivative vanishes. Following \emph{verbatim} the arguments of \cite[Section 4.3]{Cekic-Lefeuvre-20}, and using that the operator $\X_-$ is of \emph{uniform divergence type} (see \cite[Section 3]{Cekic-Lefeuvre-20}), this would imply that:
\[
\langle u_1(x), (A_2)_- w \rangle_x = 0,
\]
for all $x \in M$, $A_2 \in C^\infty(M,T^*M \otimes \End_{\mathrm{sk}}(\E_2)), w \in \Omega_{m+1}(x) \otimes \mathrm{Hom}(\E_1(x),\E_2(x))$. It thus suffices to show, as in \cite[Lemma 4.8]{Cekic-Lefeuvre-20} that this forces $u_1$ to be equal to zero, which is a contradiction since $\|u_1\|_{L^2}=1$.

We now fix an arbitrary point $x_0$. We can write $u_1(x_0)= \sum_{i=1}^k p_i \otimes s_i$, where $k = \rk(\mathrm{Hom}(\E_1,\E_2))$, $\left\{s_1,...,s_k\right\}$ is an orthonormal basis of $\mathrm{Hom}(\E_1,\E_2)$ at $x_0$ and $p_i \in \Omega_{m}(x_0)$. We write $(\mathbf{e}_1, ... \mathbf{e}_n)$ for an orthonormal basis of $T_{x_0}M$. Taking $A_2 = i \mathbbm{1}_{\E_2} \otimes \e_1^*$, $w = f \otimes s_{i_0}$ where $f \in \Omega_{m + 1}(x_0)$ is such that $(v_1)_- f = p_{i_0}$ (here $v_1 = \mathbf{e}_1^*(v)$ and $(v_1)_-$ is the minus operator associated; this operator is surjective by \cite[Lemma 2.4]{Cekic-Lefeuvre-20}), we get as in \cite[Lemma 4.8]{Cekic-Lefeuvre-20}:
\[
\langle u_1(x_0), (A_2)_- w \rangle_{x_0} = \sum_{i=1}^k \langle p_i \otimes s_i, (v_1)_- f \otimes i \mathbbm{1}_{\E_2}  \cdot s_{i_0} \rangle  = i \|p_{i_0}\|^2 \|s_{i_0}\|^2 = i \|p_{i_0}\|^2_{L^2} = 0.
\]
Hence $u_1 \equiv 0$. This concludes the proof.
\end{proof}

\bibliographystyle{alpha}
\bibliography{Biblio}
\end{document}